\newtheorem{theorem}{Theorem}
\newtheorem{conjecture}[theorem]{Conjecture}
\newtheorem{corollary}[theorem]{Corollary}
\newtheorem{definition}[theorem]{Definition}
\newtheorem{lemma}[theorem]{Lemma}
\newtheorem{proposition}[theorem]{Proposition}
\newtheorem{remark}[theorem]{Remark}
\newcommand{\one}{{{\rm 1\mkern-1.5mu}\!{\rm I}}}
\numberwithin{equation}{chapter}   
\begin{document}
\frontmatter

\pagestyle{empty}
\begin{center}
\rule{165pt}{0pt} \\
\vspace{1.5cm}
\LARGE{Large Deviations for Random Walk in a Random Environment}\\
\vspace{1.5cm}
\normalsize{by} \\
\vspace{1.5cm}
\large{Atilla Y\i lmaz} \\
\vspace{2.5cm}
\normalsize{A dissertation submitted in partial fulfillment} \\
\normalsize{of the requirements for the degree of} \\
\normalsize{Doctor of Philosophy}  \\
\normalsize{Department of Mathematics} \\
\normalsize{New York University} \\
\normalsize{September 2008} \\
\vspace{1.0cm}
\end{center}
\begin{flushright}
\vspace{.6cm}
\rule{145pt}{.6pt} \\
\normalsize{S.\ R.\ S.\ Varadhan}\\
\vspace{.8cm}
\end{flushright}
\pagebreak


\pagestyle{empty}           
\qquad \pagebreak

\setcounter{page}{3}
\pagestyle{plain}
\addcontentsline{toc}{section}{Acknowledgements}
\begin{center}
\huge{\textbf{Acknowledgements}}\\
\end{center}
\vspace{1cm}
\par
\normalsize{My professors at Bo\u{g}azi\c{c}i University, Istanbul, provided me with a solid undergraduate education. I especially thank A.\ Eden, A.\ Feyzio\u{g}lu and K.\ \"Oz\c{c}ald\i ran for their constant support and valuable advice. In particular, it was A.\ Eden who strongly encouraged me to come to the Courant Institute for my graduate studies.

I am indebted to T.\ Arnon, G.\ Ben Arous, D.\ Cai, S.\ G\"unt\"urk, H.\ McKean, T.\ Suidan, N.\ Zygouras and many other members of the Courant community for their help during the five years I have been here.

Working with S.\ R.\ S.\ Varadhan has been an absolutely amazing experience. He has warmly welcomed me whenever I've shown up at his office door, patiently listened to my often too long presentations, answered my questions, clarified my understanding of many fundamental concepts, taught me numerous techniques, and generously suggested new ideas. I am very fortunate to be his student. He has been and will always be a role model for me both as a person and as a mathematician.

Finally, I thank F.\ Rassoul-Agha for his detailed and constructive comments on the three papers that contain my results in this dissertation, and O.\ Zeitouni for his hospitality during my short visit to the Weizmann Institute.}
\pagebreak

\setcounter{page}{4}
\pagestyle{plain}
\addcontentsline{toc}{section}{Abstract}
\begin{center}
\huge{\textbf{Abstract}}\\
\end{center}
\vspace{1cm}
\par
\normalsize{In this work, we study the large deviation properties of random walk in a random environment on $\mathbb{Z}^d$ with $d\geq1$.

We start with the quenched case, take the point of view of the particle, and prove the large deviation principle (LDP) for the pair empirical measure of the environment Markov chain. By an appropriate contraction, we deduce the quenched LDP for the mean velocity of the particle and obtain a variational formula for the corresponding rate function $I_q$. We propose an Ansatz for the minimizer of this formula. This Ansatz is easily verified when $d=1$.

In his 2003 paper, Varadhan proves the averaged LDP for the mean velocity and gives a variational formula for the corresponding rate function $I_a$. Under the non-nestling assumption (resp.\ Kalikow's condition), we show that $I_a$ is strictly convex and analytic on a non-empty open set $\mathcal{A}$, and that the true velocity $\xi_o$ is an element (resp.\ in the closure) of $\mathcal{A}$. We then identify the minimizer of Varadhan's variational formula at any $\xi\in\mathcal{A}$.

For walks in high dimension, we believe that $I_a$ and $I_q$ agree on a set with non-empty interior. We prove this for space-time walks when the dimension is at least $3+1$. In the latter case, we show that the cheapest way to condition the asymptotic mean velocity of the particle to be equal to any $\xi$ close to $\xi_o$ is to tilt the transition kernel of the environment Markov chain via a Doob $h$-transform.}
\pagebreak

\tableofcontents

\mainmatter

\pagestyle{plain}
\addcontentsline{toc}{section}{Introduction}
\begin{center}
\huge{\textbf{Introduction}}\\
\end{center}
\vspace{1cm}
\par
\normalsize{The random motion of a particle on $\mathbb{Z}^d$ can be modelled by a discrete time Markov chain. Write $\pi(x,x+z)$ for the transition probability from $x$ to $x+z$ for each $x,z\in\mathbb{Z}^d$, and refer to $\omega_x:=(\pi(x,x+z))_{z\in\mathbb{Z}^d}$ as the ``environment" at $x$. If the environment $\omega:=(\omega_x)_{x\in\mathbb{Z}^d}$ is sampled from a probability space $(\Omega,\mathcal{B},\mathbb{P})$, then the particle is said to perform ``random walk in a random environment" (RWRE). Here, $\mathcal{B}$ is the Borel $\sigma$-algebra corresponding to the product topology.

For each $z\in\mathbb{Z}^d$, define the shift $T_z$ on $\Omega$ by $\left(T_z\omega\right)_x=\omega_{x+z}$, and assume that $\mathbb{P}$ is stationary and ergodic under $\left(T_z\right)_{z\in\mathbb{Z}^d}$. Further assume that the step sizes are bounded by a constant $B$, i.e., for any $z=(z_1,\ldots,z_d)\in\mathbb{Z}^d$, $\pi(0,z)=0$ $\mathbb{P}$-a.s.\ unless $0<|z_1|+\cdots+|z_d|\leq B$. Denote the set of allowed steps of the walk by \[\mathcal{R} := \{(z_1,\ldots,z_d)\in\mathbb{Z}^d:\;0<|z_1|+\cdots+|z_d|\leq B\}.\] The walk is said to be nearest-neighbor when $B=1$, and the set of allowed steps is then \[U:=\{(z_1,\ldots,z_d)\in\mathbb{Z}^d:\;|z_1|+\cdots+|z_d|=1\}.\]

For any $x\in\mathbb{Z}^d$ and $\omega\in\Omega$, the Markov chain with transition probabilities given by $\omega$ induces a probability measure $P_x^\omega$ on the space of paths starting at $x$. Statements about $P_x^\omega$ that hold for $\mathbb{P}$-a.e.\ $\omega$ are referred to as ``quenched". Statements about the semi-direct product $P_x:=\mathbb{P}\times P_x^\omega$ are referred to as ``averaged". Expectations under $\mathbb{P}, P_x^\omega$ and $P_x$ are denoted by $\mathbb{E}, E_x^\omega$ and $E_x$, respectively.

Because of the extra layer of randomness in the model, the standard questions of recurrence vs.\ transience, the law of large numbers (LLN), the central limit theorem (CLT) and the large deviation principle (LDP) --- which have well known answers for classical random walk --- become subtle. However, it is possible by taking the ``point of view of the particle" to treat the two layers of randomness as one: If we denote the random path of the particle by $X:=(X_n)_{n\geq0}$, then $(T_{X_n}\omega)_{n\geq0}$ is a Markov chain (referred to as ``the environment Markov chain") on $\Omega$ with transition kernel $\overline{\pi}$ given by \[\overline{\pi}(\omega,\omega'):=\sum_{z:\,T_z\omega=\omega'}\pi(0,z).\] This is a standard approach in the study of random media. See for example \cite{DeMasi}, \cite{KV}, \cite{Kozlov}, \cite{Olla} or \cite{PV}.

Instead of viewing the environment Markov chain as an auxiliary construction, one can introduce it first and then deduce the particle dynamics from it:
\begin{definition}\label{ortamkeli}
A function $\hat{\pi}(\cdot,\cdot):\Omega\times\mathcal{R}\to\mathbb{R}^+$ is said to be an ``environment kernel" if $\hat{\pi}(\cdot,z)$ is $\mathcal{B}$-measurable for each $z\in\mathcal{R}$ and if $\sum_{z\in\mathcal{R}}\hat{\pi}(\cdot,z)=1,\ \mathbb{P}$-a.s. It can be viewed as a transition kernel on $\Omega$ by the following identification: \[\hat{\pi}(\omega,\omega'):=\sum_{z:\,T_z\omega=\omega'}\hat{\pi}(\omega,z).\]

Given $x\in\mathbb{Z}^d$, $\omega\in\Omega$ and any environment kernel $\hat{\pi}$, the probability measure $P_x^{\hat{\pi},\omega}$ on the space of particle paths $(X_n)_{n\geq0}$ starting at $x$ is defined by setting $P_x^{\hat{\pi},\omega}\left(X_o=x\right)=1$ and $P_x^{\hat{\pi},\omega}\left(X_{n+1}=y+z\left|X_n=y\right.\right)=\hat{\pi}(T_y\omega,z)$ for all $n\geq0$, $y\in\mathbb{Z}^d$ and $z\in\mathcal{R}$. Expectations under $P_x^{\hat{\pi},\omega}$ and $P_x^{\hat{\pi}}:=\mathbb{P}\times P_x^{\hat{\pi},\omega}$ are denoted by $E_x^{\hat{\pi},\omega}$ and $E_x^{\hat{\pi}}$, respectively.
\end{definition}

See \cite{Sznitman} or \cite{Zeitouni} for a survey of results on RWRE. We study the large deviation properties of this model. Our results are taken from \cite{YilmazSpaceTime}, \cite{YilmazQuenched} and \cite{YilmazAveraged}.

Recall that a sequence $\left(Q_n\right)_{n\geq1}$ of probability measures on a topological space satisfies the LDP with rate function $I$ if:\\ $I$ is non-negative,  lower semicontinuous, and for any measurable set $G$, $$-\inf_{x\in G^o}I(x)\leq\liminf_{n\to\infty}\frac{1}{n}\log Q_n(G)\leq\limsup_{n\to\infty}\frac{1}{n}\log Q_n(G)\leq-\inf_{x\in\bar{G}}I(x).$$ Here, $G^o$ denotes the interior of $G$, and $\bar{G}$ its closure. See \cite{DZ} for general background and definitions regarding large deviations.}

\chapter{Statement of results}

\section{Quenched large deviations}\label{egridogru}

\subsection{Previous results}

In the case of nearest-neighbor RWRE on $\mathbb{Z}$, Greven and den Hollander \cite{GdH} assume that $\mathbb{P}$ is a product measure, and prove

\begin{theorem}[Quenched LDP]\label{qLDPgeneric}
For $\mathbb{P}$-a.e.\ $\omega$, $\left(P_o^\omega\left(\frac{X_n}{n}\in\cdot\,\right)\right)_{n\geq1}$ satisfies the LDP with a deterministic and convex rate function $I_q$.
\end{theorem}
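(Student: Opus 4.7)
The plan is to reduce the LDP for $X_n/n$ on $[-1,1]$ to LDPs for the one-sided hitting times $H_n^\pm:=\inf\{m\geq 0:\,X_m=\pm n\}$. For $\xi\in(0,1]$ the monotone identity $\{X_n\geq k\}=\{H_k^+\leq n\}$ together with the continuous contraction $t\mapsto 1/t$ converts a quenched LDP for $H_n^+/n$ on $[1,\infty]$ into a quenched LDP for $X_n/n$ restricted to $(0,1]$, and analogously for the left side via $H_n^-$. The value of $I_q$ at $\xi=0$ is then pinned down by lower semicontinuity (together with a direct exponential upper bound on $P_o^\omega(|X_n|\leq\varepsilon n)$ if needed). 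So the core task is a quenched LDP for $H_n^+/n$ with a deterministic convex rate function.

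To analyse $H_n^+$, I will exploit the one-dimensional nearest-neighbour structure and decompose $H_n^+=\sum_{k=0}^{n-1}\sigma_k$, where $\sigma_k$ is the time between the first visits to $k$ and to $k+1$. Under $P_o^\omega$ the $\sigma_k$ are independent, and a one-step analysis at site $k$ (a left step forces first a return from $k-1$ to $k$, then another attempt from $k$) yields the recursion
\[
\phi_k(\lambda,\omega):=E_o^\omega\!\left[e^{-\lambda\sigma_k}\right]=\frac{\omega_k\,e^{-\lambda}}{1-(1-\omega_k)\,e^{-\lambda}\,\phi_{k-1}(\lambda,\omega)},\qquad\lambda\geq 0,
\]
so $\phi_k$ is a function of $(\omega_j)_{j\leq k}$ only. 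Because $\mathbb{P}$ is a product measure, iterating this recursion backwards expresses $\phi_k$ as a continued fraction in $\omega_k,\omega_{k-1},\ldots$ that contracts for every $\lambda\geq 0$; the limit identifies a measurable fixed point $\phi^*(\lambda,\cdot):\Omega\to(0,1]$, depending only on $(\omega_j)_{j\leq 0}$, such that $\phi_k(\lambda,\omega)=\phi^*(\lambda,T_k\omega)$ $\mathbb{P}$-a.s.

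The $P_o^\omega$-independence of the $\sigma_k$ then gives
\[
\frac{1}{n}\log E_o^\omega\!\left[e^{-\lambda H_n^+}\right]=\frac{1}{n}\sum_{k=0}^{n-1}\log\phi^*(\lambda,T_k\omega),
\]
and Birkhoff's ergodic theorem applied to the i.i.d.\ shift $(\omega_k)_{k\in\mathbb{Z}}$ delivers the $\mathbb{P}$-a.s.\ limit $L(\lambda):=\mathbb{E}[\log\phi^*(\lambda,\omega)]$. The function $L$ is deterministic, convex, and (under uniform ellipticity, or more generally suitable moment hypotheses) essentially smooth on a neighbourhood of $0$, so the Gärtner-Ellis theorem produces the quenched LDP for $H_n^+/n$ with deterministic convex rate function $L^*$. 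Contracting through $t\mapsto 1/t$ and combining with the symmetric analysis for $H_n^-$ assembles a single deterministic, convex $I_q$ on $[-1,1]$ and concludes the proof.

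The main obstacle I anticipate is the non-ballistic regime $\mathbb{E}[\log((1-\omega_0)/\omega_0)]=0$, where $H_n^+$ and $H_n^-$ are both a.s.\ finite but grow super-linearly. There the continued fraction for $\phi^*$ degenerates at $\lambda=0$, and verifying essential smoothness of $L$ in a full neighbourhood of the origin --- needed so that Gärtner-Ellis produces the lower bound at $\xi=0$ and the two one-sided rate functions glue continuously --- requires a careful perturbative analysis. A secondary annoyance is the transient-in-the-wrong-direction case (e.g.\ $\xi>0$ while the walk is a.s.\ transient to the left), where $\{H_n^+<\infty\}$ already carries exponential cost; this cost is however automatically encoded in $\phi^*(0,\omega)<1$ and is absorbed into the ergodic limit without changing the structure of the argument.
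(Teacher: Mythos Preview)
Your approach is essentially that of Comets--Gantert--Zeitouni \cite{CGZ}, which the paper explicitly mentions as one of the known routes: derive a quenched LDP for the hitting times $H_n^\pm/n$ via G\"artner--Ellis (using the $P_o^\omega$-independence of the crossings $\sigma_k$ and the ergodic theorem for $\frac{1}{n}\sum_{k}\log\phi^*(\lambda,T_k\omega)$), then invert through $t\mapsto 1/t$. Note, however, that the paper does not prove Theorem~\ref{qLDPgeneric} itself; it is quoted as a result of Greven--den Hollander \cite{GdH}, whose original proof used an auxiliary branching process. The paper's own route to the quenched velocity LDP is Corollary~\ref{level1LDP}, obtained by first proving a level-2 LDP for the pair empirical measure of the environment Markov chain (Theorem~\ref{level2LDP}) and then contracting via $\mu\mapsto\xi_\mu$. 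That approach is dimension-free and works for general stationary ergodic environments, at the price of an abstract variational formula; your one-dimensional hitting-time approach yields a far more explicit rate function --- indeed, the paper later recovers the \cite{CGZ} formula in Section~\ref{vandiseksin} using precisely your $\zeta(r,\omega)=E_o^\omega[e^{rt_1}]$.

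Two technical remarks on your sketch. First, the identity $\phi_k(\lambda,\omega)=\phi^*(\lambda,T_k\omega)$ is immediate from shift-covariance of the quenched law (set $\phi^*(\lambda,\omega):=E_o^\omega[e^{-\lambda t_1}]$); no continued-fraction contraction argument is needed. Second, the obstacles you correctly flag at $\xi=0$ --- failure of essential smoothness of $L$ at the boundary in the recurrent regime, and gluing the two half-line LDPs --- are genuine and are where the work in \cite{CGZ} lies; lower semicontinuity alone does not give you the LDP \emph{lower} bound at the origin, so that step needs more than you have written.
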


\noindent They provide a formula for $I_q$ and show that its graph typically has flat pieces. Their proof makes use of an auxiliary branching process formed by the excursions of the walk. By a completely different technique, Comets, Gantert and Zeitouni \cite{CGZ} extend the results in \cite{GdH} to stationary and ergodic environments. Their argument involves first proving a quenched LDP for the passage times of the walk by an application of the G\"artner-Ellis theorem, and then inverting this to get the desired LDP for the mean velocity.

For $d\geq2$, the first result on quenched large deviations is given by Zerner \cite{Zerner}. He uses a subadditivity argument for certain passage times to prove Theorem \ref{qLDPgeneric} in the case of ``nestling" walks in product environments.
\begin{definition}\label{defnestling}
The nestling property is said to hold if the convex hull of the support of the law of $\sum_{z\in\mathcal{R}}\pi(0,z)z$ contains the origin. Otherwise, the walk is referred to as non-nestling.
\end{definition}
\noindent By a more direct use of the subadditive ergodic theorem, Varadhan \cite{Raghu} drops the nestling assumption and generalizes Theorem \ref{qLDPgeneric} to stationary and ergodic environments. The drawback of these approaches is that they don't lead to any formula for the rate function. 

In his Ph.D.\ thesis, Rosenbluth \cite{jeffrey} takes the point of view of the particle and gives an alternative proof of Theorem \ref{qLDPgeneric} in the case of stationary and ergodic environments. He provides a variational formula for the rate function $I_q$. Our results concerning quenched large deviations build on his approach.

\subsection{Our results}

For any measurable space $(Y,\mathcal{F})$, write $M_1(Y,\mathcal{F})$ (or simply $M_1(Y)$ whenever no confusion occurs) to denote the space of probability measures on $(Y,\mathcal{F})$. Consider random walk $X=(X_n)_{n\geq0}$ on $\mathbb{Z}^d$ in a stationary and ergodic random environment, and focus on \[\nu_{n,X} := \frac{1}{n}\sum_{k=0}^{n-1}\one_{T_{X_k}\omega,X_{k+1}-X_k}\] which is a random element of $M_1(\Omega\times\mathcal{R})$. The map $(\omega,z)\mapsto(\omega,T_z\omega)$ allows us to imbed $M_1(\Omega\times\mathcal{R})$ into $M_1(\Omega\times\Omega)$, and we therefore refer to $\nu_{n,X}$ as ``the pair empirical measure of the environment Markov chain".

Given any $\mu\in M_1(\Omega\times\mathcal{R})$, introduce the probability measures $(\mu)^1$ and $(\mu)^2$ on $\Omega$ by setting \[\mathrm{d}(\mu)^1(\omega):=\sum_{z\in\mathcal{R}}\mathrm{d}\mu(\omega,z)\qquad\mbox{and}\qquad\mathrm{d}(\mu)^2(\omega):=\sum_{z\in\mathcal{R}}\mathrm{d}\mu(T_{-z}\omega,z).\] In words, $(\mu)^1$ and $(\mu)^2$ are the marginals of $\mu$ when $\mu$ is seen as an element of $M_1(\Omega\times\Omega)$. With this notation, define
\begin{align*}
&M_{1,s}^{\ll}(\Omega\times\mathcal{R})\\&:= \left\{\mu\in M_1(\Omega\times\mathcal{R}): (\mu)^1=(\mu)^2\ll\mathbb{P},\ \frac{\mathrm{d}\mu(\omega,z)}{\mathrm{d}(\mu)^1(\omega)}>0\ \mathbb{P}\mbox{-a.s.\ for each }z\in U\right\}.
\end{align*}

Our main result is the following theorem whose proof constitutes Section \ref{pairLDPsection}.
\begin{theorem}\label{level2LDP} If there exists $\alpha>0$ such that
\begin{equation}
\int|\log \pi(0,z)|^{d+\alpha}\,\mathrm{d}\mathbb{P}<\infty\label{kimimvarki}
\end{equation} for each $z\in\mathcal{R}$, then
$\mathbb{P}$-a.s.\ $(P_o^\omega(\nu_{n,X}\in\cdot\,))_{n\geq1}$ satisfies the LDP. The rate function $\mathfrak{I}_q^{**}$ is the double Fenchel-Legendre transform of $\mathfrak{I}_q:M_1(\Omega\times\mathcal{R})\to\mathbb{R}^+$ given by
\begin{equation}
\mathfrak{I}_q(\mu)=\left\{
\begin{array}{ll}
\int_{\Omega}\sum_{z\in\mathcal{R}} \mathrm{d}\mu(\omega,z)\log\frac{\mathrm{d}\mu(\omega,z)}{\mathrm{d}(\mu)^1(\omega)\pi(0,z)}& \mbox{if }\mu\in M_{1,s}^{\ll}(\Omega\times\mathcal{R}),\\ \infty & \mbox{otherwise.}
\end{array}\right.\label{level2ratetilde}
\end{equation}
\end{theorem}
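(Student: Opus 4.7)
The plan is to establish this as a level-2 large deviation principle for the environment Markov chain $(T_{X_n}\omega)_{n\ge 0}$, parallel in spirit to the classical Donsker--Varadhan theorem but with two new features: the state space $\Omega$ is uncountable and non-compact, and we want a quenched (not averaged) statement. The candidate rate is already apparent: on $M_{1,s}^{\ll}(\Omega\times\mathcal{R})$, $\mathfrak{I}_q(\mu)$ is exactly the relative entropy $H(\mu\,|\,(\mu)^1\otimes\bar\pi)$ of $\mu$ with respect to the Markov measure built from the marginal $(\mu)^1$ and the transition $\bar\pi$. The standard matching upper and lower bounds give $\mathfrak{I}_q$; the appearance of the double Fenchel--Legendre transform reflects the fact that $\mathfrak{I}_q$ is convex but a priori not lower semicontinuous in $\mu$, so the true rate function is its convex, l.s.c.\ regularization $\mathfrak{I}_q^{**}$.

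For the upper bound I would use the exponential martingale method. Given any bounded measurable $F:\Omega\times\mathcal{R}\to\mathbb{R}$, set $\Lambda F(\omega):=\log\sum_{z\in\mathcal{R}}e^{F(\omega,z)}\pi(0,z)$. Then
\[
M_n^F:=\exp\Bigl\{\sum_{k=0}^{n-1}\bigl[F(T_{X_k}\omega,X_{k+1}-X_k)-\Lambda F(T_{X_k}\omega)\bigr]\Bigr\}
\]
is a positive $P_o^\omega$-martingale with mean one, so Chebyshev gives, for any closed $C$,
\[
\frac{1}{n}\log P_o^\omega(\nu_{n,X}\in C)\;\le\;-\inf_{\mu\in C}\Bigl(\int F\,\mathrm{d}\mu-\int\Lambda F\,\mathrm{d}(\nu_{n,X})^{\!1}\Bigr)+o(1).
\]
On the event $\{\nu_{n,X}\in C\}$ the empirical marginal $(\nu_{n,X})^{1}$ is close, in a suitable sense, to $(\mu)^1$, and one invokes the multidimensional ergodic theorem to replace $\int\Lambda F\,\mathrm{d}(\nu_{n,X})^{1}$ by $\int\Lambda F\,\mathrm{d}(\mu)^1$. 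Optimising over a countable dense family of test functions $F$, the exponent becomes $\sup_F\bigl(\int F\,\mathrm{d}\mu-\int\Lambda F\,\mathrm{d}(\mu)^1\bigr)$, a Fenchel--Legendre conjugate whose value coincides with $\mathfrak{I}_q(\mu)$ on $M_{1,s}^{\ll}$ and with $+\infty$ outside of it. Passing from compact to closed $C$ requires exponential tightness of $(\nu_{n,X})$, which in turn needs (\ref{kimimvarki}).

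For the lower bound, fix $\mu\in M_{1,s}^{\ll}$ with $\mathfrak{I}_q(\mu)<\infty$ and define the tilted environment kernel $\hat{\pi}(\omega,z):=\mathrm{d}\mu(\omega,z)/\mathrm{d}(\mu)^1(\omega)$. Under $P_o^{\hat\pi,\omega}$ the environment chain has invariant measure $(\mu)^1\ll\mathbb{P}$, and the condition $(\mu)^1=(\mu)^2$ is precisely what makes $(\mu)^1$ invariant; by ergodicity $\nu_{n,X}\to\mu$ weakly, $P_o^{\hat{\pi}}$-a.s. A direct computation gives
\[
\frac{1}{n}\log\frac{\mathrm{d}P_o^{\hat\pi,\omega}}{\mathrm{d}P_o^\omega}\bigg|_{\sigma(X_0,\ldots,X_n)}=\frac{1}{n}\sum_{k=0}^{n-1}\log\frac{\hat\pi(T_{X_k}\omega,X_{k+1}-X_k)}{\pi(T_{X_k}\omega,X_{k+1}-X_k)}\;\longrightarrow\;\mathfrak{I}_q(\mu)
\]
by the ergodic theorem. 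The standard change-of-measure argument then yields $\liminf_n \frac{1}{n}\log P_o^\omega(\nu_{n,X}\in G)\ge -\mathfrak{I}_q(\mu)$ for every open $G\ni\mu$; taking the l.s.c.\ convex envelope over $\mu$ produces $\mathfrak{I}_q^{**}$ and matches the upper bound.

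The main obstacle is the quenched upper bound. The function $\Lambda F$ fails to be bounded below because $\log\pi(0,z)$ is unbounded, so replacing the $\omega$-dependent ergodic sum $\frac{1}{n}\sum_k \Lambda F(T_{X_k}\omega)$ by its spatial average requires quantitative multidimensional ergodic control along the trajectories of $X$. This is exactly where the moment hypothesis (\ref{kimimvarki}) enters: an $L^{d+\alpha}$-bound on $|\log\pi(0,z)|$ feeds into a maximal ergodic inequality of Burkholder/Wiener type in $\mathbb{Z}^d$, which is strong enough to dominate uniform fluctuations of $\Lambda F$ over a countable dense class of $F$'s and, simultaneously, to establish the exponential tightness needed to extend the upper bound from compact to closed sets. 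The rest of the argument is standard, and the lower bound is comparatively routine once the tilted kernel $\hat\pi$ is in hand.
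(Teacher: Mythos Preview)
Your lower bound via change of measure matches the paper's. The upper bound has a genuine gap: the martingale $M_n^F$ with bounded $F$ yields (after localization on compacts) an upper bound with rate $H(\mu):=\sup_F\bigl(\langle F,\mu\rangle-\langle\Lambda F,(\mu)^1\rangle\bigr)$, but your claim that $H$ equals $+\infty$ outside $M_{1,s}^{\ll}$ is false. Bounded test functions cannot detect the constraint $(\mu)^1\ll\mathbb{P}$: if $\mu$ is carried by a periodic orbit in $\Omega$ with $(\mu)^1=(\mu)^2$, then $H(\mu)$ is a finite relative entropy even though $\mathfrak{I}_q(\mu)=\infty$. Since $H$ is convex lower semicontinuous with $H\le\mathfrak{I}_q$, you only obtain $H\le\mathfrak{I}_q^{**}$; the upper and lower bounds do not match without showing $H=\mathfrak{I}_q^{**}$, and that is precisely the hard part. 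Two secondary issues: $\Omega$ carries the product topology and is compact, so $M_1(\Omega\times\mathcal{R})$ is compact and exponential tightness is free---the moment hypothesis is \emph{not} used there; and your displayed inequality with $\int\Lambda F\,\mathrm{d}(\nu_{n,X})^1$ on the right is not a bound (the right side is random), nor does any ``multidimensional ergodic theorem'' apply along a random path on a large-deviation event.

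The paper takes a different route: it proves that the logarithmic moment generating function $\Lambda_q(f)=\lim_n\frac1n\log E_o^\omega[\mathrm{e}^{n\langle f,\nu_{n,X}\rangle}]$ exists $\mathbb{P}$-a.s.\ and equals $\mathfrak{I}_q^*(f)$, then invokes G\"artner--Ellis (compactness makes this immediate) to get the upper bound with rate $\Lambda_q^*=\mathfrak{I}_q^{**}$. The identification $\Lambda_q=\mathfrak{I}_q^*$ is where the real work lies. Your change-of-measure argument gives $\Lambda_q\ge\mathfrak{I}_q^*$. For the reverse inequality the paper introduces the class $\mathcal{K}$ of \emph{correctors} (mean-zero, closed-loop functions with $L^{d+\alpha}$ moments): for $F\in\mathcal{K}$ one has $\bigl|\sum_{k<n}F(T_{X_k}\omega,Z_{k+1})\bigr|\le c_\epsilon+n\epsilon$ \emph{uniformly over all nearest-neighbor paths}---this is exactly where the moment condition (\ref{kimimvarki}) enters, via a Garsia--Rodemich--Rumsey type lemma---so that $\Lambda_q(f)\le\inf_{F\in\mathcal{K}}\mathrm{ess\,sup}_\omega\log\sum_z\pi(0,z)\mathrm{e}^{f(\omega,z)+F(\omega,z)}$. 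A minimax argument over an increasing family of finite sub-$\sigma$-algebras then shows this infimum coincides with the change-of-measure lower bound $\mathfrak{I}_q^*(f)$. Your martingale scheme has no analogue of the correctors, and without them the quenched upper bound cannot reach the tight rate.
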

\begin{remark}
$\mathfrak{I}_q$ is convex but may not be lower semicontinuous, therefore $\mathfrak{I}_q^{**}$ is not a-priori equal to $\mathfrak{I}_q$.
\end{remark}

We start Section \ref{birboyutadonus} by deducing the quenched LDP for the mean velocity of the particle by an application of the contraction principle. For any $\xi\in\mathbb{R}^d$, define
\begin{align}
A_\xi&:=\{\mu\in M_1(\Omega\times\mathcal{R}):\xi_{\mu}=\xi\}\quad\text{where}\label{Axi}\\
\xi_{\mu}&:=\int\sum_{z\in\mathcal{R}}\mathrm{d}\mu(\omega,z)z\quad\text{for any }\mu\in M_1(\Omega\times\mathcal{R}).\label{ximu}
\end{align} The corollary below follows immediately from Theorem \ref{level2LDP} and reproduces the central result of \cite{jeffrey}. It is the most general version of Theorem \ref{qLDPgeneric} in the RWRE literature.

\begin{corollary}\label{level1LDP}
Under the assumption that there exists $\alpha>0$ such that (\ref{kimimvarki}) holds for each $z\in\mathcal{R}$, $(P_o^\omega(\frac{X_n}{n}\in\cdot\,))_{n\geq1}$ satisfies the LDP for $\mathbb{P}$-a.e.\ $\omega$. The rate function $I_q$ is given by
\begin{eqnarray}
I_q(\xi)&=&\inf_{\mu\in A_\xi} \mathfrak{I}_q^{**}(\mu)\label{level1rate}\\
&=&\inf_{\mu\in A_\xi} \mathfrak{I}_q(\mu)\label{level1ratetilde}
\end{eqnarray} where $\mathfrak{I}_q$ and $A_\xi$ are defined in (\ref{level2ratetilde}) and (\ref{Axi}), respectively. $I_q$ is convex.
\end{corollary}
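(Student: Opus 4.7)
The plan is to deduce the corollary from Theorem \ref{level2LDP} by the contraction principle. The key identification is that
\[
\frac{X_n}{n}=\frac{1}{n}\sum_{k=0}^{n-1}(X_{k+1}-X_k)=\int\sum_{z\in\mathcal{R}}z\,\mathrm{d}\nu_{n,X}(\omega,z)=\xi_{\nu_{n,X}},
\]
so the empirical mean velocity is the image of the pair empirical measure $\nu_{n,X}$ under the map $F:\mu\mapsto\xi_\mu$ defined in (\ref{ximu}). Since $\mathcal{R}$ is a finite (in particular bounded) subset of $\mathbb{Z}^d$, the coordinate functions $(\omega,z)\mapsto z_i$ are bounded and continuous on $\Omega\times\mathcal{R}$, so $F$ is continuous from $M_1(\Omega\times\mathcal{R})$ with its weak topology into $\mathbb{R}^d$. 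First I would verify this continuity, then appeal to the contraction principle: from the $\mathbb{P}$-a.s.\ LDP for $\nu_{n,X}$ with rate function $\mathfrak{I}_q^{**}$ we get the $\mathbb{P}$-a.s.\ LDP for $X_n/n$ with rate function $\xi\mapsto\inf\{\mathfrak{I}_q^{**}(\mu):F(\mu)=\xi\}=\inf_{\mu\in A_\xi}\mathfrak{I}_q^{**}(\mu)$, which is exactly (\ref{level1rate}).

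Next I would establish the convexity of $I_q$ directly from the variational formula. Given $\xi_1,\xi_2\in\mathbb{R}^d$, $\lambda\in[0,1]$, and $\mu_i\in A_{\xi_i}$, the convex combination $\lambda\mu_1+(1-\lambda)\mu_2$ lies in $A_{\lambda\xi_1+(1-\lambda)\xi_2}$ because $\xi_\mu$ is linear in $\mu$; convexity of $\mathfrak{I}_q^{**}$ (it is a biconjugate) then gives $I_q(\lambda\xi_1+(1-\lambda)\xi_2)\leq\lambda I_q(\xi_1)+(1-\lambda)I_q(\xi_2)$.

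For the alternative formula (\ref{level1ratetilde}) with $\mathfrak{I}_q$ replacing $\mathfrak{I}_q^{**}$, the inequality $\inf_{A_\xi}\mathfrak{I}_q^{**}\leq\inf_{A_\xi}\mathfrak{I}_q$ is immediate from $\mathfrak{I}_q^{**}\leq\mathfrak{I}_q$. For the reverse direction I would introduce $\tilde I(\xi):=\inf_{\mu\in A_\xi}\mathfrak{I}_q(\mu)$. The same convex-combination argument as above, now using the stated convexity of $\mathfrak{I}_q$ and the fact that $M_{1,s}^{\ll}(\Omega\times\mathcal{R})$ is convex, shows $\tilde I$ is convex on $\mathbb{R}^d$. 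A convex function on $\mathbb{R}^d$ is continuous (in particular lower semicontinuous) on the relative interior of its effective domain; since $I_q\leq\tilde I$ and $I_q$ is a rate function, the two must coincide on $\operatorname{int}(\mathrm{dom}\,\tilde I)$, and then by lower semicontinuity of $I_q$ and standard convex-analytic extension to the boundary one obtains $I_q=\tilde I$ everywhere.

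The main obstacle I expect is this last step: passing from $\mathfrak{I}_q^{**}$ to $\mathfrak{I}_q$ in the infimum over the affine slice $A_\xi$. Concretely, this requires that any near-minimizer of $\mathfrak{I}_q^{**}$ on $A_\xi$ can be approximated by measures in $M_{1,s}^{\ll}\cap A_\xi$ without inflating the entropic cost --- i.e.\ one must simultaneously preserve the linear constraint $\xi_\mu=\xi$, the absolute-continuity/stationarity conditions defining $M_{1,s}^{\ll}$, and the positivity of the conditional kernel on $U$. Reducing this to the convexity-based argument above is clean in principle, but the boundary behaviour of $\tilde I$ on $\partial(\mathrm{dom}\,\tilde I)$ is where genuine care is needed.
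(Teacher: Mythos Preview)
Your contraction-principle derivation of (\ref{level1rate}) and the convexity argument are correct and match the paper exactly.

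The gap is in your argument for (\ref{level1ratetilde}). The assertion ``since $I_q\leq\tilde I$ and $I_q$ is a rate function, the two must coincide on $\operatorname{int}(\mathrm{dom}\,\tilde I)$'' is simply false as stated: two convex functions ordered pointwise need not agree anywhere (take $I_q\equiv 0$ and $\tilde I(\xi)=|\xi|^2$). You have not used any property linking $\tilde I$ back to $I_q$ beyond the trivial inequality, so the argument cannot close. You correctly flag this as the obstacle, but the approximation scheme you sketch (approximating near-minimizers of $\mathfrak{I}_q^{**}$ on $A_\xi$ by elements of $M_{1,s}^{\ll}\cap A_\xi$) is exactly the hard part, and you give no mechanism for it.

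The paper bypasses this entirely via convex duality. Setting $J_q(\xi):=\inf_{\mu\in A_\xi}\mathfrak{I}_q(\mu)$, one computes the Legendre transforms of \emph{both} $I_q$ and $J_q$. For $\eta\in\mathbb{R}^d$ let $f_\eta(\omega,z):=\langle z,\eta\rangle\in C_b(\Omega\times\mathcal{R})$; then
\[
I_q^*(\eta)=\sup_{\xi}\sup_{\mu\in A_\xi}\bigl\{\langle\eta,\xi_\mu\rangle-\mathfrak{I}_q^{**}(\mu)\bigr\}
=\sup_{\mu}\bigl\{\langle f_\eta,\mu\rangle-\mathfrak{I}_q^{**}(\mu)\bigr\}
=\mathfrak{I}_q^{***}(f_\eta)=\mathfrak{I}_q^{*}(f_\eta)=\Lambda_q(f_\eta),
\]
and the identical computation with $\mathfrak{I}_q$ in place of $\mathfrak{I}_q^{**}$ gives $J_q^*(\eta)=\mathfrak{I}_q^{*}(f_\eta)=\Lambda_q(f_\eta)$. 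Thus $I_q^*\equiv J_q^*$ on $\mathbb{R}^d$, and since $I_q$ and $J_q$ are convex on the finite-dimensional space $\mathbb{R}^d$, this forces $I_q\equiv J_q$. The point you are missing is that the supremum over $\xi$ followed by the supremum over $A_\xi$ collapses to an unconstrained supremum over $\mu$, turning the conjugate of the contracted rate function into the original moment generating function $\Lambda_q$ evaluated at a linear test function --- and $\Lambda_q=\mathfrak{I}_q^*=\mathfrak{I}_q^{***}$ is already known from Theorem~\ref{LMGF}.
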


One would like to get a more explicit expression for the rate function $I_q$. This is not an easy task in general. $M_1(\Omega\times\mathcal{R})$ is compact (when equipped with the weak topology), $A_\xi$ is closed and $\mathfrak{I}_q^{**}$ is lower semicontinuous, therefore the infimum in (\ref{level1rate}) is attained. However, due to the possible lack of lower semicontinuity of $\mathfrak{I}_q$, the infimum in (\ref{level1ratetilde}) may not be attained. Below, we propose an Ansatz and show that whenever an element of $A_\xi$ fits this Ansatz, it is the unique minimizer of (\ref{level1ratetilde}). Let us start by defining a class of functions.
\begin{definition} \label{K}
A measurable function $F:\Omega\times\mathcal{R}\rightarrow\mathbb{R}$ is said to be in class $\mathcal{K}$ if it satisfies the following conditions:
\begin{description}
\item[Moment.] For each $z\in\mathcal{R}$, $F(\cdot,z)\in\bigcup_{\alpha>0}L^{d+\alpha}(\mathbb{P})$.
\item[Mean zero.] For each $z\in\mathcal{R}$, $\mathbb{E}\left[F(\cdot,z)\right]=0$.
\item[Closed loop.] For $\mathbb{P}$-a.e.\ $\omega$, and any $(x_{k})_{k=0}^n$ with $x_0=x_n$ and $x_{k+1}-x_k\in\mathcal{R}$, \[\sum_{k=0}^{n-1}F(T_{x_k}\omega,x_{k+1}-x_k)=0.\]
\end{description}
\end{definition}
\noindent The following lemma provides the aforementioned Ansatz for the unique minimizer of (\ref{level1ratetilde}). Its proof concludes Section $\ref{birboyutadonus}$.

\begin{lemma}\label{lagrange}
For any $\xi\in\mathbb{R}^d$, if there exists $\mu_\xi\in A_\xi\cap M_{1,s}^{\ll}(\Omega\times\mathcal{R})$ such that \[\mathrm{d}\mu_\xi(\omega,z)=\mathrm{d}(\mu_\xi)^1(\omega) \pi(0,z)\mathrm{e}^{\langle\theta,z\rangle+F(\omega,z)+ r}\] for some $\theta\in\mathbb{R}^d$, $F\in\mathcal{K}$ and $r\in\mathbb{R}$, then $\mu_\xi$ is the unique minimizer of (\ref{level1ratetilde}).
\end{lemma}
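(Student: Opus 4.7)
The plan is to use a Gibbs-type variational argument: the Ansatz form of $\mu_\xi$ is exactly what one obtains by formally writing down Euler--Lagrange equations for the constrained minimization $\inf_{\mu\in A_\xi}\mathfrak{I}_q(\mu)$, with $\theta$ playing the role of a Lagrange multiplier for the velocity constraint and the triple $(F,r)$ absorbing the normalization and stationarity constraints. The natural tool is then the pointwise relative entropy inequality.

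First I would fix an arbitrary competitor $\mu\in A_\xi\cap M_{1,s}^{\ll}(\Omega\times\mathcal{R})$ (competitors outside this set give $\mathfrak{I}_q(\mu)=\infty$), introduce the Markov kernels $p(\omega,z):=\mathrm{d}\mu(\omega,z)/\mathrm{d}(\mu)^1(\omega)$ and $p_\xi(\omega,z):=\mathrm{d}\mu_\xi(\omega,z)/\mathrm{d}(\mu_\xi)^1(\omega)$, and rewrite
\[
\mathfrak{I}_q(\mu)=\int \mathrm{d}(\mu)^1(\omega)\sum_{z\in\mathcal{R}}p(\omega,z)\log\frac{p(\omega,z)}{p_\xi(\omega,z)}+\int\sum_{z\in\mathcal{R}}\mathrm{d}\mu(\omega,z)\log\frac{p_\xi(\omega,z)}{\pi(0,z)}.
\]
The first summand is a $(\mu)^1$-average of a pointwise KL divergence, hence non-negative, with equality iff $p=p_\xi$ for $(\mu)^1$-a.e.\ $\omega$. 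Plugging the Ansatz $\log(p_\xi/\pi)=\langle\theta,z\rangle+F(\omega,z)+r$ into the second summand and using $\xi_\mu=\xi$, that summand equals $\langle\theta,\xi\rangle+\int\sum_{z}F\,\mathrm{d}\mu+r$. Running the same computation with $\mu=\mu_\xi$ gives $\mathfrak{I}_q(\mu_\xi)=\langle\theta,\xi\rangle+\int\sum_z F\,\mathrm{d}\mu_\xi+r$, so the desired inequality $\mathfrak{I}_q(\mu)\geq\mathfrak{I}_q(\mu_\xi)$ reduces to the identity $\int\sum_z F\,\mathrm{d}\mu=\int\sum_z F\,\mathrm{d}\mu_\xi$ for every $\mu\in M_{1,s}^{\ll}(\Omega\times\mathcal{R})$, which by symmetry is the same as $\int\sum_z F\,\mathrm{d}\mu=0$.

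The hard part will be precisely this vanishing of $\int\sum_z F\,\mathrm{d}\mu$ on the stationary set $M_{1,s}^{\ll}$. Heuristically, the closed-loop condition says that $F$ is a discrete gradient: if one sets $G_\omega(x):=\sum_{k=0}^{n-1}F(T_{x_k}\omega, x_{k+1}-x_k)$ along any lattice path from $0$ to $x$, it is well-defined by the closed-loop axiom, and along a trajectory $(X_n)$ of the environment chain the sum telescopes as $\sum_{k<n}F(T_{X_k}\omega,X_{k+1}-X_k)=G_\omega(X_n)$. Combined with the ergodic theorem under the stationary $\mu$, the desired identity follows provided $G_\omega(X_n)/n\to0$. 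This sublinear corrector bound is exactly what the $L^{d+\alpha}$ moment hypothesis on $F$ is designed to buy (Kozlov/Kipnis--Varadhan style), and is the point where the dimension-dependent moment enters; I would invoke it as a black box from Rosenbluth's thesis \cite{jeffrey} or from the machinery built for Theorem \ref{level2LDP}.

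Finally, uniqueness: equality in the first summand forces $p(\omega,\cdot)=p_\xi(\omega,\cdot)$ for $(\mu)^1$-a.e.\ $\omega$, so $\mu$ and $\mu_\xi$ induce the same Markov kernel on $\Omega$. Both $(\mu)^1$ and $(\mu_\xi)^1$ are invariant probability measures for this kernel that are absolutely continuous with respect to $\mathbb{P}$, and the positivity clause in the definition of $M_{1,s}^{\ll}$ (i.e.\ $p_\xi(\omega,z)>0$ for all $z\in U$, $\mathbb{P}$-a.s.) together with ergodicity of $\mathbb{P}$ gives an irreducibility that pins down the invariant measure uniquely. Hence $(\mu)^1=(\mu_\xi)^1$ and therefore $\mu=\mu_\xi$, completing the argument.
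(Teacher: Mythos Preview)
Your proposal is correct and follows essentially the same route as the paper: the same relative-entropy decomposition (the paper writes the KL term as $\int\mathrm{d}\nu\log\frac{\mathrm{d}\nu\,\mathrm{d}(\mu_\xi)^1}{\mathrm{d}(\nu)^1\,\mathrm{d}\mu_\xi}$, which is your $\int\mathrm{d}(\mu)^1\sum_z p\log(p/p_\xi)$), the same reduction to $\int\sum_z F\,\mathrm{d}\mu=0$, the same sublinear-corrector input (the paper invokes it as Lemma~\ref{GRR}, combined with the ergodic theorem under the kernel of $\mu$), and the same Kozlov-lemma argument for uniqueness. The only cosmetic difference is that the paper obtains $\int F\,\mathrm{d}\mu=0$ by directly identifying the ergodic average of $\sum_{k<n}F(T_{X_k}\omega,Z_{k+1})$ with both $\int F\,\mathrm{d}\mu$ and $0$, rather than phrasing it via a corrector $G_\omega$.
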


In Section \ref{vandiseksin}, we verify the above Ansatz in the case of nearest-neighbor RWRE on $\mathbb{Z}$.
\begin{theorem}\label{findthemin}
Assume that $d=1$, the walk is nearest-neighbor, and
\begin{equation}
\int|\log\pi(0,\pm 1)|^{1+\alpha}\mathrm{d}\mathbb{P}<\infty\label{pleasant}
\end{equation}for some $\alpha>0$. Then, there exist $\xi_c,\xi_c'\in\mathbb{R}$ with $-1<\xi_c'\leq0\leq\xi_c<1$ such that there is a $\mu_\xi\in M_1(\Omega\times U)$ that fits the Ansatz given in Lemma \ref{lagrange} whenever $\xi\in (-1,\xi_c')\cup(\xi_c,1)$.
\end{theorem}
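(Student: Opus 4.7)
In one dimension with nearest-neighbor steps, $U=\{+1,-1\}$, and the closed-loop condition for $F\in\mathcal{K}$ reduces to the single identity $F(\omega,-1)=-F(T_{-1}\omega,+1)$. So class $\mathcal{K}$ is parameterized by one scalar function $f(\omega):=F(\omega,+1)$ in $L^{1+\alpha}(\mathbb{P})$ with $\mathbb{E}[f]=0$, and the Ansatz reduces to finding $\theta,r\in\mathbb{R}$ and such an $f$ so that the stochastic-kernel condition $\sum_{z}\pi(0,z)\mathrm{e}^{\theta z+F(\omega,z)+r}=1$ holds $\mathbb{P}$-a.s. Writing $p(\omega)=\pi(0,+1)$, $q(\omega)=\pi(0,-1)$, and $\phi(\omega):=\mathrm{e}^{f(\omega)}$, this condition reads
\[
\mathrm{e}^\theta p(\omega)\phi(\omega)+\mathrm{e}^{-\theta} q(\omega)\,\phi(T_{-1}\omega)^{-1}=\mathrm{e}^{-r}\quad\mathbb{P}\text{-a.s.}
\]
Multiplying through by $\phi(T_{-1}\omega)$ and relabelling, one sees this is equivalent to a positive measurable eigenfunction equation $L_\theta\Phi=\lambda\Phi$ for the tilted transfer operator $L_\theta\Phi(\omega)=\mathrm{e}^\theta p(\omega)\Phi(T_1\omega)+\mathrm{e}^{-\theta} q(\omega)\Phi(T_{-1}\omega)$. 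Thus the Ansatz is precisely the Doob $h$-transform data coming from a positive eigenfunction with $\log\Phi\in L^{1+\alpha'}(\mathbb{P})$, together with an invariant probability for the transformed environment chain that is absolutely continuous with respect to $\mathbb{P}$.

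For $\xi$ close to $+1$, I would construct $\Phi$ via the quenched passage-time generating function, following the strategy Comets, Gantert and Zeitouni use for the quenched LDP in dimension one. Set $T=\inf\{n\geq 1:X_n=1\}$ and $\Phi_s(\omega):=E^\omega_0[\mathrm{e}^{sT}]$ for $s<0$. The first-step decomposition yields the random fixed-point recursion
\[
\Phi_s(\omega)=\frac{p(\omega)\mathrm{e}^s}{1-q(\omega)\mathrm{e}^s\,\Phi_s(T_{-1}\omega)},
\]
and a straightforward rewrite shows $\Phi_s$ solves the eigenvalue equation $L_\theta\Phi_s=\lambda(s)\Phi_s$ for appropriate $\theta=\theta(s)$ and eigenvalue $\lambda(s)$. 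There is an interval $s\in(-\infty,s_c)$ on which $\Phi_s$ is finite $\mathbb{P}$-a.s.; I would then set $f(\omega)=\log\Phi_s(\omega)-\mathbb{E}[\log\Phi_s]$, absorbing the additive constant into $r$, and use assumption (\ref{pleasant}) together with the monotonicity of the recursion to deduce $\log\Phi_s\in L^{1+\alpha'}(\mathbb{P})$ for some $\alpha'>0$, verifying the moment condition in the definition of $\mathcal{K}$.

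Next I would check that the Doob-tilted walk is itself a one-dimensional nearest-neighbor RWRE (with transition probabilities $\hat p,\hat q$ read off from the Ansatz) whose speed $\xi(s)$ is computable by Solomon's formula applied to the new ratio $\hat q/\hat p$. A monotonicity/continuity argument in $s$ gives $\xi(s)\uparrow 1$ as $s\uparrow s_c$, and $\xi(s)\downarrow\xi_c:=\lim_{s\to -\infty}\xi(s)\geq 0$ as $s$ decreases, defining the critical threshold $\xi_c$. Kozlov's theorem for one-dimensional transient RWRE provides the required absolutely continuous invariant measure $(\mu_\xi)^1\ll\mathbb{P}$ for the tilted environment chain, giving $\mu_\xi\in M_{1,s}^{\ll}(\Omega\times U)\cap A_\xi$ that fits the Ansatz. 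The symmetric construction, exchanging the roles of $+1$ and $-1$, produces $\xi_c'\in(-1,0]$ and the corresponding $\mu_\xi$ for $\xi\in(-1,\xi_c')$. I expect the main technical obstacle to be verifying the integrability $\log\Phi_s\in L^{1+\alpha'}(\mathbb{P})$ uniformly in the relevant range of $s$, since the recursion involves inverses and compositions that are not obviously stable under the $L^{1+\alpha}$ hypothesis on $\log p,\log q$; handling this will require a careful iteration of the fixed-point equation together with an extraction of $\log\Phi_s$ as a convergent series in $\log q\circ T_{-k}$ controlled by (\ref{pleasant}).
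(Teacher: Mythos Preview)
Your approach is essentially the paper's: define the passage-time generating function $\Phi_s(\omega)=E_o^\omega[\mathrm{e}^{st_1},t_1<\infty]$, read off an environment kernel $\hat\pi$ from the one-step recursion, build the invariant density from one-dimensional ballistic RWRE theory, and set $F(\omega,+1)=-\log\Phi_s(\omega)+\mathbb{E}[\log\Phi_s]$. Two points need correction, however.

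First, your monotonicity is reversed. With $\lambda(s):=\mathbb{E}[\log\Phi_s]$ strictly convex and $\lambda'(s)=\mathbb{E}\bigl[E_o^\omega[t_1\mathrm{e}^{st_1}]/E_o^\omega[\mathrm{e}^{st_1}]\bigr]$, the relation is $\xi(s)^{-1}=\lambda'(s)$. As $s\to-\infty$ the event $\{t_1=1\}$ dominates, so $\lambda'(s)\to 1$ and $\xi(s)\to 1$; as $s\uparrow s_c$ (which can be strictly positive, not just $0$), $\lambda'(s)\uparrow\lambda'(s_c-)$ and $\xi(s)\downarrow\xi_c:=\lambda'(s_c-)^{-1}$. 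So $\xi_c$ comes from the \emph{right} endpoint of the $s$-interval, not from $s\to-\infty$, and you should not restrict to $s<0$ a priori.

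Second, the integrability of $\log\Phi_s$ is much easier than you anticipate and requires no series expansion. Since $\hat\pi(\omega,z)=\pi(0,z)\mathrm{e}^{\theta z+F(\omega,z)+r}\leq 1$, you get $F(\omega,z)\leq|\log\pi(0,z)|+|\theta|+|r|$; combining with the closed-loop identity $-F(\omega,z)=F(T_z\omega,-z)$ gives a two-sided bound $|F(\omega,z)|\leq|\log\pi(0,1)|+|\log\pi(1,0)|+\mathrm{const}$, and (\ref{pleasant}) finishes the moment condition immediately. Finally, note that Kozlov's lemma asserts uniqueness and ergodicity \emph{given} an invariant $\mathbb{Q}\ll\mathbb{P}$; it does not construct one. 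For that you need Alili's formula (or the paper's Theorem~\ref{density}), which applies once you verify $E_o^{\hat\pi}[t_1]<\infty$---and the paper does this in one line via $\mathbb{E}\log E_o^{\hat\pi,\omega}[\mathrm{e}^{ut_1}]=\lambda(s+u)-\lambda(s)$, differentiating at $u=0$.
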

\begin{remark}
In the proof of Theorem \ref{findthemin}, we construct the unique minimizer $\mu_\xi$. Plugging it in (\ref{level2ratetilde}) gives an explicit expression for (\ref{level1ratetilde}) when $\xi\in (-1,\xi_c')\cup(\xi_c,1)$. Our formula agrees with the one provided in \cite{CGZ}.
\end{remark}
\begin{remark}
Theorem \ref{findthemin} generalizes to the case where the steps are bounded but not necessarily nearest-neighbor. The idea of the proof is the same. We chose to focus on nearest-neighbor walks in order to keep the arguments short.
\end{remark}

In general, whenever one takes the point of view of a particle performing RWRE, the main tool for proving limit theorems is
\begin{lemma}[Kozlov \cite{Kozlov}]\label{Kozlov}
If an environment kernel $\hat{\pi}$ satisfies $\hat{\pi}(\cdot,z)>0$ $\mathbb{P}$-a.s.\ for each $z\in U$, and if there exists a $\hat{\pi}$-invariant probability measure $\mathbb{Q}\ll\mathbb{P}$, then the following hold:
\begin{itemize}
\item[(a)] The measures $\mathbb{P}$ and $\mathbb{Q}$ are in fact mutually absolutely continuous.
\item[(b)] The environment Markov chain with transition kernel $\hat{\pi}$ and initial distribution $\mathbb{Q}$ is stationary and ergodic.
\item[(c)] $\mathbb{Q}$ is the unique $\hat{\pi}$-invariant probability measure on $\Omega$ that is absolutely continuous relative to $\mathbb{P}$.
\item[(d)] The following LLN is satisfied: \[P_o^{\hat{\pi}}\left(\lim_{n\rightarrow\infty}\frac{X_n}{n}=\int\sum_{z\in\mathcal{R}}\hat{\pi}(\omega,z)z\;\mathrm{d}\mathbb{Q}\right) = 1.\]
\end{itemize}
\end{lemma}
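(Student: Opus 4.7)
My plan is to handle the four parts in order, treating (a) as the main technical step; (b)--(d) then fall out by standard ergodic-theoretic arguments. The central mechanism throughout is that the positivity of $\hat\pi$ on the generating set $U$ converts $\hat\pi$-invariance relations into shift-invariance statements, at which point ergodicity of $\mathbb{P}$ takes over.

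For (a), I would set $f:=\mathrm{d}\mathbb{Q}/\mathrm{d}\mathbb{P}$ and $A:=\{f=0\}$. The $\hat\pi$-invariance of $\mathbb{Q}$ reads $\int(\hat\pi g)\,\mathrm{d}\mathbb{Q}=\int g\,\mathrm{d}\mathbb{Q}$ for bounded $g$, where $(\hat\pi g)(\omega)=\sum_{z\in\mathcal{R}}\hat\pi(\omega,z)g(T_z\omega)$. Writing both integrals against $\mathbb{P}$ and changing variables $\omega'=T_z\omega$ via the $T_z$-invariance of $\mathbb{P}$ produces
\[ f(\omega)=\sum_{z\in\mathcal{R}}\hat\pi(T_{-z}\omega,z)\,f(T_{-z}\omega) \qquad \mathbb{P}\text{-a.s.} \]
On $A$ every nonnegative summand on the right must vanish, and by stationarity $\hat\pi(T_{-z}\cdot,z)>0$ $\mathbb{P}$-a.s.\ for each $z\in U$, so $T_{-z}\omega\in A$ for each such $z$. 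Hence $A$ is essentially $T_z$-invariant for every $z$ in the generating set $U$, and therefore for every $z\in\mathbb{Z}^d$. Ergodicity of $\mathbb{P}$ forces $\mathbb{P}(A)\in\{0,1\}$, and $\mathbb{P}(A)=1$ is ruled out by $\int f\,\mathrm{d}\mathbb{P}=1$; hence $\mathbb{P}\ll\mathbb{Q}$.

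For (b), stationarity of the chain started from $\mathbb{Q}$ is immediate, and ergodicity reduces to showing $\mathbb{Q}(B)\in\{0,1\}$ whenever $B\subseteq\Omega$ satisfies $\hat\pi(\omega,B)=\one_B(\omega)$ $\mathbb{Q}$-a.s., which by (a) also holds $\mathbb{P}$-a.s. The identity $\one_B(\omega)=\sum_z\hat\pi(\omega,z)\one_B(T_z\omega)$ is a convex combination of $\{0,1\}$-valued terms with strictly positive weight on each $z\in U$, forcing $\one_B(\omega)=\one_B(T_z\omega)$ $\mathbb{P}$-a.s.\ for $z\in U$; ergodicity of $\mathbb{P}$ and (a) then give the dichotomy. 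For (c), any other $\hat\pi$-invariant $\mathbb{Q}'\ll\mathbb{P}$ satisfies $\mathbb{Q}'\ll\mathbb{Q}$ by (a), so the path-space law $\mathbb{Q}'\times P_o^{\hat\pi,\omega}$ is absolutely continuous with respect to the stationary ergodic path-space law $\mathbb{Q}\times P_o^{\hat\pi,\omega}$ produced by (b); Birkhoff under the latter gives $\frac{1}{n}\sum_{k=0}^{n-1}g(T_{X_k}\omega)\to\int g\,\mathrm{d}\mathbb{Q}$ almost surely for every bounded $g$, hence also a.s.\ under $\mathbb{Q}'\times P_o^{\hat\pi,\omega}$, and taking expectations and invoking the stationarity of the latter yields $\int g\,\mathrm{d}\mathbb{Q}'=\int g\,\mathrm{d}\mathbb{Q}$, so $\mathbb{Q}'=\mathbb{Q}$.

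Finally, for (d), I would apply Birkhoff's ergodic theorem to the stationary ergodic joint sequence $(T_{X_k}\omega,\,X_{k+1}-X_k)_{k\geq 0}$ under $\mathbb{Q}\times P_o^{\hat\pi,\omega}$ with the bounded functional $(\omega,z)\mapsto z$, obtaining $X_n/n\to\int\sum_{z\in\mathcal{R}}\hat\pi(\omega,z)z\,\mathrm{d}\mathbb{Q}$ $\mathbb{Q}\times P_o^{\hat\pi,\omega}$-a.s., and then use the mutual absolute continuity from (a) to transfer the statement to $P_o^{\hat\pi}=\mathbb{P}\times P_o^{\hat\pi,\omega}$. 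The hardest step is the propagation argument in (a), which extracts the pointwise statement that $A$ is essentially shift-invariant from the $L^1$-identity expressing $\hat\pi$-invariance of the density; once this is in hand, the remainder is a routine consequence of the ergodicity of $\mathbb{P}$ together with the positivity of $\hat\pi$ on $U$.
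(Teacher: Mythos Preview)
The paper does not supply its own proof of this lemma: it is stated with attribution to Kozlov \cite{Kozlov} and used as a black box throughout (e.g.\ in Subsections \ref{pourum} and \ref{LDPproof}), so there is nothing in the text to compare your argument against.

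That said, your proof is the standard one and is correct. The key idea---using the positivity of $\hat\pi(\cdot,z)$ for $z\in U$ to convert the $\hat\pi$-invariance equation for the density into shift-invariance of the zero set $\{f=0\}$, and then invoking ergodicity of $\mathbb{P}$---is exactly what is needed for (a), and your derivations of (b)--(d) from (a) via Birkhoff and absolute continuity are routine and accurate. One small point worth making explicit in (a): from $T_{-z}A\subset A$ (mod $\mathbb{P}$) for each $z\in U$ you get $T_zA\supset A$, and since $U=-U$ this gives full two-sided invariance; you implicitly use this symmetry when you conclude that $A$ is $T_z$-invariant for all $z\in\mathbb{Z}^d$.
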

Let us for every $y\in\mathbb{Z}$ define the passage times \begin{equation}t_y:=\inf\{k\geq0:X_k\geq y\}\quad\mbox{and}\quad t_y':=\inf\{k\geq0:X_k\leq y\}.\label{nihatgomer}\end{equation} When $d=1$, if the walk is ballistic (i.e., if $E_o^{\hat{\pi}}[t_1]$ or $E_o^{\hat{\pi}}[t_{-1}']$ is finite) and nearest-neighbor, \cite{alili} shows the existence of a $\hat{\pi}$-invariant probability measure $\mathbb{Q}\ll\mathbb{P}$ and provides a formula for its density. We use this in our proof of Theorem \ref{findthemin}. The last result of Section \ref{vandiseksin} constructs the invariant measure in the case of ballistic RWRE with bounded steps on $\mathbb{Z}$.
\begin{theorem}\label{density}
In the case of RWRE with bounded steps on $\mathbb{Z}$, if the environment kernel $\hat{\pi}$ satisfies $\hat{\pi}(\cdot,1)>0$ $\mathbb{P}$-a.s.\ and if $E_o^{\hat{\pi}}[t_1]<\infty$, then the following hold:
\begin{itemize}
\item[(a)] $\phi(\omega):=\lim_{x\rightarrow-\infty} E_x^{\hat{\pi},\omega}\left[\sum_{k=0}^\infty\one_{X_k=0}\right]>0$ exists for $\mathbb{P}$-a.e.\ $\omega$.\label{directlimit}
\item[(b)] $\phi\in L^1(\mathbb{P})$.
\item[(c)] The measure $\mathbb{Q}$ defined by $\mathrm{d}\mathbb{Q}(\omega)=\left({1}/{\left\|\phi\right\|_{L^1(\mathbb{P})}}\right)\phi(\omega)\mathrm{d}\mathbb{P}(\omega)$ is $\hat{\pi}$-invariant.
\end{itemize}
\end{theorem}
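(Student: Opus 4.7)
Write $G^\omega(x, y) := E_x^{\hat\pi, \omega}\!\left[\sum_{k \geq 0} \one_{X_k = y}\right]$ so that $\phi_x(\omega) = G^\omega(x, 0)$. The plan is to prove (a), (b), (c) in order; both (b) and (c) rest on the factorization obtained in (a).

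For part (a), strong Markov at $\tau_0 := \inf\{k \geq 0 : X_k = 0\}$ gives, on $\{\tau_0 < \infty\}$, that the future count of visits to $0$ is distributed as $N_0$ under $P_0^{\hat\pi, \omega}$, yielding
\[\phi_x(\omega) = P_x^{\hat\pi,\omega}(\tau_0 < \infty)\, G^\omega(0, 0) \quad\text{for all } x \leq 0.\]
Convergence of $\phi_x$ thus reduces to the quenched convergence of $h_x(\omega) := P_x^{\hat\pi,\omega}(\tau_0 < \infty)$. Conditioning on the overshoot $X_\sigma \in \{0, 1, \ldots, B-1\}$ at the first entry time $\sigma := \inf\{k : X_k \geq 0\}$ gives $h_x = \sum_{y=0}^{B-1} P_x^{\hat\pi,\omega}(X_\sigma = y)\, h_y$, reducing the question further to quenched convergence of the overshoot distribution on the finite set $\{0, 1, \ldots, B-1\}$. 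Under $\hat\pi(\cdot, 1) > 0$ and $E_o^{\hat\pi}[t_1] < \infty$, the walk is ballistic to $+\infty$ and admits a sequence of regeneration levels accumulating at $-\infty$; the renewal structure between consecutive regeneration levels, together with the ergodicity of $\mathbb{P}$ under spatial shifts, forces the overshoot distribution to converge $\mathbb{P}$-a.s.\ to a limit $\mu_\infty$. Positivity $\phi(\omega) > 0$ then follows from $h_0 = 1$ together with $\mu_\infty(0; \omega) > 0$, the latter because $\hat\pi(\cdot, 1) > 0$ and ergodicity ensure that a positive fraction of level-crossings occur through $+1$-steps (hence with overshoot $0$).

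For part (b), the factorization gives $\phi \leq G^\omega(0, 0)$ pointwise, so it suffices to show $\mathbb{E}[G^\omega(0, 0)] < \infty$. At the first regeneration time $\tau_1$ of the walk started at $0$, $X_{\tau_1} \geq 1$ and $X_m \geq X_{\tau_1}$ for all $m \geq \tau_1$, so all visits to $0$ lie in $[0, \tau_1)$; hence $G^\omega(0, 0) \leq E_0^{\hat\pi,\omega}[\tau_1]$, and integrating gives $\mathbb{E}[G^\omega(0, 0)] \leq E_0^{\hat\pi}[\tau_1] < \infty$ by standard one-dimensional regeneration theory under the stated hypotheses. For part (c), a one-step decomposition by the value of $X_{k-1}$ for each $k \geq 1$ gives the backward Green's function identity
\[G^\omega(x, 0) - \one_{x=0} = \sum_{z \in \mathcal{R}} \hat\pi(T_{-z}\omega, z)\, G^\omega(x, -z),\]
and the shift relation $G^{T_{-z}\omega}(x, 0) = G^\omega(x - z, -z)$ combined with (a) yields $\lim_{x \to -\infty} G^\omega(x, -z) = \phi(T_{-z}\omega)$ for $\mathbb{P}$-a.e.\ $\omega$. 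Since $\mathcal{R}$ is finite and $\one_{x=0}$ vanishes for $x < 0$, passing $x \to -\infty$ produces $\phi(\omega) = \sum_{z \in \mathcal{R}} \hat\pi(T_{-z}\omega, z)\, \phi(T_{-z}\omega)$, which is the adjoint identity $\hat\pi^* \phi = \phi$ in $L^1(\mathbb{P})$; equivalently, $\mathbb{Q} := (\phi / \|\phi\|_{L^1(\mathbb{P})})\, \mathrm{d}\mathbb{P}$ is $\hat\pi$-invariant.

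The main obstacle is the quenched convergence of the overshoot distribution in part (a). Under the quenched law the overshoot sequence is not Markov — after crossing a level, bounded excursions below it remain possible — so convergence has to be extracted from the regeneration structure of the ballistic walk together with the spatial ergodicity of $\mathbb{P}$, rather than from a straightforward one-dimensional Markov-chain ergodic theorem on $\{0, \ldots, B-1\}$.
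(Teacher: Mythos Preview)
Your factorization $\phi_x(\omega) = P_x^{\hat\pi,\omega}(\tau_0 < \infty)\cdot G^\omega(0,0)$ and the proof of (c) match the paper. The trouble is in (a) and (b).

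For (a), you correctly reduce to the convergence of $h_x(\omega) = P_x^{\hat\pi,\omega}(\tau_0 < \infty)$, but then you do not prove it: you invoke ``regeneration levels accumulating at $-\infty$'' and ``the renewal structure between consecutive regeneration levels'' without defining these objects or establishing them in the stationary ergodic setting. The Sznitman--Zerner regeneration machinery is an i.i.d.-environment tool; in a merely ergodic environment the blocks between regenerations are not i.i.d., so the renewal argument you allude to does not go through as stated. You close by calling this ``the main obstacle''---so the proof is incomplete, not just sketchy.

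The paper avoids overshoot analysis altogether. It notes that $x\mapsto\psi(\omega,x):=P_x^{\hat\pi,\omega}(V_o<\infty)$ is bounded and $\hat\pi$-harmonic for $x\neq0$, and then proves a short maximum-principle lemma (Lemma~\ref{maxprinciple}): any bounded function that is $\hat\pi$-harmonic for $|x|$ large has limits at $\pm\infty$, $\mathbb{P}$-a.s. The lemma uses only $\hat\pi(\cdot,1)>0$ and ergodicity to locate a sequence of sites where a local comparison forces oscillations to vanish. This is far lighter than a regeneration argument and works verbatim under the ergodic hypothesis.

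For (b), the bound $G^\omega(0,0)\leq E_0^{\hat\pi,\omega}[\tau_1]$ is fine, but asserting $E_0^{\hat\pi}[\tau_1]<\infty$ from $E_0^{\hat\pi}[t_1]<\infty$ by ``standard one-dimensional regeneration theory'' again leans on the i.i.d.\ case. The paper instead bounds $\|\phi\|_{L^1(\mathbb{P})}$ by an ergodic averaging argument: it writes $\sum_{y=0}^{N-1}\phi(T_y\omega)$ as the expected number of visits (started from far left) to $[0,N-1]$, bounds this by $\lim_{x\to-\infty}E_x^{\hat\pi,\omega}[t_N-t_0]$ plus a boundary term, controls the boundary term along a random subsequence $N_j$ via ergodicity, and concludes $\|\phi\|_{L^1(\mathbb{P})}\leq E_0^{\hat\pi}[t_1]$ by the ergodic theorem. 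This uses exactly the stated hypothesis and no regeneration.
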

\begin{remark}
If $E_o^{\hat{\pi}}[t_{-1}']<\infty$, then take $x\to\infty$ instead of $x\to-\infty$ in (a).
\end{remark}
\begin{remark}
Br\'emont \cite{Bremont} also shows the existence of a $\hat{\pi}$-invariant probability measure $\mathbb{Q}\ll\mathbb{P}$ in the case of ballistic RWRE with bounded steps on $\mathbb{Z}$. However, his argument is not elementary, assumes a stronger ellipticity condition, and does not provide a formula for the density. Rassoul-Agha \cite{Firas} takes an approach similar to ours, but resorts to Ces\`aro means and weak limits instead of showing the almost sure convergence in part (a) of Theorem \ref{density}, and assumes that Kalikow's condition (see (A3) in Section \ref{franzek}) holds. For the related model of ``random walk on a strip", Roitershtein \cite{roiter} shows the existence of the ergodic invariant measure. It is easy to see that the natural analog of our formula works in that setting.
\end{remark}

\section{Averaged large deviations}\label{franzek}

\subsection{Previous results}

In their aforementioned paper concerning nearest-neighbor RWRE on $\mathbb{Z}$, Comets et al.\ \cite{CGZ} prove also the following

\begin{theorem}[Averaged LDP]\label{aLDPgeneric}
$\left(P_o\left(\frac{X_n}{n}\in\cdot\,\right)\right)_{n\geq1}$ satisfies the LDP with a convex rate function $I_a$.
\end{theorem}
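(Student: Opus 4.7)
The plan is to prove the averaged LDP for $X_n/n$ via a detour through the passage times $t_k=\inf\{j\geq 0: X_j\geq k\}$, exactly the strategy sketched in the excerpt. In the nearest-neighbor one-dimensional setting the increments $\tau_k := t_k - t_{k-1}$ (for $k\geq 1$, with $\tau_k=\infty$ allowed) have a crucial structural property under the averaged measure: conditionally on the environment the strong Markov property applied at $t_{k-1}$ gives $\tau_k$ in the same distribution as the time for a walk started at $k-1$ in environment $\omega$ to reach $k$, and after averaging in $\mathbb{P}$, stationarity of the environment shows $(\tau_k)_{k\geq 1}$ is stationary and ergodic under $P_o$ on the event of transience to $+\infty$. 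First I would restrict to the case where the walk is transient to $+\infty$, which in nearest-neighbor RWRE on $\mathbb{Z}$ is governed by Solomon's criterion on the environment; the regime of transience to $-\infty$ is handled symmetrically using the reflected passage times $t'_k$, and the recurrent case (where both $t_1=\infty$ and $t'_{-1}=\infty$ have positive probability) is handled by noting $X_n/n\to 0$ so only the behavior of $I_a$ near $\xi=0$ matters.

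Next I would prove an averaged LDP for $t_n/n$ on the event $\{t_n<\infty\}$. The moment generating function
\begin{equation*}
\Lambda(\lambda) := \lim_{n\to\infty}\frac{1}{n}\log E_o\!\left[e^{\lambda t_n}\one_{t_n<\infty}\right]
\end{equation*}
exists for every $\lambda\in\mathbb{R}$ by subadditivity: writing $t_n = \sum_{k=1}^n \tau_k$ and using the ergodic structure of the $\tau_k$'s together with a Fekete argument on $\log E_o[e^{\lambda t_n}\one_{t_n<\infty}]$, one checks subadditivity in $n$. A careful analysis of the domain of $\Lambda$ (it is finite for $\lambda\leq 0$, with some critical $\lambda^*\geq 0$ above which it is $+\infty$) together with differentiability properties in the interior of its effective domain lets me invoke the Gärtner–Ellis theorem to conclude an LDP for $t_n/n$ with convex good rate function $J(s)=\sup_\lambda\{\lambda s-\Lambda(\lambda)\}$.

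Then I would invert: using the exact duality $\{X_n/n \geq \xi\}=\{t_{\lceil n\xi\rceil}\leq n\}$ valid for $\xi>0$ thanks to the nearest-neighbor structure, the LDP for $t_n/n$ transfers to an LDP for $X_n/n$ on $(0,1]$ with rate $I_a(\xi)=\xi J(1/\xi)$ for $\xi\in(0,1]$; the prefactor $\xi$ preserves convexity because $\xi\mapsto \xi J(1/\xi)$ is the perspective transform of the convex function $J$. The same argument applied to $t'_k$ gives the LDP on $[-1,0)$ with an analogous formula. Finally I would glue the two halves together with the value $I_a(0)$, which is computed directly from large deviations of $P_o(|X_n|\leq \delta n)$ (using, e.g., the estimate that for the walk to stay near $0$ it must spend time in environments atypically close to the recurrent boundary); continuity and convexity of $I_a$ on $[-1,1]$ then follow from the convexity of the two pieces and lower semicontinuity at $0$.

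The main obstacle I expect is twofold: first, controlling $\Lambda$ on the event $\{t_n=\infty\}$ so that Gärtner–Ellis actually applies (this requires the hypothesis that is implicit in \cite{CGZ}, namely a quantitative tail estimate on hitting times under $P_o$, which is delicate in the non-ballistic and marginally-ballistic cases where $E_o[\tau_1]=\infty$); and second, showing that the gluing of $I_a$ at $\xi=0$ produces a jointly convex function. Both issues are ultimately one-dimensional features of the potential $V(x)=\sum_{y<x}\log\rho_y$ with $\rho_y=\pi(y,y-1)/\pi(y,y+1)$, and a careful analysis of extrema of $V$ on long stretches supplies the tail estimates needed; but writing this out carefully, especially the treatment near the critical slopes where $\Lambda$ becomes infinite, is where the technical effort concentrates.
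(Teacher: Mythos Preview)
This theorem is not proved in the paper at all. It is stated in the ``Previous results'' subsection of Section~\ref{franzek} and attributed to Comets, Gantert and Zeitouni \cite{CGZ} for nearest-neighbor RWRE on $\mathbb{Z}$, and to Varadhan \cite{Raghu} for RWRE with bounded steps on $\mathbb{Z}^d$ in i.i.d.\ environments. The paper uses Theorem~\ref{aLDPgeneric} (and the variational formula (\ref{divaneasik}) from \cite{Raghu}) as a starting point for its own contributions, namely the regularity results in Theorem~\ref{qual} and the identification of the minimizer in Theorem~\ref{sevval}.

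Your sketch is essentially the \cite{CGZ} strategy in dimension one: establish an LDP for the passage times $t_n/n$ via G\"artner--Ellis, then invert using $\{X_n/n\geq\xi\}=\{t_{\lceil n\xi\rceil}\leq n\}$ to get the velocity LDP on $(0,1]$, treat $(-1,0]$ symmetrically, and glue at $0$. This is a legitimate route, and the paper explicitly mentions (in the discussion of the quenched case) that this is how \cite{CGZ} proceed. Two technical remarks on your outline: first, under the \emph{averaged} measure the increments $\tau_k$ are stationary but not independent, so a naive Fekete subadditivity for $n\mapsto\log E_o[e^{\lambda t_n}\one_{t_n<\infty}]$ is not immediate---the dependence through the shared environment has to be handled, and \cite{CGZ} do this with some care. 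Second, the paper also points to Varadhan's approach in \cite{Raghu}, which works in any dimension and goes through the empirical process of the path (leading to the variational formula (\ref{divaneasik})); this is quite different from the passage-time inversion and is the version the paper actually builds on in Chapter~\ref{averagedchapter}.
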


\noindent They establish this result for a class of environments including the i.i.d.\ case, and obtain the following variational formula for $I_a$:
\begin{equation}\label{montreal}
I_a(\xi)=\inf_{\mathbb{Q}}\left\{I_q^\mathbb{Q}(\xi) + |\xi|h\left(\mathbb{Q}\left|\mathbb{P}\right.\right)\right\}.
\end{equation}Here, the infimum is over all stationary and ergodic probability measures on $\Omega$, $I_q^\mathbb{Q}(\cdot)$ denotes the rate function for the quenched LDP when the environment measure is $\mathbb{Q}$, and $h\left(\cdot\left|\cdot\right.\right)$ is specific relative entropy. Similar to the quenched picture, the graph of $I_a$ is shown to typically have flat pieces. Note that the regularity properties of $I_a$ are not studied in \cite{CGZ}.

Varadhan \cite{Raghu} considers RWRE with bounded steps on $\mathbb{Z}^d$, assumes that $\mathbb{P}$ is a product measure, and proves Theorem \ref{aLDPgeneric} for any $d\geq1$. He gives yet another variational formula for $I_a$. Below, we focus on the nearest-neighbor case and introduce some notation in order to write down this formula.

An infinite path $\left(x_i\right)_{i\leq0}$ with nearest-neighbor steps $x_{i+1}-x_i$ is said to be in $W_\infty^{\mathrm{tr}}$ if $x_o=0$ and $\lim_{i\to-\infty}|x_i|=\infty$. For any $w\in W_\infty^{\mathrm{tr}}$, let $n_o$ be the number of times $w$ visits the origin, excluding the last visit. By the transience assumption, $n_o$ is finite. For any $z\in U$, let $n_{o,z}$ be the number of times $w$ jumps to $z$ after a visit to the origin. Clearly, $\sum_{z\in U}n_{o,z}=n_o$. If the averaged walk starts from time $-\infty$ and its path $\left(X_i\right)_{i\leq0}$ up to the present is conditioned to be equal to $w$, then the probability of the next step being equal to $z$ is
\begin{equation}\label{ozgurevren}
q(w,z):=\frac{\mathbb{E}\left[\pi(0,z)\prod_{z'\in U}\pi(0,z')^{n_{o,z'}}\right]}{\mathbb{E}\left[\prod_{z'\in U}\pi(0,z')^{n_{o,z'}}\right]}
\end{equation} by Bayes' rule. The probability measure that the averaged walk induces on $\left(X_n\right)_{n\geq0}$ conditioned on $\{\left(X_i\right)_{i\leq0}=w\}$ is denoted by $Q^w$. As usual, $E^w$ stands for expectation under $Q^w$.

Consider the map $T^*:W_\infty^{\mathrm{tr}}\to W_\infty^{\mathrm{tr}}$ that takes $\left(x_i\right)_{i\leq0}$ to $\left(x_i-x_{-1}\right)_{i\leq-1}$. Let $\mathcal{I}$ be the set of probability measures on $W_\infty^{\mathrm{tr}}$ that are invariant under $T^*$, and $\mathcal{E}$ be the set of extremal points of $\mathcal{I}$. Each $\mu\in\mathcal{I}$ (resp. $\mu\in\mathcal{E})$ corresponds to a transient process with stationary (resp. stationary and ergodic) increments and induces a probability measure $Q_\mu$ on particle paths $\left(X_i\right)_{-\infty<i<\infty}$. The associated ``mean drift" is $m(\mu) := \int\left(x_o-x_{-1}\right)\mathrm{d}\mu=Q_\mu(X_1-X_o)$. Define $$Q_\mu^w(\cdot):=Q_\mu(\,\cdot\,\left|\left(X_i\right)_{i\leq0}=w\right.)\quad\text{and}\quad q_\mu(w,z):=Q_\mu^w(X_1=z)$$ for any $w\in W_\infty^{\mathrm{tr}}$ and $z\in U$. Expectations under $Q_\mu$ and $Q_\mu^w$ are denoted by $E_\mu$ and $E_\mu^w$, respectively.

With this notation,
\begin{equation}\label{divaneasik}
I_a(\xi)=\inf_{\substack{\mu\in\mathcal{E}\\m(\mu)=\xi}}\mathfrak{I}_a(\mu)
\end{equation} for every $\xi\neq0$, where
\begin{equation}\label{hayirsh}
\mathfrak{I}_a(\mu):=\int_{W_\infty^{\mathrm{tr}}}\left[\sum_{z\in U}q_\mu(w,z)\log\frac{q_\mu(w,z)}{q(w,z)}\right]\,\mathrm{d}\mu(w).
\end{equation}

Aside from showing that $I_a$ is convex, Varadhan analyzes the set $$\mathcal{N}:=\left\{\xi\in\mathbb{R}^d:I_a(\xi)=0\right\}$$ where the rate function $I_a$ vanishes. For non-nestling walks (recall Definition \ref{defnestling}), $\mathcal{N}$ consists of a single point $\xi_o$ which is the LLN velocity. In the case of nestling walks, $\mathcal{N}$ is a line segment through the origin that can extend in one or both directions.

Rassoul-Agha \cite{FirasLDP} generalizes Varadhan's result to a class of mixing environments, and also to some other models of random walk on $\mathbb{Z}^d$.

\subsection{Our results}

We make the following assumptions:
\begin{enumerate}
\item [(A1)] $\mathbb{P}$ is a product measure and the walk is nearest-neighbor.
\item [(A2)] There exists a constant $c_1>0$ such that $\mathbb{P}\left(\pi(0,z)\geq c_1\right)=1$ for each $z\in U$. This is known as ``uniform ellipticity".
\item [(A3)] Kalikow's condition relative to a unit vector $\hat{u}\in\mathbb{R}^d$ is satisfied. Namely, $$\inf_{\mathcal{G}}\inf_{x\in\mathcal{G}}\frac{\mathbb{E}\left[E_o^\omega\left[\sum_{k=0}^{t_\mathcal{G}}\one_{X_k=x}\right]\sum_{z\in U}\pi(x,x+z)\langle z,\hat{u}\rangle\right]}{E_o\left[\sum_{k=0}^{t_\mathcal{G}}\one_{X_k=x}\right]}>0.$$ Here, the first infimum is over all connected strict subsets of $\mathbb{Z}^d$ that contain the origin, and $t_\mathcal{G}$ is the first time the walk exits $\mathcal{G}$.
\end{enumerate}
\begin{remark}
Assumption (A3) is first formulated in \cite{Kalikow}. It is the weakest known condition that implies transience. However, it is not easy to verify since it involves both the walk and the environment. In the case of non-nestling walks, $\hat{u}$ can be chosen such that $$\mathbb{P}\left(\sum_{z\in U}\pi(0,z)\langle z,\hat{u}\rangle\geq c_2\right)=1$$ for some constant $c_2>0$, and (A3) is clearly satisfied.
\end{remark}

Our approach is based on a renewal structure which is first introduced in \cite{SznitmanZerner}. Here is a brief description: Take the unit vector $\hat{u}\in\mathbb{R}^d$ appearing in (A3). Let $$D:=\inf\left\{k\geq0:\langle X_k,\hat{u}\rangle<\langle X_o,\hat{u}\rangle\right\}.$$ Recursively define a sequence $\left(\tau_m\right)_{m\geq1}$ of random times, which will be referred to as ``regeneration times", by
\begin{align*}
\tau_1&:=\inf\left\{j>0:\langle X_i,\hat{u}\rangle<\langle X_j,\hat{u}\rangle\leq\langle X_k,\hat{u}\rangle\mbox{ for all }i,k\mbox{ with }i<j<k\right\},\\
\tau_{m+1}&:=\inf\left\{j>\tau_m:\langle X_i,\hat{u}\rangle<\langle X_j,\hat{u}\rangle\leq\langle X_k,\hat{u}\rangle\mbox{ for all }i,k\mbox{ with }i<j<k\right\}.
\end{align*}
Denote the steps $X_i-X_{i-1}$ of the walk by $Z_i$. Then, $\left(Z_{\tau_m+1},\ldots,Z_{\tau_{m+1}}\right)_{m\geq1}$ is an i.i.d.\ sequence under $P_o$, and $$P_o\left(\left(Z_{\tau_1+1},\ldots,Z_{\tau_2}\right)\in\cdot\,\right)=P_o\left(\left.\left(Z_1,\ldots,Z_{\tau_1}\right)\in\cdot\,\right|\,D=\infty\right).$$ Sznitman and Zerner \cite{SznitmanZerner} use these facts to show that the LLN holds with limiting velocity 
\begin{equation}\label{huseysh}
\xi_o=\frac{E_o\left[\left.X_{\tau_1}\right|D=\infty\right]}{E_o\left[\left.\tau_1\right|D=\infty\right]}\neq0.
\end{equation}

Since (A1) and (A2) are sufficient for the validity of Theorem \ref{aLDPgeneric},
\begin{equation}\label{kazimkoyuncu}
\Lambda_a(\theta):=\lim_{n\to\infty}\frac{1}{n}\log E_o\left[\mathrm{e}^{\langle\theta,X_n\rangle}\right]=\sup_{\xi\in\mathbb{R}^d}\left\{\langle\theta,\xi\rangle - I_a(\xi)\right\}
\end{equation} by Varadhan's lemma (see \cite{DZ}). We start Section \ref{aLDPregularitysection} by obtaining a series of intermediate results including
\begin{lemma}\label{berkeleyolursa}
$\Lambda_a$ is strictly convex and analytic on a non-empty open set $\mathcal{C}$.
\begin{itemize}
\item [(a)] If the walk is non-nestling, $\mathcal{C}=\left\{\theta\in\mathbb{R}^d:|\theta|<c_3\right\}$ for some $c_3>0$.
\item [(b)] If the walk is nestling, $\mathcal{C}=\left\{\theta\in\mathbb{R}^d:|\theta|<c_4\,, \Lambda_a(\theta)>0\right\}$ for some $c_4>0$.
\end{itemize}
\end{lemma}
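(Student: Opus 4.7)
The plan is to use the Sznitman--Zerner renewal structure to represent $\Lambda_a$ implicitly. Let $\tilde{P}_o := P_o(\,\cdot\mid D=\infty\,)$ and define
\[
\Psi(\theta,\lambda) := \tilde{E}_o\bigl[e^{\langle\theta,X_{\tau_1}\rangle - \lambda \tau_1}\bigr],\qquad \mathcal{D}:=\{(\theta,\lambda)\in\mathbb{R}^d\times\mathbb{R}:\Psi(\theta,\lambda)<\infty\}.
\]
Under $\tilde{P}_o$ the slabs $(X_{\tau_{m+1}}-X_{\tau_m},\tau_{m+1}-\tau_m)_{m\ge 1}$ are i.i.d.\ with the same law as $(X_{\tau_1},\tau_1)$, and $\Psi$ is real-analytic on the interior of $\mathcal{D}$. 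Since $\tau_1\ge 1$, the derivative $\partial_\lambda\Psi(\theta,\lambda)=-\tilde{E}_o[\tau_1 e^{\langle\theta,X_{\tau_1}\rangle-\lambda\tau_1}]$ is strictly negative on this interior, so the level set $\{\Psi=1\}$ locally defines $\lambda$ as an analytic function $\lambda(\theta)$.

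The first main step is the identification $\Lambda_a(\theta)=\lambda(\theta)$ on the open set $\mathcal{C}$ of those $\theta$ for which $(\theta,\lambda(\theta))$ lies in the interior of $\mathcal{D}$. I would obtain this by decomposing the walk at the regeneration times: with $N_n:=\sup\{m:\tau_m\le n\}$, the quantity $E_o[e^{\langle\theta,X_n\rangle}]$ factorizes (up to the pre-$\tau_1$ block and the overshoot from $\tau_{N_n}$ to $n$) into a product of i.i.d.\ slab factors, and a Cram\'er-type estimate on $N_n$ pins down the exponential growth rate to be exactly the $\lambda$ solving $\Psi(\theta,\lambda)=1$. The boundary pieces contribute subexponentially because the tail estimates of \cite{SznitmanZerner} under (A1)--(A3) give sufficient integrability for $\tau_1$ and $|X_{\tau_1}|\le\tau_1$. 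The analytic implicit function theorem applied to $\Psi(\theta,\lambda)=1$ then yields real-analyticity of $\Lambda_a$ on $\mathcal{C}$, and a standard differentiation identifies $\nabla^2\Lambda_a$ with a positive multiple of the covariance matrix of $X_{\tau_1}$ under the tilted law $\mathrm{d}\tilde{P}_{\theta,\lambda(\theta)}\propto e^{\langle\theta,X_{\tau_1}\rangle-\lambda(\theta)\tau_1}\,\mathrm{d}\tilde{P}_o$. Uniform ellipticity (A2) guarantees that under any such tilt $X_{\tau_1}$ is not concentrated on a hyperplane, so the Hessian is positive definite and $\Lambda_a$ is strictly convex on $\mathcal{C}$.

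The explicit descriptions of $\mathcal{C}$ in (a) and (b) are then read off from the domain $\mathcal{D}$. In the non-nestling case, the Sznitman--Zerner argument yields $\tilde{E}_o[e^{c\tau_1}]<\infty$ for some $c>0$, so $\Psi$ is finite on a full neighborhood of $(0,0)$; since $\Psi(0,0)=1$ and $\partial_\lambda\Psi(0,0)<0$, the implicit solution $\lambda(\theta)$ extends analytically to a ball $\{|\theta|<c_3\}$, which is $\mathcal{C}$. In the nestling case, $\tau_1$ has only polynomial tails under $\tilde{P}_o$ (all $L^p$ moments but no exponential moment, by the Kalikow-type bounds from \cite{Kalikow,SznitmanZerner}), so $\tilde{E}_o[e^{-\lambda\tau_1}]<\infty$ requires $\lambda\ge0$, and combined with the nearest-neighbor bound $|\langle\theta,X_{\tau_1}\rangle|\le|\theta|\tau_1$ one gets a further threshold $|\theta|<c_4$; since $\lambda(\theta)=\Lambda_a(\theta)$, this exactly carves out $\mathcal{C}=\{|\theta|<c_4,\,\Lambda_a(\theta)>0\}$.

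The main obstacle is the rigorous identification $\Lambda_a(\theta)=\lambda(\theta)$: the Cram\'er-type upper bound is straightforward from the i.i.d.\ factorization, but the matching lower bound requires a change-of-measure to the tilted regeneration law, whose very construction (and whose integrability properties) depend crucially on the Sznitman--Zerner regeneration-time tail estimates and on uniform ellipticity, and some care is needed so that the tilted walk remains transient in direction $\hat{u}$ and hence admits its own regeneration structure.
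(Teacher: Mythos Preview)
Your overall strategy matches the paper's: define $\psi(\theta,r)=\tilde{E}_o[e^{\langle\theta,X_{\tau_1}\rangle-r\tau_1}]$, show $\psi(\theta,\Lambda_a(\theta))=1$ on $\mathcal{C}$, apply the analytic implicit function theorem (noting $\partial_r\psi<0$), and differentiate twice to get the Hessian as the tilted second moment of $X_{\tau_1}-\nabla\Lambda_a(\theta)\tau_1$, which is positive.

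Where the paper differs is in how it obtains the identification $\psi(\theta,\Lambda_a(\theta))=1$, and its route is simpler than the Cram\'er decomposition you propose. The paper exploits that $\Lambda_a$ is \emph{already known to exist} (from Varadhan's averaged LDP and Varadhan's lemma, see (\ref{kazimkoyuncu})). It then verifies the implicit equation by two soft inequalities. For $\psi(\theta,\Lambda_a(\theta))\le 1$ (Lemma~\ref{phillysh}), it estimates $E_o[e^{\langle\theta,X_{\tau_m}\rangle-(\Lambda_a(\theta)+\epsilon)\tau_m}]$ by bucketing over $\tau_m/m$ and using the definition of $\Lambda_a$, then factors over regenerations. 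For the reverse, it shows that whenever $\psi(\theta,r)\le 1$ (and the relevant Sznitman moment bound holds), one has $\Lambda_a(\theta)\le r$, by bounding $E_o[e^{\langle\theta,X_n\rangle-rn}]$ via a single regeneration block. Taking $r\uparrow\Lambda_a(\theta)$ along $r=\Lambda_a(\theta)-\epsilon$ and using monotone convergence finishes it. This completely avoids the tilted-walk construction and the ``tilted transience'' issue you flag as the main obstacle: no change of measure on the regeneration law is ever needed.

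One small correction on the nestling domain: your bound $|\langle\theta,X_{\tau_1}\rangle|\le|\theta|\tau_1$ does not help here, since $\tau_1$ has no exponential moment. The relevant Sznitman estimate (Lemma~\ref{Szestimates}(c)) is $E_o[\sup_{1\le n\le\tau_1}e^{c_4|X_n|}]<\infty$, so one controls $e^{\langle\theta,X_{\tau_1}\rangle}\le e^{|\theta||X_{\tau_1}|}$ directly and needs $\Lambda_a(\theta)>0$ only to kill the $-\lambda\tau_1$ factor. This is precisely what yields $\mathcal{C}=\{|\theta|<c_4,\ \Lambda_a(\theta)>0\}$.
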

\noindent We then use the convex duality in (\ref{kazimkoyuncu}) to establish
\begin{theorem}\label{qual}
$I_a$ is strictly convex and analytic on the non-empty open set $$\mathcal{A}:=\{\nabla\Lambda_a(\theta):\theta\in\mathcal{C}\}.$$
\begin{enumerate}
\item [(a)] If the walk is non-nestling, then $\xi_o\in\mathcal{A}$.
\item [(b)] If the walk is nestling, then $\xi_o\in\partial\mathcal{A}$. For $d\geq2$, $\partial\mathcal{A}$ is smooth at $\xi_o$. The unit vector $\eta_o$ normal to $\partial\mathcal{A}$ (and pointing in $\mathcal{A}$) at $\xi_o$ satisfies $\langle\eta_o,\xi_o\rangle>0$. 
\end{enumerate}
\end{theorem}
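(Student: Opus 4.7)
The plan is to use the Fenchel duality in (\ref{kazimkoyuncu}) together with Lemma \ref{berkeleyolursa}. Since $I_a$ is a convex lower semicontinuous rate function, the Fenchel--Moreau biconjugation applied to (\ref{kazimkoyuncu}) gives
\begin{equation*}
I_a(\xi)=\sup_{\theta\in\mathbb{R}^d}\left\{\langle\theta,\xi\rangle-\Lambda_a(\theta)\right\},
\end{equation*}
so the regularity and strict convexity of $I_a$ on $\mathcal{A}$ can be read off from the corresponding properties of $\Lambda_a$ on $\mathcal{C}$.

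The first step is to show that $\nabla\Lambda_a:\mathcal{C}\to\mathcal{A}$ is an analytic diffeomorphism, which automatically makes $\mathcal{A}$ open. Strict convexity of $\Lambda_a$ on $\mathcal{C}$ gives injectivity of $\nabla\Lambda_a$; local invertibility via the analytic inverse function theorem requires nonsingularity of $\nabla^2\Lambda_a$ on $\mathcal{C}$. Under (A1)--(A3) this should come out of the proof of Lemma \ref{berkeleyolursa}: $\Lambda_a$ is built via the regeneration structure of \cite{SznitmanZerner} so that on $\mathcal{C}$ it coincides with the log-moment generating function of a random vector whose covariance is nondegenerate (uniform ellipticity (A2) forbids the increments between regenerations from being supported on a proper affine subspace). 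One then obtains an analytic $\theta:\mathcal{A}\to\mathcal{C}$ with $\nabla\Lambda_a(\theta(\xi))=\xi$, and the supremum above is attained at $\theta(\xi)$, giving
\begin{equation*}
I_a(\xi)=\langle\theta(\xi),\xi\rangle-\Lambda_a(\theta(\xi))\qquad(\xi\in\mathcal{A}).
\end{equation*}
Analyticity of $I_a$ on $\mathcal{A}$ is immediate, and the standard identity $\nabla^2 I_a(\xi)=(\nabla^2\Lambda_a(\theta(\xi)))^{-1}$ delivers strict convexity.

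Part (a) now follows at once: in the non-nestling case $0\in\mathcal{C}$ by Lemma \ref{berkeleyolursa}(a), and differentiating (\ref{kazimkoyuncu}) at $\theta=0$ (or using that $\xi_o$ is the unique zero of $I_a$) yields $\nabla\Lambda_a(0)=\xi_o$, so $\xi_o\in\mathcal{A}$. For part (b), nestling excludes $0$ from $\mathcal{C}$ because $\Lambda_a(0)=0$, but the renewal construction underlying Lemma \ref{berkeleyolursa} provides an analytic continuation $\widetilde{\Lambda}$ of $\Lambda_a|_{\mathcal{C}}$ to an open ball around $0$, with $\widetilde{\Lambda}(0)=0$ and $\nabla\widetilde{\Lambda}(0)=\xi_o$. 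The inclusion $\xi_o\notin\mathcal{A}$ follows because strict convexity of $I_a$ on the open set $\mathcal{A}$ is incompatible with $\xi_o$ being the endpoint of the line segment $\mathcal{N}\subset\{I_a=0\}$, whose intersection with any neighborhood of $\xi_o$ is nontrivial. For $\xi_o\in\overline{\mathcal{A}}$, pick a unit vector $\eta$ with $\langle\eta,\xi_o\rangle>\langle\eta,\xi\rangle$ for every other $\xi\in\mathcal{N}$ (which exists because $\xi_o$ is an exposed endpoint of $\mathcal{N}$); then $t\eta\in\mathcal{C}$ for small $t>0$, $\Lambda_a(t\eta)=\widetilde{\Lambda}(t\eta)$ on this ray, and $\nabla\Lambda_a(t\eta)\to\nabla\widetilde{\Lambda}(0)=\xi_o$. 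When $d\geq2$, the local piece of $\partial\mathcal{C}$ through $0$ is the analytic hypersurface $\{\widetilde{\Lambda}=0\}$, smooth at $0$ because its normal $\nabla\widetilde{\Lambda}(0)=\xi_o$ is nonzero; pushing forward by the local analytic diffeomorphism $\nabla\widetilde{\Lambda}$ shows $\partial\mathcal{A}$ is smooth at $\xi_o$. Writing $H:=\nabla^2\widetilde{\Lambda}(0)$, tangent vectors to $\partial\mathcal{A}$ at $\xi_o$ have the form $Hv$ with $v\perp\xi_o$; the unit normal $\eta_o$ pointing into $\mathcal{A}$ is therefore a positive multiple of $H^{-1}\xi_o$, and positive-definiteness of $H$ gives $\langle\eta_o,\xi_o\rangle>0$.

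The main obstacle I anticipate is establishing the positive-definiteness of $\nabla^2\Lambda_a$ throughout $\mathcal{C}$ (and of $\nabla^2\widetilde{\Lambda}$ at $0$ in the nestling case): strict convexity alone does not imply this, so the probabilistic representation of $\Lambda_a$ through the regeneration structure must be invoked to identify the Hessian with a nondegenerate covariance matrix. A secondary technical point, relevant only to (b), is securing the analytic continuation $\widetilde{\Lambda}$ across $\theta=0$ on which the boundary-smoothness argument depends; both issues should follow from a careful inspection of the construction of $\Lambda_a$ in the proof of Lemma \ref{berkeleyolursa}.
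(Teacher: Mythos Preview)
Your plan for the first assertion and for part (a) is essentially the paper's own: the positive-definiteness of $\nabla^2\Lambda_a$ on $\mathcal{C}$ is exactly formula (\ref{hessiansh}), obtained by differentiating $\psi(\theta,\Lambda_a(\theta))=1$ twice, and from there the analytic inverse function theorem and Fenchel duality give analyticity and strict convexity of $I_a$ on $\mathcal{A}$. In the non-nestling case $0\in\mathcal{C}$ and (\ref{babosh}) at $\theta=0$ reproduces (\ref{huseysh}), so $\xi_o=\nabla\Lambda_a(0)\in\mathcal{A}$.

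The gap is in part (b). Your argument hinges on an analytic continuation $\widetilde{\Lambda}$ of $\Lambda_a|_{\mathcal{C}}$ to a full open ball about $0$. This does not follow from the construction in Lemma~\ref{berkeleyolursa}: $\Lambda_a$ is defined on $\mathcal{C}$ via the implicit function theorem applied to $\psi(\theta,r)=E_o\!\left[\mathrm{e}^{\langle\theta,X_{\tau_1}\rangle-r\tau_1}\,\middle|\,D=\infty\right]=1$, and extending through $\theta=0$ would require $\psi$ to be analytic in a neighborhood of $(0,0)$, hence finite for some $r<0$. In the nestling case Lemma~\ref{Szestimates}(c) gives only polynomial moments of $\tau_1$ (for $d\geq2$; not even that for $d=1$), so $\psi(\theta,r)$ typically blows up for $r<0$ and no such $\widetilde{\Lambda}$ exists. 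Consequently your use of $\nabla\widetilde{\Lambda}(0)=\xi_o$ to show $\xi_o\in\overline{\mathcal{A}}$, and of $\nabla^2\widetilde{\Lambda}(0)$ for the boundary smoothness and normal direction, is unjustified.

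The paper avoids this by working one-sidedly. For $\xi_o\in\partial\mathcal{A}$ it takes $\theta_n\in\mathcal{C}$ with $\theta_n\to0$ and passes to the limit in the explicit formula (\ref{babosh}) for $\nabla\Lambda_a(\theta_n)$: when $d\geq2$ the polynomial moments of $\tau_1$ allow dominated convergence in both numerator and denominator; when $d=1$ the numerator is handled by dominated convergence and the denominator by Fatou, the gap being closed using that $I_a$ is affine on $[0,\xi_o]$. For the smoothness of $\partial\mathcal{A}$ at $\xi_o$ the paper does not extend $\Lambda_a$ but rather describes $\partial\mathcal{C}$ near $0$ as the level set $\{\psi(\cdot,0)=1\}$ (analytic because $E_o[\mathrm{e}^{c_4|X_{\tau_1}|}\mid D=\infty]<\infty$), whose normal at $0$ is proportional to $\xi_o$; it then observes that the right-hand side of (\ref{hessiansh}) extends smoothly to $\overline{\mathcal{C}}\cap\{|\theta|<c_4\}$ (again only polynomial moments of $\tau_1$ are needed) and uses this smooth extension of the differential of $\nabla\Lambda_a$, rather than any extension of $\Lambda_a$ itself, to push the tangent space of $\partial\mathcal{C}$ forward and read off $\eta_o$.
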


In Section \ref{aLDPminimizersection}, we identify the unique minimizer in (\ref{divaneasik}) for $\xi\in\mathcal{A}$. The natural interpretation is that this minimizer gives the distribution of the RWRE path under $P_o$ when the particle is conditioned to escape to infinity with mean velocity $\xi$. Here is our candidate:

\begin{definition}\label{definemuyuanan}
For every $\xi\in\mathcal{A}$, define a measure $\bar{\mu}_\xi^\infty$ on $U^\mathbb{N}$ in the following way: There exists a unique $\theta\in\mathcal{C}$ satisfying $\xi=\nabla\Lambda_a(\theta)$. For every $K\in\mathbb{N}$, take any bounded function $f:U^{\mathbb{N}}\rightarrow\mathbb{R}$ such that $f((z_i)_{i\geq1})$ is independent of $(z_i)_{i>K}$.
\begin{equation}
\int\!\! f\mathrm{d}\bar{\mu}_\xi^\infty:=\frac{E_o\left[\left.\sum_{j=0}^{\tau_1 -1}f((Z_{j+i})_{i\geq1})\ \mathrm{e}^{\langle\theta,X_{\tau_{K}}\rangle - \Lambda_a(\theta)\tau_{K}}\,\right|\,D=\infty\right]}{E_o\left[\left.\tau_1\ \mathrm{e}^{\langle\theta,X_{\tau_1}\rangle - \Lambda_a(\theta)\tau_1}\,\right|\,D=\infty\right]}\label{muyucananan}.
\end{equation}
\end{definition}
\begin{theorem}\label{sevval}
For every $\xi\in\mathcal{A}$, the measure $\mu_\xi^\infty$ on $\left(\mathbb{Z}^d\right)^{\mathbb{N}}$ induced by $\bar{\mu}_\xi^\infty$  via the map $(z_1,z_2,\ldots)\mapsto(z_1,z_1+z_2,\ldots)$ is the unique minimizer of (\ref{divaneasik}).
\end{theorem}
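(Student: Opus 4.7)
The plan is to identify $\mu_\xi^\infty$ as a Doob $h$-transform of the averaged walk $P_o(\,\cdot\,|\,D=\infty)$ by the exponential functional $M_n := \mathrm{e}^{\langle\theta, X_n\rangle - \Lambda_a(\theta)n}$, where $\theta\in\mathcal{C}$ is the unique covector satisfying $\nabla\Lambda_a(\theta)=\xi$ guaranteed by Theorem~\ref{qual}. The proof then divides into three phases: (i)~verify that $\mu_\xi^\infty\in\mathcal{E}$ with $m(\mu_\xi^\infty)=\xi$; (ii)~compute $\mathfrak{I}_a(\mu_\xi^\infty)=\langle\theta,\xi\rangle-\Lambda_a(\theta)=I_a(\xi)$; (iii)~derive uniqueness from a Donsker--Varadhan-type convex duality.

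For phase~(i) the cornerstone is the renewal identity $E_o[M_{\tau_1}\mid D=\infty]=1$. Because the regeneration slabs are i.i.d.\ under $P_o(\,\cdot\,|\,D=\infty)$, the equation $E_o[\mathrm{e}^{\langle\theta,X_{\tau_1}\rangle-\lambda\tau_1}\mid D=\infty]=1$ has a unique solution $\lambda$, and the exponential moment control supplied by Kalikow's condition~(A3) together with the construction behind Lemma~\ref{berkeleyolursa} forces $\lambda=\Lambda_a(\theta)$. This identity makes $M_{\tau_K}$ a martingale in $K$, so the right-hand side of (\ref{muyucananan}) is independent of $K$; the tilted slabs remain i.i.d., which yields shift-invariance and ergodicity of $\bar\mu_\xi^\infty$ under $T^*$. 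Differentiating the renewal identity in $\theta$ and applying (\ref{muyucananan}) to $f=Z_1$ (so that $\sum_{j=0}^{\tau_1-1}Z_{j+1}=X_{\tau_1}$) yields
\[
m(\mu_\xi^\infty) \;=\; \frac{E_o[X_{\tau_1}M_{\tau_1}\mid D=\infty]}{E_o[\tau_1 M_{\tau_1}\mid D=\infty]} \;=\; \nabla\Lambda_a(\theta) \;=\; \xi.
\]

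Phase~(ii) is the core computation. Unwinding (\ref{muyucananan}) should display $q_{\mu_\xi^\infty}(w,z)$ as an $h$-transform of $q(w,z)$, namely
\[
\frac{q_{\mu_\xi^\infty}(w,z)}{q(w,z)} \;=\; \mathrm{e}^{\langle\theta,z\rangle-\Lambda_a(\theta)}\,\frac{V(wz)}{V(w)},
\]
where $V(w)$ is a positive function built from the conditional expectation of the future tilt beyond the endpoint of $w$; its existence and positivity follow from the martingale property together with (A2). Substituting into (\ref{hayirsh}) and integrating against $\mu_\xi^\infty$, the $V$-ratios telescope in expectation by stationarity under $T^*$, leaving $\langle\theta,\xi\rangle-\Lambda_a(\theta)$; by (\ref{kazimkoyuncu}) and saddle-point optimality at $\theta$, this equals $I_a(\xi)$.

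Phase~(iii) is then a short argument: the weak duality $\mathfrak{I}_a(\mu)\geq\langle\theta,m(\mu)\rangle-\Lambda_a(\theta)$ that falls out of (\ref{kazimkoyuncu}) and (\ref{divaneasik}) forces any competing minimiser to have slab-increments whose Radon--Nikodym derivative against the reference slab law equals $M_{\tau_1}$, and strict convexity of $\Lambda_a$ on $\mathcal{C}$ (Lemma~\ref{berkeleyolursa}) pins it down to $\mu_\xi^\infty$. The principal obstacle will be phase~(ii): (\ref{muyucananan}) is organised by regeneration times of the \emph{forward} walk, whereas the integrand of $\mathfrak{I}_a$ is driven by the \emph{past} $w\in W_\infty^{\mathrm{tr}}$, which can visit the origin many times and straddle regeneration slabs awkwardly. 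Establishing well-definedness of $V(w)$ under $Q^w$ and converting the formal telescoping into a rigorous $L^1(\mu_\xi^\infty)$ identity will require careful exponential moment bounds on $\tau_1$ and on the slab lengths, all of which should follow from (A2) and (A3).
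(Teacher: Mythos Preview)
Your plan is a direct, constructive attack: show $\mu_\xi^\infty\in\mathcal{E}$ with the right drift, then compute $\mathfrak{I}_a(\mu_\xi^\infty)=\langle\theta,\xi\rangle-\Lambda_a(\theta)=I_a(\xi)$ by exhibiting $q_{\mu_\xi^\infty}/q$ as an $h$-transform, and finally argue uniqueness from a Gibbs-type variational identity. The paper does something quite different and, in a sense, slicker. It never computes $\mathfrak{I}_a(\mu_\xi^\infty)$ and never verifies $\mu_\xi^\infty\in\mathcal{E}$ directly. Instead it (a)~uses strict convexity of $I_a$ at $\xi$ (Theorem~\ref{qual}) plus a compactness argument on Varadhan's compactification $\overline{W}$ of the path space (Lemma~\ref{tugish}) to show the infimum in (\ref{divaneasik}) is \emph{attained} by some $\alpha\in\mathcal{E}$; and (b)~proves a conditioning result (Theorem~\ref{averagedconditioningsh}): under $P_o(\,\cdot\,|\,|X_n/n-\xi|\le\delta)$ the empirical process $\bar\nu_{n,X}^\infty$ concentrates on $\bar\mu_\xi^\infty$ at an exponential rate. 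Then for any competitor $\alpha\in\mathcal{E}$ with $m(\alpha)=\xi$ and $\alpha\neq\mu_\xi^\infty$, a Jensen/change-of-measure lower bound gives $\liminf_n\frac1n\log P_o(\bar\nu_{n,X}^\infty\text{ near }\bar\alpha,\,X_n/n\text{ near }\xi)\ge -\mathfrak{I}_a(\alpha)$, and combining with~(b) yields $I_a(\xi)-\mathfrak{I}_a(\alpha)<0$. So every $\alpha\neq\mu_\xi^\infty$ is suboptimal; since a minimizer exists by~(a), it must be $\mu_\xi^\infty$. The facts $\mu_\xi^\infty\in\mathcal{E}$ and $m(\mu_\xi^\infty)=\xi$ then fall out \emph{a posteriori}.

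What each approach buys: the paper's route completely sidesteps the obstacle you flagged in Phase~(ii), namely reconciling the forward-regeneration description of $\bar\mu_\xi^\infty$ with the backward-path kernel $q(w,z)$ and building the harmonic function $V$ on $W_\infty^{\mathrm{tr}}$. That reconciliation is genuinely delicate (the past $w$ can revisit the origin and straddle slabs), and the paper simply never needs it. Your approach, if carried through, would yield more: an explicit formula for $q_{\mu_\xi^\infty}$ and a transparent reason for optimality. Your Phase~(iii), however, is under-specified as written: the inequality $\mathfrak{I}_a(\mu)\ge\langle\theta,m(\mu)\rangle-\Lambda_a(\theta)$ becomes a relative entropy only \emph{after} you normalise by the very function $V$ you have yet to construct, so uniqueness really hinges on completing Phase~(ii), not on strict convexity of $\Lambda_a$ alone; and since $\mathcal{E}$ is not convex, a bare strict-convexity-of-entropy argument does not immediately rule out a second ergodic minimizer. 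The paper's indirect argument handles uniqueness and existence in one stroke without any of this.
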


\section{Quenched vs.\ averaged}\label{nilgunisik}

\subsection{Previous results and a conjecture}

Consider nearest-neighbor RWRE on $\mathbb{Z}^d$. Assume that the environment is i.i.d.\ and uniformly elliptic. Then, the quenched and averaged LDPs hold with rate functions $I_q$ and $I_a$, respectively. Clearly,
\begin{equation}\label{geckalmakkk}
\mathcal{D}:=\left\{(\xi_1,\ldots,\xi_d)\in\mathbb{R}^d:|\xi_1|+\cdots+|\xi_d|\leq1\right\}=\left\{\xi\in\mathbb{R}^d:I_q(\xi)<\infty\right\}.
\end{equation} 
For any $\xi\in\mathcal{D}$, it follows from Jensen's inequality that $I_a(\xi)\leq I_q(\xi)$.

Take any $\xi=(\xi_1,\ldots,\xi_d)\in\mathbb{R}^d$ with $|\xi_1|+\cdots+|\xi_d|=1$, and assume WLOG that $\xi_j\geq0$ for all $j=1,\ldots,d$. Denote the canonical basis of $\mathbb{Z}^d$ by $(e_1,\ldots,e_d)$. The paths constituting the event $\left\{\frac{X_n}{n}=\xi\right\}$ do not visit the same point more than once, and it is not hard to see that $$I_a(\xi)=\sum_{j=1}^d\xi_j\log\frac{\xi_j}{\mathbb{E}\left[\pi(0,e_j)\right]}\qquad\mbox{and}\qquad I_q(\xi)=\mathbb{E}\left[\sum_{j=1}^d\xi_j\log\frac{\xi_j}{\pi(0,e_j)}\right].$$ Again by Jensen's inequality, $I_a(\xi)<I_q(\xi)$ as long as the environment is not deterministic. Since the rate functions are convex and thus continuous on $\mathcal{D}$, we conclude that $I_a(\cdot)<I_q(\cdot)$ on the boundary and at some interior points of $\mathcal{D}$.

In the case of nearest-neighbor RWRE on $\mathbb{Z}$, recall that (\ref{montreal}) connects the rate functions $I_a$ and $I_q$. When $\mathbb{P}$ is a product measure, Comets et al.\ \cite{CGZ} use this formula to show that $I_a(\xi)=I_q(\xi)$ if and only if $\xi=0$ or $I_a(\xi)=0$.

When $d\geq2$, Varadhan \cite{Raghu} proves that the statements $I_a(0)=I_q(0)$ and $\left\{\xi:I_a(\xi)=0\right\}=\left\{\xi:I_q(\xi)=0\right\}$ continue to hold. It is not known whether these are the only points where the two rate functions are equal. Here is our
\begin{conjecture}\label{conjecture}
For walks in high dimension, $I_a$ and $I_q$ agree on a set with non-empty interior.
\end{conjecture}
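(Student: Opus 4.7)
The plan is to establish the equality of $I_a$ and $I_q$ on a neighborhood of the LLN velocity $\xi_o$ by identifying the minimizer of Varadhan's averaged variational formula (\ref{divaneasik}) with an environment-kernel tilt of $\mathbb{P}$, so that the quenched variational formula in Corollary \ref{level1LDP} produces the same value. The conjecture is formulated for $\mathbb{Z}^d$-RWRE, but the technically attackable case, and the one the introduction flags, is the space-time walk in dimension $d\geq 3+1$; I would begin there and treat the general case as a guide. Throughout, I would work under the non-nestling assumption, so that by Theorem \ref{qual} the set $\mathcal{A}$ is an open neighborhood of $\xi_o$ on which $I_a$ is strictly convex and analytic and Theorem \ref{sevval} identifies a canonical minimizer $\mu_\xi^\infty$.

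The first step is to read off an environment kernel from $\mu_\xi^\infty$. Fixing $\xi\in\mathcal{A}$ with corresponding tilt $\theta\in\mathcal{C}$, one expects that, up to a Doob $h$-transform, the tilted path measure defined in (\ref{muyucananan}) factors through the environment: there is a positive $\mathcal{B}$-measurable $h_\xi$ on $\Omega$ such that
\[
\hat{\pi}_\xi(\omega,z):=\frac{h_\xi(T_z\omega)}{h_\xi(\omega)}\,\pi(0,z)\,\mathrm{e}^{\langle\theta,z\rangle-\Lambda_a(\theta)}
\]
is an environment kernel, with $h_\xi$ harmonic for the exponentially tilted kernel $\pi(0,z)\mathrm{e}^{\langle\theta,z\rangle-\Lambda_a(\theta)}$. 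The second step is to produce an $\hat{\pi}_\xi$-invariant probability measure $\mathbb{Q}_\xi\ll\mathbb{P}$ by following the scheme of Theorem \ref{density}: write $\mathrm{d}\mathbb{Q}_\xi=\phi_\xi\,\mathrm{d}\mathbb{P}$ where $\phi_\xi$ is a suitable sum of tilted Green's function contributions. Kozlov's Lemma \ref{Kozlov} then gives the LLN with velocity $\xi$ under $P_o^{\hat{\pi}_\xi}$. The third step is to verify the equality of costs: substituting $\mathrm{d}\mu_\xi=\mathrm{d}\mathbb{Q}_\xi\,\hat{\pi}_\xi(\omega,z)$ into (\ref{level2ratetilde}) gives, after the telescoping of $\log h_\xi(T_z\omega)-\log h_\xi(\omega)$ (which integrates to zero against the stationary $\mathbb{Q}_\xi$), precisely $\mathfrak{I}_q(\mu_\xi)=\langle\theta,\xi\rangle-\Lambda_a(\theta)=I_a(\xi)$. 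Since $I_a(\xi)\leq I_q(\xi)\leq\mathfrak{I}_q(\mu_\xi)$ by Corollary \ref{level1LDP}, this would force $I_a(\xi)=I_q(\xi)$ on $\mathcal{A}$, which is open.

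The hard step is the second one: the construction and integrability of the harmonic multiplier $h_\xi$, equivalently the density $\phi_\xi$. In the space-time setting with $d\geq 3+1$, the tilted walk is transient on $\mathbb{Z}^{d+1}$ with a polynomially decaying Green's function, and the independence of environment slices lets one sum the tilted Green's function against $\mathbb{P}$ and obtain $\phi_\xi\in L^1(\mathbb{P})$; this is the analogue of part (b) of Theorem \ref{density}. In the genuine $\mathbb{Z}^d$ case, the walk revisits the same environment sites, so the corresponding sum is a renewal-type series in a correlated field and there is no obvious reason for $L^1$ control — this is exactly why the statement remains a conjecture outside the space-time framework. A secondary difficulty is verifying that $\mathrm{d}\mu_\xi^\infty$ really does depend on the past path only through $T_{X_n}\omega$, i.e., that the conditioning on $\{D=\infty\}$ and on the regeneration structure produces a Markovian-in-the-environment transition; this uses the i.i.d.\ nature of $\mathbb{P}$ (assumption (A1)) and the renewal decomposition of Sznitman–Zerner, and in the space-time setting it is automatic because the environment slice at each new time level is fresh.
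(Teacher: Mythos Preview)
Your proposal and the paper take genuinely different routes. The paper does not pass through either variational formula to establish the space-time result (Theorem \ref{AequalsQ}). Instead it works directly with the logarithmic moment generating function: setting $u_N^\theta(\omega)=E_{o,o}^\omega[\mathrm{e}^{\langle\theta,X_N\rangle-N\Lambda_c(\theta)}]$, this is a nonnegative martingale in $N$, and the single technical step is Lemma \ref{eltu}, an $L^2$ bound $\sup_N\|u_N^\theta\|_{L^2(\mathbb{P})}<\infty$ obtained by writing the second moment as a two-replica expectation and reducing it to a renewal equation governed by the collision probability of two independent tilted walks (transient when $d\geq3$). $L^2$-boundedness upgrades almost-sure convergence to $L^1$-convergence, forcing the limit $u^\theta$ to have mean one and hence to be strictly positive; then $\frac{1}{n}\log E_{o,o}^\omega[\mathrm{e}^{\langle\theta,X_n\rangle}]=\Lambda_c(\theta)+\frac{1}{n}\log u_n^\theta(\omega)\to\Lambda_c(\theta)$, and G\"artner--Ellis finishes. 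Your harmonic multiplier $h_\xi$ is exactly this $u^\theta$, and your ``integrability of $h_\xi$'' is exactly the $L^2$ bound; the difference is what is done with it.

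Your route---feed the Doob-transformed kernel and its invariant measure into the quenched variational formula (\ref{level2ratetilde})---is conceptually natural but runs into two obstacles the paper's direct LMGF argument sidesteps. First, Corollary \ref{level1LDP} is not available off the shelf in the space-time case because the walk on $\mathbb{Z}^{d+1}$ fails ellipticity in the time direction (the paper explicitly notes this before Theorem \ref{AequalsQ}); you would have to reprove the quenched LDP and its variational representation under weaker hypotheses before invoking it. Second, the $\overline{\pi}^\theta$-invariant measure the paper eventually constructs (Theorem \ref{doob}) is absolutely continuous with respect to $\mathbb{P}$ only on each forward $\sigma$-algebra $\mathcal{B}_{-N}^+$, not on all of $\mathcal{B}$; so the candidate pair measure does not literally belong to $M_{1,s}^{\ll}(\Omega\times\mathcal{R})$, and substituting it into (\ref{level2ratetilde}) is not immediate. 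The paper's approach buys a short, self-contained proof of the rate-function equality that needs neither the quenched variational machinery nor the structure of the invariant measure; your approach, if the obstacles were cleared, would buy a more explicit identification of the common minimizer and would dovetail with Lemma \ref{lagrange}.
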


\subsection{Our results in the space-time case}

In the definition of RWRE, the environment $\omega$ is sampled from $(\Omega,\mathcal{B}, \mathbb{P})$ and kept fixed throughout the walk. In other words, if the particle visits a point multiple times, it sees the same environment there at every visit. Thus, the walk under the averaged measure $P_o$ has a long-term memory which makes the model hard to analyze.

In Chapter \ref{spacetimechapter}, we consider a simpler model referred to as ``space-time RWRE" where we assume that the transition probabilities at distinct points are i.i.d.\ and are freshly sampled at each time step. To explicitly indicate the time dependence, write $\omega_{n,x}:=\left(\pi_{n,n+1}(x,x+z)\right)_{z\in\mathbb{Z}^d}$ for the environment at $x$ at time $n$. The environment is i.i.d.\ in space as well as in time, i.e., $\omega:=\left(\omega_{n,x}\right)_{n\in\mathbb{Z},x\in\mathbb{Z}^d}$ is an i.i.d.\ collection.

Apart from $\mathcal{B}$, define the ``past" and ``future" $\sigma$-algebras $\mathcal{B}_n^-$ and $\mathcal{B}_n^+$ on $\Omega$ which for every $n\in\mathbb{Z}$ are respectively generated by $\left(\omega_{m,x}:\ x\in\mathbb{Z}^d,\ m\leq n\right)$ and $\left(\omega_{m,x}:\ x\in\mathbb{Z}^d,\ m\geq n\right)$.

Note that if $\left(X_n\right)_{n\geq0}$ denotes the space-time RWRE path on $\mathbb{Z}^d$, then $\left(n,X_n\right)_{n\geq0}$ can be viewed as the trajectory of a particle performing RWRE on $\mathbb{Z}^{d+1}$ such that the first component of the position of the particle at time $n$ is always equal to $n$. With this picture in mind, the quenched and averaged measures on paths starting at $x$ at time $k$ are denoted by $P_{k,x}^{\omega}$ and $P_{k,x}$, respectively. Similarly, write $E_{k,x}^{\omega}$ and $E_{k,x}$ for the corresponding expectations.

To keep the arguments short, assume that the walk $\left(X_n\right)_{n\geq0}$ is nearest-neighbor. Plus, impose a uniform ellipticity condition which now means there exists a constant $c_1>0$ such that $\mathbb{P}(\pi_{0,1}(0,z)\geq c_1)=1$ for each $z\in U$.

Define the space-time shifts $\left(T_{m,y}\right)_{m\in\mathbb{Z}, y\in\mathbb{Z}^d}$ on $\Omega$ by $\left(T_{m,y}\omega\right)_{n,x}=\omega_{n+m,x+y}$. With this notation, the transition kernel $\overline{\pi}$ of the environment Markov chain $\left(T_{n,X_n}\omega\right)_{n\geq 0}$ satisfies $\overline{\pi}(\omega,T_{1,z}\omega)=\pi_{0,1}(0,z)$ for every $\omega\in\Omega$ and $z\in U$.

The marginal of $P_{o,o}$ on paths is classical random walk with transition vector $\left(q(z)\right)_{z\in U}$ given by $q(z)=\mathbb{E}[\pi_{0,1}(0,z)]$ for every $z\in U$. Therefore, the LLN for the mean velocity is valid, and the limiting velocity vector $\xi_o$ is $\sum_{z\in U}q(z)z$. The averaged LDP for the mean velocity is simply Cram\'{e}r's theorem (see \cite{DZ}) and the rate function $I_c$ is the convex conjugate of the logarithmic moment generating function $\Lambda_c:\mathbb{R}^d\to\mathbb{R}$ given by 
\begin{equation}
\Lambda_c(\theta)=\log\left(\sum_{z\in U}q(z)\mathrm{e}^{\langle\theta,z\rangle}\right).\label{fi}
\end{equation}

Even though we can think of $(n,X_n)_{n\geq0}$ as RWRE on $\mathbb{Z}^{d+1}$, the results of \cite{jeffrey} and \cite{Raghu} on quenched large deviations are not directly applicable since our environment is not elliptic in the ``time" direction. However, one expects that modifications of these arguments should work. Instead of taking this route, we develop an alternative technique in Section \ref{Qsection} and prove Conjecture \ref{conjecture} in the space-time case:

\begin{theorem}\label{AequalsQ}
If $d\geq3$, then there exists $\eta>0$ such that the quenched LDP for the mean velocity holds in the $\eta$-neighborhood of $\xi_o$, and the rate function is identically equal to the rate function $I_c$ of the averaged LDP in this neighborhood.
\end{theorem}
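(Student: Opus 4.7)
The plan is to construct, for every $\theta$ in a neighborhood of $0\in\mathbb{R}^d$, the martingale limit
$$h_\theta(\omega)\;:=\;\lim_{n\to\infty}M_n^\theta(\omega),\qquad M_n^\theta(\omega)\;:=\;\frac{E_{0,o}^\omega\!\left[e^{\langle\theta,X_n\rangle}\right]}{e^{n\Lambda_c(\theta)}},$$
and to use it both as the ingredient of a Doob $h$-transform of the environment kernel and as a carrier of the quenched Lyapunov exponent. Because the time slabs of the environment are independent, $(M_n^\theta)$ is a strictly positive, mean-one $\mathbb{P}$-martingale in the filtration generated by $\{\omega_{m,x}:0\leq m\leq n-1\}$, and the one-step decomposition
$$M_n^\theta(\omega)=e^{-\Lambda_c(\theta)}\sum_{z\in U}\pi_{0,1}(0,z)\,e^{\langle\theta,z\rangle}\,M_{n-1}^\theta(T_{1,z}\omega)$$
passes to the limit to yield the cocycle
$$h_\theta(\omega)=e^{-\Lambda_c(\theta)}\sum_{z\in U}\pi_{0,1}(0,z)\,e^{\langle\theta,z\rangle}\,h_\theta(T_{1,z}\omega).$$

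The heart of the proof is a uniform $L^2$-bound on $(M_n^\theta)$. Expanding $\mathbb{E}[(M_n^\theta)^2]$ and integrating out the environment slab by slab yields a Feynman-Kac representation under two independent copies of the $\theta$-tilted averaged walk, with a multiplicative weight applied at each coincidence time $\{X_k^{(1)}=X_k^{(2)}\}$ whose conditional mean is $R_\theta:=\mathbb{E}[g_\theta^2]/e^{2\Lambda_c(\theta)}$, where $g_\theta(\omega):=\sum_z\pi_{0,1}(0,z)e^{\langle\theta,z\rangle}$. Differentiating at $\theta=0$ shows $\log R_\theta=O(|\theta|^2)$, the linear contribution being absorbed by the $e^{-n\Lambda_c(\theta)}$ normalization. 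The difference $X_k^{(1)}-X_k^{(2)}$ is a random walk on $\mathbb{Z}^d$, transient precisely when $d\geq3$, so its occupation time at the origin has bounded exponential moments up to a positive threshold; this gives $\sup_n\mathbb{E}[(M_n^\theta)^2]<\infty$ uniformly in $\theta$ near $0$. Martingale $L^2$-convergence produces $h_\theta$. For positivity, Paley-Zygmund yields $\mathbb{P}(h_\theta>0)>0$, and the cocycle combined with uniform ellipticity forces $\{h_\theta>0\}$ to be stable under each shift $T_{1,z}$; iterating this stability makes it $\mathcal{B}_k^+$-measurable for every $k$, hence a tail event for the i.i.d.\ time slabs, and Kolmogorov's 0-1 law upgrades the conclusion to $\mathbb{P}(h_\theta>0)=1$.

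From positivity of $h_\theta$ the theorem follows by two matching bounds. The convergence $\frac{1}{n}\log E_{0,o}^\omega[e^{\langle\theta,X_n\rangle}]\to\Lambda_c(\theta)$ combined with exponential Chebyshev delivers the quenched LDP upper bound in a neighborhood of $\xi_o=\nabla\Lambda_c(0)$ with rate $I_c$. For the matching lower bound, the cocycle promotes $h_\theta$ to a genuine Doob $h$-transform: setting
$$\hat{\pi}^\theta(\omega,z):=\frac{\pi_{0,1}(0,z)\,e^{\langle\theta,z\rangle}\,h_\theta(T_{1,z}\omega)}{e^{\Lambda_c(\theta)}\,h_\theta(\omega)},\qquad d\mathbb{Q}^\theta\;\propto\;h_\theta\,d\mathbb{P},$$
independence of $\pi_{0,1}(0,\cdot)$ from $T_{1,z}\omega$ yields both the $\hat{\pi}^\theta$-invariance of $\mathbb{Q}^\theta$ and the direct identification $\int\sum_z z\,\hat{\pi}^\theta(\omega,z)\,d\mathbb{Q}^\theta(\omega)=e^{-\Lambda_c(\theta)}\sum_z zq(z)e^{\langle\theta,z\rangle}=\nabla\Lambda_c(\theta)$. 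Lemma \ref{Kozlov} then gives the LLN under $P_o^{\hat{\pi}^\theta}$ with velocity $\nabla\Lambda_c(\theta)$, and since the Radon-Nikodym derivative of $P_o^\omega$ against $P_o^{\hat{\pi}^\theta,\omega}$ telescopes through the cocycle, a standard change-of-measure estimate produces the quenched lower bound, with rate $I_c(\nabla\Lambda_c(\theta))$. As $\theta$ varies, $\nabla\Lambda_c(\theta)$ covers a neighborhood of $\xi_o$.

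The main obstacle is the uniform second-moment estimate: one must perform the slab-by-slab computation carefully enough to show the Feynman-Kac rate is genuinely $O(|\theta|^2)$, and then control the resulting exponential of intersection count uniformly in $n$ and in $\theta$ near $0$ using transience in $d\geq3$. Once this estimate is in hand, the positivity of $h_\theta$, the identification $\xi(\theta)=\nabla\Lambda_c(\theta)$, and the quenched change-of-measure bound are essentially routine.
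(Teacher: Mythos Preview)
Your core strategy matches the paper's exactly: define the normalized martingale $M_n^\theta=u_n^\theta$, obtain a uniform $L^2$ bound via the two-walk intersection representation in $d\geq3$, deduce a strictly positive limit $h_\theta=u^\theta$, and conclude that the quenched log-moment generating function equals $\Lambda_c(\theta)$ on a ball around the origin.

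Two points of comparison are worth making. First, for the $L^2$ estimate the paper does not bound by an exponential of the raw coincidence count. Instead it writes an exact renewal equation $G_N(\theta)=\sum_{k=0}^{N-2}B_k(\theta)G_{N-k-1}(\theta)+C_N(\theta)$ for $G_N(\theta)=\mathbb{E}[(M_N^\theta)^2]$, observes that $G_N(0)\equiv1$ forces $\sum_k B_k(0)<1$ (since $C_\infty(0)>0$ by transience), and then uses continuity of $\theta\mapsto\sum_k B_k(\theta)$ via the local CLT. This is sharper than your sketch, because the weight at a coincidence depends on the \emph{next} steps of both walks, so it does not simply factor as $R_\theta$ per coincidence; the renewal decomposition handles this cleanly.

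Second, once $h_\theta>0$ a.s.\ is known, the paper finishes in one line: $\frac1n\log E_{o,o}^\omega[e^{\langle\theta,X_n\rangle}]=\Lambda_c(\theta)+\frac1n\log u_n^\theta(\omega)\to\Lambda_c(\theta)$, and G\"artner--Ellis (with $\Lambda_c$ smooth and strictly convex) gives both LDP bounds on $\{\nabla\Lambda_c(\theta):|\theta|<\bar\eta\}$. Your route through the Doob transform and Kozlov's lemma also works but is longer and hides a subtlety: $h_\theta\,\mathrm{d}\mathbb{P}$ is \emph{not} $\hat\pi^\theta$-invariant on the full $\sigma$-algebra $\mathcal{B}$, because $T_{1,z}\omega$ retains $\omega_{0,0}$ at coordinate $(-1,-z)$ and hence is not independent of $\pi_{0,1}(0,\cdot)$. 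The invariance does hold when restricted to $\mathcal{B}_0^+$ (this is exactly what the paper establishes later when proving Theorem~\ref{doob}), and that restriction suffices for the LLN since $\hat\pi^\theta$ is $\mathcal{B}_0^+$-measurable; but you should flag this rather than assert independence outright. The upshot is that your lower-bound machinery anticipates the structure of $\mu_\xi^\infty$, whereas the paper separates concerns: G\"artner--Ellis for Theorem~\ref{AequalsQ}, and the $h$-transform description only afterwards.
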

\begin{remark}
This theorem is similar in flavor to the results in \cite{Flury}, \cite{Song}, and \cite{Nikos} on the related model of random walk with a random potential.
\end{remark}


Having established the equality of the rate functions in a neighborhood of the true velocity $\xi_o$, we move on to another large deviation property of space-time RWRE. Note that the random measures \[\bar{\nu}_{n,X}^\infty := \frac{1}{n}\sum_{j=0}^{n-1}\one_{T_{j,X_j}\omega,\left(Z_{j+i}\right)_{i\geq1}}\qquad\mbox{and}\qquad\nu_{n,X}^\infty:= \frac{1}{n}\sum_{j=0}^{n-1}\one_{\left(T_{j+i,X_{j+i}}\omega\right)_{i\geq0}}\] can be naturally identified. (Here, $Z_i=X_i-X_{i-1}$ are the steps of the walk.) Therefore, $\bar{\nu}_{n,X}^\infty$ is referred to as ``the empirical process of the environment Markov chain". Recall (\ref{geckalmakkk}). Given $\xi\in \mathcal{D}^o$, consider the event defined by the particle having mean velocity $\xi$ after a large time $n$. If $\xi\neq\xi_o$, this is a rare event and the exponential rate of decay in $n$ of its $P_{o,o}$-probability is given by $I_c(\xi)>0$. Conditioned on this event, we show that $\nu_{n,X}^\infty$ under $P_{o,o}$ converges to a stationary process uniquely determined by $\xi$. In order to rigorously formulate this result, we first give a

\begin{definition}\label{definemu}
For every $\xi\in\mathcal{D}^o$, define a measure $\bar{\mu}_\xi^\infty$ on $\Omega\times U^\mathbb{N}$ in the following way: There exists a unique $\theta\in\mathbb{R}^d$ satisfying $\xi=\nabla\Lambda_c(\theta)$. For every $N,M$ and $K\in\mathbb{N}$, take any bounded function $f:\Omega\times U^{\mathbb{N}}\rightarrow\mathbb{R}$ such that $f(\cdot,(z_i)_{i\geq1})$ is independent of $(z_i)_{i>K}$ and $\mathcal{B}_{-N}^+\cap\mathcal{B}_M^-$-measurable for each $(z_i)_{i\geq1}$.
\begin{equation}\label{mucan}
\int\!\! f\mathrm{d}\bar{\mu}_\xi^\infty:=E_{o,o}\left[\mathrm{e}^{\langle\theta,X_{N+M+K+1}\rangle - (N+M+K+1)\Lambda_c(\theta)}f(T_{N,X_N}\omega,(Z_{N+i})_{i\geq1})\right].
\end{equation}
\end{definition}
\begin{remark}\label{fehimbey}
Recall the terminology introduced in Section \ref{franzek}. The walk $(n,X_n)_{n\geq0}$ on $\mathbb{Z}^{d+1}$ is clearly non-nestling in the ``time" direction, and the regeneration times satisfy $\tau_m=m$. Therefore, Definition \ref{definemu} is nothing but the space-time version of Definition \ref{definemuyuanan}, except that the test functions here depend also on the environment.
\end{remark}



We start Section \ref{Asection} by showing that $\bar{\mu}_\xi^\infty$ is well defined, and that it naturally induces a stationary process $\mu_\xi^\infty$ with values in $\Omega$. The theorem below says that $\nu_{n,X}^\infty$ converges to $\mu_\xi^\infty$ under $P_{o,o}$ when the particle is conditioned to have mean velocity $\xi$. It is the first main result of Section \ref{Asection}.

\begin{theorem}\label{averagedconditioning}
For every $\xi\in\mathcal{D}^o$, $N,M,K\in\mathbb{N}$, $f$ as in Definition \ref{definemu}, and $\epsilon>0$, \[\limsup_{\delta\to0}\limsup_{n\rightarrow\infty}\frac{1}{n}\log P_{o,o}\left(\ \left|\int\!\! f\mathrm{d}\bar{\nu}_{n,X}^\infty-\int\!\! f\mathrm{d}\bar{\mu}_\xi^\infty\right|>\epsilon\ \left|\ |\frac{X_n}{n}-\xi|\leq\delta\right.\right)<0.\]
\end{theorem}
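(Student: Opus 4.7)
The plan is to use exponential tilting to reduce the conditional estimate to an unconditional concentration bound under a tilted measure whose typical behavior is exactly $\bar{\mu}_\xi^\infty$. Since $\xi\in\mathcal{D}^o$ and $\Lambda_c$ in (\ref{fi}) is strictly convex and real-analytic, the dual $\theta=\theta(\xi)$ with $\nabla\Lambda_c(\theta)=\xi$ is uniquely determined and $I_c(\xi)=\langle\theta,\xi\rangle-\Lambda_c(\theta)$. On finite horizons define the tilted measure
\[\frac{\mathrm{d}\tilde{P}^\theta_{o,o}}{\mathrm{d}P_{o,o}}=\mathrm{e}^{\langle\theta,X_n\rangle-n\Lambda_c(\theta)},\]
under which the marginal walk is i.i.d.\ with step distribution $q^\theta(z):=q(z)\mathrm{e}^{\langle\theta,z\rangle-\Lambda_c(\theta)}$ and mean $\xi$; the martingale property of the density provides Kolmogorov consistency, so $\tilde{P}^\theta_{o,o}$ extends to the infinite-horizon space. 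Rewriting (\ref{mucan}) gives $\int\! f\,\mathrm{d}\bar{\mu}_\xi^\infty=\tilde{E}^\theta_{o,o}[f(T_{N,X_N}\omega,(Z_{N+i})_{i\geq 1})]$, and a consistency check in $N,M,K$ (using the space-time i.i.d.\ structure of $\omega$) shows that $\bar{\mu}_\xi^\infty$ is a well-defined shift-invariant probability measure on $\Omega\times U^{\mathbb{N}}$ and is the stationary law of the tilted environment chain $(\eta_n,(Z_{n+i})_{i\geq 1})_{n\geq 0}$ with $\eta_n:=T_{n,X_n}\omega$.

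Set $A_n:=\{|\int\! f\,\mathrm{d}\bar{\nu}_{n,X}^\infty-\int\! f\,\mathrm{d}\bar{\mu}_\xi^\infty|>\epsilon\}$. On $\{|X_n/n-\xi|\leq\delta\}$, the bound $|\langle\theta,X_n\rangle-n\langle\theta,\xi\rangle|\leq n|\theta|\delta$ combined with $I_c(\xi)=\langle\theta,\xi\rangle-\Lambda_c(\theta)$ yields
\[P_{o,o}\bigl(A_n\cap\{|X_n/n-\xi|\leq\delta\}\bigr)\leq\mathrm{e}^{-nI_c(\xi)+n|\theta|\delta}\,\tilde{P}^\theta(A_n),\]
\[P_{o,o}\bigl(|X_n/n-\xi|\leq\delta\bigr)\geq\mathrm{e}^{-nI_c(\xi)-n|\theta|\delta}\,\tilde{P}^\theta\bigl(|X_n/n-\xi|\leq\delta\bigr).\]
Since $\tilde{P}^\theta(|X_n/n-\xi|\leq\delta)\to 1$ by the LLN for the tilted walk, the conditional probability in the theorem is at most $\mathrm{e}^{2n|\theta|\delta}(1+o(1))\tilde{P}^\theta(A_n)$. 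Taking $\limsup_{n\to\infty}\frac{1}{n}\log$ and then $\limsup_{\delta\to 0}$ kills the $\mathrm{e}^{2n|\theta|\delta}$ factor, reducing the theorem to
\[\limsup_{n\to\infty}\tfrac{1}{n}\log\tilde{P}^\theta(A_n)<0.\]

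For this last step, $\int f\,\mathrm{d}\bar{\nu}_{n,X}^\infty=\frac{1}{n}\sum_{j=0}^{n-1}f(T_{j,X_j}\omega,(Z_{j+i})_{i\geq 1})$ is a sample average of a bounded functional over the chain $(\eta_j,(Z_{j+i})_{i\geq 1})_{j\geq 0}$, which under $\tilde{P}^\theta$ is Markov with invariant law $\bar{\mu}_\xi^\infty$. Uniform ellipticity (A2) is preserved by the tilt (since $|\theta|<\infty$) and, combined with the independence of $\omega$ across time, yields a one-step Doeblin/uniform-mixing condition, hence uniform ergodicity. Standard exponential concentration for bounded functionals of uniformly ergodic Markov chains — via an Azuma--Hoeffding argument on martingale differences obtained by coupling with the stationary chain, or via the Donsker--Varadhan level-3 LDP together with the uniqueness of $\bar{\mu}_\xi^\infty$ as the zero of the associated rate function at mean velocity $\xi$ — gives the strictly negative exponential rate needed.

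\textbf{Main obstacle.} The crux is the concentration step. The tilted walk marginal is i.i.d., but the environment and walk remain coupled: the transition kernel of $(\eta_n)$ under $\tilde{P}^\theta$ is a Doob $h$-transform of the original, and $f$ probes a space-time slab of width $N+M$ that slides with the particle. Checking uniform ergodicity of this coupled chain and confirming that $\bar{\mu}_\xi^\infty$ is its unique invariant measure — both resting on (A2) and the independence of $\omega$ across time — is where the real technical work lies.
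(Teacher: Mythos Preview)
Your reduction is correct and matches the paper exactly: the change of measure
on $\{|X_n/n-\xi|\leq\delta\}$ together with Cram\'er's theorem reduces the
statement to
\[
\limsup_{n\to\infty}\frac1n\log
E_{o,o}\!\left[\mathrm{e}^{\langle\theta,X_n\rangle-n\Lambda_c(\theta)}
\,\one_{A_n}\right]<0,
\]
which is your $\tilde P^\theta(A_n)$.  Your identification
$\int f\,\mathrm{d}\bar\mu_\xi^\infty=\tilde E^\theta[f(T_{N,X_N}\omega,(Z_{N+i})_{i\ge1})]$
is also the right reading of~(\ref{mucan}).

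The gap is in the concentration step.  The chain you name,
$(\eta_j,(Z_{j+i})_{i\ge1})_{j\ge0}$ with $\eta_j=T_{j,X_j}\omega$, is a
\emph{deterministic} shift on $\Omega\times U^{\mathbb N}$, so a Doeblin
minorization is vacuous.  If you pass to $(\eta_j)_{j\ge0}$ alone, the state
$\eta_j$ contains the full past environment $(\omega_{m,\cdot})_{m\leq j}$
(shifted), and the transition to $\eta_{j+1}$ retains it; there is no loss of
memory on $\Omega$ and hence no uniform ergodicity.  Uniform ellipticity gives
you a lower bound on the step probabilities of the walk, not a minorization for
the environment chain.  Neither the Azuma route nor the Donsker--Varadhan route
goes through without first establishing some quantitative mixing, and that
mixing is precisely what needs to be proved.

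The correct mechanism---and what the paper actually uses---is finite-range
dependence, not uniform ergodicity.  Under $\tilde P^\theta$ (equivalently,
under $P_{o,o}$, since the tilting density factors over time) the one-step
blocks $Y_j:=(\omega_{j,\,X_j+\cdot},\,Z_{j+1})$ are i.i.d.\ because the
environment is i.i.d.\ in both space and time and the density
$\prod_j\mathrm{e}^{\langle\theta,Z_{j+1}\rangle-\Lambda_c(\theta)}$ factors
through the $Y_j$'s.  Your centered summand
$\tilde G_j:=g(T_{j,X_j}\omega,(Z_{j+i})_{i\ge1})$ is a bounded function of
$Y_{j-N},\ldots,Y_{j+(M\vee K)-1}$, so $(\tilde G_j)_{j\ge0}$ is
$L$-dependent with $L=N+M+K+1$ and has $\tilde E^\theta[\tilde G_j]=0$ by
your identification of $\bar\mu_\xi^\infty$.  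From here the paper applies
H\"older to split $\sum_j\tilde G_j$ into $L$ interleaved i.i.d.\ sums and
obtains the Cram\'er-type bound~(\ref{yilish}); an Azuma/Hoeffding bound for
$L$-dependent bounded variables would work equally well.  Replacing your
Doeblin appeal by this $L$-dependence observation closes the gap.
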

\begin{remark}
Note that Theorem \ref{sevval} for RWRE and Theorem \ref{averagedconditioning} for space-time RWRE have very similar interpretations. In fact, as we will see in Section \ref{aLDPminimizersection}, the proof of Theorem \ref{sevval} relies on the RWRE analog of Theorem \ref{averagedconditioning}.
\end{remark}

One can ask what $\nu_{n,X}^\infty$ converges to under $P_{o,o}^\omega$ when the particle is conditioned to have mean velocity $\xi$. Whenever the quenched LDP for the mean velocity holds in a neighborhood of $\xi$ with rate $I_c(\xi)$ at $\xi$ --- in particular when $d\geq3$ and $|\xi-\xi_o|<\eta$ --- the answer is again $\mu_\xi^\infty$, as one expects.

\begin{theorem}\label{strong}
Assume that the quenched LDP for the mean velocity holds in a neighborhood of $\xi$ with rate $I_c(\xi)$ at $\xi$. Then, for $\mathbb{P}$-a.e.\ $\omega$, and every $N,M,K\in\mathbb{N}$, $f$ as in Definition \ref{definemu}, and $\epsilon>0$, \[\limsup_{\delta\to0}\limsup_{n\rightarrow\infty}\frac{1}{n}\log P_{o,o}^\omega\left(\ \left|\int\!\! f\mathrm{d}\bar{\nu}_{n,X}^\infty-\int\!\! f\mathrm{d}\bar{\mu}_\xi^\infty\right|>\epsilon\ \left|\ |\frac{X_n}{n}-\xi|\leq\delta\right.\right)<0.\]
\end{theorem}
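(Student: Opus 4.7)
The plan is to transfer Theorem \ref{averagedconditioning} from the averaged to the quenched measure by a Borel--Cantelli argument, with the exponential budget supplied by the assumed equality $I_q(\xi)=I_c(\xi)$. Fix $\epsilon>0$ and $f$ as in the statement, and abbreviate
\[
A_n:=\Bigl\{\,\Bigl|\int\!\! f\,\mathrm{d}\bar\nu_{n,X}^\infty-\int\!\! f\,\mathrm{d}\bar\mu_\xi^\infty\Bigr|>\epsilon\Bigr\},\qquad B_n(\delta):=\{|X_n/n-\xi|\leq\delta\}.
\]
By Theorem \ref{averagedconditioning} there exist $\eta>0$ and $\delta_0>0$ such that $P_{o,o}(A_n\mid B_n(\delta))\leq e^{-n\eta}$ for every $\delta<\delta_0$ and every $n$ large. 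Combining this with the classical (Cram\'er) upper bound
$P_{o,o}(B_n(\delta))\leq e^{-n(I_c(\xi)-o_\delta(1))}$, valid because the marginal of $P_{o,o}$ on paths is i.i.d.\ and $I_c$ is continuous at $\xi\in\mathcal{D}^o$, I would obtain
\[
P_{o,o}\bigl(A_n\cap B_n(\delta)\bigr)\leq e^{-n(I_c(\xi)+\eta/2)}
\]
for every $\delta$ sufficiently small and $n$ sufficiently large.

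Since $P_{o,o}=\mathbb{E}[P_{o,o}^\omega]$, Markov's inequality then yields
\[
\mathbb{P}\bigl(P_{o,o}^\omega(A_n\cap B_n(\delta))\geq e^{-n(I_c(\xi)+\eta/4)}\bigr)\leq e^{-n\eta/4},
\]
which is summable in $n$. A Borel--Cantelli argument applied along a countable sequence $\delta_k\downarrow 0$ (to handle the possible $\delta$-dependence of the exceptional set) gives: for $\mathbb{P}$-a.e.\ $\omega$, every $k$, and every $n$ large,
\[
P_{o,o}^\omega\bigl(A_n\cap B_n(\delta_k)\bigr)\leq e^{-n(I_c(\xi)+\eta/4)}.
\]
Now invoke the hypothesized quenched LDP: its lower bound applied to the open ball of radius $\delta_k$ around $\xi$, together with $I_q(\xi)=I_c(\xi)$, produces
\[
P_{o,o}^\omega\bigl(B_n(\delta_k)\bigr)\geq e^{-n(I_c(\xi)+\eta/8)}
\]
for all $n$ large. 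Dividing the last two displays gives $P_{o,o}^\omega(A_n\mid B_n(\delta_k))\leq e^{-n\eta/8}$, so $\limsup_n \tfrac{1}{n}\log P_{o,o}^\omega(A_n\mid B_n(\delta_k))\leq -\eta/8$ for every $k$, and then $\limsup_{\delta\to 0}$ preserves strict negativity.

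The main obstacle I anticipate is the bookkeeping of exponents: Borel--Cantelli costs us a positive slack in the upper bound on the numerator, and the quenched LDP lower bound on the denominator must retain enough margin to leave a strictly positive gap. This is precisely where the assumption $I_q(\xi)=I_c(\xi)$ is used in an essential way; the bare inequality $I_q\geq I_c$ would leave no room. A secondary technical point is the null-set issue associated with the continuous parameter $\delta$, which I resolve by passing to a countable sequence $\delta_k\downarrow 0$, using that $B_n(\delta)$ is monotone in $\delta$ so that control at each $\delta_k$ suffices for the outer $\limsup_{\delta\to 0}$.
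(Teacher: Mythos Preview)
Your argument is correct, and the overall architecture (Borel--Cantelli to pass from averaged to quenched, then divide by the quenched LDP lower bound) is the same as the paper's. The difference is in which averaged quantity you transfer. You work with $P_{o,o}(A_n\cap B_n(\delta))$, which carries a $\delta$-dependence and forces you to run Borel--Cantelli along a countable sequence $\delta_k\downarrow 0$ and then patch intermediate $\delta$'s via the monotonicity of $B_n(\delta)$. The paper instead reaches inside the proof of Theorem~\ref{averagedconditioning} and uses the tilted estimate (\ref{yaniyorumbee}),
\[
\limsup_{n\to\infty}\frac1n\log E_{o,o}\!\left[e^{\langle\theta,X_n\rangle-n\Lambda_c(\theta)},\,A_n\right]=\gamma<0,
\]
which is $\delta$-free. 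A single Borel--Cantelli then gives the quenched bound $\limsup_n\tfrac1n\log E_{o,o}^\omega[e^{\langle\theta,X_n\rangle-n\Lambda_c(\theta)},A_n]\leq\gamma$, and only afterwards does the standard change-of-measure step reintroduce $\delta$ and yield the bound $\gamma+|\theta|\delta$ on the conditional probability. So the paper's route is slightly cleaner (one Borel--Cantelli, no sequence of $\delta$'s), while yours has the advantage of using Theorem~\ref{averagedconditioning} purely as a black box rather than relying on an intermediate estimate from its proof.
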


The formula for $\bar{\mu}_\xi^\infty$ given in Definition \ref{definemu} is not very explicit. We conclude Section \ref{Asection} by showing that $\mu_\xi^\infty$ actually has a simple and elegant structure for $d\geq3$ and $|\xi-\xi_o|<\eta$.

\begin{theorem}\label{doob}
For $d\geq3$ and $|\xi-\xi_o|<\eta$ with $\eta$ as in Theorem \ref{AequalsQ}, let $\theta\in\mathbb{R}^d$ be the unique solution of $\xi=\nabla\Lambda_c(\theta)$. There exists a $\mathcal{B}_o^+$-measurable function $u^\theta>0$ that satisfies $\int\! u^\theta\mathrm{d}\mathbb{P} = 1$ and $\mathbb{P}$-a.s. \[u^\theta(\omega)=\sum_{z\in U}\overline{\pi}(\omega,T_{1,z}\omega)\mathrm{e}^{\langle\theta,z\rangle-\Lambda_c(\theta)}u^\theta(T_{1,z}\omega).\] Define a new kernel \[\overline{\pi}^\theta(\omega,T_{1,z}\omega) := \overline{\pi}(\omega,T_{1,z}\omega)\frac{u^\theta(T_{1,z}\omega)}{u^\theta(\omega)}\mathrm{e}^{\langle\theta,z\rangle-\Lambda_c(\theta)}\] on $\Omega$ via Doob $h$-transform. $\mu_\xi^\infty$ is the unique stationary Markov process with transition kernel $\overline{\pi}^\theta$ and whose marginal is absolutely continuous relative to $\mathbb{P}$ on every $\mathcal{B}_n^+$.
\end{theorem}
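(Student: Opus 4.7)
The plan is to construct $u^\theta$ explicitly via a Feynman--Kac limit, check that the measure $u^\theta\, d\mathbb{P}$ is $\overline{\pi}^\theta$-invariant, and then match the resulting chain to $\mu_\xi^\infty$ via Definition \ref{definemu}. First I would set
\[
u_n^\theta(\omega) := E_{o,o}^\omega\bigl[e^{\langle\theta,X_n\rangle - n\Lambda_c(\theta)}\bigr],
\]
which is a nonnegative $\mathcal{B}_o^+$-measurable function. Conditioning on the first step gives the one-step identity
\[
u_n^\theta(\omega) = \sum_{z\in U}\overline{\pi}(\omega, T_{1,z}\omega)\, e^{\langle\theta,z\rangle - \Lambda_c(\theta)}\, u_{n-1}^\theta(T_{1,z}\omega),
\]
and the i.i.d.\ structure under $P_{o,o}$ combined with Fubini yields $\int u_n^\theta\, d\mathbb{P}=1$ for every $n$, since $\mathbb{E}[e^{\langle\theta,X_n\rangle}] = e^{n\Lambda_c(\theta)}$. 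The key analytic input is an $L^2(\mathbb{P})$ bound: writing $\mathbb{E}[(u_n^\theta)^2]$ as a tilted expectation for two independent walks sharing the same environment, one bounds it by a geometric series in the number of space-time sites at which the two walks coincide. For $d \geq 3$ and $|\xi-\xi_o|<\eta$, the difference walk of the two tilted walks is sufficiently transient that $\sup_n\mathbb{E}[(u_n^\theta)^2]<\infty$. Uniform integrability then gives $\mathbb{P}$-a.s.\ and $L^1$ convergence $u_n^\theta\to u^\theta$ with $\int u^\theta\, d\mathbb{P}=1$; passing the recursion to the limit yields the stated harmonic equation, and the strict positivity $u^\theta>0$ is propagated from the fact that the $L^2$-limit has unit mean and hence is not identically zero, using the harmonic equation together with uniform ellipticity.

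With $u^\theta$ in hand, defining $\overline{\pi}^\theta$ as in the statement produces a Markov kernel on $\Omega$ with $\overline{\pi}^\theta(\cdot,T_{1,z}\cdot)>0$ $\mathbb{P}$-a.s.\ for every $z\in U$. A direct change of variables---shifting each term in $\sum_z \overline{\pi}^\theta(\omega, T_{1,z}\omega)g(T_{1,z}\omega)\, u^\theta(\omega)$ by $T_{-1,-z}$ and using $\mathbb{P}$-stationarity---verifies that $d\mathbb{Q}^\theta := u^\theta\, d\mathbb{P}$ is $\overline{\pi}^\theta$-invariant. Since $u^\theta$ is $\mathcal{B}_o^+$-measurable, $\mathbb{Q}^\theta$ is absolutely continuous relative to $\mathbb{P}$ on every $\mathcal{B}_n^+$ after translating time by the appropriate shift. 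The space-time version of Kozlov's Lemma \ref{Kozlov}, applied on each $\mathcal{B}_n^+$, then yields uniqueness of the $\overline{\pi}^\theta$-invariant probability measure that is absolutely continuous with respect to $\mathbb{P}$ on every $\mathcal{B}_n^+$.

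Finally, to identify $\mu_\xi^\infty$ with this Markov chain, I would return to Definition \ref{definemu}, take $N=0$, and choose a test function $f$ depending on the first $K+1$ shifted environments along the path (using the natural identification between $\Omega\times U^\mathbb{N}$ and $\Omega^\mathbb{N}$). The Markov property at time $K+1$ factors $e^{\langle\theta,X_{M+K+1}\rangle-(M+K+1)\Lambda_c(\theta)}$ into the tilting up to time $K+1$ times an inner quenched expectation equal to $u_M^\theta(T_{K+1,X_{K+1}}\omega)$; letting $M\to\infty$, which is legitimate since $f$ does not see the missing coordinates and by the $L^1$ convergence already established, replaces this factor by $u^\theta(T_{K+1,X_{K+1}}\omega)$. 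Reading the resulting expression term by term against the definition of $\overline{\pi}^\theta$ shows that $\mu_\xi^\infty$ coincides with the $\overline{\pi}^\theta$-Markov chain with marginal $\mathbb{Q}^\theta$, and the uniqueness from the previous step identifies it as the unique such stationary chain. The main obstacle is the $L^2$ estimate together with the strict $\mathbb{P}$-a.s.\ positivity of the limit $u^\theta$: this is exactly where the hypothesis $d\geq 3$ enters and where the bulk of the analytic work is concentrated, whereas the algebraic invariance check and the matching to Definition \ref{definemu} are bookkeeping once $u^\theta$ has been secured.
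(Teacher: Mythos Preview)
Your construction of $u^\theta$ matches the paper's: the $L^2$ estimate via the two-walk representation is exactly Lemma~\ref{eltu}, and the martingale limit, harmonic identity and positivity arguments are the same as those preceding the proof of Theorem~\ref{doob}.

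The genuine gap is your invariance claim: $\mathbb{Q}^\theta:=u^\theta\,\mathrm{d}\mathbb{P}$ is \emph{not} $\overline{\pi}^\theta$-invariant on the full $\sigma$-algebra $\mathcal{B}$. Your change-of-variables gives
\[
\int\!\sum_{z}\overline{\pi}^\theta(\omega,T_{1,z}\omega)\,g(T_{1,z}\omega)\,u^\theta(\omega)\,\mathrm{d}\mathbb{P}(\omega)
=\int\!\Bigl(\sum_{z}\pi_{-1,0}(-z,0)\,\mathrm{e}^{\langle\theta,z\rangle-\Lambda_c(\theta)}\Bigr)u^\theta(\omega)\,g(\omega)\,\mathrm{d}\mathbb{P}(\omega),
\]
and the bracketed factor involves the environment at time $-1$ at \emph{different} sites for different $z$; it equals $1$ only in expectation, not $\mathbb{P}$-a.s. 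So the identity $\int(\overline{\pi}^\theta g)\,\mathrm{d}\mathbb{Q}^\theta=\int g\,\mathrm{d}\mathbb{Q}^\theta$ holds for $\mathcal{B}_0^+$-measurable $g$ (where the bracket becomes independent of $g\,u^\theta$) but fails for general $g$. Concretely, the true invariant marginal $\mu_\xi^1$ agrees with $u^\theta\,\mathrm{d}\mathbb{P}$ on $\mathcal{B}_0^+$ but differs from it at negative times: under $\mu_\xi^1$ the environment at time $-1$ at the neighboring sites of the origin is biased by the walk's last step, whereas under $u^\theta\,\mathrm{d}\mathbb{P}$ it is still i.i.d. Your restriction to $N=0$ in the identification step likewise only pins down $\mu_\xi^\infty$ on $\mathcal{B}_0^+$-measurable test functions and misses this.

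The paper avoids this by never proposing a formula for $\mu_\xi^1$ on all of $\mathcal{B}$. Instead it inserts $u^\theta$ into Definition~\ref{definemu} with \emph{general} $N$ to get the representation
\[
\int\! f\,\mathrm{d}\bar{\mu}_\xi^\infty=\mathbb{E}\Bigl(u^\theta(\omega)\,E_{o,o}^{\theta,\omega}\bigl[f(T_{N,X_N}\omega,(Z_{N+i})_{i\geq1})\bigr]\Bigr),
\]
reads off the Markov property of $\mu_\xi^\infty$ with kernel $\overline{\pi}^\theta$ directly from this and the quenched Markov property, and then uses the already-established stationarity of $\mu_\xi^\infty$ (Proposition~\ref{stationary}) to conclude that $\mu_\xi^1$ is $\overline{\pi}^\theta$-invariant. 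Absolute continuity of $\mu_\xi^1$ on each $\mathcal{B}_{-N}^+$ is obtained from the same representation via a Cauchy--Schwarz/Riesz argument, and uniqueness is quoted from the space-time analogue of Lemma~\ref{Kozlov}. In short: keep your $u^\theta$, but drop the claim that $u^\theta\,\mathrm{d}\mathbb{P}$ is invariant and instead work with arbitrary $N$ in~(\ref{mucan}) to identify the Markov structure first.
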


In other words, when the particle is conditioned to have mean velocity $\xi$, the environment Markov chain chooses to switch from its original kernel $\overline{\pi}$ to a new kernel $\overline{\pi}^\theta$. The most economical tilt is given by a Doob $h$-transform.

\chapter{Quenched large deviations for RWRE}\label{quenchedchapter}

\section{LDP for the pair empirical measure}\label{pairLDPsection}

As mentioned in Section \ref{egridogru}, Rosenbluth \cite{jeffrey} takes the point of view of a particle performing RWRE and proves the quenched LDP for the mean velocity. In this section, we generalize his argument and prove Theorem \ref{level2LDP}.

The strategy is to first show the existence of the logarithmic moment generating function $\Lambda_q: C_b(\Omega\times\mathcal{R})\rightarrow\mathbb{R}$ given by
\begin{align}
\Lambda_q(f)=&\lim_{n\rightarrow\infty}\frac{1}{n}\log E_o^{\omega}\left[\mathrm{e}^{n\langle f,\nu_{n,X}\rangle}\right]\nonumber\\
=&\lim_{n\rightarrow\infty}\frac{1}{n}\log E_o^{\omega}\left[\exp\left(\sum_{k=0}^{n-1} f(T_{X_k}\omega,X_{k+1}-X_k)\right)\right]\label{askerdatca}
\end{align} where $C_b$ denotes the space of bounded continuous functions.
\pagebreak
\begin{theorem}\label{LMGF}
Assume there exists $\alpha>0$ such that (\ref{kimimvarki}) holds for each $z\in\mathcal{R}$. Then, the following hold:
\begin{description}
\item[Lower bound.] For $\mathbb{P}$-a.e.\ $\omega$,
\begin{align*}
&\liminf_{n\rightarrow\infty}\frac{1}{n}\log E_o^{\omega}\left[\exp\left(\sum_{k=0}^{n-1} f(T_{X_k}\omega,X_{k+1}-X_k)\right)\right]\\
&\geq\sup_{\mu\in M_{1,s}^{\ll}(\Omega\times\mathcal{R})}\int\sum_{z\in\mathcal{R}}\mathrm{d}\mu(\omega,z)\left(f(\omega,z)-\log \frac{\mathrm{d}\mu(\omega,z)}{\mathrm{d}(\mu)^1(\omega)\pi(0,z)}\right)=:\Gamma(f).
\end{align*}
\item[Upper bound.] For $\mathbb{P}$-a.e.\ $\omega$,
\begin{align*}
&\limsup_{n\rightarrow\infty}\frac{1}{n}\log E_o^{\omega}\left[\exp\left(\sum_{k=0}^{n-1} f(T_{X_k}\omega,X_{k+1}-X_k)\right)\right]\\
&\leq\inf_{F\in\mathcal{K}}\mathrm{ess}\sup_{\omega} \log\sum_{z\in\mathcal{R}}\pi(0,z)\mathrm{e}^{f(\omega,z)+F(\omega,z)}=:\Lambda_q(f).
\end{align*}
\item[Equivalence of the bounds.] For every $\epsilon>0$, there exists $F_\epsilon\in\mathcal{K}$ such that
\[\mathrm{ess}\sup_{\omega} \log\sum_{z\in\mathcal{R}}\pi(0,z)\mathrm{e}^{f(\omega,z)+F_\epsilon(\omega,z)}\leq\Gamma(f)+\epsilon.\]
\end{description}Thus, $\Lambda_q(f)\leq\Gamma(f)$. In other words, the limit in (\ref{askerdatca}) exists.
\end{theorem}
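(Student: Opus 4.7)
The plan is to prove the three parts in the order stated, exploiting Kozlov's lemma and the cocycle structure of $\mathcal{K}$.

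\textbf{Lower bound.} Fix any $\mu\in M_{1,s}^{\ll}(\Omega\times\mathcal{R})$ and introduce the environment kernel $\hat\pi(\omega,z):=\mathrm{d}\mu(\omega,z)/\mathrm{d}(\mu)^1(\omega)$. The stationarity assumption $(\mu)^1=(\mu)^2$ says exactly that $(\mu)^1$ is $\hat\pi$-invariant, so Lemma \ref{Kozlov} applies with $\mathbb{Q}=(\mu)^1\ll\mathbb{P}$: the environment Markov chain under $\hat\pi$ is stationary and ergodic, and $\mathbb{Q}\sim\mathbb{P}$. I would then change measure from $P_o^\omega$ to $P_o^{\hat\pi,\omega}$ and apply Jensen to the log:
\[\log E_o^\omega\!\left[\mathrm{e}^{S_n}\right]\geq E_o^{\hat\pi,\omega}\!\left[S_n\right]+E_o^{\hat\pi,\omega}\!\left[\sum_{k=0}^{n-1}\log\frac{\pi(0,Z_{k+1})}{\hat\pi(T_{X_k}\omega,Z_{k+1})}\right],\]
where $S_n=\sum_{k=0}^{n-1}f(T_{X_k}\omega,Z_{k+1})$. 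By Kozlov's ergodic theorem applied to the pair empirical measure $\nu_{n,X}$, the right side divided by $n$ tends $\mathbb{P}$-a.s.\ to $\int f\,\mathrm{d}\mu-\int\log(\mathrm{d}\mu/\mathrm{d}(\mu)^1\pi)\,\mathrm{d}\mu$. Supremum over $\mu$ yields $\Gamma(f)$. The only subtlety is to go from $P_o^{\hat\pi}$-a.s.\ convergence to $\mathbb{P}$-a.s., which follows from the equivalence $\mathbb{Q}\sim\mathbb{P}$.

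\textbf{Upper bound.} Fix $F\in\mathcal{K}$. The closed-loop property makes $F$ an exact cocycle: for any admissible path from $0$ to $y$, $\sum_{k=0}^{n-1}F(T_{X_k}\omega,Z_{k+1})=G_\omega(y)$ depends only on the endpoint $y=X_n$ and $\omega$ (with $G_\omega(0)=0$). The crucial ingredient is a sublinearity lemma of Rosenbluth: under the moment condition $F(\cdot,z)\in L^{d+\alpha}(\mathbb{P})$ together with the mean-zero condition, one has $n^{-1}\max_{|y|\leq Bn}|G_\omega(y)|\to0$ $\mathbb{P}$-a.s. (this is where the dimension-dependent $L^{d+\alpha}$ hypothesis enters). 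Writing
\[E_o^\omega[\mathrm{e}^{S_n}]=E_o^\omega\!\left[\mathrm{e}^{S_n+G_\omega(X_n)}\mathrm{e}^{-G_\omega(X_n)}\right]\leq \mathrm{e}^{\max_{|y|\leq Bn}(-G_\omega(y))}\,E_o^\omega\!\left[\mathrm{e}^{\sum_k(f+F)(T_{X_k}\omega,Z_{k+1})}\right],\]
and iterating the Markov property to bound the last expectation by $(\mathrm{ess\,sup}_\omega\sum_z\pi(0,z)\mathrm{e}^{f(\omega,z)+F(\omega,z)})^n$, I divide by $n$ and let $n\to\infty$. Sublinearity of $G_\omega$ kills the prefactor and yields the bound with $\Lambda_q(f)$ after taking the infimum over $F\in\mathcal{K}$.

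\textbf{Equivalence of the bounds.} This is the hard part. The goal is to produce, for each $\epsilon>0$, a single cocycle $F_\epsilon\in\mathcal{K}$ that matches the $\sup$ achieving $\Gamma(f)$ up to $\epsilon$; equivalently, to show $\Lambda_q(f)\leq\Gamma(f)$. My plan is a minimax / convex duality argument. Set $\lambda>\Gamma(f)$ and consider the convex set of functions $\{\omega\mapsto\log\sum_z\pi(0,z)\mathrm{e}^{f(\omega,z)+F(\omega,z)}-\lambda:F\in\mathcal{K}\}$. If none of these has essentially negative supremum, a Hahn-Banach separation in a suitable dual of $L^\infty(\mathbb{P})$ produces a finitely additive probability measure $\mathbb{Q}$ such that $\int\log\sum_z\pi(0,z)\mathrm{e}^{f+F}\,\mathrm{d}\mathbb{Q}\geq\lambda$ for every $F\in\mathcal{K}$. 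A Jensen/variational identity $\log\sum_z\pi(0,z)\mathrm{e}^{f+F}=\sup_{\hat\pi}\sum_z\hat\pi(\omega,z)\{f+F-\log(\hat\pi/\pi)\}$ then lets me construct an environment kernel and a joint measure $\mu$ on $\Omega\times\mathcal{R}$ with $(\mu)^1=\mathbb{Q}$. The condition that $F$ ranges over cocycles forces orthogonality $(\mu)^1=(\mu)^2$, and after promoting $\mathbb{Q}$ to a countably additive measure with $\mathbb{Q}\ll\mathbb{P}$, I arrive at $\mu\in M_{1,s}^{\ll}(\Omega\times\mathcal{R})$ and $\Gamma(f)\geq\lambda$, contradicting $\lambda>\Gamma(f)$.

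The main obstacle is unambiguously Part 3: the minimax step must handle unbounded integrands $\log\pi(0,z)$, it must yield a countably additive $\mathbb{Q}$ genuinely absolutely continuous with respect to $\mathbb{P}$ (not merely a finitely additive element of $L^\infty(\mathbb{P})^*$), and the extracted $\mu$ must lie in $M_{1,s}^{\ll}$ with the strict positivity on $U$. These issues are precisely the ones addressed by Rosenbluth \cite{jeffrey}, and my strategy is to adapt his duality framework to the pair-empirical setting by enlarging the class of test functions from $f(\omega,z)=\langle\theta,z\rangle$ to arbitrary $f\in C_b(\Omega\times\mathcal{R})$.
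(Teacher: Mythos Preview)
Your treatments of the lower and upper bounds match the paper's essentially line for line: change of measure plus Jensen plus Kozlov's ergodic theorem for the lower bound, and the sublinearity lemma (Lemma~\ref{GRR}) plus one-step iteration for the upper bound.

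The divergence is entirely in Part~3, and here your route is genuinely different from the paper's. The paper does \emph{not} do a Hahn--Banach separation in $L^\infty(\mathbb{P})^*$. Instead, it first rewrites $\Gamma(f)$ as the triple variational expression
\[
\sup_{\phi}\sup_{\hat\pi}\inf_{h}\int\sum_{z\in\mathcal{R}}\hat\pi(\omega,z)\Bigl(f(\omega,z)-\log\tfrac{\hat\pi(\omega,z)}{\pi(0,z)}+h(\omega)-h(T_z\omega)\Bigr)\phi\,\mathrm{d}\mathbb{P},
\]
then passes to a sequence of \emph{finite} sub-$\sigma$-algebras $\mathcal{E}_k\uparrow\mathcal{B}$ with $\mathcal{E}_k\subset T_z\mathcal{E}_{k+1}$. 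On each $\mathcal{E}_k$ the suprema over $\phi$ and $\hat\pi$ are over compact simplices, so Ky Fan's minimax theorem applies twice and the inner $\sup_{\hat\pi}$ is evaluated explicitly by Lagrange multipliers. This produces, for each $k$ and $\epsilon$, a bounded function $h_{k,\epsilon}$, and the candidate cocycle is $F_{k,\epsilon}(\omega,z):=\mathbb{E}[h_{k,\epsilon}(\omega)-h_{k,\epsilon}(T_z\omega)\mid\mathcal{E}_{k-1}]$. The one-sided bound coming from the variational inequality, applied also to the reverse step $-z$, gives a two-sided control $|F_{k,\epsilon}(\cdot,z)|\leq\mathbb{E}[-\log\pi(0,z)\mid\mathcal{E}_{k-1}]+\mathbb{E}[-\log\pi(z,0)\mid\mathcal{E}_{k-1}]+C$, which is uniformly bounded in $L^{d+\alpha}(\mathbb{P})$ by hypothesis (\ref{kimimvarki}). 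Weak compactness in the reflexive space $L^{d+\alpha}$ then yields $F_\epsilon\in\mathcal{K}$, and Mazur's theorem upgrades to $\mathbb{P}$-a.s.\ convergence along convex combinations.

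What this buys: the finite-$\sigma$-algebra discretisation sidesteps exactly the obstacle you flag. There is never a finitely additive object to ``promote''; compactness lives in finite-dimensional simplices, and the passage to the limit happens in $L^{d+\alpha}$, where weak limits are automatically countably additive and $\ll\mathbb{P}$. Your Hahn--Banach route, by contrast, hands you an element of $(L^\infty)^*$ with no mechanism to force countable additivity or absolute continuity, and ``promoting'' it is not a step one can just announce---it is the entire difficulty. Your appeal to Rosenbluth's framework is misplaced in this respect: Rosenbluth's argument (on which the paper builds) is precisely the $\mathcal{E}_k$/Ky~Fan/$L^{d+\alpha}$ scheme, not an abstract separation argument. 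So as written, Part~3 of your proposal has a genuine gap at the promotion step, and the fix is to replace the separation argument by the finite-dimensional minimax approximation.
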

Subsection \ref{LMGFsubsection} is devoted to the proof of Theorem \ref{LMGF}. After that, proving Theorem \ref{level2LDP} is easy: the LDP lower bound follows from a standard change of measure argument and the LDP upper bound is obtained by an application of the G\"{a}rtner-Ellis theorem. These arguments are given in Subsection \ref{LDPproof}.

\subsection{Logarithmic moment generating function}\label{LMGFsubsection}

\subsubsection{Lower bound}\label{pourum}

This is a standard change of measure argument. 
For any environment kernel $\hat{\pi}$ as in Definition \ref{ortamkeli},
\begin{eqnarray*}
&&\ \ E_o^{\omega}\left[\exp\left(\sum_{k=0}^{n-1} f(T_{X_k}\omega,X_{k+1}-X_k)\right)\right]\\
&=&E_o^{\hat{\pi},\omega}\left[\exp\left(\sum_{k=0}^{n-1} f(T_{X_k}\omega,X_{k+1}-X_k)\right)\,\frac{\mathrm{d}P_o^\omega}{\mathrm{d}P_o^{\hat{\pi},\omega}}\right]\\
&=&E_o^{\hat{\pi},\omega}\left[\exp\left(\sum_{k=0}^{n-1}f(T_{X_k}\omega,X_{k+1}-X_k)-\log\frac{\hat{\pi}(T_{X_k}\omega,X_{k+1}-X_k)}{\pi(X_k,X_{k+1})}\right)\right].
\end{eqnarray*} If $\hat{\pi}(\cdot,z)>0$ $\mathbb{P}$-a.s.\ for each $z\in U$, and if there exists $\phi\in L^1(\mathbb{P})$ such that $\phi\,\mathrm{d}\mathbb{P}$ is an invariant probability measure for the environment kernel $\hat{\pi}$ (i.e., if $\phi(\omega)=\sum_{z\in\mathcal{R}}\phi(T_{-z}\omega)\hat{\pi}(T_{-z}\omega,z)$ for $\mathbb{P}$-a.e.\ $\omega$), then it follows from Lemma \ref{Kozlov} that $\phi\,\mathrm{d}\mathbb{P}$ is in fact an ergodic invariant measure for $\hat{\pi}$. By Jensen's inequality,
\begin{align}
&\liminf_{n\rightarrow\infty}\frac{1}{n}\log E_o^{\omega}\left[\exp\left(\sum_{k=0}^{n-1} f(T_{X_k}\omega,X_{k+1}-X_k)\right)\right]\nonumber\\\geq&\liminf_{n\rightarrow\infty}E_o^{\hat{\pi},\omega}\left[\frac{1}{n}\sum_{k=0}^{n-1}f(T_{X_k}\omega,X_{k+1}-X_k)-\log\frac{\hat{\pi}(T_{X_k}\omega,X_{k+1}-X_k)}{\pi(X_k,X_{k+1})}\right]\nonumber\\
=&\int\sum_{z\in\mathcal{R}}\hat{\pi}(\omega,z)\left(f(\omega,z)-\log \frac{\hat{\pi}(\omega,z)}{\pi(0,z)}\right)\phi(\omega)\mathrm{d}\mathbb{P}=:H_f(\hat{\pi},\phi).\label{hakkariye}\\
\text{Therefore,}\quad&\liminf_{n\rightarrow\infty}\frac{1}{n}\log E_o^{\omega}\left[\exp\left(\sum_{k=0}^{n-1} f(T_{X_k}\omega,X_{k+1}-X_k)\right)\right]\nonumber\\
\geq&\sup_{(\hat{\pi},\phi)}\int\sum_{z\in\mathcal{R}}\hat{\pi}(\omega,z)\left(f(\omega,z)-\log \frac{\hat{\pi}(\omega,z)}{\pi(0,z)}\right)\phi(\omega)\mathrm{d}\mathbb{P}\label{stef}
\end{align}
where the supremum is taken over the set of $(\hat{\pi},\phi)$ pairs where $\hat{\pi}(\cdot,z)>0$ $\mathbb{P}$-a.s.\ for each $z\in U$ and $\phi\,\mathrm{d}\mathbb{P}$ is a $\hat{\pi}$-invariant probability measure. Notice that there is a one-to-one correspondence between this set and $M_{1,s}^{\ll}(\Omega\times\mathcal{R})$. Hence, (\ref{stef}) is the desired lower bound.

Before proceeding with the upper bound, let us put (\ref{stef}) in a form that will turn out to be more convenient for showing the equivalence of the bounds. We start by giving the following
\begin{lemma}\label{camilla}
For every $f\in C_b(\Omega\times\mathcal{R})$, $H_f$ (defined in (\ref{hakkariye})) has the following concavity property: For each $t\in(0,1)$ and any two pairs $(\hat{\pi}_1,\phi_1)$ and $(\hat{\pi}_2,\phi_2)$ where $\phi_i\,\mathrm{d}\mathbb{P}$ is a $\hat{\pi}_i$-invariant probability measure (for $i=1,2$), define \[\gamma:=\frac{t\phi_1}{t\phi_1+(1-t)\phi_2},\ \ \phi_3:=t\phi_1+(1-t)\phi_2\ \ \mbox{and}\ \ \hat{\pi}_3:=\gamma\hat{\pi}_1+(1-\gamma)\hat{\pi}_2.\] Then, $\phi_3\,\mathrm{d}\mathbb{P}$ is $\hat{\pi}_3$-invariant and 
\begin{equation}
H_f(\hat{\pi}_3,\phi_3)\geq tH_f(\hat{\pi}_1,\phi_1)+(1-t)H_f(\hat{\pi}_2,\phi_2).\label{yozgat}
\end{equation}
\end{lemma}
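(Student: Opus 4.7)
My plan is to establish the two assertions separately. The crucial pointwise identity behind both is that by the definition of $\gamma$,
$$\gamma(\omega)\phi_3(\omega) = t\phi_1(\omega), \qquad (1-\gamma(\omega))\phi_3(\omega) = (1-t)\phi_2(\omega),$$
and consequently
$$\hat{\pi}_3(\omega,z)\phi_3(\omega) = t\hat{\pi}_1(\omega,z)\phi_1(\omega) + (1-t)\hat{\pi}_2(\omega,z)\phi_2(\omega).$$
Although $\hat{\pi}_3$ is only a $\gamma$-weighted combination of $\hat{\pi}_1,\hat{\pi}_2$, the pair density $\hat{\pi}_3\phi_3$ is genuinely a $t$-weighted combination of $\hat{\pi}_i\phi_i$. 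Everything flows from this.

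For invariance, I would use this identity together with the $\hat{\pi}_i$-invariance of $\phi_i\,\mathrm{d}\mathbb{P}$ to write
$$\sum_{z\in\mathcal{R}}\phi_3(T_{-z}\omega)\hat{\pi}_3(T_{-z}\omega,z) = t\sum_z\phi_1(T_{-z}\omega)\hat{\pi}_1(T_{-z}\omega,z) + (1-t)\sum_z\phi_2(T_{-z}\omega)\hat{\pi}_2(T_{-z}\omega,z) = t\phi_1(\omega) + (1-t)\phi_2(\omega) = \phi_3(\omega),$$
which is exactly the assertion that $\phi_3\,\mathrm{d}\mathbb{P}$ is $\hat{\pi}_3$-invariant.

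For the concavity inequality \eqref{yozgat}, I would decompose
$$H_f(\hat{\pi},\phi) = \int\sum_z\bigl[f(\omega,z)+\log\pi(0,z)\bigr]\hat{\pi}(\omega,z)\phi(\omega)\,\mathrm{d}\mathbb{P} - \int\sum_z\hat{\pi}(\omega,z)\phi(\omega)\log\hat{\pi}(\omega,z)\,\mathrm{d}\mathbb{P}.$$
The first integral is linear in the pair measure $\hat{\pi}\phi\,\mathrm{d}\mathbb{P}$, which by the opening identity combines additively with weights $t$ and $1-t$, so it contributes equality to \eqref{yozgat}. For the second, I would rewrite the integrand as $\Phi(\hat{\pi}\phi,\phi)$ where $\Phi(x,y):=x\log(x/y)$ is the standard relative-entropy function on $\mathbb{R}_+\times\mathbb{R}_+$, which is jointly convex (equivalently, apply the log-sum inequality). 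Since $(\hat{\pi}_3\phi_3,\phi_3)=t(\hat{\pi}_1\phi_1,\phi_1)+(1-t)(\hat{\pi}_2\phi_2,\phi_2)$ pointwise, joint convexity delivers
$$\hat{\pi}_3(\omega,z)\phi_3(\omega)\log\hat{\pi}_3(\omega,z) \leq t\hat{\pi}_1(\omega,z)\phi_1(\omega)\log\hat{\pi}_1(\omega,z) + (1-t)\hat{\pi}_2(\omega,z)\phi_2(\omega)\log\hat{\pi}_2(\omega,z)$$
for $\mathbb{P}$-a.e.\ $\omega$ and every $z$; integrating and combining with the linear part gives \eqref{yozgat}.

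There is no real obstacle beyond recognizing the correct convexity structure: the definition of $\gamma$ is engineered precisely so that the pair measure $\hat{\pi}_3\phi_3\,\mathrm{d}\mathbb{P}$ is a true convex combination of $\hat{\pi}_i\phi_i\,\mathrm{d}\mathbb{P}$, after which invariance is a one-line calculation and joint convexity of $x\log(x/y)$ handles the entropy term.
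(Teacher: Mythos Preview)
Your proof is correct and essentially the same as the paper's. The invariance calculation is identical, and for the concavity the paper applies Jensen's inequality to the concave map $p\mapsto\sum_z p(z)\bigl(f(\omega,z)-\log\frac{p(z)}{\pi(0,z)}\bigr)$ with the weights $\gamma,1-\gamma$ and then substitutes $\gamma\phi_3=t\phi_1$, $(1-\gamma)\phi_3=(1-t)\phi_2$; your use of the joint convexity of $x\log(x/y)$ on the pair $(\hat\pi\phi,\phi)$ with weights $t,1-t$ is the same inequality viewed through the pair-measure identity you stated at the outset.
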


\begin{proof}
For $t\in(0,1)$, use the definitions and the assumptions in the statement of the lemma to observe that $\mathbb{P}$-a.s.
\begin{align*}
&\sum_{z\in\mathcal{R}}\phi_3(T_{-z}\omega)\hat{\pi}_3(T_{-z}\omega,z)\\=&\sum_{z\in\mathcal{R}}\phi_3(T_{-z}\omega)\gamma(T_{-z}\omega)\hat{\pi}_1(T_{-z}\omega,z)+\sum_{z\in\mathcal{R}}\phi_3(T_{-z}\omega)(1-\gamma(T_{-z}\omega))\hat{\pi}_2(T_{-z}\omega,z)\\=&\ t\sum_{z\in\mathcal{R}}\phi_1(T_{-z}\omega)\hat{\pi}_1(T_{-z}\omega,z)+(1-t)\sum_{z\in\mathcal{R}}\phi_2(T_{-z}\omega)\hat{\pi}_2(T_{-z}\omega,z)\\=&\ t\phi_1(\omega)+(1-t)\phi_2(\omega)=\phi_3(\omega)
\end{align*} which proves that $\phi_3\,\mathrm{d}\mathbb{P}$ is $\hat{\pi}_3$-invariant. Finally,
\begin{align*}
H_f(\hat{\pi}_3,\phi_3)&=\int\sum_{z\in\mathcal{R}}\hat{\pi}_3(\omega,z)\left(f(\omega,z)-\log \frac{\hat{\pi}_3(\omega,z)}{\pi(0,z)}\right)\phi_3(\omega)\mathrm{d}\mathbb{P}\\&\geq\int\gamma(\omega)\sum_{z\in\mathcal{R}}\hat{\pi}_1(\omega,z)\left(f(\omega,z)-\log \frac{\hat{\pi}_1(\omega,z)}{\pi(0,z)}\right)\phi_3(\omega)\mathrm{d}\mathbb{P}\\&\ \ \ +\int(1-\gamma(\omega))\sum_{z\in\mathcal{R}}\hat{\pi}_2(\omega,z)\left(f(\omega,z)-\log \frac{\hat{\pi}_2(\omega,z)}{\pi(0,z)}\right)\phi_3(\omega)\mathrm{d}\mathbb{P}\\&=\ t\int\sum_{z\in\mathcal{R}}\hat{\pi}_1(\omega,z)\left(f(\omega,z)-\log \frac{\hat{\pi}_1(\omega,z)}{\pi(0,z)}\right)\phi_1(\omega)\mathrm{d}\mathbb{P}\\&\ \ \ +(1-t)\int\sum_{z\in\mathcal{R}}\hat{\pi}_2(\omega,z)\left(f(\omega,z)-\log \frac{\hat{\pi}_2(\omega,z)}{\pi(0,z)}\right)\phi_2(\omega)\mathrm{d}\mathbb{P}\\&=\ tH_f(\hat{\pi}_1,\phi_1)+(1-t)H_f(\hat{\pi}_2,\phi_2)
\end{align*} where the second line is obtained by applying Jensen's inequality to the integrand.
\end{proof}
Let us go back to the argument and define $(\hat{\pi}_1,\phi_1)$ by $\hat{\pi}_1(\omega,z):={1}/{(2d)}$ for each $z\in U$ and $\phi_1(\omega):=1$, $\mathbb{P}$-a.s. By an easy computation, $H_f(\hat{\pi}_1,\phi_1)>-\infty$. Take any pair $(\hat{\pi}_2,\phi_2)$ such that $\phi_2\,\mathrm{d}\mathbb{P}$ is $\hat{\pi}_2$-invariant and $H_f(\hat{\pi}_2,\phi_2)>-\infty$. For any $t\in(0,1)$, define $(\hat{\pi}_3,\phi_3)$ as in Lemma \ref{camilla} and see that $\hat{\pi}_3(\omega,z)>0$ $\mathbb{P}$-a.s.\ for each $z\in U$. Recalling (\ref{yozgat}), note that $H_f(\hat{\pi}_3,\phi_3)\geq (1-t)H_f(\hat{\pi}_2,\phi_2) + O(t)$. Since $t$ can be arbitrarily small, the value of (\ref{stef}) does not change if the supremum there is taken over the set of all $(\hat{\pi},\phi)$ pairs where $\phi\,\mathrm{d}\mathbb{P}$ is a $\hat{\pi}$-invariant probability measure, dropping the positivity condition on $\hat{\pi}$. Finally, decouple $\hat{\pi}$ and $\phi$, and express the lower bound $\Gamma(f)$ as
\begin{equation}
\sup_{\phi}\sup_{\hat{\pi}}\inf_{h}\int\sum_{z\in\mathcal{R}}\hat{\pi}(\omega,z)\left(f(\omega,z)-\log \frac{\hat{\pi}(\omega,z)}{\pi(0,z)}+h(\omega)-h(T_z\omega)\right)\phi\,\mathrm{d}\mathbb{P}\label{putinh}
\end{equation} where the suprema are over all probability densities and all environment kernels, and the infimum is over all bounded measurable functions. This is due to the observation that if $\phi\,\mathrm{d}\mathbb{P}$ is not $\hat{\pi}$-invariant, then there exists a bounded measurable function $h:\Omega\to\mathbb{R}$ satisfying \[\int\sum_{z\in\mathcal{R}}\hat{\pi}(\omega,z)\left(h(\omega)-h(T_z\omega)\right)\phi(\omega)\mathrm{d}\mathbb{P}\neq0,\] and the infimum in (\ref{putinh}) is $-\infty$ since $h$ can be multiplied by any scalar.

\subsubsection{Upper bound}

Let us fix $f\in C_b(\Omega\times\mathcal{R})$. For any $F\in\mathcal{K}$, set \[K(F):=\mbox{ess}\sup_\omega \log\sum_{z\in\mathcal{R}}\pi(0,z)\mathrm{e}^{f(\omega,z)+F(\omega,z)}.\] Then, for every $n\geq1$ and $\mathbb{P}$-a.e.\ $\omega$,
\begin{align*}
&E_o^{\omega}\left[\left.\mathrm{e}^{f(T_{X_{n-1}}\omega,X_n-X_{n-1})+F(T_{X_{n-1}}\omega,X_n-X_{n-1})}\right|X_{n-1}\right]\\
&\qquad\qquad=\sum_{z\in\mathcal{R}}\pi(X_{n-1},X_{n-1}+z)\mathrm{e}^{f(T_{X_{n-1}}\omega,z)+F(T_{X_{n-1}}\omega,z)}\leq\mathrm{e}^{K(F)}.
\end{align*}
It is easy to see by induction that
\[E_o^{\omega}\left[\exp\left(\sum_{k=0}^{n-1}f(T_{X_k}\omega,X_{k+1}-X_k)+F(T_{X_k}\omega,X_{k+1}-X_k)\right)\right]\leq\mathrm{e}^{nK(F)}.\] For any $\epsilon>0$, applying Lemma \ref{GRR} (stated below) gives \[E_o^{\omega}\left[\exp\left(-c_\epsilon-n\epsilon+\sum_{k=0}^{n-1}f(T_{X_k}\omega,X_{k+1}-X_k)\right)\right]\leq\mathrm{e}^{nK(F)}\] where $c_\epsilon = c_\epsilon(\omega)$ is some constant. Arranging the terms, \[\frac{1}{n}\log E_o^{\omega}\left[\exp\left(\sum_{k=0}^{n-1} f(T_{X_k}\omega,X_{k+1}-X_k)\right)\right]\leq K(F)+\epsilon+\frac{c_\epsilon}{n}.\] The desired upper bound is obtained by letting $n\to\infty,\ \epsilon\to 0$ and taking infimum over $F\in\mathcal{K}$.

\begin{lemma}\label{GRR}
For every $F\in\mathcal{K}$, $\epsilon>0$ and $\mathbb{P}$-a.e.\ $\omega$, $\exists\, c_\epsilon= c_\epsilon(\omega)\geq 0$ such that for any $n\geq1$ and any sequence $(x_{k})_{k=0}^n$ with $x_o=0$ and $x_{k+1}-x_k\in\mathcal{R}$, \[\left|\sum_{k=0}^{n-1}F(T_{x_k}\omega,x_{k+1}-x_k)\right| \leq c_\epsilon+n\epsilon.\]
\end{lemma}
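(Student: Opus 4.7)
The closed-loop condition defining the class $\mathcal{K}$ is exactly what is needed to turn $F$ into a scalar potential. My first move would be to define
\[\Phi_\omega(y) := \sum_{k=0}^{m-1} F(T_{x_k}\omega, x_{k+1}-x_k)\]
for any path $(x_k)_{k=0}^m$ with $x_0 = 0$ and $x_m = y$; the closed-loop axiom makes this sum independent of the choice of path, so $\Phi_\omega : \mathbb{Z}^d \to \mathbb{R}$ is a well-defined cocycle with $\Phi_\omega(0) = 0$ and $\Phi_\omega(y+z) - \Phi_\omega(y) = F(T_y\omega, z)$. The quantity appearing in the lemma is then simply $\Phi_\omega(x_n)$, so the task reduces to a pointwise growth estimate on $\Phi_\omega$.

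Since the step sizes are bounded by $B$, any $x_n$ reachable from the origin in $n$ steps satisfies $|x_n| \leq Bn$. I claim that once the sublinearity
\[\lim_{|y| \to \infty} \frac{\Phi_\omega(y)}{|y|} = 0 \qquad \mathbb{P}\text{-a.s.}\]
is in hand, the lemma follows easily: given $\epsilon > 0$, pick a (random) $R = R(\epsilon, \omega)$ so that $|\Phi_\omega(y)| \leq (\epsilon/B)|y|$ for all $|y| > R$, and set $c_\epsilon(\omega) := \max_{|y| \leq R} |\Phi_\omega(y)|$, which is finite because only finitely many lattice points lie in the ball and $\Phi_\omega$ is finite at each (evaluate it along, say, an axis-parallel path). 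Splitting into the cases $|x_n| \leq R$ and $|x_n| > R$ then yields $|\Phi_\omega(x_n)| \leq c_\epsilon + \epsilon n$ in both subcases.

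The substantive step is therefore the sublinear growth of the cocycle, a Kozlov-type ``sublinear corrector'' result. My strategy would be twofold. First, use the mean-zero hypothesis together with the multidimensional ergodic theorem for the $\mathbb{Z}^d$-action $(T_z)_{z\in\mathbb{Z}^d}$ to obtain $o(|y|)$ control on spatial averages of $\Phi_\omega$ over large boxes. Second, upgrade from averaged to pointwise via an oscillation estimate: a Garsia-Rodemich-Rumsey type inequality on the lattice (which is why the lemma is named GRR) bounds the fluctuation of $\Phi_\omega$ inside a box $B_N$ by $\sum_{z \in \mathcal{R}} \|F(\cdot, z)\|_{L^{d+\alpha}}$ times a power of $N$ that is strictly less than $1$. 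The $L^{d+\alpha}$ moment with $\alpha > 0$ is precisely what makes this exponent subcritical, and the leftover $\alpha$-slack then supplies summable tails for a Borel-Cantelli argument that converts convergence along a deterministic subsequence of box-sizes into the full pointwise statement.

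The hardest part of the program is the oscillation/maximal estimate in the stationary and ergodic (not necessarily i.i.d.) setting, where many of the clean product-measure tools are unavailable. The bookkeeping that pins down the exact power of $N$ in terms of $d$ and $\alpha$, and that threads the hypothesis through the Sobolev/GRR-type inequality and into the Borel-Cantelli step, is the technical heart of the argument --- and it is precisely here that one sees why the moment exponent $d+\alpha$ (rather than merely $L^1$ or $L^d$) appears already in the hypothesis of Theorem \ref{level2LDP}. Once that step is executed, the reduction sketched in the first two paragraphs above is essentially soft.
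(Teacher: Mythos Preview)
Your proposal is correct and follows the same approach as the referenced source: the paper does not prove this lemma itself but defers to Chapter~2 of Rosenbluth's thesis \cite{jeffrey}, where the argument is indeed the cocycle reduction you describe followed by a Garsia--Rodemich--Rumsey oscillation estimate (hence the lemma's label ``GRR'') that exploits the $L^{d+\alpha}$ moment to obtain pointwise sublinearity via Borel--Cantelli. Your sketch correctly identifies both the soft reduction and the location of the real work.
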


\begin{remark}
See Chapter 2 of \cite{jeffrey} for the proof.
\end{remark}

\subsubsection{Equivalence of the bounds}

Consider a sequence $\left(\mathcal{E}_k\right)_{k\geq1}$ of finite $\sigma$-algebras such that
$\mathcal{B}=\sigma\left(\bigcup_k \mathcal{E}_k\right)$ and $\mathcal{E}_{k}\subset T_z\mathcal{E}_{k+1}$ for all $z\in\mathcal{R}$ and $k\geq1$. Then, recall (\ref{putinh}) and see that $\Gamma(f)$ can be bounded below by
\begin{align}
&\sup_{\phi}\sup_{\hat{\pi}}\inf_{h}\int\sum_{z\in\mathcal{R}}\hat{\pi}(\omega,z)\left(f(\omega,z)-\log \frac{\hat{\pi}(\omega,z)}{\pi(0,z)}+h(\omega)-h(T_z\omega)\right)\phi\,\mathrm{d}\mathbb{P}\label{thingone}\\=&\sup_{\phi}\inf_{h}\sup_{\hat{\pi}}\int\sum_{z\in\mathcal{R}}\hat{\pi}(\omega,z)\left(f(\omega,z)-\log \frac{\hat{\pi}(\omega,z)}{\pi(0,z)}+h(\omega)-h(T_z\omega)\right)\phi\,\mathrm{d}\mathbb{P}\label{thingtwo}\\=&\sup_{\phi}\inf_{h}\sup_{\hat{\pi}}\int\sum_{z\in\mathcal{R}}\left[v(\omega,z)-\log\hat{\pi}(\omega,z)\right]\hat{\pi}(\omega,z)\phi\,\mathrm{d}\mathbb{P}\label{thingthree}\\=&\sup_{\phi}\inf_{h}\int\sup_{\hat{\pi}(\omega,\cdot)}\left(\sum_{z\in\mathcal{R}}[v(\omega,z)-\log\hat{\pi}(\omega,z)]\hat{\pi}(\omega,z)\right)\phi\,\mathrm{d}\mathbb{P}\label{thingfour}\\=&\sup_{\phi}\inf_{h}\int\left(\log\sum_{z\in\mathcal{R}}\mathrm{e}^{v(\omega,z)}\right)\phi\,\mathrm{d}\mathbb{P}\label{thingfive}\displaybreak\\=&\inf_{h}\sup_{\phi}\int\left(\log\sum_{z\in\mathcal{R}}\mathrm{e}^{v(\omega,z)}\right)\phi\,\mathrm{d}\mathbb{P}\label{thingsix}\\=&\inf_{h}\mathrm{ess}\sup_{\omega}\log\sum_{z\in\mathcal{R}}\mathrm{e}^{v(\omega,z)}.\label{thingseven}
\end{align}
Let us explain: In (\ref{thingone}), the first supremum is taken over $\mathcal{E}_k$-measurable probability densities, the second supremum is over $\mathcal{E}_k$-measurable environment kernels, and the infimum is over bounded $\mathcal{B}$-measurable functions. For each $\phi$, the second supremum in (\ref{thingone}) is over a compact set, the integral is concave and continuous in $\hat{\pi}$ and affine (hence convex) in $h$. Apply the minimax theorem of Ky Fan \cite{KyFan} to obtain (\ref{thingtwo}). Evaluate the integral in (\ref{thingtwo}) in two steps by first taking a conditional expectation with respect to $\mathcal{E}_k$. This gives (\ref{thingthree}) where \[v(\omega,z):=\mathbb{E}\left[\log \pi(0,z) + f(\omega,z) + h(\omega) - h(T_z\omega)\left|\mathcal{E}_k\right.\right].\] The integrand in (\ref{thingthree}) is a local function of $\hat{\pi}(\omega,\cdot)$, therefore one can take the supremum inside the integral and obtain (\ref{thingfour}). Apply the method of Lagrange multipliers and see that the supremum in (\ref{thingfour}) is attained at \[\hat{\pi}(\omega,z)=\frac{\mathrm{e}^{v(\omega,z)}}{\sum_{z'\in\mathcal{R}}\mathrm{e}^{v(\omega,z')}}.\] Plugging this back in (\ref{thingfour}) gives (\ref{thingfive}). The integral in (\ref{thingfive}) is convex in $h$, and affine (hence concave) and continuous in $\phi$. Plus, the supremum is taken over a compact set. Apply once again the minimax theorem of Ky Fan \cite{KyFan} and arrive at (\ref{thingsix}) which is clearly equal to (\ref{thingseven}).

Let us proceed with the proof: (\ref{thingseven}) implies that $\forall\epsilon >0$ and $k\geq1$, there exists a bounded $\mathcal{B}$-measurable function $h_{k,\epsilon}$ that $\mathbb{P}$-a.s. satisfies\pagebreak
\begin{equation}
\log\sum_{z\in\mathcal{R}}\exp \mathbb{E}\left[\log \pi(0,z) + f(\omega,z) + h_{k,\epsilon}(\omega) - h_{k,\epsilon}(T_z\omega)\left|\mathcal{E}_k\right.\right]\leq\Gamma(f) + \epsilon.\label{coklugot}
\end{equation}
For each $z\in\mathcal{R}$,
\begin{equation}
\mathbb{E}\left[h_{k,\epsilon}(\omega) - h_{k,\epsilon}(T_z\omega)\left|\mathcal{E}_k\right.\right]\leq\mathbb{E}\left[-\log \pi(0,z)\left|\mathcal{E}_k\right.\right]+\|f\|_{\infty}+\Gamma(f)+\epsilon\label{yarinbitersekral}.
\end{equation} Define $F_{k,\epsilon}:\Omega\times\mathcal{R}\to\mathbb{R}$ by $F_{k,\epsilon}(\omega,z):=\mathbb{E}\left[h_{k,\epsilon}(\omega) - h_{k,\epsilon}(T_z\omega)\left|\mathcal{E}_{k-1}\right.\right]$. Then,
\begin{equation}
F_{k,\epsilon}(\omega,z)\leq\mathbb{E}\left[-\log \pi(0,z)\left|\mathcal{E}_{k-1}\right.\right]+\|f\|_{\infty}+\Gamma(f)+\epsilon\label{kirmizigul}
\end{equation} holds $\mathbb{P}$-a.s. Also note that
\begin{align*}
-\mathbb{E}\left[h_{k,\epsilon}(\omega) - h_{k,\epsilon}(T_z\omega)\left|T_{-z}\mathcal{E}_k\right.\right]&=-\mathbb{E}\left[h_{k,\epsilon}(T_{-z}\omega) - h_{k,\epsilon}(\omega)\left|\mathcal{E}_k\right.\right](T_z\cdot)\\
&=\mathbb{E}\left[h_{k,\epsilon}(\omega) - h_{k,\epsilon}(T_{-z}\omega)\left|\mathcal{E}_k\right.\right](T_z\cdot)\\
&\leq\mathbb{E}\left[-\log \pi(0,-z)\left|\mathcal{E}_k\right.\right](T_z\cdot)+\|f\|_{\infty}+\Gamma(f)+\epsilon\\
&=\mathbb{E}\left[-\log \pi(z,0)\left|T_{-z}\mathcal{E}_k\right.\right]+\|f\|_{\infty}+\Gamma(f)+\epsilon
\end{align*} where the inequality follows from (\ref{yarinbitersekral}). Since $\mathcal{E}_{k-1}\subset T_{-z}\mathcal{E}_k$, taking conditional expectation with respect to $\mathcal{E}_{k-1}$ gives \[-F_{k,\epsilon}(\omega,z)\leq\mathbb{E}\left[-\log \pi(z,0)\left|\mathcal{E}_{k-1}\right.\right]+\|f\|_{\infty}+\Gamma(f)+\epsilon.\] Recall (\ref{kirmizigul}) and deduce that \[\left|F_{k,\epsilon}(\omega,z)\right|\leq\mathbb{E}\left[-\log \pi(0,z)\left|\mathcal{E}_{k-1}\right.\right]+\mathbb{E}\left[-\log \pi(z,0)\left|\mathcal{E}_{k-1}\right.\right]+\|f\|_{\infty}+\Gamma(f)+\epsilon.\] This implies by (\ref{kimimvarki}) that $\left(F_{k,\epsilon}(\cdot,z)\right)_{k\geq1}$ is uniformly bounded in $L^{d+\alpha}(\mathbb{P})$ for $z\in\mathcal{R}$. Passing to a subsequence if necessary, $F_{k,\epsilon}(\cdot,z)$ converges weakly to a limit $F_{\epsilon}(\cdot,z)\in L^{d+\alpha}(\mathbb{P})$.

For $j\geq1$, and any sequence $(x_{i})_{i=0}^n$ in $\mathbb{Z}^d$ such that $x_{i+1}-x_i\in\mathcal{R}$ and $x_0=x_n$, 
\begin{align}
&\mathbb{E}\left(\left.\sum_{i=0}^{n-1}F_{\epsilon}(T_{x_i}\omega,x_{i+1}-x_i)\right|\mathcal{E}_j\right)\nonumber\\
=&\sum_{i=0}^{n-1}\mathbb{E}\left(\left.\lim_{k\to\infty}F_{k,\epsilon}(T_{x_i}\omega,x_{i+1}-x_i)\right|\mathcal{E}_j\right)\nonumber\\
=&\sum_{i=0}^{n-1}\lim_{k\to\infty}\mathbb{E}\left(\left.F_{k,\epsilon}(T_{x_i}\omega,x_{i+1}-x_i)\right|\mathcal{E}_j\right)\nonumber\\
=&\sum_{i=0}^{n-1}\lim_{k\to\infty}\mathbb{E}\left(\left.\mathbb{E}\left[h_{k,\epsilon}(\omega) - h_{k,\epsilon}(T_{x_{i+1}-x_i}\omega)\left|\mathcal{E}_{k-1}\right.\right](T_{x_i}\omega)\right|\mathcal{E}_j\right)\nonumber\\
=&\sum_{i=0}^{n-1}\lim_{k\to\infty}\mathbb{E}\left(\left.\mathbb{E}\left[h_{k,\epsilon}(T_{x_i}\omega) - h_{k,\epsilon}(T_{x_{i+1}}\omega)\left|T_{-x_i}\mathcal{E}_{k-1}\right.\right]\right|\mathcal{E}_j\right)\nonumber\\
=&\sum_{i=0}^{n-1}\lim_{k\to\infty}\mathbb{E}\left(\left.h_{k,\epsilon}(T_{x_i}\omega) - h_{k,\epsilon}(T_{x_{i+1}}\omega)\right|\mathcal{E}_j\right)\label{dursunmus}\\
=&\lim_{k\to\infty}\mathbb{E}\left(\left.\sum_{i=0}^{n-1}\left(h_{k,\epsilon}(T_{x_i}\omega) - h_{k,\epsilon}(T_{x_{i+1}}\omega)\right)\right|\mathcal{E}_j\right)=0\nonumber
\end{align} holds $\mathbb{P}$-a.s., where (\ref{dursunmus}) follows from the fact that $\mathcal{E}_j\subset T_{-x_i}\mathcal{E}_{k-1}$ for $k$ large. Therefore, $\sum_{i=0}^{n-1}F_{\epsilon}(T_{x_i}\omega,x_{i+1}-x_i)=0$ for $\mathbb{P}$-a.e.\ $\omega$, and $F_{\epsilon}:\Omega\times\mathcal{R}\to\mathbb{R}$ satisfies the closed loop condition in Definition \ref{K}. We already know that it satisfies the moment condition, and it is also clearly mean zero. Hence, $F_{\epsilon}\in\mathcal{K}$.

Since $\mathbb{E}\left[\log \pi(0,z) + f(\omega,z)\left|\mathcal{E}_{k-1}\right.\right]$ is an $L^{d+\alpha}(\mathbb{P})$-bounded martingale, it converges in $L^{d+\alpha}(\mathbb{P})$ to $\log \pi(0,z) + f(\cdot,z)$. Therefore, \[\mathcal{L}_{k,\epsilon}(\cdot,z):=\mathbb{E}\left[\log \pi(0,z) + f(\omega,z)\left|\mathcal{E}_{k-1}\right.\right]+F_{k,\epsilon}(\cdot,z)\] converges weakly in $L^{d+\alpha}(\mathbb{P})$ to $\log \pi(0,z) + f(\cdot,z)+F_{\epsilon}(\cdot,z)$. By Mazur's theorem (see \cite{Rudin}), we can find $\mathcal{L}_{k,\epsilon}':\Omega\times\mathcal{R}\to\mathbb{R}$ for $k\geq1$ such that $\mathcal{L}_{k,\epsilon}'(\cdot,z)$ converges strongly in $L^{d+\alpha}(\mathbb{P})$ to $\log \pi(0,z) + f(\cdot,z)+F_{\epsilon}(\cdot,z)$ for each $z\in\mathcal{R}$ and $\mathcal{L}_{k,\epsilon}'$ is a convex combination of $\{\mathcal{L}_{1,\epsilon},\mathcal{L}_{2,\epsilon},\ldots,\mathcal{L}_{k,\epsilon}\}$. Passing to a further subsequence, $\mathcal{L}_{k,\epsilon}'(\cdot,z)$ converges $\mathbb{P}$-a.s.\ to $\log \pi(0,z) + f(\cdot,z)+F_{\epsilon}(\cdot,z)$. Take conditional expectation of both sides of (\ref{coklugot}) with respect to $\mathcal{E}_{k-1}$ and use Jensen's inequality to write \[\log\sum_{z\in\mathcal{R}}\exp\left(\mathbb{E}\left[\log \pi(0,z) + f(\omega,z)\left|\mathcal{E}_{k-1}\right.\right]+F_{k,\epsilon}(\cdot,z)\right)\leq\Gamma(f) + \epsilon.\] Again by Jensen's inequality, $\log\sum_{z\in\mathcal{R}}\exp\left(\mathcal{L}_{k,\epsilon}'(\cdot,z)\right)\leq\Gamma(f) + \epsilon$. Taking $k\to\infty$ gives \[\log\sum_{z\in\mathcal{R}}\pi(0,z)\mathrm{e}^{f(\omega,z)+F_{\epsilon}(\omega,z)}\leq\Gamma(f) + \epsilon\] for $\mathbb{P}$-a.e.\ $\omega$. Theorem \ref{LMGF} is proved.

\subsection{Large deviation principle}\label{LDPproof}

Putting together (\ref{level2ratetilde}) and Theorem \ref{LMGF},
\begin{align*}
\Lambda_q(f)&=\sup_{\mu\in M_{1,s}^{\ll}(\Omega\times\mathcal{R})}\int\sum_{z\in\mathcal{R}}\mathrm{d}\mu(\omega,z)\left(f(\omega,z)-\log \frac{\mathrm{d}\mu(\omega,z)}{\mathrm{d}(\mu)^1(\omega)\pi(0,z)}\right)\\
&=\sup_{\mu\in M_{1,s}^{\ll}(\Omega\times\mathcal{R})}\left\{\left\langle f,\mu\right\rangle - \mathfrak{I}_q(\mu)\right\}\\
&=\sup_{\mu\in M_1(\Omega\times\mathcal{R})}\left\{\left\langle f,\mu\right\rangle - \mathfrak{I}_q(\mu)\right\}\\
&=\ \mathfrak{I}_q^*(f),
\end{align*} the Fenchel-Legendre transform of $\mathfrak{I}_q$. Therefore, $\mathfrak{I}_q^{**}=\Lambda_q^*$.

Since $M_1(\Omega\times\mathcal{R})$ is compact, it directly follows from the G\"{a}rtner-Ellis theorem (see \cite{DZ}) that for any closed subset $C$ of $M_1(\Omega\times\mathcal{R})$ and $\mathbb{P}$-a.e.\ $\omega$,
\begin{displaymath}
\limsup_{n\rightarrow\infty}\frac{1}{n}\log P_o^{\omega}(\nu_{n,X}\in C)\leq-\inf_{\mu\in C}\Lambda_q^*(\mu)=-\inf_{\mu\in C}\mathfrak{I}_q^{**}(\mu).
\end{displaymath}

To conclude the proof of Theorem \ref{level2LDP}, one needs to obtain the LDP lower bound. Note that for any open subset $G$ of $M_1(\Omega\times\mathcal{R})$, $\inf_{\nu\in G}\mathfrak{I}_q^{**}(\nu)=\inf_{\nu\in G}\mathfrak{I}_q(\nu)$. (See \cite{Rockafellar}, page 104.) Therefore, it suffices to show that for any $\mu\in M_{1,s}^{\ll}(\Omega\times\mathcal{R})$, any open set $O$ containing $\mu$, and $\mathbb{P}$-a.e.\ $\omega$,
\begin{equation}
\liminf_{n\rightarrow\infty}\frac{1}{n}\log P_o^{\omega}(\nu_{n,X}\in O)\geq-\mathfrak{I}_q(\mu).\label{LB}
\end{equation} 
Take the pair \[(\hat{\pi},\phi)=\left(\frac{\mathrm{d}\mu}{\mathrm{d}(\mu)^1},\frac{\mathrm{d}(\mu)^1}{\mathrm{d}\mathbb{P}}\right)\] corresponding to a given $\mu\in M_{1,s}^{\ll}(\Omega\times\mathcal{R})$. Then, $\hat{\pi}(\cdot,z)>0$ $\mathbb{P}$-a.s.\ for each $z\in U$, $\phi\in L^1(\mathbb{P})$, and $\phi\,\mathrm{d}\mathbb{P}$ is a $\hat{\pi}$-invariant probability measure. With this notation, (\ref{LB}) becomes \[\liminf_{n\rightarrow\infty}\frac{1}{n}\log P_o^{\omega}(\nu_{n,X}\in O)\geq-\int_{\Omega}\sum_{z\in\mathcal{R}}\hat{\pi}(\omega,z)\log\frac{\hat{\pi}(\omega,z)}{\pi(0,z)}\phi(\omega)\mathrm{d}\mathbb{P}.\] Recall Definition \ref{ortamkeli} and introduce a new measure $R_o^{\hat{\pi},\omega}$ by setting \[\mathrm{d}R_o^{\hat{\pi},\omega}:=\frac{\one_{\nu_{n,X}\in O}}{P_o^{\hat{\pi},\omega}(\nu_{n,X}\in O)}\,\mathrm{d}P_o^{\hat{\pi},\omega}.\]
\begin{align*}
\text{Then,}&\quad\liminf_{n\rightarrow\infty}\frac{1}{n}\log P_o^\omega(\nu_{n,X}\in O)
=\liminf_{n\rightarrow\infty}\frac{1}{n}\log E_o^{\hat{\pi},\omega}\left[\one_{\nu_{n,X}\in O}\,\frac{\mathrm{d}P_o^\omega}{\mathrm{d}P_o^{\hat{\pi},\omega}}\right]\displaybreak\\
=&\liminf_{n\rightarrow\infty}\frac{1}{n}\left(\log P_o^{\hat{\pi},\omega}(\nu_{n,X}\in O)+\log \int\frac{\mathrm{d}P_o^\omega}{\mathrm{d}P_o^{\hat{\pi},\omega}}\mathrm{d}R_o^{\hat{\pi},\omega}\right)\\
\geq&\liminf_{n\rightarrow\infty}\frac{1}{n}\left(\log P_o^{\hat{\pi},\omega}(\nu_{n,X}\in O)- \int\log\frac{\mathrm{d}P_o^{\hat{\pi},\omega}}{\mathrm{d}P_o^\omega}\mathrm{d}R_o^{\hat{\pi},\omega}\right)\\
=&\liminf_{n\rightarrow\infty}\frac{1}{n}\left(\log P_o^{\hat{\pi},\omega}(\nu_{n,X}\in O)-\frac{1}{P_o^{\hat{\pi},\omega}(\nu_{n,X}\in O)} E_o^{\hat{\pi},\omega}\left[\one_{\nu_{n,X}\in O}\,\log\frac{\mathrm{d}P_o^{\hat{\pi},\omega}}{\mathrm{d}P_o^\omega}\right]\right)
\end{align*} where the fourth line uses Jensen's inequality. It follows from Lemma \ref{Kozlov} that $\lim_{n\rightarrow\infty}P_o^{\hat{\pi},\omega}(\nu_{n,X}\in O)=1$. Therefore,
\begin{align*}
\liminf_{n\rightarrow\infty}\frac{1}{n}\log P_o^\omega(\nu_{n,X}\in O)&\geq-\limsup_{n\rightarrow\infty}\frac{1}{n}E_o^{\hat{\pi},\omega}\left[\one_{\nu_{n,X}\in O}\,\log\frac{\mathrm{d}P_o^{\hat{\pi},\omega}}{\mathrm{d}P_o^\omega}\right]\\
&=-\int_{\Omega}\sum_{z\in\mathcal{R}}\hat{\pi}(\omega,z)\log\frac{\hat{\pi}(\omega,z)}{\pi(0,z)}\phi(\omega)\mathrm{d}\mathbb{P}
\end{align*} again by Lemma \ref{Kozlov} and the $L^1$-ergodic theorem. Theorem \ref{level2LDP} is proved. Finally, note that the convexity of $\mathfrak{I}_q$ follows from an argument similar to the proof of Lemma \ref{camilla}.

\section{LDP for the mean velocity}\label{birboyutadonus}

\subsection{Variational formula for the rate function}

\begin{proof}[Proof of Corollary \ref{level1LDP}]
Recall (\ref{ximu}) and observe that \[\xi_{\nu_{n,X}}=\int\sum_{z\in\mathcal{R}}\mathrm{d}\nu_{n,X}(\omega,z)z=\frac{1}{n}\sum_{k=0}^{n-1}\left(X_{k+1}-X_k\right)=\frac{X_n-X_o}{n}.\] Therefore, as noted in Section \ref{egridogru}, Corollary \ref{level1LDP} follows from Theorem \ref{level2LDP} by the contraction principle (see \cite{DZ}), and the rate function is given by (\ref{level1rate}). 

In order to justify (\ref{level1ratetilde}), let us define $J_q:\mathbb{R}^d\rightarrow\mathbb{R}^+$ by $J_q(\xi)=\inf_{\mu\in A_\xi}\mathfrak{I}_q(\mu)$. We would like to show that $J_q\equiv I_q$. Since $\mathfrak{I}_q$ and $\mathfrak{I}_q^{**}$ are convex, $I_q$ and $J_q$ are convex functions on $\mathbb{R}^d$. Therefore, it suffices to show that $J_q^*\equiv I_q^*$. For any $\eta\in\mathbb{R}^d$, define $f_{\eta}\in C_b(\Omega\times\mathcal{R})$ by $f_{\eta}(\omega,z):=\langle z,\eta\rangle$. Recalling (\ref{ximu}),
\begin{eqnarray*}
I_q^*(\eta)&=&\sup_{\xi}\{\langle\eta,\xi\rangle - \inf_{\mu\in A_\xi}\mathfrak{I}_q^{**}(\mu)\}\\
&=&\sup_{\xi}\sup_{\mu\in A_\xi}\{\langle\eta,\xi_{\mu}\rangle - \mathfrak{I}_q^{**}(\mu)\}\\
&=&\sup_{\mu\in M_1(\Omega\times\mathcal{R})}\{\langle f_{\eta},\mu\rangle - \mathfrak{I}_q^{**}(\mu)\}\\
&=&\mathfrak{I}_q^{***}(f_{\eta})=\Lambda_q(f_{\eta}).
\end{eqnarray*}
Similarly, $J_q^*(\eta)=\mathfrak{I}_q^*(f_{\eta})=\Lambda_q(f_{\eta})$. We are done.
\end{proof}

\subsection{An Ansatz for the unique minimizer}

\begin{proof}[Proof of Lemma \ref{lagrange}]
The rate function given by formula (\ref{level1ratetilde}) is
\begin{equation}
I_q(\xi)=\inf_{\mu\in A_\xi\cap M_{1,s}^{\ll}(\Omega\times\mathcal{R})}\int_{\Omega}\sum_{z\in\mathcal{R}} \mathrm{d}\mu(\omega,z)\log\frac{\mathrm{d}\mu(\omega,z)}{\mathrm{d}(\mu)^1(\omega)\pi(0,z)}.\label{buduranan}
\end{equation} Fix $\xi=(\xi_1,\ldots,\xi_d)\in\mathbb{R}^d$ with $|\xi_1|+\cdots+|\xi_d|\leq B$. (Otherwise, $A_\xi$ is empty.) If there exists $\mu_\xi\in A_\xi\cap M_{1,s}^{\ll}(\Omega\times\mathcal{R})$ such that \[\mathrm{d}\mu_\xi(\omega,z)=\mathrm{d}(\mu_\xi)^1(\omega) \pi(0,z)\mathrm{e}^{\langle\theta,z\rangle+F(\omega,z)+ r}\] for some $\theta\in\mathbb{R}^d$, $F\in\mathcal{K}$ and $r\in\mathbb{R}$, then for any $\nu\in A_\xi\cap M_{1,s}^{\ll}(\Omega\times\mathcal{R})$,
\begin{align*}
\mathfrak{I}_q(\nu)&=\int_{\Omega}\sum_{z\in\mathcal{R}} \mathrm{d}\nu(\omega,z)\log\frac{\mathrm{d}\nu(\omega,z)}{\mathrm{d}(\nu)^1(\omega)\pi(0,z)}\\
&=\int_{\Omega}\sum_{z\in\mathcal{R}} \mathrm{d}\nu(\omega,z)\log\frac{\mathrm{d}\nu(\omega,z)\mathrm{e}^{\langle\theta,z\rangle+F(\omega,z)+r}}{\mathrm{d}(\nu)^1(\omega)\pi(0,z)\mathrm{e}^{\langle\theta,z\rangle+F(\omega,z)+r}}\displaybreak\\
&=\int_{\Omega}\sum_{z\in\mathcal{R}} \mathrm{d}\nu(\omega,z)\left(\langle\theta,z\rangle+F(\omega,z)+r+\log\frac{\mathrm{d}\nu(\omega,z)\;\mathrm{d}(\mu_\xi)^1(\omega)}{\mathrm{d}(\nu)^1(\omega)\;\mathrm{d}\mu_\xi(\omega,z)}\right)\\ &=\langle\theta,\xi\rangle+r+\int_{\Omega}\sum_{z\in\mathcal{R}} \mathrm{d}\nu(\omega,z)F(\omega,z)\\&\ \ \ \ +\int_{\Omega}\sum_{z\in\mathcal{R}} \mathrm{d}\nu(\omega,z)\log\frac{\mathrm{d}\nu(\omega,z)\;\mathrm{d}(\mu_\xi)^1(\omega)}{\mathrm{d}(\nu)^1(\omega)\;\mathrm{d}\mu_\xi(\omega,z)}.
\end{align*}
Under the Markov kernel $\frac{\mathrm{d}\nu}{\mathrm{d}(\nu)^1}$ with invariant measure $(\nu)^1$, $\mathbb{P}$-a.s.\[\lim_{n\rightarrow\infty}\frac{1}{n}\sum_{k=0}^{n-1}F(T_{X_k}\omega,X_{k+1}-X_k)=\int_{\Omega}\sum_{z\in\mathcal{R}} \mathrm{d}\nu(\omega,z)F(\omega,z)\] by Lemma \ref{Kozlov} and the ergodic theorem. But the same limit is $0$ by Lemma \ref{GRR}. Therefore,
\begin{equation}
\mathfrak{I}_q(\nu)=\langle\theta,\xi\rangle+r+\int_{\Omega}\sum_{z\in\mathcal{R}} \mathrm{d}\nu(\omega,z)\log\frac{\mathrm{d}\nu(\omega,z)\;\mathrm{d}(\mu_\xi)^1(\omega)}{\mathrm{d}(\nu)^1(\omega)\;\mathrm{d}\mu_\xi(\omega,z)}.
\label{sifirladik}
\end{equation}
By an application of Jensen's inequality, it is easy to see that the integral on the RHS of (\ref{sifirladik}) is nonnegative. Moreover, this integral is zero if and only if $\frac{\mathrm{d}\nu}{\mathrm{d}(\nu)^1}=\frac{\mathrm{d}\mu_\xi}{\mathrm{d}(\mu_\xi)^1}$ holds $(\nu)^1$-a.s.\ and hence $\mathbb{P}$-a.s.\ by Lemma \ref{Kozlov}. Since $(\mu_\xi)^1$ is the unique invariant measure of $\frac{\mathrm{d}\mu_\xi}{\mathrm{d}(\mu_\xi)^1}$ that is absolutely continuous relative to $\mathbb{P}$ (again by Lemma \ref{Kozlov}), $\mu_\xi$ is the unique minimizer of (\ref{buduranan}).
\end{proof}

\section{The one dimensional case}\label{vandiseksin}

In Subsection \ref{construction}, we prove Theorem \ref{findthemin} by constructing a $\mu_\xi\in M_1(\Omega\times U)$ that fits the Ansatz given in Lemma \ref{lagrange} for $\xi\in (-1,\xi_c')\cup(\xi_c,1)$, where $\xi_c$ and $\xi_c'$ naturally appear. Finally, we prove Theorem \ref{density} in Subsection \ref{densitysection}.

\subsection{Construction of the unique minimizer}\label{construction}

Define $\zeta(r,\omega):=E_o^\omega\left[\mathrm{e}^{rt_1},t_1<\infty\right]$ for any $r\in\mathbb{R}$. Then, $\zeta(r,\omega)=\pi(0,1)\mathrm{e}^r+\pi(0,-1)\mathrm{e}^r\zeta(r,T_{-1}\omega)\zeta(r,\omega)$ if $\zeta(r,\omega)$ is finite.
\begin{equation}
1=\pi(0,1)\mathrm{e}^r\zeta(r,\omega)^{-1}+\pi(0,-1)\mathrm{e}^r\zeta(r,T_{-1}\omega).\label{masterof}
\end{equation} Since $\pi(0,-1)>0$ holds $\mathbb{P}$-a.s., $\{\omega:\zeta(r,\omega)<\infty\}$ is $T$-invariant and therefore its probability under $\mathbb{P}$ is $0$ or $1$. $\zeta(r,\omega)$ is strictly increasing in $r$. There exists $r_c\geq0$ such that $\mathbb{P}$-a.s.\ $\zeta(r,\omega)<\infty$ if $r<r_c$ and $\zeta(r,\omega)=\infty$ if $r>r_c$. By (\ref{masterof}), $1\geq\pi(0,-1)\mathrm{e}^r\zeta(r,T_{-1}\omega)$ and $\log\zeta(r,T_{-1}\omega)\leq-\log\pi(0,-1)-r$. Thus, \begin{equation}\lambda(r):=\mathbb{E}[\log\zeta(r,\cdot)]\leq\int|\log\pi(0,-1)|\mathrm{d}\mathbb{P}-r<\infty\label{turran}\end{equation} for $r<r_c$, and also for $r=r_c$ by the monotone convergence theorem. In particular, $\zeta(r_c,\omega)<\infty$ holds for $\mathbb{P}$-a.e.\ $\omega$. It is easy to see that $r\mapsto\lambda(r)$ is analytic and strictly convex for $r<r_c$. Set $\xi_c:=\lambda'(r_c-)^{-1}$ and note that $$\xi_c^{-1}=\lambda'(r_c-)\geq\lambda'(0-)=\mathbb{E}\left(E_o^\omega[\left.t_1\right|t_1<\infty]\right)>1$$ since the ellipticity condition ensures that the walk is not deterministic. 

For any $\xi\in(\xi_c,1)$, there is a unique $r=r(\xi)<r_c$ such that $\xi^{-1}=\lambda'(r)$. For $r=r(\xi)$, recall (\ref{masterof}) and define an environment kernel $\hat{\pi}$ by
\begin{equation}
\hat{\pi}(\omega,1):=\pi(0,1)\mathrm{e}^r\zeta(r,\omega)^{-1}\quad\mbox{and}\quad\hat{\pi}(\omega,-1):=\pi(0,-1)\mathrm{e}^r\zeta(r,T_{-1}\omega).\label{yyedin}
\end{equation}

\begin{lemma}
$P_o^{\hat{\pi}}(t_1<\infty)=1$.
\end{lemma}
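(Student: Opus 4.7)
My approach is to exhibit a path-by-path Radon--Nikodym identity between $P_o^{\hat{\pi},\omega}$ and $P_o^\omega$ restricted to $\{t_1 = n\}$, with density $e^{rn}/\zeta(r,\omega)$. Summing this identity over all finite paths that first hit $1$ immediately gives the result from the very definition of $\zeta$.

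The plan is as follows. First, I observe that the master equation (\ref{masterof}) is precisely the statement $\hat{\pi}(\omega,1)+\hat{\pi}(\omega,-1)=1$, so $\hat{\pi}$ is a legitimate environment kernel. Next, fix $\omega$ and a finite path $0=x_o,x_1,\ldots,x_n=1$ with $x_k\leq 0$ for $k<n$, and let $U_y$ (resp.\ $D_y$) denote the number of up-steps (resp.\ down-steps) the path makes while at site $y$. A direct computation using (\ref{yyedin}) gives
\[
\prod_{k=0}^{n-1}\hat{\pi}(T_{x_k}\omega,x_{k+1}-x_k)=e^{rn}\Bigl(\prod_{k=0}^{n-1}\pi(x_k,x_{k+1})\Bigr)\prod_{y\in\mathbb{Z}}\zeta(r,T_y\omega)^{D_{y+1}-U_y}.
\]
A conservation-of-flow argument shows $U_y=D_{y+1}$ for every $y\neq 0$ (because the path starts at $0$, ends at $1$, and never visits $\{y+1,y+2,\ldots\}$ before time $n$ when $y\geq 1$), while at $y=0$ exactly one step up is taken at the terminal time, so $U_0-D_1=1$. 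Thus every factor in the product over $y$ collapses except $\zeta(r,\omega)^{-1}$, yielding the key identity
\[
P_o^{\hat{\pi},\omega}\bigl((X_0,\ldots,X_n)=(x_0,\ldots,x_n)\bigr)=\frac{e^{rn}}{\zeta(r,\omega)}\,P_o^\omega\bigl((X_0,\ldots,X_n)=(x_0,\ldots,x_n)\bigr).
\]

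Summing over $n$ and over all first-passage paths and using the definition of $\zeta(r,\omega)$,
\[
P_o^{\hat{\pi},\omega}(t_1<\infty)=\frac{1}{\zeta(r,\omega)}\sum_{n\geq 1}e^{rn}P_o^\omega(t_1=n)=\frac{\zeta(r,\omega)}{\zeta(r,\omega)}=1
\]
for $\mathbb{P}$-a.e.\ $\omega$ (those for which $\zeta(r,\omega)<\infty$, which by (\ref{turran}) is a full-measure event since $r=r(\xi)<r_c$). Integrating against $\mathbb{P}$ gives $P_o^{\hat{\pi}}(t_1<\infty)=1$.

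The only non-routine step is the combinatorial simplification of $\prod_y\zeta(r,T_y\omega)^{D_{y+1}-U_y}$; the rest is bookkeeping. I do not expect any analytic difficulty, since finiteness of $\zeta(r,\omega)$ for $r<r_c$ has already been established, and the change of measure is exact at the level of cylinder events (no limiting argument needed beyond monotone convergence to sum over $n$).
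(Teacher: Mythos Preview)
Your proof is correct and takes a genuinely different route from the paper's. The paper argues indirectly: it shows $P_o^{\hat{\pi},\omega}(t_{-1}'<\infty)<1$ for $\mathbb{P}$-a.e.\ $\omega$ (which forces the $\hat{\pi}$-walk to be transient to the right and hence $t_1<\infty$ a.s.). To get that bound, the paper writes $P_o^{\hat{\pi},\omega}(t_{-1}'<\infty)=E_o^\omega[\mathrm{e}^{rt_{-1}'},t_{-1}'<\infty]\,E_{-1}^\omega[\mathrm{e}^{rt_o},t_o<\infty]$, compares with the analogous expressions at the critical exponent $r_c$, and uses a hitting-time identity involving $E_o^\omega[\mathrm{e}^{r_ct_n},t_n<\infty]$ to show the $r_c$-product is at most $1$; the strict inequality then comes from $r<r_c$.

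Your argument is more direct: you recognise that on first-passage paths to $1$ the Radon--Nikodym derivative ${\mathrm{d}P_o^{\hat{\pi},\omega}}/{\mathrm{d}P_o^\omega}$ telescopes exactly to $\mathrm{e}^{rn}/\zeta(r,\omega)$, so that $P_o^{\hat{\pi},\omega}(t_1<\infty)=\zeta(r,\omega)^{-1}E_o^\omega[\mathrm{e}^{rt_1},t_1<\infty]=1$ by definition of $\zeta$. This avoids any reference to $r_c$ and is essentially the Doob $h$-transform picture made explicit. The crossing-count identity $U_y=D_{y+1}$ for $y\neq0$ and $U_0-D_1=1$ is the standard edge-flow balance for a nearest-neighbour path from $0$ to $1$; your justification is fine (for $y\leq-1$ it is the net-crossing count, for $y\geq1$ both quantities vanish because the path stays in $(-\infty,0]$ before time $n$). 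One minor remark: your parenthetical explanation covers only $y\geq1$; for $y\leq-1$ you should say explicitly that net up-crossings of the edge $\{y,y+1\}$ equal $\one_{\{1\geq y+1\}}-\one_{\{0\geq y+1\}}=0$. What you gain is brevity and an exact identity; what the paper's approach gains is the extra information that the $\hat{\pi}$-walk has a uniformly positive escape probability to the right, which is sometimes useful downstream.
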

\begin{proof}
It suffices to show that $P_o^{\hat{\pi},\omega}(t_{-1}'<\infty)<1$ holds for $\mathbb{P}$-a.e.\ $\omega$. It follows from (\ref{yyedin}) that
\begin{align*}
P_o^{\hat{\pi},\omega}(t_{-1}'<\infty)&=E_o^\omega[\mathrm{e}^{rt_{-1}'}\zeta(r,T_{-1}\omega),t_{-1}'<\infty]\\
&=E_o^\omega[\mathrm{e}^{rt_{-1}'},t_{-1}'<\infty]E_{-1}^\omega[\mathrm{e}^{rt_o},t_o<\infty]\\
&\leq\mathrm{e}^{2(r-r_c)}E_o^\omega[\mathrm{e}^{r_ct_{-1}'},t_{-1}'<\infty]E_{-1}^\omega[\mathrm{e}^{r_ct_o},t_o<\infty].
\end{align*} On the other hand, for any $n\geq1$,
\begin{align*}
E_o^\omega[\mathrm{e}^{r_ct_n},t_n<\infty]&\geq E_o^\omega[\mathrm{e}^{r_ct_n},t_{-1}'<t_n<\infty]\\
&=E_o^\omega[\mathrm{e}^{r_ct_{-1}'},t_{-1}'<t_n]E_{-1}^\omega[\mathrm{e}^{r_ct_n},t_n<\infty]\\
&=E_o^\omega[\mathrm{e}^{r_ct_{-1}'},t_{-1}'<t_n]E_{-1}^\omega[\mathrm{e}^{r_ct_o},t_o<\infty]E_o^\omega[\mathrm{e}^{r_ct_n},t_n<\infty].
\end{align*} Simplify this to get $1\geq E_o^\omega[\mathrm{e}^{r_ct_{-1}'},t_{-1}'<t_n]E_{-1}^\omega[\mathrm{e}^{r_ct_o},t_o<\infty]$. Taking $n\to\infty$ gives $E_o^\omega[\mathrm{e}^{r_ct_{-1}'},t_{-1}'<\infty]E_{-1}^\omega[\mathrm{e}^{r_ct_o},t_o<\infty]\leq1.$ Since $r-r_c<0$, we conclude that $P_o^{\hat{\pi},\omega}(t_{-1}'<\infty)\leq\mathrm{e}^{2(r-r_c)}<1$.
\end{proof}
\begin{lemma}
$E_o^{\hat{\pi}}[t_1]=\xi^{-1}<\infty$.
\end{lemma}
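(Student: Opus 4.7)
The plan is to exploit the fact that the tilted kernel $\hat{\pi}$ defined in (\ref{yyedin}) is essentially a Doob $h$-transform of the original kernel $\pi$ with multiplicative functional $e^{rt_1}$. Concretely, for $x\le 0$ let $h(x,\omega):=E_x^\omega[\mathrm{e}^{rt_1},t_1<\infty]$ and set $h(1,\omega):=1$. The strong Markov property at the hitting time of $x+1$ yields $h(x,\omega)=\zeta(r,T_x\omega)\,h(x+1,\omega)$, so $h(x+1,\omega)/h(x,\omega)=1/\zeta(r,T_x\omega)$ and $h(x-1,\omega)/h(x,\omega)=\zeta(r,T_{x-1}\omega)$. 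Plugging these into the Doob formulas $\pi(x,x\pm 1)\mathrm{e}^r h(x\pm 1,\omega)/h(x,\omega)$ reproduces exactly $\hat{\pi}(T_x\omega,\pm 1)$ from (\ref{yyedin}). Equivalently, the $P_o^\omega$-martingale $M_n:=\mathrm{e}^{rn}h(X_n,\omega)/h(0,\omega)$ (harmonicity is the recursion (\ref{masterof})) provides the change of measure.

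Optional stopping at $t_1$ then gives, on $\{t_1<\infty\}$,
\[
\frac{\mathrm{d}P_o^{\hat{\pi},\omega}}{\mathrm{d}P_o^\omega}\bigg|_{\mathcal{F}_{t_1}} \;=\; M_{t_1} \;=\; \frac{\mathrm{e}^{rt_1}}{\zeta(r,\omega)},
\]
since $h(X_{t_1},\omega)=h(1,\omega)=1$. (For a reader who prefers a bare-hands derivation, the same identity follows by expanding $\prod_{k=0}^{t_1-1}\hat{\pi}(T_{X_k}\omega,X_{k+1}-X_k)/\pi(X_k,X_{k+1})$ and using the arrivals-equal-departures identities $N_{x+1}^-=N_x^+$ for $x\le -1$ on a path from $0$ to $1$.) Combined with the previous lemma $P_o^{\hat{\pi}}(t_1<\infty)=1$, this yields, for every $s$ with $r+s\le r_c$,
\[
E_o^{\hat{\pi},\omega}\!\left[\mathrm{e}^{st_1}\right] \;=\; \frac{1}{\zeta(r,\omega)}\,E_o^\omega\!\left[\mathrm{e}^{(r+s)t_1},\,t_1<\infty\right] \;=\; \frac{\zeta(r+s,\omega)}{\zeta(r,\omega)}.
\]

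Differentiating this identity at $s=0$, which is legitimate because $s\mapsto\zeta(r+s,\omega)$ is analytic on $\{s:r+s<r_c\}$, gives $E_o^{\hat{\pi},\omega}[t_1]=\partial_r\log\zeta(r,\omega)$ for $\mathbb{P}$-a.e.\ $\omega$. Taking $\mathbb{P}$-expectation and interchanging expectation with differentiation in $r$ produces $E_o^{\hat{\pi}}[t_1]=\lambda'(r)$; the interchange is justified by the monotonicity and convexity of $r\mapsto\log\zeta(r,\omega)$ together with the $L^1(\mathbb{P})$ bound (\ref{turran}) applied on a small closed interval $[r-\delta,r+\delta]\subset(-\infty,r_c)$, which dominates the difference quotients. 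Since by construction $r=r(\xi)$ was chosen to satisfy $\xi^{-1}=\lambda'(r)$, and $\xi^{-1}<\infty$ because $\xi>\xi_c>0$, this completes the proof.

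The main subtlety is not any single computation but the clean identification of $\hat{\pi}$ as an $h$-transform: the $h$-transform naturally produces a \emph{position-dependent} kernel through the ratios $h(x\pm 1,\omega)/h(x,\omega)$, whereas (\ref{yyedin}) defines a shift-invariant environment kernel. The reconciliation hinges on the multiplicative identity $h(x,\omega)=\zeta(r,T_x\omega)h(x+1,\omega)$, which absorbs the position dependence into a shift of the environment; once this observation is in place, the rest is routine differentiation and monotone/dominated-convergence bookkeeping.
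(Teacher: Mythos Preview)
Your proof is correct and follows essentially the same route as the paper: both derive the key identity $E_o^{\hat{\pi},\omega}[\mathrm{e}^{st_1}]=\zeta(r+s,\omega)/\zeta(r,\omega)$ from the change of measure implicit in (\ref{yyedin}), then differentiate at $s=0$ and take $\mathbb{P}$-expectation to get $E_o^{\hat{\pi}}[t_1]=\lambda'(r)=\xi^{-1}$. Your presentation is more explicit about the $h$-transform interpretation and the justification for interchanging expectation with differentiation, but the underlying computation is the same.
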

\begin{proof}
For any $s\in\mathbb{R}$ and $\mathbb{P}$-a.e.\ $\omega$, recall (\ref{yyedin}) and observe that
\begin{align*}E_o^{\hat{\pi},\omega}[\mathrm{e}^{st_1}]=E_o^{\hat{\pi},\omega}[\mathrm{e}^{st_1},t_1<\infty]&=E_o^\omega[\mathrm{e}^{(r+s)t_1}\zeta(r,\omega)^{-1},t_1<\infty]\\&=\zeta(r+s,\omega)\zeta(r,\omega)^{-1}.
\end{align*}
Therefore, $\hat{\lambda}(s):=\mathbb{E}\left(\log E_o^{\hat{\pi},\omega}[\mathrm{e}^{st_1}]\right)=\lambda(r+s)-\lambda(r)$ by (\ref{turran}), and \[E_o^{\hat{\pi}}[t_1]=\left.\frac{\mathrm{d}}{\mathrm{d}s}\right|_{s=0}\!\!\!\!\!\hat{\lambda}(s)=\lambda'(r)=\xi^{-1}.\qedhere\]
\end{proof}
Since $\hat{\pi}(\omega,\pm1)>0$ holds $\mathbb{P}$-a.s., there exists a $\phi\in L^1(\mathbb{P})$ such that $\phi\,\mathrm{d}\mathbb{P}$ is a $\hat{\pi}$-invariant probability measure. (See \cite{alili} or Theorem \ref{density}.) The pair $(\hat{\pi},\phi)$ corresponds to a $\mu_\xi\in M_{1,s}^{\ll}(\Omega\times U)$ with $\mathrm{d}(\mu_\xi)^1=\phi\,\mathrm{d}\mathbb{P}$. By Lemma \ref{Kozlov}, the LLN for the mean velocity of the particle holds under $P_o^{\hat{\pi}}$ and the limiting velocity is (recall (\ref{ximu})) \[\int\sum_{z\in\mathcal{R}}\hat{\pi}(\omega,z)z\mathbb{\phi}(\omega)\;\mathrm{d}\mathbb{P}=\xi_{\mu_\xi}.\] Since $E_o^{\hat{\pi}}[t_1]=\xi^{-1}$, $\xi_{\mu_\xi}=\xi$ and therefore $\mu_\xi\in A_\xi$.

Let us define $F:\Omega\times\{-1,1\}\to\mathbb{R}$ by \[F(\omega,-1):=\log\zeta(r,T_{-1}\omega)-\lambda(r)\quad\mbox{and}\quad F(\omega,1):=-\log\zeta(r,\omega)+\lambda(r).\] Then, recall (\ref{yyedin}) and see that
\begin{equation}
\mathrm{d}\mu_\xi(\omega,z)=\hat{\pi}(\omega,z)\phi(\omega)\mathrm{d}\mathbb{P}(\omega)=\mathrm{d}(\mu_\xi)^1(\omega)\pi(0,z)\mathrm{e}^{-z\lambda(r)+F(\omega,z)+r}\label{veriguut}
\end{equation} for $z\in\{-1,1\}$. In order to conclude that $\mu_\xi$ fits the Ansatz given in Lemma \ref{lagrange}, $F\in\mathcal{K}$ remains to be shown. $F$ clearly satisfies the mean zero and the closed loop conditions in Definition \ref{K}. For $z\in\{-1,1\}$,
\[\pi(0,z)\mathrm{e}^{-z\lambda(r)+F(\omega,z)+r}=\hat{\pi}(\omega,z)\leq1\] gives $F(\omega,z)\leq|\log\pi(0,z)| +z\lambda(r)-r$. Since $-F(\omega,z)=F(T_z\omega,-z)$, we can write $|F(\omega,z)|\leq|\log\pi(0,1)|+|\log\pi(1,0)|+|\lambda(r)|-r$ and see that the moment condition on $F(\cdot,z)$ follows from (\ref{pleasant}).

Recalling (\ref{sifirladik}), $I_q(\xi)=\mathfrak{I}_q(\mu_\xi)=r(\xi)-\xi\lambda(r(\xi))$, which agrees with the formula provided by Comets et al.\ \cite{CGZ}.

By replacing $t_1$ by $t_{-1}'$ in the above construction, we can define $\xi_c'\in(-1,0]$ and obtain the minimizer $\mu_\xi$ when $\xi\in(-1,\xi_c')$. Theorem \ref{findthemin} is proved.

\subsection{Ergodic invariant density of the environment MC}\label{densitysection}

Consider random walk with bounded jumps on $\mathbb{Z}$ in a stationary and ergodic random environment.

\begin{lemma}\label{maxprinciple}
Given an environment kernel $\hat{\pi}$ for which $\hat{\pi}(\omega,1)>0$ holds $\mathbb{P}$-a.s., if a bounded measurable function $u:\Omega\times\mathbb{Z}\to\mathbb{R}$ satisfies \[u(\omega,x)=\sum_{z\in\mathcal{R}}\hat{\pi}(T_x\omega,z)u(\omega,x+z)\] for $\mathbb{P}$-a.e.\ $\omega$ when $|x|$ is large, then $\lim_{x\to-\infty}u(\cdot,x)$ and $\lim_{x\to\infty}u(\cdot,x)$ exist $\mathbb{P}$-a.s.
\end{lemma}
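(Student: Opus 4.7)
The strategy is to exploit bounded martingale convergence for the harmonic function $u$ along the walk under $\hat\pi$, together with the ellipticity in the $+1$ direction and ergodicity of $\mathbb{P}$ under the shifts, to force $u(\omega,\cdot)$ to have a limit at $-\infty$ (the case $x\to\infty$ is symmetric).

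First I would fix $M$ so that the harmonic identity $u(\omega,x)=\sum_{z\in\mathcal{R}}\hat\pi(T_x\omega,z)u(\omega,x+z)$ holds $\mathbb{P}$-a.s.\ for every $|x|>M$. For $\mathbb{P}$-a.e.\ $\omega$ and any starting point $x\leq -M-B$, consider the walk $(X_n)_{n\geq0}$ under $P_x^{\hat\pi,\omega}$ and set $\sigma:=\inf\{n\geq0:X_n>-M\}$. Since $u$ is bounded and the harmonic relation holds up to time $\sigma$, the stopped process $u(\omega,X_{n\wedge\sigma})$ is a bounded martingale, hence converges $P_x^{\hat\pi,\omega}$-a.s.\ to some limit $L_x$. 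On $\{\sigma<\infty\}$ one has $L_x=u(\omega,X_\sigma)$, taking values in the finite set $\{-M+1,\ldots,-M+B\}$. On $\{\sigma=\infty\}$ I would show $X_n\to-\infty$: the ellipticity $\hat\pi(T_y\omega,1)>0$ for all $y$ ($\mathbb{P}$-a.s.) implies $\prod_{i=0}^{M+B-1}\hat\pi(T_{y+i}\omega,1)>0$ for every $y\leq -M$, giving a uniformly positive lower bound on the chance of escaping upward through $-M$ in $M+B$ steps from $y$; a strong-Markov plus conditional Borel--Cantelli argument applied at successive visits to any site then forbids any site from being visited infinitely often on $\{\sigma=\infty\}$, forcing $X_n\to-\infty$ there. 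Optional stopping (justified by boundedness) then yields $u(\omega,x)=E_x^{\hat\pi,\omega}[L_x]$.

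The remaining and most delicate step is to promote this to existence of $\lim_{x\to-\infty}u(\omega,x)$. I would compare two deep starting points $x<x'\leq -M-B$ via the strong Markov property at the hitting time $\tau_{x'}$ of $x'$, writing
\[
u(\omega,x)=P_x^{\hat\pi,\omega}(\tau_{x'}<\sigma)\,u(\omega,x')+E_x^{\hat\pi,\omega}\bigl[u(\omega,X_\sigma);\,\sigma<\tau_{x'}\bigr],
\]
so that the Cauchy property for $(u(\omega,x))$ as $x\to-\infty$ reduces to showing $P_x^{\hat\pi,\omega}(\sigma<\tau_{x'})\to 0$ as $x\to-\infty$ with $x'$ fixed. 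This is exactly where I expect the main obstacle to sit, since the hypothesis provides neither transience nor recurrence of the kernel $\hat\pi$. I would address it using the one-dimensional structure together with ergodicity of $\mathbb{P}$: the space of bounded solutions to the harmonic equation on $(-\infty,-M]$ is finite-dimensional (dimension controlled by $B$), and an ergodic / zero-one argument on this finite-dimensional space should force the hitting distribution $P_x^{\hat\pi,\omega}(X_\sigma=\cdot)$ on the finite boundary to stabilize as $x\to-\infty$, delivering the limit. The analogous statement at $+\infty$ follows by symmetry once the harmonic equation is read in the reverse direction, together with the fact that $\hat\pi(\cdot,1)>0$ already supplies enough ellipticity to run the same martingale-plus-Borel--Cantelli scheme.
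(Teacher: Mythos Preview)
Your martingale set-up is sound and does yield a limit of $u(\omega,X_n)$ along random trajectories, but the final step---upgrading this to the deterministic limit $\lim_{x\to-\infty}u(\omega,x)$---has a real gap. The displayed decomposition
\[
u(\omega,x)=P_x^{\hat\pi,\omega}(\tau_{x'}<\sigma)\,u(\omega,x')+E_x^{\hat\pi,\omega}\bigl[u(\omega,X_\sigma);\,\sigma<\tau_{x'}\bigr]
\]
is incomplete: on $\{\sigma=\infty\}$ you have shown $X_n\to-\infty$, but with steps of size up to $B$ the walk may overshoot $x'$ or drift off to $-\infty$ without ever landing on (or even crossing) $x'$, so $\{\tau_{x'}=\sigma=\infty\}$ can have positive probability, and on that event the relevant contribution is the path-dependent martingale limit $L_x$, not $u(\omega,x')$. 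Even after repairing the overshoot issue, the crucial claim $P_x^{\hat\pi,\omega}(\sigma<\tau_{x'})\to0$ is precisely the point you flag as the obstacle, and the sketched resolution via ``the finite-dimensional space of bounded harmonic functions plus an ergodic zero-one argument'' is not a proof: absent any transience or recurrence hypothesis on $\hat\pi$, there is no reason for those hitting distributions to stabilize. (Two smaller slips, both harmless: the product $\prod_{i=0}^{M+B-1}\hat\pi(T_{y+i}\omega,1)$ covers only $M+B$ steps, not enough to reach above $-M$ from a very negative $y$; and its positivity is pointwise in $y$, not \emph{uniform}. The Borel--Cantelli step survives because it is applied at each fixed site.)

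The paper bypasses hitting probabilities altogether with a direct maximum-principle argument. Ergodicity of $\mathbb{P}$ under the shift produces, for $\mathbb{P}$-a.e.\ $\omega$, a sequence $y_j\to\infty$ with $\hat\pi(T_{y_j+z}\omega,1)\geq\beta$ for all $z\in\mathcal{R}$---blocks on which the right-step probability is \emph{uniformly} bounded below. By the maximum principle, $\limsup_{x\to\infty}u(\omega,x)$ is already attained along a subsequence $x_k$ lying in the set $W(\omega)$ of such blocks. At each $x_k$ the harmonic identity, combined with $\hat\pi(\cdot,1)\geq\beta$ on the whole block, forces the neighbouring values $u(\omega,x_k+1),\ldots,u(\omega,x_k+B-1)$ to converge to the same limit; a second appeal to the maximum principle between consecutive barriers then gives convergence of the full sequence. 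The same argument with $y_j\to-\infty$ handles the other direction. The idea you are missing is exactly this: use ergodicity not to control hitting probabilities, but to manufacture recurring blocks of \emph{uniform} ellipticity on which a pure maximum-principle computation suffices.
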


\begin{proof}
Since $\mathbb{P}\left\{\omega:\hat{\pi}(T_z\omega,1)>0\ \forall z\in\mathcal{R}\right\}=1$, $$\mathbb{P}\left\{\omega:\hat{\pi}(T_z\omega,1)\geq\beta\ \forall z\in\mathcal{R}\right\}>0$$ for any small $\beta>0$. The ergodicity of the environment implies that for $\mathbb{P}$-a.e.\ $\omega$, there is a (random) sequence $y_j\to\infty$ such that $\hat{\pi}(T_{y_j+z}\omega,1)\geq\beta$ for each $z\in\mathcal{R}$. Define $W(\omega):=\{y_j-z:j\geq1,0\leq z<B\}$. Since the jumps of the walk under the kernel $\hat{\pi}$ are bounded by $B$, it follows from the maximum principle that \[u(\omega,\infty):=\limsup_{\substack{x\rightarrow\infty\\x\in W(\omega)}}u(\omega,x)=\limsup_{x\to\infty}u(\omega,x).\] So, there exists a sequence $x_k\to\infty$ in $W(\omega)$ such that $u(\omega,x_k)\to u(\omega,\infty)$. For any $\epsilon>0$,
\begin{equation}
u(\omega,x_k+z)-u(\omega,x_k)<\epsilon\label{lizkam}
\end{equation} when $k$ is large and $z\in\mathcal{R}$. It follows by construction that $\hat{\pi}(T_{x_k+z'}\omega,1)\geq\beta$ for each $z'=0,\ldots,B$. Therefore, if $u(\omega,x_k)\geq u(\omega,x_k+1)$, then
\begin{align*}
&\beta [u(\omega,x_k)-u(\omega,x_k+1)]\leq \hat{\pi}(T_{x_k}\omega,1)[u(\omega,x_k)-u(\omega,x_k+1)]\\&=-\sum_{z\neq 1}\hat{\pi}(T_{x_k}\omega,z)[u(\omega,x_k)-u(\omega,x_k+z)]<\epsilon
\end{align*} holds for large $k$, which (in combination with setting $z=1$ in (\ref{lizkam})) implies that $u(\omega,x_k+1)\to u(\omega,\infty)$. Iterating this shows that $u(\omega,x_k+z')\to u(\omega,\infty)$ for each $z'=0,\ldots,B-1$. Again by the maximum principle, $u(\omega,x)\to u(\omega,\infty)$ as $x\to\infty$. The existence of $\lim_{x\to-\infty}u(\omega,x)$ is proved the same way.
\end{proof}

\begin{proof}[Proof of Theorem \ref{density}]

Denoting the walk as usual by $\left(X_k\right)_{k\geq0}$, consider the hitting time $V_o:=\inf\{k\geq0:\,X_k=0\}$ and set $\psi(\omega,x):=P_x^{\hat{\pi},\omega}(V_o<\infty)$ for $x\in\mathbb{Z}$. It follows from these definitions that whenever $x\neq0$, \[\psi(\omega,x)=\sum_{z\in\mathcal{R}}\hat{\pi}(T_x\omega,z)\psi(\omega,x+z)\] holds. It is easy to see that the function $\phi(\omega,x):=E_x^{\hat{\pi},\omega}\left[\sum_{k=0}^\infty\one_{X_k=0}\right]$ satisfies $\phi(\omega,x)=\psi(\omega,x)\phi(\omega,0)$. Hence, \[\phi(\omega)=\lim_{x\rightarrow-\infty}\phi(\omega,x)=\phi(\omega,0)\lim_{x\rightarrow-\infty}\psi(\omega,x)\] exists for $\mathbb{P}$-a.e.\ $\omega$ by Lemma \ref{maxprinciple}. Since the walk is transient to the right and has bounded jumps, the ellipticity condition ensures that $\phi>0$ holds $\mathbb{P}$-a.s. This proves part (a) of the theorem.

Let us now show that $\phi\in L^1(\mathbb{P})$:
\begin{align}
\sum_{y=0}^{N-1}\phi(T_y\omega)&=\sum_{y=0}^{N-1}\lim_{x\rightarrow-\infty}E_x^{\hat{\pi},T_y\omega}\left[\sum_{k=0}^\infty\one_{X_k=0}\right]=\sum_{y=0}^{N-1}\lim_{x\rightarrow-\infty}E_x^{\hat{\pi},\omega}\left[\sum_{k=0}^\infty\one_{X_k=y}\right]\nonumber\\
&=\!\!\lim_{x\rightarrow-\infty}\!\!E_x^{\hat{\pi},\omega}\left[\#\{k\geq0:\,0\leq X_k\leq N-1\}\right]\nonumber\\
&\leq\!\!\lim_{x\rightarrow-\infty}\!\!E_x^{\hat{\pi},\omega}\left[t_N-t_o\right]+\!\!\lim_{x\rightarrow-\infty}\!\!E_x^{\hat{\pi},\omega}\left[\#\{k\geq t_N:\,X_k\leq N-1\}\right].\label{asil}
\end{align} Here, $\#$ denotes the number of elements of a set. In order to control the second term in (\ref{asil}), define a new random time $S:=\inf\{k\geq t_{-1}': X_k\geq0\}$. Since the walk is transient, $P_o^{\hat{\pi},\omega}(t_{-1}'=\infty)>0\ \mathbb{P}$-a.s.\ and $P_o^{\hat{\pi}}(S<\infty\,|\,t_{-1}'<\infty)=1$. Note that if $X_o\geq0$, then $-B\leq X_{t_{-1}'}\leq-1$ and $0\leq X_S\leq B-1$. For any $x$ that satisfies $0\leq x\leq B-1$,
\begin{align*}
&E_x^{\hat{\pi},\omega}\left[\#\{k\geq0:\,X_k\leq-1\}\right]\\
=&E_x^{\hat{\pi},\omega}\left[\#\{k\geq0:\,X_k\leq-1\},t_{-1}'<\infty\right]\\
=&P_x^{\hat{\pi},\omega}(t_{-1}'<\infty)E_x^{\hat{\pi},\omega} E_{X_{t_{-1}'}}^{\hat{\pi},\omega}\left[\#\{k\geq0:\,X_k\leq-1\}\right]\\
=&P_x^{\hat{\pi},\omega}(t_{-1}'<\infty)E_x^{\hat{\pi},\omega}\left[E_{X_{t_{-1}'}}^{\hat{\pi},\omega}\left[t_o\right]+E_{X_S}^{\hat{\pi},\omega}\left[\#\{k\geq0:\,X_k\leq-1\}\right]\right].
\end{align*}
Letting $h_B(\omega):=\max_{0\leq x\leq B-1}E_x^{\hat{\pi},\omega}\left[\#\{k\geq0:\,X_k\leq-1\}\right]$, \[h_B(\omega)\leq\max_{0\leq x\leq B-1}P_x^{\hat{\pi},\omega}(t_{-1}'<\infty)\left(\max_{-B\leq y\leq-1}E_y^{\hat{\pi},\omega}\left[t_o\right]+h_B(\omega)\right).\] Therefore, \[h_B(\omega)\leq\frac{\max_{0\leq x\leq B-1}P_x^{\hat{\pi},\omega}(t_{-1}'<\infty)}{\min_{0\leq x\leq B-1}P_x^{\hat{\pi},\omega}(t_{-1}'=\infty)}\max_{-B\leq y\leq-1}E_y^{\hat{\pi},\omega}\left[t_o\right]<\infty\] holds $\mathbb{P}$-a.s.\ since $E_o^{\hat{\pi}}\left[t_1\right]<\infty$. Because the environment is ergodic under shifts, there is a constant $C$ such that for $\mathbb{P}$-a.e.\ $\omega$, there is a sequence $N_j=N_j(\omega)\to\infty$ for which \[\lim_{x\rightarrow-\infty}E_x^{\hat{\pi},\omega}\left[\#\{k\geq t_{N_j}:\,X_k\leq N_j-1\}\right]\leq h_B(T_{N_j}\omega)\leq C.\] This controls the second term in (\ref{asil}). By the ergodic theorem,
\begin{align*}
&\left\|\phi\right\|_{L^1(\mathbb{P})}=\lim_{N_j\rightarrow\infty}\frac{1}{N_j}\sum_{y=0}^{N_j-1}\phi(T_y\omega)\leq\lim_{N_j\rightarrow\infty}\frac{1}{N_j}\lim_{x\rightarrow-\infty}E_x^{\hat{\pi},\omega}\left[t_{N_j}-t_o\right]\\
&=\lim_{N_j\rightarrow\infty}\frac{1}{N_j}\lim_{x\rightarrow-\infty}\sum_{y=0}^{N_j-1}E_x^{\hat{\pi},\omega}\left[t_{y+1}-t_y\right]\leq\lim_{N_j\rightarrow\infty}\frac{1}{N_j}\sum_{y=0}^{N_j-1}E_o^{\hat{\pi},T_y\omega}\left[t_1\right]=E_o^{\hat{\pi}}\left[t_1\right].
\end{align*} This proves part (b) of the theorem. Finally, note that
\begin{align*}
&\sum_{z\in\mathcal{R}}E_{x+z}^{\hat{\pi},T_{-z}\omega}\left[\sum_{k=0}^\infty\one_{X_k=0}\right]\hat{\pi}(T_{-z}\omega,z)=\sum_{z\in\mathcal{R}}E_x^{\hat{\pi},\omega}\left[\sum_{k=0}^\infty\one_{X_k=-z}\right]\hat{\pi}(T_{-z}\omega,z)\\&=E_x^{\hat{\pi},\omega}\left[\sum_{k=0}^\infty\one_{X_{k+1}=0}\right]=E_x^{\hat{\pi},\omega}\left[\sum_{k=0}^\infty\one_{X_{k}=0}\right]
\end{align*}holds whenever $x\neq0$. Let $x\to-\infty$ to conclude that for $\mathbb{P}$-a.e.\ $\omega$, \[\sum_{z\in\mathcal{R}}\phi(T_{-z}\omega)\hat{\pi}(T_{-z}\omega,z)=\phi(\omega).\]This proves part (c) of the theorem.
\end{proof}

\chapter{Averaged large deviations for RWRE}\label{averagedchapter}

\section{Strict convexity and analyticity}\label{aLDPregularitysection}

Recall the notation and assumptions introduced in Section \ref{franzek}. Our results on averaged large deviations make frequent use of the following
\begin{lemma}[Sznitman]\label{Szestimates}
\begin{itemize}
\item[(a)] $P_o\left(D=\infty\right)>0$.
\item[(b)] If the walk is non-nestling, then $\exists\,c_3>0$ such that $E_o\left[\mathrm{e}^{2c_3\tau_1}\right]<\infty$.
\item[(c)] If the walk is nestling, then $\exists\,c_4>0$ such that $E_o\left[\sup_{1\leq n\leq\tau_1}\mathrm{e}^{c_4\left|X_n\right|}\right]<\infty$. For $d\geq2$, $\tau_1$ has finite $P_o$-moments of arbitrary order.
\end{itemize}
\end{lemma}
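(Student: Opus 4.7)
The plan is to follow Sznitman's renewal-time analysis, leveraging Kalikow's condition and the auxiliary Kalikow walk.

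For part (a), the first step is to establish directional transience $P_o(\langle X_n, \hat{u}\rangle \to \infty) = 1$. Under Kalikow's condition (A3), the auxiliary Kalikow Markov chain has drift uniformly bounded below by a positive constant in direction $\hat{u}$, so a standard martingale comparison argument (combined with a Borel–Cantelli application to rule out oscillation) yields this transience. Consequently $M := \inf_{n \geq 0}\langle X_n, \hat{u}\rangle$ is finite $P_o$-almost surely. Choose $R > 0$ so that $P_o(M < -R) < 1/2$. Using uniform ellipticity (A2), the walk takes a deterministic number of steps in direction $\hat{u}$ with positive $P_o$-probability, climbing past level $R$; combining this with the Markov property and translation invariance of the environment law yields $P_o(D = \infty) > 0$.

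For part (b), the non-nestling hypothesis is $\sum_{z \in U}\pi(0,z)\langle z, \hat{u}\rangle \geq c_2$ $\mathbb{P}$-almost surely. Then $\langle X_n, \hat{u}\rangle - c_2 n$ is a submartingale with bounded increments under any $P_o^\omega$, and Azuma–Hoeffding gives a quenched exponential bound $P_o^\omega(\langle X_n, \hat{u}\rangle < c_2 n/2) \leq \mathrm{e}^{-\alpha n}$ uniformly in $\omega$, hence an averaged one. The renewal structure then converts this into an exponential tail for $\tau_1$: on $\{\tau_1 > n\}$ there must exist $k \leq n$ at which the walk later revisits level $\langle X_k, \hat{u}\rangle$, and the uniform drift makes such a backward crossing exponentially unlikely. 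Summing over $k$ and taking $c_3$ small enough yields $E_o[\mathrm{e}^{2c_3\tau_1}] < \infty$.

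For part (c), the preceding argument fails in the nestling case because the drift may vanish. The first claim, exponential integrability of $\sup_{n \leq \tau_1}|X_n|$, is obtained via a renewal bootstrap: uniform ellipticity guarantees that, conditional on not having regenerated by the time the walk has travelled a given spatial scale in direction $\hat{u}$, there is a uniformly positive probability of regenerating within the next fixed number of steps; iterating this across geometrically growing spatial scales yields the exponential tail bound on $\sup_{n \leq \tau_1}|X_n|$. For the second claim, polynomial moments of $\tau_1$ of all orders when $d \geq 2$, the idea is that the trapping mechanisms which would make $\tau_1$ heavy-tailed require coordinated atypical behavior of the environment inside slabs transverse to $\hat{u}$, and in $d \geq 2$ the transverse directions give additional polynomial decay via moment estimates on the Kalikow walk; the conclusion follows from a multi-scale analysis comparing excursions below a record level with the Kalikow walk and applying Kalikow's estimate inductively.

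The main obstacle is part (c), specifically the polynomial moments of $\tau_1$ in the nestling $d \geq 2$ regime. Parts (a) and (b) reduce to classical comparison and concentration arguments, but part (c) needs a careful multi-scale scheme controlling the probability of long excursions below a record, where only the combination of Kalikow's condition and the availability of transverse directions gives the required decay; the exponential bound on $\sup_{n \leq \tau_1}|X_n|$ is the key input, but leveraging it into moments of all orders of $\tau_1$ itself is the delicate step.
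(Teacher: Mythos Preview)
The paper does not prove this lemma at all: immediately after stating it, the author writes a remark citing Lemma~1.1, Theorem~2.1, Proposition~1.4 and Theorem~3.5 of Sznitman's slowdown paper \cite{SznitmanSlowdown} and moves on. So there is nothing to compare against on the paper's side; the lemma is imported as a black box.

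Your sketch for (a) and (b) follows the standard line and is essentially correct, though (b) is a bit telegraphic: the passage from the Azuma bound on $\langle X_n,\hat u\rangle$ to an exponential tail for $\tau_1$ requires the usual decomposition into the ladder times $S_k$ and the stopping times $R_k$ (the Sznitman--Zerner scheme) and is not quite the one-line union bound you suggest, but the ingredients are right.

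Your account of (c), however, is too vague to stand as a proof, and in places misstates the mechanism. The exponential tail for $\sup_{n\le\tau_1}|X_n|$ in Sznitman's argument does not come from a ``uniformly positive probability of regenerating within the next fixed number of steps'' conditional on having travelled a given spatial scale; rather, one first controls $\langle X_{\tau_1},\hat u\rangle$ (via the ladder structure and Kalikow's condition, which gives geometric tails for the number of failed attempts $K$), and then bounds the transverse excursion by a martingale argument in the slab. More seriously, the polynomial moments of $\tau_1$ for $d\ge2$ in the nestling case (Theorem~3.5 of \cite{SznitmanSlowdown}) are obtained through a delicate renormalization/trap analysis that your last paragraph only gestures at: the phrases ``multi-scale analysis'' and ``transverse directions give additional polynomial decay'' are placeholders, not an argument. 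If you intend to actually prove (c) rather than cite it, you would need to reproduce Sznitman's condition $(\mathcal T)_\gamma$ machinery or an equivalent; as written, this part is not a proof.
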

\begin{remark}
See Lemma 1.1, Theorem 2.1, Proposition 1.4 and Theorem 3.5 of \cite{SznitmanSlowdown} for the proofs of these statements.
\end{remark}

\subsection{Logarithmic moment generating function}

\begin{lemma}\label{phillysh}
$E_o\left[\left.\mathrm{e}^{\langle\theta,X_{\tau_1}\rangle-\Lambda_a(\theta)\tau_1}\right|D=\infty\right]\leq1$ for every $\theta\in\mathbb{R}^d$.
\end{lemma}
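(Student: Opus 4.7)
The plan is to exploit the i.i.d.\ renewal structure of the regeneration increments and couple it with the exponential growth rate $\Lambda_a(\theta)$ provided by (\ref{kazimkoyuncu}). Since $(\tau_{m+1}-\tau_m,\,X_{\tau_{m+1}}-X_{\tau_m})_{m\geq1}$ are i.i.d.\ under $P_o$ with common law equal to that of $(\tau_1,X_{\tau_1})$ under $P_o(\,\cdot\,|\,D=\infty)$, I would first factorize
\begin{equation*}
E_o\!\left[\mathrm{e}^{\langle\theta,X_{\tau_{n+1}}\rangle-\Lambda_a(\theta)\tau_{n+1}}\right]\;=\;A(\theta)\cdot\rho^{\,n},\quad n\geq0,
\end{equation*}
where $A(\theta):=E_o[\mathrm{e}^{\langle\theta,X_{\tau_1}\rangle-\Lambda_a(\theta)\tau_1}]>0$ and $\rho$ denotes the quantity to be bounded by $1$. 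It then suffices to show the left side is subexponential in $n$, for that forces $\log\rho\leq 0$.

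To obtain the subexponential bound I would fix $L>0$ and split the expectation according to whether $\tau_{n+1}\leq nL$ or $\tau_{n+1}>nL$. On $\{\tau_{n+1}\leq nL\}$, summing over the value $N$ of $\tau_{n+1}$, dropping the indicator, and invoking (\ref{kazimkoyuncu}) in the form $E_o[\mathrm{e}^{\langle\theta,X_N\rangle-\Lambda_a(\theta)N}]\leq C_\epsilon\mathrm{e}^{\epsilon N}$ bounds this piece by $nL\cdot C_\epsilon\,\mathrm{e}^{\epsilon nL}$, whose exponential growth rate in $n$ is $\epsilon L$. On the complementary event, the nearest-neighbor bound $|X_{\tau_{n+1}}|\leq\tau_{n+1}$ together with the elementary inequality $\mathbf{1}_{t>nL}\leq\mathrm{e}^{\delta(t-nL)}$ gives
\[\mathrm{e}^{\langle\theta,X_{\tau_{n+1}}\rangle-\Lambda_a(\theta)\tau_{n+1}}\mathbf{1}_{\tau_{n+1}>nL}\;\leq\;\mathrm{e}^{-\delta nL}\,\mathrm{e}^{(C_\theta+\delta)\tau_{n+1}}\]
with $C_\theta:=|\theta|+|\Lambda_a(\theta)|$, and the same i.i.d.\ factorization together with Lemma \ref{Szestimates}(b) shows $E_o[\mathrm{e}^{(C_\theta+\delta)\tau_{n+1}}]\leq M\cdot K^n$ for constants $M,K=K(\theta,\delta)$. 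Choosing $L$ so that $\delta L>\log K$ makes this piece decay exponentially in $n$. Taking $\tfrac{1}{n}\log$, letting $n\to\infty$ and then $\epsilon\downarrow 0$ yields $\limsup_n\tfrac{1}{n}\log E_o[\mathrm{e}^{\langle\theta,X_{\tau_{n+1}}\rangle-\Lambda_a(\theta)\tau_{n+1}}]\leq 0$, whence $\rho\leq 1$.

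The main technical obstacle is ensuring the exponential moment $E_o[\mathrm{e}^{(C_\theta+\delta)\tau_1}]<\infty$ needed for the tail piece. In the non-nestling case this is clean provided $C_\theta+\delta<2c_3$, where Lemma \ref{Szestimates}(b) applies directly. For large $\theta$, or in the nestling case where Lemma \ref{Szestimates}(c) only supplies polynomial moments of $\tau_1$ in low dimension, one must replace the crude estimate $|X_{\tau_{n+1}}|\leq\tau_{n+1}$ by the sharper $E_o[\sup_{1\leq n\leq\tau_1}\mathrm{e}^{c_4|X_n|}]<\infty$, and combine it with the convexity inequality $\Lambda_a(\theta)\geq\langle\theta,\xi_o\rangle$ (a consequence of $I_a(\xi_o)=0$), which makes the tail integrand harmless up to fluctuations of $X_{\tau_{n+1}}/\tau_{n+1}$ around $\xi_o$. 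Apart from this tail estimate, the argument reduces to the factorization of the first paragraph and a direct appeal to (\ref{kazimkoyuncu}).
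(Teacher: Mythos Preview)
Your overall strategy—factorize via the i.i.d.\ renewal structure and show that $E_o[\mathrm{e}^{\langle\theta,X_{\tau_m}\rangle-\Lambda_a(\theta)\tau_m}]$ grows at most subexponentially in $m$—matches the paper's. The gap is in the tail piece $\{\tau_{n+1}>nL\}$. Your bound there needs $E_o[\mathrm{e}^{(C_\theta+\delta)\tau_1}]<\infty$, and Lemma~\ref{Szestimates} supplies such a moment only when $|\theta|+|\Lambda_a(\theta)|$ lies below a fixed threshold; for large $|\theta|$ it fails, and in the nestling case with $d=1$ no exponential moment of $\tau_1$ is available at all. Your suggested fixes (the $\sup\mathrm{e}^{c_4|X_n|}$ moment combined with $\Lambda_a(\theta)\geq\langle\theta,\xi_o\rangle$) still require an exponential moment of $\langle\theta,X_{\tau_1}-\xi_o\tau_1\rangle$, which again is only guaranteed for $|\theta|$ below the same thresholds. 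Since the lemma is asserted for \emph{every} $\theta\in\mathbb{R}^d$, this is a genuine hole.

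The paper's device eliminates the tail piece altogether and slots straight into your argument: work with $\Lambda_a(\theta)+\epsilon$ in place of $\Lambda_a(\theta)$. Then your sum over $\{\tau_{n+1}=N\}$ can run over all $N\geq n+1$ with no cutoff, because (\ref{kazimkoyuncu}) gives $E_o[\mathrm{e}^{\langle\theta,X_N\rangle-(\Lambda_a(\theta)+\epsilon)N}]=\mathrm{e}^{(o(1)-\epsilon)N}$, which is summable and yields a bound of order $\mathrm{e}^{-\epsilon n/2}$ for the whole expectation—no moment of $\tau_1$ enters anywhere. The factorization then gives $\rho_\epsilon:=E_o[\mathrm{e}^{\langle\theta,X_{\tau_1}\rangle-(\Lambda_a(\theta)+\epsilon)\tau_1}\mid D=\infty]<1$, and letting $\epsilon\downarrow0$ via monotone convergence finishes. (The paper partitions $[m,\infty)$ into blocks of length $m/L$ rather than singletons, but your singleton decomposition is if anything cleaner, since $X_{\tau_{n+1}}=X_N$ exactly on $\{\tau_{n+1}=N\}$.)
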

\begin{proof}
For every $m,L\in\mathbb{N}$ and $\epsilon>0$,
\begin{align*}
E_o\left[\mathrm{e}^{\langle\theta,X_{\tau_m}\rangle-\left(\Lambda_a(\theta)+\epsilon\right)\tau_m}\right]&=\sum_{j=L}^\infty\int_{\frac{j}{L}\leq\frac{\tau_m}{m}<\frac{j+1}{L}}\mathrm{e}^{\langle\theta,X_{\tau_m}\rangle-\left(\Lambda_a(\theta)+\epsilon\right)\tau_m}\mathrm{d}P_o\\
&\leq\sum_{j=L}^\infty\int\mathrm{e}^{\langle\theta,X_{\frac{jm}{L}}\rangle+2|\theta|\frac{m}{L}-\left(\Lambda_a(\theta)+\epsilon\right)\frac{jm}{L}}\mathrm{d}P_o\\
&=\mathrm{e}^{2|\theta|\frac{m}{L}}\sum_{j=L}^\infty E_o\left[\mathrm{e}^{\langle\theta,X_{\frac{jm}{L}}\rangle}\right]\mathrm{e}^{-\left(\Lambda_a(\theta)+\epsilon\right)\frac{jm}{L}}\\
&=\mathrm{e}^{2|\theta|\frac{m}{L}}\sum_{j=L}^\infty\mathrm{e}^{o\left(\frac{jm}{L}\right)-\epsilon\frac{jm}{L}}.
\end{align*}Therefore, $$E_o\left[\mathrm{e}^{\langle\theta,X_{\tau_m}\rangle-\left(\Lambda_a(\theta)+\epsilon\right)\tau_m}\right]\leq\mathrm{e}^{2|\theta|\frac{m}{L}}\sum_{j=L}^\infty\mathrm{e}^{-\frac{\epsilon}{2}\frac{jm}{L}}=\frac{\mathrm{e}^{\left(\frac{2|\theta|}{L}-\frac{\epsilon}{2}\right)m}}{1-\mathrm{e}^{-\frac{\epsilon m}{2L}}}$$ holds for large $m$, uniformly in $L$. Taking $L={8|\theta|}/{\epsilon}$ gives $$\lim_{m\to\infty}\frac{1}{m}\log E_o\left[\mathrm{e}^{\langle\theta,X_{\tau_m}\rangle-\left(\Lambda_a(\theta)+\epsilon\right)\tau_m}\right]\leq-\frac{\epsilon}{4}.$$ We also know that $E_o\left[\mathrm{e}^{\langle\theta,X_{\tau_m}\rangle-\left(\Lambda_a(\theta)+\epsilon\right)\tau_m}\right]$ is equal to $$E_o\left[\mathrm{e}^{\langle\theta,X_{\tau_1}\rangle-\left(\Lambda_a(\theta)+\epsilon\right)\tau_1}\right]E_o\left[\left.\mathrm{e}^{\langle\theta,X_{\tau_1}\rangle-\left(\Lambda_a(\theta)+\epsilon\right)\tau_1}\right|D=\infty\right]^{m-1}$$ by the renewal structure. Hence, $\log E_o\left[\left.\mathrm{e}^{\langle\theta,X_{\tau_1}\rangle-\left(\Lambda_a(\theta)+\epsilon\right)\tau_1}\right|D=\infty\right]\leq-\frac{\epsilon}{4}$. The desired result is obtained by taking $\epsilon\to 0$ and applying the monotone convergence theorem.
\end{proof}

With $c_3$ and $c_4$ as in Lemma \ref{Szestimates}, recall the definition of $\mathcal{C}$ in Lemma \ref{berkeleyolursa}:
\begin{itemize}
\item [(a)] If the walk is non-nestling, $\mathcal{C}:=\left\{\theta\in\mathbb{R}^d:|\theta|<c_3\right\}$.
\item [(b)] If the walk is nestling, $\mathcal{C}:=\left\{\theta\in\mathbb{R}^d:|\theta|<c_4\,, \Lambda_a(\theta)>0\right\}$.
\end{itemize}
By Jensen's inequality, 
\begin{align}
\langle\theta,\xi_o\rangle=\lim_{n\to\infty}\frac{1}{n} E_o\left[\langle\theta,X_n\rangle\right]&\leq\lim_{n\to\infty}\frac{1}{n}\log E_o\left[\mathrm{e}^{\langle\theta,X_n\rangle}\right]=\Lambda_a(\theta)\label{noldush}\\&\leq\lim_{n\to\infty}\frac{1}{n}\log E_o\left[\mathrm{e}^{|\theta|n}\right]=|\theta|.\nonumber
\end{align}In the nestling case, $\left\{\theta\in\mathbb{R}^d:|\theta|<c_4,\,\langle\theta,\xi_o\rangle>0\right\}\subset\mathcal{C}$ by (\ref{noldush}). Hence, $\mathcal{C}$ is a non-empty open set both for nestling and non-nestling walks.

\begin{lemma}\label{memenguzel}
If the walk is non-nestling and $\theta\in\mathcal{C}$, then
$$E_o\left[\left.\mathrm{e}^{\langle\theta,X_{\tau_1}\rangle-\Lambda_a(\theta)\tau_1}\right|D=\infty\right]=1.$$
\end{lemma}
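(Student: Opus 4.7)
The plan is to combine Lemma \ref{phillysh}, which already gives $\leq 1$, with the reverse inequality, proved by contradiction. Set $G(\lambda):=E_o\!\left[\mathrm{e}^{\langle\theta,X_{\tau_1}\rangle-\lambda\tau_1}\,\middle|\,D=\infty\right]$ and suppose $G(\Lambda_a(\theta))<1$. Lemma \ref{Szestimates}(b), together with $|\Lambda_a(\theta)|\leq|\theta|<c_3$ (the upper bound is (\ref{noldush}); the lower bound $\Lambda_a(\theta)\geq\langle\theta,\xi_o\rangle\geq-|\theta|$ follows from Jensen's inequality and $|\xi_o|\leq 1$), ensures that $G$ is finite, continuous, and strictly decreasing on a neighborhood of $\Lambda_a(\theta)$; here I am free to shrink $c_3$ in the definition of $\mathcal{C}$ if needed, since the statement only asserts existence of such an open set. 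By continuity, pick $\lambda_*<\Lambda_a(\theta)$ with $G(\lambda_*)<1$ still.

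The contradiction will come from a uniform-in-$n$ upper bound on $E_o[\mathrm{e}^{\langle\theta,X_n\rangle-\lambda_* n}]$, which clashes with
$$\tfrac{1}{n}\log E_o[\mathrm{e}^{\langle\theta,X_n\rangle-\lambda_* n}]\longrightarrow\Lambda_a(\theta)-\lambda_*>0.$$
To produce the bound I split according to the last regeneration before time $n$: with $m_n:=\sup\{m\geq 0:\tau_m\leq n\}$ and $\sigma_{m+1}:=\tau_{m+1}-\tau_m$, the nearest-neighborhood step bound gives $|X_n-X_{\tau_m}|\leq n-\tau_m\leq\sigma_{m+1}$ on $\{m_n=m\}$, so
$$\mathrm{e}^{\langle\theta,X_n\rangle-\lambda_* n}\;\leq\;\mathrm{e}^{\langle\theta,X_{\tau_m}\rangle-\lambda_*\tau_m+(|\theta|+|\lambda_*|)\sigma_{m+1}}.$$
After dropping the indicator $\mathbf{1}_{\{m_n=m\}}$ and summing over $m\geq 0$, the renewal structure (under $P_o$ the pair $(X_{\tau_{m+1}}-X_{\tau_m},\sigma_{m+1})$ is independent of $(X_{\tau_m},\tau_m)$ with the law of $(X_{\tau_1},\tau_1)$ under $P_o(\,\cdot\mid D=\infty)$ for $m\geq 1$) lets each summand factor, and the identity $E_o[\mathrm{e}^{\langle\theta,X_{\tau_m}\rangle-\lambda_*\tau_m}]=E_o[\mathrm{e}^{\langle\theta,X_{\tau_1}\rangle-\lambda_*\tau_1}]\,G(\lambda_*)^{m-1}$ already exploited in Lemma \ref{phillysh} turns the resulting series into a convergent geometric sum in $G(\lambda_*)<1$.

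The one genuine hurdle is the finiteness of the three moments entering this estimate, namely $G(\lambda_*)$, the prefactor $E_o[\mathrm{e}^{\langle\theta,X_{\tau_1}\rangle-\lambda_*\tau_1}]$, and the independent factor $E_o[\mathrm{e}^{(|\theta|+|\lambda_*|)\tau_1}\mid D=\infty]$. Each requires an exponential moment of $\tau_1$ with coefficient strictly below the constant of Lemma \ref{Szestimates}(b); since $|\lambda_*|\leq|\theta|<c_3$, this is secured by taking $c_3$ in the definition of $\mathcal{C}$ sufficiently small (for instance, below one quarter of Sznitman's constant), which does not weaken the lemma. Once these integrability points are cleared, everything else is bookkeeping with the renewal identity.
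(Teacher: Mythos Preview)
Your argument is correct and is essentially the same as the paper's. Both establish the reverse inequality to Lemma~\ref{phillysh} by showing that whenever $E_o[\mathrm{e}^{\langle\theta,X_{\tau_1}\rangle-r\tau_1}\mid D=\infty]\leq1$ (the paper) or $<1$ (you) for some $r<\Lambda_a(\theta)$, the renewal decomposition of $E_o[\mathrm{e}^{\langle\theta,X_n\rangle-rn}]$ into blocks between regeneration times yields a uniform-in-$n$ bound, contradicting $\Lambda_a(\theta)>r$; the paper's one-line display hides exactly the geometric-series computation you spell out. Your worry about shrinking $c_3$ is overcautious: since $|\Lambda_a(\theta)|\leq|\theta|$ and $\lambda_*$ may be taken arbitrarily close to $\Lambda_a(\theta)$, the moment condition $|\theta|+|\lambda_*|<2c_3$ needed for Lemma~\ref{Szestimates}(b) already follows from $|\theta|<c_3$, so no shrinking is required.
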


\begin{proof}
Given any $\theta\in\mathcal{C}$ and $\epsilon>0$, it follows from Lemma \ref{Szestimates} that whenever $2|\theta|+\epsilon<2c_3$, $$E_o\left[\left.\mathrm{e}^{\langle\theta,X_{\tau_1}\rangle-\left(\Lambda_a(\theta)-\epsilon\right)\tau_1}\right|D=\infty\right]\leq E_o\left[\left.\mathrm{e}^{\left(2|\theta|+\epsilon\right)\tau_1}\right|D=\infty\right]<\infty.$$

For any $r\in\mathbb{R}$ with $|\theta|+|r|<2c_3$ and $E_o\left[\left.\mathrm{e}^{\langle\theta,X_{\tau_1}\rangle-r\tau_1}\right|D=\infty\right]\leq1$,
\begin{align*}
\Lambda_a(\theta)-r&=\lim_{n\to\infty}\frac{1}{n}\log E_o\left[\mathrm{e}^{\langle\theta,X_n\rangle-rn}\right]\leq\lim_{n\to\infty}\frac{1}{n}\log E_o\left[\left.\sup_{1\leq u\leq\tau_1}\mathrm{e}^{\langle\theta,X_u\rangle-ru}\right|D=\infty\right]\\&\leq\lim_{n\to\infty}\frac{1}{n}\log E_o\left[\left.\mathrm{e}^{\left(|\theta|+|r|\right)\tau_1}\right|D=\infty\right]=0
\end{align*}again by Lemma \ref{Szestimates}. Therefore, $1<E_o\left[\left.\mathrm{e}^{\langle\theta,X_{\tau_1}\rangle-\left(\Lambda_a(\theta)-\epsilon\right)\tau_1}\right|D=\infty\right]<\infty$. By the monotone convergence theorem, $E_o\left[\left.\mathrm{e}^{\langle\theta,X_{\tau_1}\rangle-\Lambda_a(\theta)\tau_1}\right|D=\infty\right]\geq1$. Combined with Lemma \ref{phillysh}, this gives the desired result.
\end{proof}

\begin{lemma}\label{gotunguzel}
If the walk is nestling and $\theta\in\mathcal{C}$, then
$$E_o\left[\left.\mathrm{e}^{\langle\theta,X_{\tau_1}\rangle-\Lambda_a(\theta)\tau_1}\right|D=\infty\right]=1.$$
\end{lemma}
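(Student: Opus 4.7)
The plan is to imitate the contradiction strategy of Lemma \ref{memenguzel}, replacing the exponential-moment control of $\tau_1$ (unavailable under the nestling assumption) by the spatial exponential moment from Lemma \ref{Szestimates}(c) and exploiting the positivity $\Lambda_a(\theta)>0$ that characterizes $\mathcal{C}$ in the nestling case. Since Lemma \ref{phillysh} already supplies the $\leq 1$ direction, only the reverse inequality is needed.

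Fix $\theta\in\mathcal{C}$ and $\epsilon\in(0,\Lambda_a(\theta))$, and set $r:=\Lambda_a(\theta)-\epsilon>0$. First verify that $\rho(\epsilon):=E_o[\mathrm{e}^{\langle\theta,X_{\tau_1}\rangle-r\tau_1}\mid D=\infty]$ is finite: since $r>0$ the integrand is dominated by $\mathrm{e}^{|\theta||X_{\tau_1}|}\leq\sup_{1\leq n\leq\tau_1}\mathrm{e}^{|\theta||X_n|}$, whose $P_o$-expectation is finite by Lemma \ref{Szestimates}(c) because $|\theta|<c_4$, and $P_o(D=\infty)>0$.

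The main step is to show that $\rho(\epsilon)\leq 1$ leads to a contradiction. Partition according to the last regeneration index $k$ with $\tau_k\leq n<\tau_{k+1}$ (set $\tau_0:=0$), factorize $\mathrm{e}^{\langle\theta,X_n\rangle-rn}$ across $\tau_k$, and use $r>0$ to absorb the post-$\tau_k$ time factor:
\[
\mathrm{e}^{\langle\theta,X_n\rangle-rn}\leq \mathrm{e}^{\langle\theta,X_{\tau_k}\rangle-r\tau_k}\sup_{0\leq u<\tau_{k+1}-\tau_k}\mathrm{e}^{|\theta||X_{\tau_k+u}-X_{\tau_k}|}.
\]
The post-$\tau_k$ supremum is independent of $\mathcal{F}_{\tau_k}$ for $k\geq 1$ with law governed by $P_o(\cdot\mid D=\infty)$ (the renewal structure recalled after (\ref{huseysh})); combined with the factorization $E_o[\mathrm{e}^{\langle\theta,X_{\tau_k}\rangle-r\tau_k}]=E_o[\mathrm{e}^{\langle\theta,X_{\tau_1}\rangle-r\tau_1}]\rho(\epsilon)^{k-1}$ and with $\tau_k\geq k$, this gives
\[
E_o[\mathrm{e}^{\langle\theta,X_n\rangle-rn}]\leq M_\theta\Big(1+E_o[\mathrm{e}^{\langle\theta,X_{\tau_1}\rangle-r\tau_1}]\sum_{k=1}^n\rho(\epsilon)^{k-1}\Big),
\]
where $M_\theta<\infty$ dominates both $E_o[\sup_{0\leq u\leq\tau_1}\mathrm{e}^{|\theta||X_u|}]$ and its conditional analog. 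Under $\rho(\epsilon)\leq 1$ the right-hand side is at most linear in $n$, so $\Lambda_a(\theta)-r=\lim n^{-1}\log E_o[\mathrm{e}^{\langle\theta,X_n\rangle-rn}]\leq 0$, contradicting $\Lambda_a(\theta)-r=\epsilon>0$.

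Hence $\rho(\epsilon)>1$ for every admissible $\epsilon$, and letting $\epsilon\downarrow 0$ the monotone convergence theorem yields $E_o[\mathrm{e}^{\langle\theta,X_{\tau_1}\rangle-\Lambda_a(\theta)\tau_1}\mid D=\infty]\geq 1$; combining with Lemma \ref{phillysh} gives the desired equality. The principal difficulty is running the renewal bound without exponential moments of $\tau_1$: the workaround is that $r>0$ makes the temporal factor $\mathrm{e}^{-ru}\leq 1$ a cost-free upper bound, so that only the spatial exponential moment of $\sup_{n\leq\tau_1}|X_n|$ supplied by Lemma \ref{Szestimates}(c) is required.
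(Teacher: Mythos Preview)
Your proof is correct and follows essentially the same route as the paper's: both obtain the inequality $\geq 1$ by showing that $E_o[\mathrm{e}^{\langle\theta,X_{\tau_1}\rangle-r\tau_1}\mid D=\infty]\leq 1$ with $r\geq 0$ forces $\Lambda_a(\theta)\leq r$ via the renewal structure, using the spatial moment from Lemma \ref{Szestimates}(c) (in place of the time moment from the non-nestling case) and the fact that $r>0$ lets one discard the post-$\tau_k$ time factor. The only difference is presentational: the paper compresses the renewal decomposition into the single line (\ref{neguzelbessh}), while you write out the partition over $k$ and the linear-in-$n$ bound explicitly.
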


\begin{proof}
Given any $\theta\in\mathcal{C}$ and $\epsilon>0$, $$E_o\left[\left.\mathrm{e}^{\langle\theta,X_{\tau_1}\rangle-\left(\Lambda_a(\theta)-\epsilon\right)\tau_1}\right|D=\infty\right]\leq E_o\left[\left.\mathrm{e}^{|\theta|\left|X_{\tau_1}\right|}\right|D=\infty\right]<\infty$$ follows from Lemma \ref{Szestimates} whenever $\Lambda_a(\theta)-\epsilon>0$.

For any $r\geq0$ with $E_o\left[\left.\mathrm{e}^{\langle\theta,X_{\tau_1}\rangle-r\tau_1}\right|D=\infty\right]\leq1$,
\begin{align}
\Lambda_a(\theta)-r&=\lim_{n\to\infty}\frac{1}{n}\log E_o\left[\mathrm{e}^{\langle\theta,X_n\rangle-rn}\right]\leq\lim_{n\to\infty}\frac{1}{n}\log E_o\left[\left.\sup_{1\leq u\leq\tau_1}\mathrm{e}^{\langle\theta,X_u\rangle-ru}\right|D=\infty\right]\nonumber\\&\leq\lim_{n\to\infty}\frac{1}{n}\log E_o\left[\left.\sup_{1\leq u\leq\tau_1}\mathrm{e}^{|\theta|\left|X_{u}\right|}\right|D=\infty\right]=0\label{neguzelbessh}
\end{align}again by Lemma \ref{Szestimates}. Therefore, $1<E_o\left[\left.\mathrm{e}^{\langle\theta,X_{\tau_1}\rangle-\left(\Lambda_a(\theta)-\epsilon\right)\tau_1}\right|D=\infty\right]<\infty$. By the monotone convergence theorem, $E_o\left[\left.\mathrm{e}^{\langle\theta,X_{\tau_1}\rangle-\Lambda_a(\theta)\tau_1}\right|D=\infty\right]\geq1$. Combined with Lemma \ref{phillysh}, this gives the desired result.
\end{proof}

\begin{lemma}\label{geldiiksh}
If the walk is nestling and $|\theta|<c_4$, then:
\begin{itemize}
\item[(a)] $\theta\not\in\mathcal{C}$ if and only if $E_o\left[\left.\mathrm{e}^{\langle\theta,X_{\tau_1}\rangle}\right|D=\infty\right]\leq1$.
\item[(b)] $\theta\in\partial\mathcal{C}$ if and only if $E_o\left[\left.\mathrm{e}^{\langle\theta,X_{\tau_1}\rangle}\right|D=\infty\right]=1$. 
\end{itemize}
\end{lemma}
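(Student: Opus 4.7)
The plan is to rely on Lemmas \ref{phillysh} and \ref{gotunguzel}, together with two structural facts specific to the nestling case. First, since $0\in\mathcal{N}$ in the nestling regime, (\ref{kazimkoyuncu}) yields $\Lambda_a\geq 0$ on all of $\mathbb{R}^d$; consequently, within the ball $\{|\theta|<c_4\}$, membership in $\mathcal{C}$ is equivalent to $\Lambda_a(\theta)>0$, and its complement consists of those $\theta$ with $\Lambda_a(\theta)=0$. Second, set $g(\theta):=E_o[\mathrm{e}^{\langle\theta,X_{\tau_1}\rangle}\mid D=\infty]$; Lemma \ref{Szestimates}(c) guarantees the finiteness of $E_o[\mathrm{e}^{c_4|X_{\tau_1}|}\mid D=\infty]$, so $g$ is convex and real-analytic on the connected open ball $\{|\theta|<c_4\}$.

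For part (a), if $\theta\notin\mathcal{C}$ and $|\theta|<c_4$ then $\Lambda_a(\theta)=0$, so Lemma \ref{phillysh} directly yields $g(\theta)\leq 1$. Conversely, if $\theta\in\mathcal{C}$, then Lemma \ref{gotunguzel} gives $E_o[\mathrm{e}^{\langle\theta,X_{\tau_1}\rangle-\Lambda_a(\theta)\tau_1}\mid D=\infty]=1$, and since $\tau_1\geq 1$ almost surely while $\Lambda_a(\theta)>0$, the factor $\mathrm{e}^{-\Lambda_a(\theta)\tau_1}$ is strictly below $1$, promoting the identity to $g(\theta)>1$. This settles (a).

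For the forward direction of (b), $\theta\in\partial\mathcal{C}$ with $|\theta|<c_4$ implies $\theta\notin\mathcal{C}$ (by openness of $\mathcal{C}$), hence $g(\theta)\leq 1$ by (a); choosing $\theta_n\in\mathcal{C}$ converging to $\theta$, continuity of $\Lambda_a$ gives $\Lambda_a(\theta_n)\to\Lambda_a(\theta)=0$, and applying Lemma \ref{gotunguzel} at each $\theta_n$ together with dominated convergence --- the integrands are bounded by $\mathrm{e}^{c|X_{\tau_1}|}$ for any $c\in(|\theta|,c_4)$, which is conditionally integrable by Lemma \ref{Szestimates}(c) --- passes the equality to the limit, yielding $g(\theta)=1$.

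The substantive step is the reverse of (b): assuming $g(\theta)=1$ and $|\theta|<c_4$, part (a) already gives $\theta\notin\mathcal{C}$, and it remains to show $\theta\in\overline{\mathcal{C}}$. Suppose not, so some neighborhood $N\subset\{|\theta|<c_4\}$ of $\theta$ misses $\mathcal{C}$; then (a) forces $g\leq 1$ on $N$, making $\theta$ an interior local maximum of the convex function $g$. A standard one-dimensional convexity argument (matching the one-sided derivatives along each line through $\theta$) shows $g$ is locally constant at $\theta$, and real-analyticity of $g$ on the connected ball then forces $g\equiv 1$ on all of $\{|\theta|<c_4\}$. This implies $\nabla g(0)=E_o[X_{\tau_1}\mid D=\infty]=0$, contradicting $\langle\hat u,E_o[X_{\tau_1}\mid D=\infty]\rangle=E_o[\langle\hat u,X_{\tau_1}\rangle\mid D=\infty]>0$, which holds because $\langle\hat u,X_{\tau_1}\rangle>0$ on $\{\tau_1<\infty\}\supset\{D=\infty\}$ by the very definition of the regeneration time. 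The main obstacle is precisely this step: without the combined use of convexity and analyticity of $g$, one could not rule out that $\theta$ sits deep inside the zero set of $\Lambda_a$.
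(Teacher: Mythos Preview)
Your proof is correct. Part (a) and the forward direction of (b) match the paper's argument almost exactly, with one cosmetic difference: for the converse of (a) you argue directly from Lemma \ref{gotunguzel} and $\tau_1\geq 1$ that $\theta\in\mathcal{C}$ forces $g(\theta)\geq\mathrm{e}^{\Lambda_a(\theta)}>1$, whereas the paper instead invokes the inequality (\ref{neguzelbessh}) with $r=0$ to show $g(\theta)\leq 1\Rightarrow\Lambda_a(\theta)=0$. These are equivalent.

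The genuine divergence is in the converse of (b). The paper avoids analyticity entirely: it observes that $\{\Lambda_a=0\}$ is convex, writes an interior point $\theta$ of that set as $t\theta_1+(1-t)\theta_2$ with $\theta_1\neq\theta_2$ both in the set, and applies \emph{strict} Jensen to conclude $g(\theta)<1$; the contrapositive then gives $g(\theta)=1\Rightarrow\theta\in\partial\mathcal{C}$. Your route instead uses convexity of $g$ to show that an interior local maximum forces $g$ to be locally constant, then invokes real-analyticity of $g$ on the connected ball $\{|\theta|<c_4\}$ to make it globally constant, and finally derives a contradiction from $\nabla g(0)=E_o[X_{\tau_1}\mid D=\infty]\neq 0$. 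Both are valid; the paper's argument is more elementary and slightly shorter, while yours trades the (mild) verification that Jensen is strict for the heavier identity-theorem machinery. Your final contradiction via $\langle\hat u,X_{\tau_1}\rangle>0$ is clean and self-contained.
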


\begin{proof}
$$0=I_a(0)=\sup_{\theta\in\mathbb{R}^d}\left\{\langle\theta,0\rangle - \Lambda_a(\theta)\right\}=-\inf_{\theta\in\mathbb{R}^d}\Lambda_a(\theta).$$ In other words, $\Lambda_a(\theta)\geq0$ for every $\theta\in\mathbb{R}^d$. If $|\theta|<c_4$ and $\theta\not\in\mathcal{C}$, then $\Lambda_a(\theta)=0$ and $E_o\left[\left.\mathrm{e}^{\langle\theta,X_{\tau_1}\rangle}\right|D=\infty\right]\leq1$ by Lemma \ref{phillysh}. Conversely, if $|\theta|<c_4$ and $E_o\left[\left.\mathrm{e}^{\langle\theta,X_{\tau_1}\rangle}\right|D=\infty\right]\leq1$, then $\Lambda_a(\theta)=0$ follows from (\ref{neguzelbessh}) by setting $r=0$. This proves part (a).

If $|\theta|<c_4$ and $\theta\in\partial\mathcal{C}$, then $\Lambda_a(\theta)=0$. Take $\theta_n\in\mathcal{C}$ such that $\theta_n\to\theta$. It follows from Lemma \ref{gotunguzel} that $E_o\left[\left.\mathrm{e}^{\langle\theta_n,X_{\tau_1}\rangle-\Lambda_a(\theta_n)\tau_1}\right|D=\infty\right]=1$. Since $\Lambda_a$ is continuous at $\theta$ and $\mathrm{e}^{\langle\theta_n,X_{\tau_1}\rangle-\Lambda_a(\theta_n)\tau_1}\leq\mathrm{e}^{c_4\left|X_{\tau_1}\right|}$, Lemma \ref{Szestimates} and the dominated convergence theorem imply that $E_o\left[\left.\mathrm{e}^{\langle\theta,X_{\tau_1}\rangle}\right|D=\infty\right]=1$.

$\Lambda_a$ is a convex function, and therefore $\{\theta\in\mathbb{R}^d:\Lambda_a(\theta)=0\}$ is convex. If $\theta$ is an interior point of this set, then $\theta=t\theta_1+(1-t)\theta_2$ for some $t\in(0,1)$ and $\theta_1,\theta_2\in\mathbb{R}^d$ such that $\theta_1\neq\theta_2$ and $E_o\left[\left.\mathrm{e}^{\langle\theta_i,X_{\tau_1}\rangle}\right|D=\infty\right]\leq1$ for $i=1,2$. By Jensen's inequality, $E_o\left[\left.\mathrm{e}^{\langle\theta,X_{\tau_1}\rangle}\right|D=\infty\right]<1$. The contraposition of this argument concludes the proof of part (b).
\end{proof}

\begin{proof}[Proof of Lemma \ref{berkeleyolursa}]
Consider the function $\psi:\mathbb{R}^d\times\mathbb{R}\to\mathbb{R}$ defined by 
\begin{equation}\label{psish}
\psi(\theta,r):=E_o\left[\left.\mathrm{e}^{\langle\theta,X_{\tau_1}\rangle-r\tau_1}\right|D=\infty\right].
\end{equation} When $\theta\in\mathcal{C}$ and $|r-\Lambda_a(\theta)|$ is small, Lemmas \ref{memenguzel} and \ref{gotunguzel} show that $\psi(\theta,r)<\infty$ and $\psi(\theta,\Lambda_a(\theta))=1$. It is clear that $(\theta,r)\mapsto\psi(\theta,r)$ is analytic at such $(\theta,r)$. Since $\left.\partial_r\psi(\theta,r)\right|_{r=\Lambda_a(\theta)}=-E_o\left[\left.\tau_1\mathrm{e}^{\langle\theta,X_{\tau_1}\rangle-\Lambda_a(\theta)\tau_1}\right|D=\infty\right]\leq-1\neq0$, the implicit function theorem applies and $\theta\mapsto\Lambda_a(\theta)$ is analytic on $\mathcal{C}$.

Differentiating both sides of $\psi(\theta,\Lambda_a(\theta))=1$ with respect to $\theta$ gives
\begin{equation}\label{jacobiansh}
E_o\left[\left.\left(X_{\tau_1}-\nabla\Lambda_a(\theta)\tau_1\right)\mathrm{e}^{\langle\theta,X_{\tau_1}\rangle-\Lambda_a(\theta)\tau_1}\right|D=\infty\right]=0.
\end{equation} Differentiating once again, we see that the Hessian $H_a$ of $\Lambda_a$ satisfies
\begin{equation}\label{hessiansh}
\langle\hat{v},H_a(\theta)\hat{v}\rangle=\frac{E_o\left[\left.\langle X_{\tau_1}-\nabla\Lambda_a(\theta)\tau_1,\hat{v}\rangle^2\,\mathrm{e}^{\langle\theta,X_{\tau_1}\rangle-\Lambda_a(\theta)\tau_1}\right|D=\infty\right]}{E_o\left[\left.\tau_1\mathrm{e}^{\langle\theta,X_{\tau_1}\rangle-\Lambda_a(\theta)\tau_1}\right|D=\infty\right]}>0
\end{equation} for any unit vector $\hat{v}\in\mathbb{R}^d$. Hence, $\Lambda_a$ is strictly convex on $\mathcal{C}$.
\end{proof}

\subsection{Rate function}

\begin{proof}[Proof of Theorem \ref{qual}]
$\Lambda_a$ is analytic on $\mathcal{C}$ by Lemma \ref{berkeleyolursa}. The Hessian of $\Lambda_a$ is positive definite at any $\theta\in\mathcal{C}$ by (\ref{hessiansh}). Therefore, for any $\xi\in\mathcal{A}$, there exists a unique $\theta=\theta(\xi)\in\mathcal{C}$ with $\xi=\nabla\Lambda_a(\theta)$. $\mathcal{A}$ is open since it is the pre-image of $\mathcal{C}$ under the map $\xi\mapsto\theta(\xi)$ which is analytic by the inverse function theorem. Since $$I_a(\xi)=\sup_{\theta'\in\mathbb{R}^d}\left\{\langle\theta',\xi\rangle-\Lambda_a(\theta')\right\}=\langle\theta(\xi),\xi\rangle-\Lambda_a(\theta(\xi)),$$ $I_a$ is analytic at $\xi$. The strict convexity of $I_a$ on $\mathcal{A}$ follows from the differentiability of $\Lambda_a$ on $\mathcal{C}$ by a standard argument. (See \cite{Rockafellar}.)

Note that (\ref{jacobiansh}) gives
\begin{equation}\label{babosh}
\nabla\Lambda_a(\theta)=\frac{E_o\left[\left.X_{\tau_1}\mathrm{e}^{\langle\theta,X_{\tau_1}\rangle-\Lambda_a(\theta)\tau_1}\right|D=\infty\right]}{E_o\left[\left.\tau_1\mathrm{e}^{\langle\theta,X_{\tau_1}\rangle-\Lambda_a(\theta)\tau_1}\right|D=\infty\right]}
\end{equation} for every $\theta\in\mathcal{C}$. If the walk is non-nestling, then $0\in\mathcal{C}$ and $$\xi_o=\frac{E_o\left[\left.X_{\tau_1}\right|D=\infty\right]}{E_o\left[\left.\tau_1\right|D=\infty\right]}=\nabla\Lambda_a(0)\in\mathcal{A}$$ by (\ref{huseysh}). This proves part (a).

The rest of this proof focuses on the nestling case. Recall the definition of $\psi$ in (\ref{psish}). By Lemma \ref{geldiiksh}, $$\{\theta\in\mathbb{R}^d: |\theta|<c_4\}\cap\partial\mathcal{C}=\{\theta\in\mathbb{R}^d: |\theta|<c_4\,,\psi(\theta,0)=1\}.$$ In particular, $0\in\partial\mathcal{C}$. Take any $(\theta_n)_{n\geq1}$ with $\theta_n\in\mathcal{C}$ such that $\theta_n\to 0$. Then, any limit point of $(\nabla\Lambda_a(\theta_n))_{n\geq1}$ belongs to $\partial\mathcal{A}$. When $d=1$, (\ref{babosh}) implies that
\begin{align}
\limsup_{n\to\infty}\nabla\Lambda_a(\theta_n)&=\limsup_{n\to\infty}\frac{E_o\left[\left.X_{\tau_1}\mathrm{e}^{\langle\theta_n,X_{\tau_1}\rangle-\Lambda_a(\theta_n)\tau_1}\right|D=\infty\right]}{E_o\left[\left.\tau_1\mathrm{e}^{\langle\theta_n,X_{\tau_1}\rangle-\Lambda_a(\theta_n)\tau_1}\right|D=\infty\right]}\label{nediyonsh}\\&\leq\frac{E_o\left[\left.X_{\tau_1}\right|D=\infty\right]}{E_o\left[\left.\tau_1\right|D=\infty\right]}=\xi_o\label{nediyonggsh}
\end{align}
where we assume WLOG that $\hat{u}=1$. The numerator in (\ref{nediyonsh}) converges to the numerator in (\ref{nediyonggsh}) by Lemma \ref{Szestimates} and the dominated convergence theorem. The denominator in (\ref{nediyonggsh}) bounds the liminf of the denominator in (\ref{nediyonsh}) by Fatou's lemma. $[0,\xi_o]\cap\mathcal{A}$ is empty since $I_a$ is linear on $[0,\xi_o]$. Therefore, $\liminf_{n\to\infty}\nabla\Lambda_a(\theta_n)\geq\xi_o$. Hence, $\xi_o=\lim_{n\to\infty}\nabla\Lambda_a(\theta_n)\in\partial\mathcal{A}$.

When $d\geq2$, $\tau_1$ has finite $P_o$-moments of arbitrary order and it is easy to see from (\ref{babosh}) that $\xi_o=\lim_{n\to\infty}\nabla\Lambda_a(\theta_n)\in\partial\mathcal{A}$ again by Lemma \ref{Szestimates} and the dominated convergence theorem. The map $\theta\mapsto\psi(\theta,0)$ is analytic for $|\theta|<c_4$, and $$\left.\nabla_\theta\psi(\theta,0)\right|_{\theta=0}=E_o\left[\left.X_{\tau_1}\right|D=\infty\right]=E_o\left[\left.\tau_1\right|D=\infty\right]\xi_o$$ is normal to $\partial\mathcal{C}$ at $0$ by Lemma \ref{geldiiksh}. The RHS of (\ref{hessiansh}) smoothly extends to $\bar{\mathcal{C}}\cap\{\theta\in\mathbb{R}^d:|\theta|<c_4\}$. Refer to the extension again by $H_a$. The unit vector $\eta_o$ normal to $\partial\mathcal{A}$ at $\xi_o$ is $c_5H_a(0)\xi_o$ for some $c_5>0$ by the chain rule, and satisfies $$\langle\eta_o,\xi_o\rangle=c_5\langle\xi_o,H_a(0)\xi_o\rangle>0.\qedhere$$
\end{proof}

\section{Minimizer of Varadhan's variational formula}\label{aLDPminimizersection}

\subsection{Existence of the minimizer}

Varadhan's variational formula for the rate function $I_a$ is
\begin{equation}\label{divaneasiksh}
I_a(\xi)=\inf_{\substack{\mu\in\mathcal{E}\\m(\mu)=\xi}}\mathfrak{I}_a(\mu).
\end{equation} Since ergodic measures on $W_\infty^{\mathrm{tr}}$ have disjoint supports, the formula (\ref{hayirsh}) for $\mathfrak{I}_a$ can be written as
\begin{equation}\label{evetsh}
\mathfrak{I}_a(\mu)=\int_{W_\infty^{\mathrm{tr}}}\left[\sum_{z\in U}\hat{q}(w,z)\log\frac{\hat{q}(w,z)}{q(w,z)}\right]\,\mathrm{d}\mu(w).
\end{equation} where $\hat{q}(\cdot,z)=q_\mu(\cdot,z)$ on the support of $\mu$. Therefore, $\mathfrak{I}_a$ is affine linear on $\mathcal{I}$.

\begin{lemma}
$$I_a(\xi)=\inf_{\substack{\mu\in\mathcal{I}\\m(\mu)=\xi}}\mathfrak{I}_a(\mu).$$
\end{lemma}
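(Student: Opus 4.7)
The inclusion $\mathcal{E}\subset\mathcal{I}$ gives the trivial bound
\[
\inf_{\substack{\mu\in\mathcal{I}\\ m(\mu)=\xi}}\mathfrak{I}_a(\mu)\ \leq\ \inf_{\substack{\mu\in\mathcal{E}\\ m(\mu)=\xi}}\mathfrak{I}_a(\mu)\ =\ I_a(\xi),
\]
so the real content is the reverse inequality. The plan is to reduce an arbitrary $\mu\in\mathcal{I}$ to its ergodic decomposition and combine the observed affine linearity of $\mathfrak{I}_a$ on $\mathcal{I}$ with the convexity of $I_a$ (which is part of Theorem \ref{aLDPgeneric}).

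More concretely, fix $\mu\in\mathcal{I}$ with $m(\mu)=\xi$, and invoke the ergodic decomposition of $T^*$-invariant measures on $W_\infty^{\mathrm{tr}}$: there exists a probability measure $\rho_\mu$ on $\mathcal{E}$ such that $\mu=\int_{\mathcal{E}}\nu\,\mathrm{d}\rho_\mu(\nu)$. Because the drift functional $m(\nu)=\int(x_o-x_{-1})\,\mathrm{d}\nu$ is linear and bounded by $1$ in norm, integrating against $\rho_\mu$ yields
\[
\xi=m(\mu)=\int_{\mathcal{E}} m(\nu)\,\mathrm{d}\rho_\mu(\nu).
\]
The expression (\ref{evetsh}) for $\mathfrak{I}_a$ shows that it is affine linear on $\mathcal{I}$ (as explicitly noted just before the statement), and standard measurable-selection arguments show that $\nu\mapsto\mathfrak{I}_a(\nu)$ and $\nu\mapsto m(\nu)$ are $\rho_\mu$-measurable. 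Hence
\[
\mathfrak{I}_a(\mu)=\int_{\mathcal{E}}\mathfrak{I}_a(\nu)\,\mathrm{d}\rho_\mu(\nu).
\]

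For each $\nu\in\mathcal{E}$ the original Varadhan formula (\ref{divaneasiksh}) gives $\mathfrak{I}_a(\nu)\geq I_a(m(\nu))$, so
\[
\mathfrak{I}_a(\mu)\ \geq\ \int_{\mathcal{E}} I_a(m(\nu))\,\mathrm{d}\rho_\mu(\nu)\ \geq\ I_a\!\left(\int_{\mathcal{E}} m(\nu)\,\mathrm{d}\rho_\mu(\nu)\right)\ =\ I_a(\xi),
\]
where the second inequality is Jensen applied to the convex function $I_a$. Taking the infimum over $\mu\in\mathcal{I}$ with $m(\mu)=\xi$ yields the reverse inequality, completing the proof.

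The only technical subtlety I anticipate is verifying measurability and the interchange of $\mathfrak{I}_a$ with the ergodic decomposition integral, i.e., justifying $\mathfrak{I}_a(\mu)=\int\mathfrak{I}_a(\nu)\,\mathrm{d}\rho_\mu(\nu)$ for every $\mu\in\mathcal{I}$. This is where the affine-linearity remark preceding the lemma is essential; if a direct identification of $\hat q$ with $q_\nu$ on the support of each ergodic component is needed, one would use the disjoint-support property of distinct ergodic measures together with a monotone class or approximation argument on (\ref{evetsh}). Everything else is bookkeeping.
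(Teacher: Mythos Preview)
Your overall strategy---ergodic decomposition, affine linearity of $\mathfrak{I}_a$, then Jensen via convexity of $I_a$---is exactly the paper's approach. But there is a genuine gap in the step where you assert that for each $\nu\in\mathcal{E}$ the formula (\ref{divaneasiksh}) gives $\mathfrak{I}_a(\nu)\geq I_a(m(\nu))$. Varadhan's variational formula (\ref{divaneasik}) is stated only for $\xi\neq0$; it says nothing about ergodic components $\nu$ with $m(\nu)=0$, and such components can certainly occur in the decomposition of a $\mu$ with $m(\mu)=\xi\neq0$. So the inequality $\mathfrak{I}_a(\nu)\geq I_a(0)$ for zero-drift ergodic $\nu$ is not available from the definition of $I_a$ and needs a separate argument.

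The paper deals with this by splitting the decomposition into the part supported on $\mathcal{E}_o:=\{\alpha\in\mathcal{E}:m(\alpha)\neq0\}$, where your argument applies verbatim, and lumping the remaining zero-drift mass into a single $\tilde{\mu}\in\mathcal{I}$ with $m(\tilde{\mu})=0$. For that piece it invokes Lemma~7.2 of \cite{Raghu}, which establishes $\mathfrak{I}_a(\tilde{\mu})\geq I_a(0)$ directly. Once you insert this case distinction, your proof is complete and coincides with the paper's.
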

\begin{proof}
By the definition of $I_a$ in (\ref{divaneasiksh}), $$I_a(\xi)\geq\inf_{\substack{\mu\in\mathcal{I}\\m(\mu)=\xi}}\mathfrak{I}_a(\mu)$$ is clear. To establish the reverse inequality, take any $\mu\in\mathcal{I}$ with $m(\mu)=\xi$. Since $\mathcal{E}$ is the set of extremal points of $\mathcal{I}$, $\mu$ can be expressed as $$\mu=\int_{\mathcal{E}_o}\alpha\,\mathrm{d}\hat{\mu}(\alpha) + \int_{\mathcal{E}\backslash\mathcal{E}_o}\alpha\,\mathrm{d}\hat{\mu}(\alpha)=\int_{\mathcal{E}_o}\alpha\,\mathrm{d}\hat{\mu}(\alpha) + (1-\hat{\mu}(\mathcal{E}_o))\tilde{\mu}$$ where $\mathcal{E}_o:=\{\alpha\in\mathcal{E}:m(\alpha)\neq0\}$, $\hat{\mu}$ is some measure on $\mathcal{E}$, and $\tilde{\mu}\in\mathcal{I}$ with $m(\tilde{\mu})=0$. Then,
\begin{align}
\mathfrak{I}_a(\mu)&=\int_{\mathcal{E}_o}\mathfrak{I}_a(\alpha)\,\mathrm{d}\hat{\mu}(\alpha) + (1-\hat{\mu}(\mathcal{E}_o))\mathfrak{I}(\tilde{\mu})\label{miyash}\\
&\geq\int_{\mathcal{E}_o}I_a(m(\alpha))\,\mathrm{d}\hat{\mu}(\alpha) + (1-\hat{\mu}(\mathcal{E}_o))I(0)\label{miyaash}\\
&\geq I_a(\xi).\label{miyaaash}
\end{align} The equality in (\ref{miyash}) uses the affine linearity of $\mathfrak{I}$. (\ref{miyaash}) follows from two facts: (i) $\mathfrak{I}_a(\alpha)\geq I_a(m(\alpha))$, (ii) $\mathfrak{I}_a(\tilde{\mu})\geq I_a(0)$. The first fact is immediate from the definition of $I_a$. See Lemma 7.2 of \cite{Raghu} for the proof of the second fact. Finally, the convexity of $I_a$ gives (\ref{miyaaash}).
\end{proof}

\begin{lemma}\label{tugish}
If $I_a(\cdot)$ is strictly convex at $\xi$, then the infimum in (\ref{divaneasiksh}) is attained.
\end{lemma}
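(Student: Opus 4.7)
The plan is to produce a minimizer in $\mathcal{I}$ first by a compactness/lower-semicontinuity argument, and then to extract an ergodic minimizer via ergodic decomposition, using strict convexity of $I_a$ at $\xi$ to rule out non-trivial mixtures of drifts.

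First, by the preceding lemma it suffices to exhibit some $\mu^{*}\in\mathcal{I}$ with $m(\mu^{*})=\xi$ and $\mathfrak{I}_a(\mu^{*})=I_a(\xi)$, since the right-hand side of (\ref{divaneasiksh}) over $\mathcal{I}$ equals $I_a(\xi)$. Take a minimizing sequence $(\mu_n)\subset\mathcal{I}$ with $m(\mu_n)=\xi$. Because the walk is nearest-neighbor, paths in $W_\infty^{\mathrm{tr}}$ are determined by their increments, which live in the compact set $U$; hence $(\mu_n)$, viewed through increments, is a sequence of shift-invariant laws on a compact product space and is automatically tight. Pass to a weakly convergent subsequence $\mu_n\to\mu^{*}$. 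Invariance under $T^{*}$ is a closed condition, so $\mu^{*}\in\mathcal{I}$. Moreover, since $X_1-X_o$ is bounded, $m(\mu)=E_\mu[X_1-X_o]$ is weakly continuous, giving $m(\mu^{*})=\xi$.

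Next, I would argue lower semicontinuity of $\mathfrak{I}_a$ on $\mathcal{I}$. The functional in (\ref{evetsh}) is a specific relative entropy between the shift-invariant law $\mu$ and the (Markovian) law governed by $q$, and admits a variational representation as a supremum of continuous linear functionals in $\mu$ against bounded cylinder functions; this standard representation makes $\mu\mapsto\mathfrak{I}_a(\mu)$ lower semicontinuous in the weak topology. Hence $\mathfrak{I}_a(\mu^{*})\leq\liminf_n\mathfrak{I}_a(\mu_n)=I_a(\xi)$, and since $\mu^{*}\in\mathcal{I}$ with $m(\mu^{*})=\xi$ the reverse inequality is automatic, so $\mathfrak{I}_a(\mu^{*})=I_a(\xi)$.

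Now I would invoke the ergodic decomposition $\mu^{*}=\int_{\mathcal{E}}\alpha\,\mathrm{d}\hat{\mu}(\alpha)$. Both $m$ and (by (\ref{evetsh}), as ergodic measures have disjoint supports) $\mathfrak{I}_a$ are affine linear on $\mathcal{I}$, so
\begin{equation*}
\xi=\int m(\alpha)\,\mathrm{d}\hat{\mu}(\alpha),\qquad I_a(\xi)=\int \mathfrak{I}_a(\alpha)\,\mathrm{d}\hat{\mu}(\alpha)\geq\int I_a(m(\alpha))\,\mathrm{d}\hat{\mu}(\alpha)\geq I_a(\xi),
\end{equation*}
using $\mathfrak{I}_a(\alpha)\geq I_a(m(\alpha))$ (direct from the definition of $I_a$ when $m(\alpha)\neq 0$, and from Lemma 7.2 of \cite{Raghu} when $m(\alpha)=0$) together with Jensen's inequality for the convex function $I_a$. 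All inequalities must be equalities. The Jensen equality $I_a\!\left(\int m(\alpha)\,\mathrm{d}\hat{\mu}(\alpha)\right)=\int I_a(m(\alpha))\,\mathrm{d}\hat{\mu}(\alpha)$ combined with the strict convexity of $I_a$ at $\xi$ forces $m(\alpha)=\xi$ for $\hat{\mu}$-a.e.\ $\alpha$, and then $\mathfrak{I}_a(\alpha)=I_a(\xi)$ for $\hat{\mu}$-a.e.\ $\alpha\in\mathcal{E}$; any such $\alpha$ attains the infimum in (\ref{divaneasiksh}).

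The main obstacle is the lower semicontinuity step: one must be careful because $W_\infty^{\mathrm{tr}}$ is not closed under weak limits and $q(w,z)$ in (\ref{ozgurevren}) depends on the entire past path. This can be handled by passing to the increment encoding and writing $\mathfrak{I}_a$ as a supremum over bounded continuous cylinder test functions via the Donsker--Varadhan-style variational formula for relative entropy rate; once this is in hand, the remaining algebra is the short convex-analysis argument above.
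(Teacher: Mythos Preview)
Your overall architecture—compactness, lower semicontinuity, ergodic decomposition, strict convexity to pin down the drift—is exactly the skeleton of the paper's proof. The difficulty you flag at the end is, however, not a technicality to be dispatched in one line; it is the entire content of the argument, and the fix you suggest does not work as stated.

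There are two intertwined problems. First, $W_\infty^{\mathrm{tr}}$ is not closed in the product (increment) topology: a weak limit $\mu^{*}$ of measures in $\mathcal{I}$ can put mass on recurrent paths, so you cannot conclude $\mu^{*}\in\mathcal{I}$. Second, and more seriously, the kernel $q(w,z)$ in (\ref{ozgurevren}) depends on the full number of past visits to $0$ and is \emph{not} continuous in the increment topology; the paper says this explicitly. Because of this, $\mathfrak{I}_a$ is not a relative entropy against a Feller kernel on the increment space, and the Donsker--Varadhan representation as a supremum of continuous cylinder functionals is unavailable: the test functions you would need involve $\log q(w,z)$, which is discontinuous. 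So lower semicontinuity on $\mathcal{I}$ in the naive topology genuinely fails to follow from your argument.

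The paper resolves both issues simultaneously by passing to the special compactification $\overline{W}$ constructed in Section~5 of \cite{Raghu}, on which $q(\cdot,z)$ extends \emph{continuously}. One takes the weak limit $\overline{\mu}$ in $\overline{\mathcal{I}}$, where lower semicontinuity of $\mathfrak{I}_a$ now holds. The limit $\overline{\mu}$ need not lie in $\mathcal{I}$, but Corollary~6.2 of \cite{Raghu} gives a decomposition $\overline{\mu}=\int_{\mathcal{E}_o}\alpha\,\mathrm{d}\hat{\mu}_1(\alpha)+(1-\hat{\mu}_1(\mathcal{E}_o))\overline{\mu}_2$ into a genuine transient ergodic part and a zero-drift remainder in $\overline{\mathcal{I}}$; Lemma~7.2 of \cite{Raghu} then controls $\mathfrak{I}_a(\overline{\mu}_2)\geq I_a(0)$. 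From there your convexity argument goes through verbatim, and strict convexity at $\xi$ forces $\hat{\mu}_1$ to concentrate on $\{\alpha\in\mathcal{E}_o:m(\alpha)=\xi,\ \mathfrak{I}_a(\alpha)=I_a(\xi)\}$. In short: the missing ingredient is the compactification $\overline{W}$ and the structure theorem (Corollary~6.2) for limits in it; without these, neither the compactness nor the lower semicontinuity step can be justified.
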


\begin{proof}
Let $W_n := \{\left(x_i\right)_{-n\leq i\leq0}:x_{i+1}-x_i\in U,\,x_o=0\}$. The simplest compactification of $W:=\cup_{n}W_n$ is $W_\infty:=\{\left(x_i\right)_{i\leq0}:x_{i+1}-x_i\in U,\,x_o=0\}$ with the product topology. However, the functions $q(\cdot,z)$ (recall (\ref{ozgurevren})) are only defined on $W_{\infty}^{\mathrm{tr}}$, and even when restricted to it they are not continuous since two walks that are identical in the immediate past are close to each other in this topology even if one of them visits $0$ in the remote past and the other one doesn't.

Section 5 of \cite{Raghu} introduces a more convenient compactification $\overline{W}$ of $W$. It is a ramification of $W_\infty$, and the functions $q(\cdot,z)$ continuously extend from $W$ to $\overline{W}$. Denote the $T^*$-invariant probability measures on $\overline{W}$ by $\overline{\mathcal{I}}$, and the extremals of $\overline{\mathcal{I}}$ by $\overline{\mathcal{E}}$. Recall that $\mathcal{E}_o:=\{\alpha\in\mathcal{E}:m(\alpha)\neq0\}$. Then, $\mathcal{E}_o\subset\mathcal{E}\subset\overline{\mathcal{E}}$ and $\mathcal{I}\subset\overline{\mathcal{I}}$. Note that the domain of the formula for $\mathfrak{I}$ given in (\ref{evetsh}) extends to $\overline{\mathcal{I}}$.

Take $\mu_n\in\mathcal{E}$ such that $m(\mu_n)=\xi$ and $\mathfrak{I}_a(\mu_n)\to I_a(\xi)$ as $n\to\infty$. Let $\overline{\mu}\in\overline{\mathcal{I}}$ be a weak limit point of $\mu_n$. Corollary 6.2 of \cite{Raghu} shows that $\overline{\mu}$ has a representation $$\overline{\mu}=\int_{\mathcal{E}_o}\alpha\,\mathrm{d}\hat{\mu}_1(\alpha) + (1-\hat{\mu}_1(\mathcal{E}_o))\overline{\mu}_2$$ where $\hat{\mu}_1$ is some measure on $\mathcal{E}_o$, and $\overline{\mu}_2\in\overline{\mathcal{I}}$ with $m(\overline{\mu}_2)=0$. Then,
\begin{align}
I_a(\xi)=\lim_{n\to\infty}\mathfrak{I}_a(\mu_n)&\geq\mathfrak{I}_a(\overline{\mu})\label{viysh}\\
&=\int_{\mathcal{E}_o}\mathfrak{I}_a(\alpha)\,\mathrm{d}\hat{\mu}_1(\alpha) + (1-\hat{\mu}_1(\mathcal{E}_o))\mathfrak{I}_a(\overline{\mu}_2)\label{viyash}\\
&\geq\int_{\mathcal{E}_o}I_a(m(\alpha))\,\mathrm{d}\hat{\mu}_1(\alpha) + (1-\hat{\mu}_1(\mathcal{E}_o))I_a(0)\label{viyaash}\\
&\geq I_a(\xi).\label{viyaaash}
\end{align} The inequality in (\ref{viysh}) follows from the lower semicontinuity of $\mathfrak{I}_a$, and the equality in (\ref{viyash}) is a consequence of the affine linearity of $\mathfrak{I}_a$. (\ref{viyaash}) relies on the fact that $\mathfrak{I}_a(\overline{\mu}_2)\geq I_a(0)$. See Lemma 7.2 of \cite{Raghu} for the proof. Finally, the convexity of $I_a$ gives (\ref{viyaaash}). Since $I_a(\cdot)$ is assumed to be strictly convex at $\xi$, $\hat{\mu}_1\left(\alpha\in\mathcal{E}_o: m(\alpha)=\xi,\, \mathfrak{I}_a(\alpha)=I_a(\xi)\right)=1$. Hence, we are done.
\end{proof}

\subsection{Formula for the unique minimizer}

Fix any $\xi\in\mathcal{A}$. Recall the definitions of $\bar{\mu}_\xi^\infty$ and $\mu_\xi^\infty$ given in Section \ref{franzek}.

\begin{proposition}\label{bursayok}
$\bar{\mu}_\xi^\infty$ is well defined.
\end{proposition}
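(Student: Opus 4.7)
The plan is to verify three things in sequence: finiteness of both the numerator and the denominator (with nonzero denominator), independence of the value from the auxiliary integer $K$, and Kolmogorov consistency so that these cylinder-level assignments extend to a single probability measure on $U^{\mathbb{N}}$. The workhorse throughout will be the renewal identity $E_o[\mathrm{e}^{\langle\theta,X_{\tau_1}\rangle-\Lambda_a(\theta)\tau_1}\mid D=\infty]=1$ supplied by Lemmas \ref{memenguzel} and \ref{gotunguzel}, together with the fact that $(Z_{\tau_m+1},\ldots,Z_{\tau_{m+1}})_{m\geq 1}$ is i.i.d.\ under $P_o(\cdot\mid D=\infty)$ with the same law as $(Z_1,\ldots,Z_{\tau_1})$ given $D=\infty$.

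First I would dispose of integrability. For the denominator, in the non-nestling case I use $|X_{\tau_1}|\leq\tau_1$ to get $\tau_1\mathrm{e}^{\langle\theta,X_{\tau_1}\rangle-\Lambda_a(\theta)\tau_1}\leq\tau_1\mathrm{e}^{2|\theta|\tau_1}$, which is integrable by Lemma \ref{Szestimates}(b) since $|\theta|<c_3$; in the nestling case $\Lambda_a(\theta)>0$ makes $\tau_1\mathrm{e}^{-\Lambda_a(\theta)\tau_1}$ bounded, while $\mathrm{e}^{\langle\theta,X_{\tau_1}\rangle}\leq\mathrm{e}^{c_4|X_{\tau_1}|}$ is integrable by Lemma \ref{Szestimates}(c). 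Positivity of the denominator is immediate from $\tau_1\geq 1$. For the numerator, bounding $|f|$ by $\|f\|_\infty$ reduces the problem to finiteness of $E_o[\tau_1\mathrm{e}^{\langle\theta,X_{\tau_K}\rangle-\Lambda_a(\theta)\tau_K}\mid D=\infty]$, which telescopes over the $K$ regeneration blocks and collapses to the denominator via the renewal identity.

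Next I would establish $K$-independence, which is the real content. The nearest-neighbor assumption and $\tau_{k+1}-\tau_k\geq 1$ force $\tau_K\geq\tau_1+K-1\geq j+K$ for $0\leq j\leq\tau_1-1$, so the integrand
\[
g_K := \sum_{j=0}^{\tau_1-1} f\bigl((Z_{j+i})_{i\geq 1}\bigr)\,\mathrm{e}^{\langle\theta,X_{\tau_K}\rangle-\Lambda_a(\theta)\tau_K}
\]
is $\sigma(Z_1,\ldots,Z_{\tau_K})$-measurable. Replacing $K$ by $K+1$ leaves the sum unchanged (because $f$ ignores coordinates beyond the $K$-th) and merely multiplies by $\mathrm{e}^{\langle\theta,X_{\tau_{K+1}}-X_{\tau_K}\rangle-\Lambda_a(\theta)(\tau_{K+1}-\tau_K)}$. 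Conditioning on the history up to $\tau_K$ and invoking the renewal structure, this extra factor is independent of $g_K$ with conditional expectation equal to $E_o[\mathrm{e}^{\langle\theta,X_{\tau_1}\rangle-\Lambda_a(\theta)\tau_1}\mid D=\infty]=1$, so $E_o[g_{K+1}\mid D=\infty]=E_o[g_K\mid D=\infty]$, and the normalizer is also unchanged.

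Finally, for each $K$ the map $f\mapsto\int f\,\mathrm{d}\bar{\mu}_\xi^\infty$ on bounded $\sigma(z_1,\ldots,z_K)$-measurable functions is linear, non-negative on non-negative $f$, and sends $f\equiv 1$ to $1$ (take $K=1$; the numerator sum collapses to $\tau_1\mathrm{e}^{\langle\theta,X_{\tau_1}\rangle-\Lambda_a(\theta)\tau_1}$, matching the denominator). Hence it is a Borel probability measure on $U^K$, and the $K$-independence established above is precisely the consistency required by the Kolmogorov extension theorem, which then produces a unique probability measure $\bar{\mu}_\xi^\infty$ on $U^{\mathbb{N}}$ realizing all these cylinder laws. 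I expect the only real obstacle to be careful bookkeeping in the renewal step, namely identifying the correct stopping-time $\sigma$-algebra at $\tau_K$ so that the conditional independence is applied to $g_K$ rather than to a larger functional; all the underlying analytic content is already contained in the renewal identity.
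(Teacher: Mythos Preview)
Your proof is correct and its core step---showing that the numerator is unchanged when $K$ is replaced by $K+1$ because the extra factor $\mathrm{e}^{\langle\theta,X_{\tau_{K+1}}-X_{\tau_K}\rangle-\Lambda_a(\theta)(\tau_{K+1}-\tau_K)}$ is independent of the rest under $P_o(\cdot\mid D=\infty)$ with mean one by Lemmas \ref{memenguzel} and \ref{gotunguzel}---is exactly the paper's argument. You add the integrability checks and the Kolmogorov extension, both of which the paper leaves implicit; these are routine but your inclusion of them is appropriate.
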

\begin{proof}
For every $K\in\mathbb{N}$, take any bounded function $f:U^{\mathbb{N}}\rightarrow\mathbb{R}$ such that $f((z_i)_{i\geq1})$ is independent of $(z_i)_{i>K}$. Then, $f((z_i)_{i\geq1})$ is independent of $(z_i)_{i>K'}$ for every $K'>K$ as well. So, we need to show that (\ref{muyucananan}) does not change if we replace $K$ by $K+1$. But, this is clear because
\begin{align*}
&E_o\left[\left.\sum_{j=0}^{\tau_1 -1}f((Z_{j+i})_{i\geq1})\ \mathrm{e}^{\langle\theta,X_{\tau_{K+1}}\rangle - \Lambda_a(\theta)\tau_{K+1}}\,\right|\,D=\infty\right]\\
=&E_o\left[\left.\sum_{j=0}^{\tau_1 -1}f((Z_{j+i})_{i\geq1})\ \mathrm{e}^{\langle\theta,X_{\tau_{K}}\rangle - \Lambda_a(\theta)\tau_{K}}\left\{\mathrm{e}^{\langle\theta,X_{\tau_{K+1}}-X_{\tau_K}\rangle - \Lambda_a(\theta)(\tau_{K+1}-\tau_K)}\right\}\,\right|\,D=\infty\right]\\
=&E_o\left[\left.\sum_{j=0}^{\tau_1 -1}f((Z_{j+i})_{i\geq1})\ \mathrm{e}^{\langle\theta,X_{\tau_{K}}\rangle - \Lambda_a(\theta)\tau_{K}}\,\right|\,D=\infty\right].
\end{align*}Explanation: In the second line of the above display, the term in $\{\cdot\}$ is independent of the others. The expectation therefore splits, and Lemmas \ref{memenguzel}\,\&\,\ref{gotunguzel} give
\begin{align*}
E_o\left[\left.\mathrm{e}^{\langle\theta,X_{\tau_{K+1}}-X_{\tau_K}\rangle - \Lambda_a(\theta)(\tau_{K+1}-\tau_K)}\,\right|\,D=\infty\right]&=E_o\left[\mathrm{e}^{\langle\theta,X_{\tau_{K+1}}-X_{\tau_K}\rangle - \Lambda_a(\theta)(\tau_{K+1}-\tau_K)}\right]\\&=E_o\left[\left.\mathrm{e}^{\langle\theta,X_{\tau_1}\rangle - \Lambda_a(\theta)\tau_1}\,\right|\,D=\infty\right]\\&=1.\qedhere
\end{align*}
\end{proof}

The following theorem states that the empirical process $$\bar{\nu}_{n,X}^\infty := \frac{1}{n}\sum_{j=0}^{n-1}\one_{\left(Z_{j+i}\right)_{i\geq1}}$$ of the walk under $P_o$ converges to $\bar{\mu}_\xi^\infty$ when the particle is conditioned to have mean velocity $\xi$. (Here, $Z_i = X_i-X_{i-1}$ as usual.)

\begin{theorem}\label{averagedconditioningsh}
For every $K\in\mathbb{N}$, $f:U^{\mathbb{N}}\rightarrow\mathbb{R}$ such that $f((z_i)_{i\geq1})$ is independent of $(z_i)_{i>K}$ and bounded, and $\epsilon>0$,
\[\limsup_{\delta\to0}\limsup_{n\rightarrow\infty}\frac{1}{n}\log P_o\left(\ \left|\int\!\! f\mathrm{d}\bar{\nu}_{n,X}^\infty-\int\!\! f\mathrm{d}\bar{\mu}_\xi^\infty\right|>\epsilon\ \left|\ |\frac{X_n}{n}-\xi|\leq\delta\right.\right)<0.\]
\end{theorem}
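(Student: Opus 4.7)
The plan is to run a Cram\'er-type large deviation bound on the regeneration blocks after an exponential tilt, then transfer back to $P_o$ via the change of measure, using the averaged LDP to control the conditioning event in the denominator.

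Fix $\theta=\theta(\xi)\in\mathcal C$ with $\nabla\Lambda_a(\theta)=\xi$. Lemmas~\ref{memenguzel}--\ref{gotunguzel} give $E_o[\mathrm{e}^{\langle\theta,X_{\tau_1}\rangle-\Lambda_a(\theta)\tau_1}\mid D=\infty]=1$, so there is a well-defined tilted law $\tilde P_\theta$ under which the regeneration blocks $B_m:=(Z_{\tau_m+1},\dots,Z_{\tau_{m+1}})$ for $m\ge 1$ are i.i.d.\ with common distribution $\mathrm{e}^{\langle\theta,X_{\tau_1}\rangle-\Lambda_a(\theta)\tau_1}\,\mathrm{d}P_o(\,\cdot\mid D=\infty)$. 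Differentiating $\psi(\theta,\Lambda_a(\theta))=1$ as in~\eqref{jacobiansh} gives $\tilde E_\theta[X_{\tau_{m+1}}-X_{\tau_m}]=\xi\,\tilde E_\theta[\tau_{m+1}-\tau_m]$, both finite by Lemma~\ref{Szestimates}, so the i.i.d.\ LLN yields $X_n/n\to\xi$ $\tilde P_\theta$-a.s. Since $f((Z_{j+i})_{i\ge 1})$ reads only the first $K$ steps of its argument, and since $j\le\tau_1-1$ forces $j+K\le\tau_K$, the block contribution $S_m:=\sum_{j=\tau_m}^{\tau_{m+1}-1}f((Z_{j+i})_{i\ge 1})$ is a bounded measurable function of at most the $K$ consecutive blocks $B_m,\dots,B_{m+K-1}$. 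Splitting $m$ into residue classes modulo $K$ yields $K$ independent i.i.d.\ bounded sequences; applying Cram\'er's theorem to each produces
\[
\tilde P_\theta\!\left(\bigl|{\textstyle\int} f\,\mathrm{d}\bar\nu_{n,X}^\infty-{\textstyle\int} f\,\mathrm{d}\bar\mu_\xi^\infty\bigr|>\epsilon\right)\ \le\ \mathrm{e}^{-c(\epsilon)n}
\]
for some $c(\epsilon)>0$, where the common block mean is identified with $\int f\,\mathrm{d}\bar\mu_\xi^\infty$ by inspection of~\eqref{muyucananan}.

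For the transfer back to $P_o$, insert $1=\mathrm{e}^{-\langle\theta,X_n\rangle+\Lambda_a(\theta)n}\mathrm{e}^{\langle\theta,X_n\rangle-\Lambda_a(\theta)n}$. On $\{|X_n/n-\xi|\le\delta\}$ the first factor is at most $\mathrm{e}^{-nI_a(\xi)+n|\theta|\delta}$ because $I_a(\xi)=\langle\theta,\xi\rangle-\Lambda_a(\theta)$. Writing $M_n:=\max\{m:\tau_m\le n\}$, the renewal decomposition factors the second weight $\mathrm{e}^{\langle\theta,X_n\rangle-\Lambda_a(\theta)n}$ into the product of the $M_n-1$ i.i.d.\ block weights realizing the change of measure to $\tilde P_\theta$, times boundary contributions from $[0,\tau_1]$ and $[\tau_{M_n},n]$ whose $P_o$-expectation is finite uniformly in $n$ by Lemma~\ref{Szestimates}. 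Denoting the event in the statement by $A_n^\epsilon$,
\[
P_o(A_n^\epsilon,\,|X_n/n-\xi|\le\delta)\ \le\ \mathrm{e}^{-nI_a(\xi)+n|\theta|\delta+o(n)}\,\tilde P_\theta(A_n^\epsilon).
\]
The averaged LDP (Theorem~\ref{aLDPgeneric}) combined with the continuity of $I_a$ at $\xi\in\mathcal A$ (Theorem~\ref{qual}) delivers the matching lower bound $P_o(|X_n/n-\xi|\le\delta)\ge\mathrm{e}^{-nI_a(\xi)-n\,o_\delta(1)}$, so dividing gives
\[
\frac{1}{n}\log P_o(A_n^\epsilon\mid|X_n/n-\xi|\le\delta)\ \le\ -c(\epsilon)+|\theta|\delta+o_\delta(1)+o(1).
\]
Sending $n\to\infty$ and then $\delta\to 0$ produces a strictly negative limit, which is the claim.

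The main technical obstacle is making the renewal factorization precise and showing that the boundary blocks contribute only $\mathrm{e}^{o(n)}$ uniformly in the tested event. This reduces to the estimate $E_o[\mathrm{e}^{c\sup_{u\le\tau_1}|X_u|}]<\infty$ together with a usable control of $\Lambda_a(\theta)(n-\tau_{M_n})$: in the non-nestling case Lemma~\ref{Szestimates}(b) provides the exponential moment of $\tau_1$ directly, while in the nestling case Lemma~\ref{Szestimates}(c) handles the displacement and one exploits the fact that $\Lambda_a(\theta)>0$ on $\mathcal C$ to make the sign of the time contribution work in one's favor. A secondary nuisance is the $K$-step look-ahead in $f$, which induces dependence between successive block functionals and is handled by the modular splitting above.
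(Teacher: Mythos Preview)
Your strategy matches the paper's: tilt by $\theta$, exploit the i.i.d.\ regeneration structure with the mod-$K$ splitting to handle the $K$-step look-ahead, and undo the conditioning via change of measure plus the averaged LDP. The paper implements this not through an explicit tilted law $\tilde P_\theta$ but by bounding $E_o[\mathrm{e}^{\langle\theta,X_n\rangle-\Lambda_a(\theta)n+ns\langle g,\bar\nu_{n,X}^\infty\rangle}]$ directly for a small auxiliary parameter $s$: it decomposes over $\{\tau_m\le n<\tau_{m+1}\}$, uses H\"older's inequality to separate the boundary pieces from the mod-$K$ i.i.d.\ products, and shows the result is $\mathrm{e}^{n\,o(s)}$ since the function $h(s)=\frac{1}{Kq}\log E_o[\mathrm{e}^{\langle\theta,X_{\tau_K}\rangle-\Lambda_a(\theta)\tau_K+(Kq)sG_0}\mid D=\infty]$ satisfies $h(0)=h'(0)=0$ by the very definition of $\bar\mu_\xi^\infty$. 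Chebyshev in $s$ then gives the analogue of your Cram\'er bound.

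Your write-up elides two points where this care matters. First, Cram\'er on each residue class gives decay exponential in the number of blocks $M_n$, not in $n$; converting requires an LDP for $M_n/n$ (or, equivalently, the renewal-reward LDP), which you do not address. Second, and more substantively, the last boundary block $[\tau_{M_n},n]$ is neither independent of the earlier blocks (since $M_n$ depends on all of them) nor of the event $A_n^\epsilon$, so ``finite $P_o$-expectation uniformly in $n$'' is not enough to factor it out; the weight $\mathrm{e}^{\langle\theta,X_n\rangle-\Lambda_a(\theta)n}$ is simply not the Radon--Nikodym derivative of any $\tilde P_\theta$ on $\sigma(X_1,\dots,X_n)$. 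This is precisely why the paper resorts to H\"older. Both gaps can be closed, but doing so essentially reproduces the paper's argument.
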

\begin{remark}
This result generalizes Theorem \ref{averagedconditioning} to the RWRE setting. The only difference is that the measures involved here are just on particle paths. However, one can easily modify the argument to deal with measures that live both on paths and environments.
\end{remark}
\begin{proof}[Proof in the non-nestling case]
Since $\xi\in\mathcal{A}$, there exists a unique $\theta\in\mathbb{R}^d$ with $|\theta|<c_3$ such that $\xi=\nabla\Lambda_a(\theta)$. Let $g(\cdot):=f(\cdot)-\int\!\! f\mathrm{d}\bar{\mu}_\xi^\infty$. Assume WLOG that $|g|\leq 1$. Then, $\int\!\! f\mathrm{d}\bar{\nu}_{n,X}^\infty-\int\!\! f\mathrm{d}\bar{\mu}_\xi^\infty = \int\!\! g\,\mathrm{d}\bar{\nu}_{n,X}^\infty =: \langle g,\bar{\nu}_{n,X}^\infty\rangle$. For any $s\in\mathbb{R}$,
\begin{align}
&E_o\left[\mathrm{e}^{\langle\theta,X_n\rangle-\Lambda_a(\theta)n+ns\langle g,\bar{\nu}_{n,X}^\infty\rangle}\right]\nonumber\\
&\quad\quad\quad\quad\quad\quad=E_o\left[n<\tau_{K+1},\,\mathrm{e}^{\langle\theta,X_n\rangle-\Lambda_a(\theta)n+ns\langle g,\bar{\nu}_{n,X}^\infty\rangle}\right]\nonumber\\&\quad\quad\quad\quad\quad\quad\quad+\sum_{m=K+1}^nE_o\left[\tau_m\leq n<\tau_{m+1},\,\mathrm{e}^{\langle\theta,X_n\rangle-\Lambda_a(\theta)n+ns\langle g,\bar{\nu}_{n,X}^\infty\rangle}\right]\label{cokazkaldish}.
\end{align}
If $2|\theta|+|s|<2c_3$, then the first term in (\ref{cokazkaldish}) is bounded from above by $E_o\!\left[n<\tau_{K+1},\,\mathrm{e}^{(2|\theta|+|s|)\tau_{K+1}}\right]$ which goes to $0$ as $n\to\infty$ by Lemma \ref{Szestimates} and the monotone convergence theorem. For $j\geq0$, define
\begin{equation}\label{sabrialtintas}
G_j:=\sum_{k=\tau_j}^{\tau_{j+1}-1}g((Z_{k+i})_{i\geq1})
\end{equation} with the convention that $\tau_o=0$. Note that $G_j$ is a function of $Z_{\tau_j+1},\ldots,Z_{\tau_{j+1}+K-1}$. Therefore, $G_j$ and $G_{j+K}$ depend on disjoint sets of steps since $\tau_{j+1}+K-1\leq\tau_{j+K}$. For any $p,q\in\mathbb{R}$ with $1<p<c_3/|\theta|$ and $1/p+1/q=1$, each term of the sum in (\ref{cokazkaldish}) can be bounded using H\"older's inequality:
\begin{align}
&E_o\left[\tau_m\leq n<\tau_{m+1},\,\mathrm{e}^{\langle\theta,X_n\rangle-\Lambda_a(\theta)n+ns\langle g,\bar{\nu}_{n,X}^\infty\rangle}\right]\nonumber\\
&\leq E_o\left[\mathrm{e}^{\langle\theta,X_{\tau_m}-X_{\tau_1}\rangle-\Lambda_a(\theta)(\tau_m-\tau_1)+s\left(G_1+\cdots+G_{m-K-1}\right)+\left(2|\theta|+|s|\right)\left(\tau_1+\tau_{m+1}-\tau_m\right)+|s|\left(\tau_m-\tau_{m-K}\right)}\right]\nonumber\displaybreak\\
&\leq E_o\left[\mathrm{e}^{\left(2|\theta|+|s|\right)\tau_1}\right]E_o\left[\mathrm{e}^{\langle\theta,X_{\tau_m}-X_{\tau_1}\rangle-\Lambda_a(\theta)(\tau_m-\tau_1)+p\left(2|\theta|+|s|\right)\left(\tau_{m+1}-\tau_m\right)+p|s|\left(\tau_m-\tau_{m-K}\right)}\right]^{1/p}\nonumber\\
&\quad\times\prod_{i=1}^K E_o\left[\mathrm{e}^{\langle\theta,X_{\tau_m}-X_{\tau_1}\rangle-\Lambda_a(\theta)(\tau_m-\tau_1)+(Kq)s\left(G_i+G_{i+K}+\cdots+G_{i+[\frac{m-K-i-1}{K}]K}\right)}\right]^{1/(Kq)}\nonumber\\
&\leq E_o\left[\mathrm{e}^{\left(2|\theta|+|s|\right)\tau_1}\right]E_o\left[\left.\mathrm{e}^{p\left(2|\theta|+|s|\right)\tau_1}\,\right|\,D=\infty\right]^{\frac{K+1}{p}}\nonumber\\
&\quad\times E_o\left[\left.\mathrm{e}^{\langle\theta,X_{\tau_K}\rangle-\Lambda_a(\theta)\tau_K+(Kq)sG_o}\,\right|\,D=\infty\right]^{\frac{m-K-1}{Kq}}\label{cumartesish}.
\end{align}
The last inequality follows from the fact that $(G_i,G_{i+K},\ldots)$ is an i.i.d.\ sequence. The terms of the product in (\ref{cumartesish}) are finite by Lemma \ref{Szestimates} if $p(2|\theta|+|s|)<2c_3$ and $2|\theta|+(Kq)|s|<2c_3$.  Putting the pieces together,
\begin{align*}
&\limsup_{n\to\infty}\frac{1}{n}\log E_o\left[\mathrm{e}^{\langle\theta,X_n\rangle-\Lambda_a(\theta)n+ns\langle g,\bar{\nu}_{n,X}^\infty\rangle}\right]\\
&\leq0\vee\limsup_{n\to\infty}\frac{1}{n}\log \sum_{m=K+1}^n E_o\left[\left.\mathrm{e}^{\langle\theta,X_{\tau_K}\rangle-\Lambda_a(\theta)\tau_K+(Kq)sG_o}\,\right|\,D=\infty\right]^{\frac{m-K-1}{Kq}}\\
&\leq0\vee\frac{1}{Kq}\log E_o\left[\left.\mathrm{e}^{\langle\theta,X_{\tau_K}\rangle-\Lambda_a(\theta)\tau_K+(Kq)sG_o}\,\right|\,D=\infty\right].
\end{align*}
Let $h(s):=\frac{1}{Kq}\log E_o\left[\left.\mathrm{e}^{\langle\theta,X_{\tau_K}\rangle-\Lambda_a(\theta)\tau_K+(Kq)sG_o}\,\right|\,D=\infty\right]$. Lemma \ref{memenguzel} implies that $h(0)=0$. The map $s\mapsto h(s)$ is analytic in a neighborhood of $0$, and
\begin{align*}
h'(0)&=E_o\left[\left.G_o\,\mathrm{e}^{\langle\theta,X_{\tau_K}\rangle-\Lambda_a(\theta)\tau_K}\,\right|\,D=\infty\right]\\
&=E_o\left[\left.\sum_{k=0}^{\tau_{1}-1}g((Z_{k+i})_{i\geq1})\,\mathrm{e}^{\langle\theta,X_{\tau_K}\rangle-\Lambda_a(\theta)\tau_K}\,\right|\,D=\infty\right]\\
&=E_o\left[\left.\left(\sum_{k=0}^{\tau_{1}-1}f((Z_{k+i})_{i\geq1})-\tau_1\int\!\! f\mathrm{d}\bar{\mu}_\xi^\infty\right)\mathrm{e}^{\langle\theta,X_{\tau_K}\rangle-\Lambda_a(\theta)\tau_K}\,\right|\,D=\infty\right]=0
\end{align*}
by Definition \ref{definemuyuanan}. We conclude that
\begin{equation}\label{yilish}
\limsup_{n\to\infty}\frac{1}{n}\log E_o\left[\mathrm{e}^{\langle\theta,X_n\rangle-\Lambda_a(\theta)n+ns\langle g,\bar{\nu}_{n,X}^\infty\rangle}\right]\leq o(s).
\end{equation}

Whenever $s>0$ is small enough, a standard change of measure argument and the averaged LDP give
\begin{align*}
&\limsup_{n\rightarrow\infty}\frac{1}{n}\log P_o\left(\ \int\!\! f\mathrm{d}\bar{\nu}_{n,X}^\infty-\int\!\! f\mathrm{d}\bar{\mu}_\xi^\infty>\epsilon\ \left|\ |\frac{X_n}{n}-\xi|\leq\delta\right.\right)\\=&\limsup_{n\rightarrow\infty}\frac{1}{n}\log P_o\left(\langle g,\bar{\nu}_{n,X}^\infty\rangle>\epsilon,|\frac{X_n}{n}-\xi|\leq\delta\right)-\lim_{n\rightarrow\infty}\frac{1}{n}\log P_o\left(|\frac{X_n}{n}-\xi|\leq\delta\right)\\\leq&\limsup_{n\rightarrow\infty}\frac{1}{n}\log E_o\left[\mathrm{e}^{\langle\theta,X_n\rangle},\langle g,\bar{\nu}_{n,X}^\infty\rangle>\epsilon,|\frac{X_n}{n}-\xi|\leq\delta\right]-\langle\theta,\xi\rangle + I_a(\xi) + |\theta|\delta\\\leq&\limsup_{n\rightarrow\infty}\frac{1}{n}\log E_o\left[\mathrm{e}^{\langle\theta,X_n\rangle-\Lambda_a(\theta)n},\langle g,\bar{\nu}_{n,X}^\infty\rangle>\epsilon\right] + |\theta|\delta\\\leq&\limsup_{n\to\infty}\frac{1}{n}\log E_o\left[\mathrm{e}^{\langle\theta,X_n\rangle-\Lambda_a(\theta)n+ns\langle g,\bar{\nu}_{n,X}^\infty\rangle}\right] - s\epsilon + |\theta|\delta\\
\leq&\,o(s) - s\epsilon + |\theta|\delta\\\leq&-s\epsilon/2+ |\theta|\delta\end{align*} for every $\delta>0$. Similarly, $$\limsup_{n\rightarrow\infty}\frac{1}{n}\log P_o\left(\ \int\!\! f\mathrm{d}\bar{\nu}_{n,X}^\infty-\int\!\! f\mathrm{d}\bar{\mu}_\xi^\infty<-\epsilon\ \left|\ |\frac{X_n}{n}-\xi|\leq\delta\right.\right)\leq-s\epsilon/2+ |\theta|\delta.$$By combining these two bounds, we finally deduce that \[\limsup_{\delta\to0}\limsup_{n\rightarrow\infty}\frac{1}{n}\log P_o\left(\ \left|\int\!\! f\mathrm{d}\bar{\nu}_{n,X}^\infty-\int\!\! f\mathrm{d}\bar{\mu}_\xi^\infty\right|>\epsilon\ \left|\ |\frac{X_n}{n}-\xi|\leq\delta\right.\right)\leq-s\epsilon/2.\]
\end{proof}

\begin{proof}[Proof in the nestling case]
Since $\xi\in\mathcal{A}$, there exists a unique $\theta\in\mathbb{R}^d$ with $|\theta|<c_4$ such that $\Lambda_a(\theta)>0$ and $\xi=\nabla\Lambda_a(\theta)$. If $0<s<\Lambda_a(\theta)$, then the first term in (\ref{cokazkaldish}) is bounded by $E_o\left[n<\tau_{K+1},\,\mathrm{e}^{|\theta||X_n|}\right]$ which goes to $0$ as $n\to\infty$ by Lemma \ref{Szestimates} and the monotone convergence theorem.

For any $p,q$ with $1<p<c_4/|\theta|$ and $1/p+1/q=1$, each term of the sum in (\ref{cokazkaldish}) can be bounded using H\"older's inequality when $0<s<{\Lambda_a(\theta)}/{(p\vee Kq)}$:
\begin{align}
&E_o\left[\tau_m\leq n<\tau_{m+1},\,\mathrm{e}^{\langle\theta,X_n\rangle-\Lambda_a(\theta)n+ns\langle g,\bar{\nu}_{n,X}^\infty\rangle}\right]\nonumber\\
&\leq E_o\left[\mathrm{e}^{\langle\theta,X_{\tau_1}\rangle+\langle\theta,X_{\tau_m}-X_{\tau_1}\rangle-\Lambda_a(\theta)(\tau_m-\tau_1)+s\left(G_1+\cdots+G_{m-1}\right)}\!\!\!\!\!\sup_{\tau_m\leq n<\tau_{m+1}}\!\!\!\!\!\mathrm{e}^{|\theta||X_n-X_{\tau_m}|}\right]\nonumber\\
&\leq E_o\left[\mathrm{e}^{\langle\theta,X_{\tau_1}\rangle}\right]E_o\left[\mathrm{e}^{\langle\theta,X_{\tau_m}-X_{\tau_1}\rangle-\Lambda_a(\theta)(\tau_m-\tau_1)+ps\left(G_{m-K}+\cdots+G_{m-1}\right)}\!\!\!\!\!\!\!\sup_{\tau_m\leq n<\tau_{m+1}}\!\!\!\!\!\!\!\mathrm{e}^{p|\theta||X_n-X_{\tau_m}|}\right]^{1/p}\nonumber\\
&\quad\times\prod_{i=1}^K E_o\left[\mathrm{e}^{\langle\theta,X_{\tau_m}-X_{\tau_1}\rangle-\Lambda_a(\theta)(\tau_m-\tau_1)+(Kq)s\left(G_i+G_{i+K}+\cdots+G_{i+[\frac{m-K-i-1}{K}]K}\right)}\right]^{1/(Kq)}\nonumber\\
&\leq E_o\left[\mathrm{e}^{\langle\theta,X_{\tau_1}\rangle}\right]E_o\left[\left.\sup_{\tau_{K}\leq n<\tau_{K+1}}\!\!\!\!\mathrm{e}^{p|\theta||X_n|}\,\right|\,D=\infty\right]^{1/p}\nonumber\\&\quad\times E_o\left[\left.\mathrm{e}^{\langle\theta,X_{\tau_K}\rangle-\Lambda_a(\theta)\tau_K+(Kq)sG_o}\,\right|\,D=\infty\right]^{\frac{m-K-1}{Kq}}.\label{sokush}
\end{align}

The first two terms in (\ref{sokush}) are finite by Lemma \ref{Szestimates}. The last term in (\ref{sokush}) is equal to the last term in (\ref{cumartesish}). The rest of the argument is identical to the one given in the non-nestling case.
\end{proof}

\begin{proof}[Proof of Theorem \ref{sevval}]
Fix $\xi\in\mathcal{A}$. Take any $\alpha\in\mathcal{E}$ with $m(\alpha)=\xi$. Denote by $\bar{\alpha}$ the probability measure $\alpha$ induces on $U^{\mathbb{N}}$ via the map $(x_1, x_2,\ldots)\mapsto(x_1,x_2-x_1,\ldots)$. If $\alpha\neq\mu_\xi^\infty$, then there exist $K\in\mathbb{N}$, $f:U^{\mathbb{N}}\rightarrow\mathbb{R}$ and $\epsilon>0$ such that $f((z_i)_{i\geq1})$ is bounded and independent of $(z_i)_{i>K}$, and $|\langle f,\bar{\alpha} - \bar{\mu}_\xi^\infty\rangle|>\epsilon$. Let $H(n,X)$ denote the number of times $(X_1,\ldots,X_n)$ intersects $(X_i)_{i\leq0}$. Since the walk under $Q_\alpha$ is transient in the $\xi$ direction, there exists a large constant $L$ such that $\lim_{n\to\infty}Q_\alpha(H(n,X)\leq L)\geq1/2$. For notational convenience, let $$A_n^\delta:=\left\{|\langle f,\bar{\nu}_{n,X}^\infty - \bar{\mu}_\xi^\infty\rangle|>\epsilon,\,|\frac{X_n}{n}-\xi|\leq\delta,\,H(n+K,X)\leq L\right\}.$$ Recall Assumption (A2) of Section \ref{franzek}, and use Jensen's inequality to write
\begin{align*}
&P_o\left(|\langle f,\bar{\nu}_{n,X}^\infty - \bar{\mu}_\xi^\infty\rangle|>\epsilon,\,|\frac{X_n}{n}-\xi|\leq\delta\right)\\
&\geq(c_1)^L\sup_{w\in W_\infty^{\mathrm{tr}}}Q^w\left(A_n^\delta\right)\\
&\geq(c_1)^L\int E^w\left[\one_{A_n^\delta}\right]\,\mathrm{d}\alpha(w)\\
&=(c_1)^L\int E_\alpha^w\left[\one_{A_n^\delta}\,\left.\frac{\mathrm{d}Q^w}{\mathrm{d}Q_\alpha^w}\right|_{\sigma(Z_1,\ldots,Z_n)}\right]\,\mathrm{d}\alpha(w)\\
&=(c_1)^LQ_\alpha(A_n^\delta)\frac{1}{Q_\alpha(A_n^\delta)}\int_{A_n^\delta}\exp\left(-\log\frac{\mathrm{d}Q_\alpha^w}{\mathrm{d}Q^w}(z_1,\ldots,z_n)\right)\mathrm{d}Q_\alpha(w,z_1,\ldots,z_n)\\
&\geq(c_1)^LQ_\alpha(A_n^\delta)\exp\left(-\frac{1}{Q_\alpha(A_n^\delta)}\int_{A_n^\delta}\!\!\!\log\frac{\mathrm{d}Q_\alpha^w}{\mathrm{d}Q^w}(z_1,\ldots,z_n)\mathrm{d}Q_\alpha(w,z_1,\ldots,z_n)\right).
\end{align*}Since $m(\alpha)=\xi$ and $|\langle f,\bar{\alpha} - \bar{\mu}_\xi^\infty\rangle|>\epsilon$, the $L^1$-ergodic theorem implies that $\lim_{n\to\infty}Q_\alpha(A_n^\delta)=\lim_{n\to\infty}Q_\alpha(H(n,X)\leq L)\geq1/2$. Therefore,
\begin{align*}
&\liminf_{n\to\infty}\frac{1}{n}\log P_o\left(|\langle f,\bar{\nu}_{n,X}^\infty - \bar{\mu}_{n,X}^\infty\rangle|>\epsilon,\,|\frac{X_n}{n}-\xi|\leq\delta\right)\\
&\quad\geq-\limsup_{n\to\infty}\frac{1}{Q_\alpha(A_n^\delta)}\int_{A_n^\delta}\log\frac{\mathrm{d}Q_\alpha^w}{\mathrm{d}Q^w}(z_1,\ldots,z_n)\mathrm{d}Q_\alpha(w,z_1,\ldots,z_n)\\
&\quad=-\int_{W_\infty^{\mathrm{tr}}}\left[\sum_{z\in U}q_\alpha(w,z)\log\frac{q_\alpha(w,z)}{q(w,z)}\right]\mathrm{d}\alpha(w)=-\mathfrak{I}_a(\alpha)
\end{align*} again by the $L^1$-ergodic theorem. Finally, Theorem \ref{averagedconditioningsh} and the averaged LDP give
\begin{align*}
0>&\limsup_{\delta\to0}\limsup_{n\rightarrow\infty}\frac{1}{n}\log P_o\left(\ \left|\int\!\! f\mathrm{d}\bar{\nu}_{n,X}^\infty-\int\!\! f\mathrm{d}\bar{\mu}_\xi^\infty\right|>\epsilon\ \left|\ |\frac{X_n}{n}-\xi|\leq\delta\right.\right)\\
=&\,I_a(\xi)+\limsup_{\delta\to0}\limsup_{n\rightarrow\infty}\frac{1}{n}\log P_o\left(\ \left|\int\!\! f\mathrm{d}\bar{\nu}_{n,X}^\infty-\int\!\! f\mathrm{d}\bar{\mu}_\xi^\infty\right|>\epsilon,\,|\frac{X_n}{n}-\xi|\leq\delta\right)\\
\geq&\,I_a(\xi)-\mathfrak{I}_a(\alpha).
\end{align*} In words, $\alpha$ is not the minimizer of (\ref{divaneasik}). Theorem \ref{qual} and Lemma \ref{tugish} imply that the infimum in (\ref{divaneasik}) is attained. Hence, $\mu_\xi^\infty$ is the unique minimizer of (\ref{divaneasik}).
\end{proof}
\begin{remark}
The above argument indirectly proves that $\mu_\xi^\infty\in\mathcal{E}$ and $m(\mu_\xi^\infty)=\xi$. These facts are also easy to show directly using Definition \ref{definemuyuanan}. In fact, $\mu_\xi^\infty$ is mixing with rate given by the tail behaviour of $\tau_1$.
\end{remark}

\chapter{Large deviations for space-time RWRE}\label{spacetimechapter}

Section \ref{nilgunisik} introduces the notation for space-time RWRE, and states our results.

\section{Equivalence of quenched and averaged LDPs}\label{Qsection}


\subsection{An $L^2$ estimate}

In this chapter, the following family of functions play a central role:

\begin{definition}
For every $\omega\in\Omega$, $\theta\in\mathbb{R}^d$, $x\in\mathbb{Z}^d$ and $n,N\in\mathbb{Z}$ with $n<N$, define
\begin{equation}\label{babalar}
u_N^\theta(\omega,n,x):=E_{n,x}^\omega\left[\mathrm{e}^{\langle\theta,X_N-X_n\rangle-\Lambda_c(\theta)(N-n)}\right]
\end{equation} where $\Lambda_c$ is given in (\ref{fi}).
\end{definition}

The main estimate that enables us to establish the equivalence of quenched and averaged large deviations is
\begin{lemma}\label{eltu}
If $d\geq3$, then there exists $\bar{\eta}>0$ such that for every $\theta\in\mathbb{R}^d$ with $|\theta|<\bar{\eta}$, $x\in\mathbb{Z}^d$ and $n\in\mathbb{Z}$, $$\sup_{N>n}\left\|u_N^\theta(\cdot,n,x)\right\|_{L^2(\mathbb{P})}<\infty.$$
\end{lemma}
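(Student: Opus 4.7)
The plan is to square, exploit the independence of the environment across space-time sites, and reduce the estimate to transience of a random walk in $d\geq 3$. By translation invariance of $\mathbb{P}$ under the space-time shifts $T_{m,y}$, it is enough to bound $\|u_N^\theta(\cdot,0,0)\|_{L^2(\mathbb{P})}$ uniformly in $N$. Expanding $u_N^\theta(\omega,0,0)^2$ as a double sum over paths $X,Y$ of length $N$ started at $0$, and using that the variables $\{\omega_{k,z}\}$ at distinct space-time sites are i.i.d., each site $(k,z)$ with $0\leq k<N$ contributes $q^\theta(\Delta X_k)$ when only $X$ visits it, $q^\theta(\Delta Y_k)$ when only $Y$ does, and $\mathbb{E}[\pi_{0,1}(0,\Delta X_k)\pi_{0,1}(0,\Delta Y_k)]\,e^{\langle\theta,\Delta X_k+\Delta Y_k\rangle-2\Lambda_c(\theta)}$ when both do, where $q^\theta(u):=q(u)\,e^{\langle\theta,u\rangle-\Lambda_c(\theta)}$. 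Pulling out the single-walk factors yields
\begin{equation*}
\mathbb{E}[u_N^\theta(\cdot,0,0)^2] = E^{q^\theta\otimes q^\theta}_{0,0}\!\left[\prod_{\substack{0\leq k<N\\ X_k=Y_k}}R(\Delta X_k,\Delta Y_k)\right],
\end{equation*}
where $R(u,v):=\mathbb{E}[\pi_{0,1}(0,u)\pi_{0,1}(0,v)]/(q(u)q(v))$ is $\theta$-free because the exponential tilts cancel, and $(X,Y)$ are two independent walks with step law $q^\theta$.

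Next I would perform a second change of measure. Set
\begin{equation*}
\rho(\theta):=\sum_{u,v\in U}q^\theta(u)q^\theta(v)R(u,v) = \mathbb{E}\!\left[\Big(\sum_{u\in U}\pi_{0,1}(0,u)e^{\langle\theta,u\rangle}\Big)^{\!2}\right]e^{-2\Lambda_c(\theta)},
\end{equation*}
so that $\rho(0)=1$, $\rho(\theta)\geq 1$ by Jensen, and $\rho$ is analytic in $\theta$. Let $\nu_\theta$ be the law of the Markov chain $((X_k,Y_k))_{k\geq 0}$ whose transitions agree with $q^\theta\otimes q^\theta$ when $X_k\neq Y_k$ and equal $q^\theta(u)q^\theta(v)R(u,v)/\rho(\theta)$ when $X_k=Y_k$. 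Since the Radon-Nikodym derivative of $\nu_\theta$ against $q^\theta\otimes q^\theta$ on paths of length $N$ is $\rho(\theta)^{-L_N}\prod_{k<N,\,X_k=Y_k}R(\Delta X_k,\Delta Y_k)$, with $L_N:=\#\{k<N:X_k=Y_k\}$, the second moment above rewrites as $E^{\nu_\theta}[\rho(\theta)^{L_N}]$.

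The proof then concludes by showing $E^{\nu_\theta}[\rho(\theta)^{L_\infty}]<\infty$ for $\theta$ small. Under $\nu_\theta$ the difference $D_k:=X_k-Y_k$ is itself a Markov chain on $\mathbb{Z}^d$: off the origin it evolves as the difference of two independent walks with step law $q^\theta$, which has mean zero (both marginals share drift $\nabla\Lambda_c(\theta)$) and, by uniform ellipticity, positive-definite covariance whenever $\theta$ is close to $0$. The local CLT then gives transience of $D$ in $d\geq 3$, so the return probability $r(\theta):=P^{\nu_\theta}(D_k=0\text{ for some }k\geq 1)$ is strictly less than $1$. By the strong Markov property, $L_\infty$ is geometrically distributed under $\nu_\theta$ with parameter $r(\theta)$, whence
\begin{equation*}
E^{\nu_\theta}[\rho(\theta)^{L_\infty}] = \frac{\rho(\theta)(1-r(\theta))}{1-\rho(\theta)r(\theta)},
\end{equation*}
which is finite exactly when $\rho(\theta)r(\theta)<1$. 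Since $\rho(0)r(0)=r(0)<1$ and both factors are continuous in $\theta$, this inequality persists on a ball $\{|\theta|<\bar{\eta}\}$. The main technical obstacle is the continuity of $r(\theta)$ in $\theta$; I expect to establish it by dominated convergence applied to $r(\theta)=\sum_y q^\theta\!*\!(q^\theta)^{-}(y)\,G^\theta(y,0)$, where $G^\theta$ is the Green function of the walk $D$, using uniform ellipticity to obtain locally uniform local CLT bounds on $G^\theta$ for $\theta$ near $0$.
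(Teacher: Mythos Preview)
Your argument is correct and parallels the paper's closely; the two differ only in bookkeeping. Both start from the same two-walk identity
\[
G_N(\theta)=E^{q^\theta\otimes q^\theta}_{0,0}\Bigl[\prod_{k<N:\,X_k=Y_k}R(\Delta X_k,\Delta Y_k)\Bigr],
\]
and both reduce the uniform bound to the single inequality $\rho(\theta)r(\theta)<1$ for small $\theta$. The paper organizes this via a first-return renewal equation $G_N=\sum_{k\leq N-2}B_k(\theta)G_{N-k-1}+C_N(\theta)$ and bounds $\sup_N G_N\leq C(\theta)/(1-B(\theta))$; your tilted measure $\nu_\theta$ and the geometric law of $L_\infty$ are an equivalent repackaging, since $B(\theta)=\sum_kB_k(\theta)$ equals exactly your $\rho(\theta)r(\theta)$. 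For the continuity step your sketch is slightly off (the first step out of $0$ under $\nu_\theta$ has law $q^\theta(u)q^\theta(v)R(u,v)/\rho(\theta)$, not $q^\theta*(q^\theta)^{-}$, and hitting probabilities are $G^\theta(y,0)/G^\theta(0,0)$ rather than $G^\theta(y,0)$), but the paper's argument---bounding the tail $\sum_{k\geq N}B_k(\theta)\leq e^{\bar V}\sum_{k>N}\hat P_o^\theta\times\hat P_o^\theta(X_k=Y_k)$ uniformly in $\theta$ by the local CLT---translates verbatim into your framework and closes that gap.
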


\begin{proof}
It suffices to prove the lemma for $n=0$ and $x=0$.
\begin{align}
&G_N(\theta):=\left\|u_N^\theta(\cdot,0,0)\right\|_{L^2(\mathbb{P})}^2=\mathbb{E}\left(E_{o,o}^\omega\left[\mathrm{e}^{\langle\theta,X_N\rangle-\Lambda_c(\theta)N}\right]^2\right)\label{aboo}\\
=&\sum_{x_o=0,x_1,\ldots,x_N\atop y_o=0,y_1,\ldots,y_N} \prod_{i=0}^{N-1}\mathbb{E}\left(\pi_{i,i+1}(x_i,x_{i+1})\pi_{i,i+1}(y_i,y_{i+1})\right)\frac{\mathrm{e}^{\langle\theta,x_{i+1}-x_i\rangle}}{\mathrm{e}^{\Lambda_c(\theta)}}\frac{\mathrm{e}^{\langle\theta,y_{i+1}-y_i\rangle}}{\mathrm{e}^{\Lambda_c(\theta)}}\nonumber\\
=&\sum_{x_o=0,x_1,\ldots,x_N\atop y_o=0,y_1,\ldots,y_N}\prod_{i=0}^{N-1}\frac{\mathbb{E}\left(\pi_{i,i+1}(x_i,x_{i+1})\pi_{i,i+1}(y_i,y_{i+1})\right)}{q(x_{i+1}-x_i)q(y_{i+1}-y_i)}q^\theta(x_{i+1}-x_i)q^\theta(y_{i+1}-y_i)\nonumber
\end{align}
where $q^\theta(z):=q(z)\mathrm{e}^{\langle\theta,z\rangle-\Lambda_c(\theta)}$ for every $z\in U$. For every $x\in\mathbb{Z}^d$, let $\hat{P}_x^\theta$ be the probability measure on paths starting at $x$ and induced by $\left(q^\theta(z)\right)_{z\in U}$. Write $\hat{E}_x^\theta$ to denote expectation with respect to $\hat{P}_x^\theta$.

Note that $\mathbb{E}\left(\pi_{i,i+1}(x_i,x_{i+1})\pi_{i,i+1}(y_i,y_{i+1})\right)=q(x_{i+1}-x_i)q(y_{i+1}-y_i)$ unless $x_i=y_i$. For every $x,y\in U$, set \[V(x,y):=\log\left(\frac{\mathbb{E}\left(\pi_{0,1}(0,x)\pi_{0,1}(0,y)\right)}{q(x)q(y)}\right).\] By uniform ellipticity, $V$ is bounded by some constant $\bar{V}$. With this notation,\[G_N(\theta)=\hat{E}_o^\theta\!\times\!\hat{E}_o^\theta\left[\mathrm{e}^{\sum_{i=0}^{N-1}\one_{X_i=Y_i}V(X_{i+1}-X_i,Y_{i+1}-Y_i)}\right].\] Let $s:=\inf\left\{k\geq0:\ X_k=Y_k\right\}$, $s^+:=\inf\left\{k>0:\ X_k=Y_k\right\}$ and decompose $G_N(\theta)$ with respect to the first steps $X_1$ and $Y_1$:\pagebreak
\begin{align*}
G_N(\theta)=&\sum_{x,y}q^\theta(x)q^\theta(y)\mathrm{e}^{V(x,y)}\sum_{k=0}^{N-2}\hat{P}_x^\theta\!\times\!\hat{P}_y^\theta\left(s=k\right)G_{N-k-1}(\theta)\\&+\sum_{x,y}q^\theta(x)q^\theta(y)\mathrm{e}^{V(x,y)}\hat{P}_x^\theta\!\times\!\hat{P}_y^\theta\left(s\geq N-1\right)\\
=&\sum_{k=0}^{N-2}\left(\sum_{x,y}q^\theta(x)q^\theta(y)\mathrm{e}^{V(x,y)}\hat{P}_x^\theta\!\times\!\hat{P}_y^\theta\left(s=k\right)\right)G_{N-k-1}(\theta)\\&+\sum_{x,y}q^\theta(x)q^\theta(y)\mathrm{e}^{V(x,y)}\hat{P}_x^\theta\!\times\!\hat{P}_y^\theta\left(s\geq N-1\right).
\end{align*}
Simplify the last expression by defining \begin{align*}
B_k(\theta)&:=\sum_{x,y}q^\theta(x)q^\theta(y)\mathrm{e}^{V(x,y)}\hat{P}_x^\theta\!\times\!\hat{P}_y^\theta\left(s=k\right),\\ C_N(\theta)&:=\sum_{x,y}q^\theta(x)q^\theta(y)\mathrm{e}^{V(x,y)}\hat{P}_x^\theta\!\times\!\hat{P}_y^\theta\left(s\geq N-1\right)
\end{align*} and obtain the following equation:
\begin{equation}\label{neis}
G_N(\theta)=\sum_{k=0}^{N-2}B_k(\theta)G_{N-k-1}(\theta)+C_N(\theta).
\end{equation} 
Since $d\geq3$, $(X_i-Y_i)_{i\geq0}$ is a transient random walk under the product measure $\hat{P}_x^\theta\!\times\!\hat{P}_y^\theta$. When $x\neq y$, it has positive probability of never hitting the origin. Therefore,
\[\lim_{N\rightarrow\infty}C_N(\theta)=\inf_{N}C_N(\theta)
=\sum_{x,y}q^\theta(x)q^\theta(y)\mathrm{e}^{V(x,y)}\hat{P}_x^\theta\!\times\!\hat{P}_y^\theta\left(s=\infty\right)>0.\]
By (\ref{aboo}), $G_M(0)=1$ for every $M$. Plugging it in (\ref{neis}), \[1=\sum_{k=0}^{N-2}B_k(0)+C_N(0).\] Taking $N\rightarrow\infty$ gives \begin{equation}\label{fisinial}
\sum_{k=0}^{\infty}B_k(0)<1.
\end{equation} 

We would like to show that \[B(\theta):=\sum_{k=0}^{\infty}B_k(\theta)\] is continuous in $\theta$ at $0$. Since $\theta\mapsto B_k(\theta)$ is continuous for each $k$, it suffices to argue that the tail of this sum is small, uniformly in $\theta$ in a neighborhood of $0$. Indeed,
\begin{align}
\sum_{k=N}^{\infty}B_k(\theta)\leq&\mathrm{e}^{\bar{V}}\sum_{x,y}q^\theta(x)q^\theta(y)\hat{P}_x^\theta\!\times\!\hat{P}_y^\theta\left(N\leq s<\infty\right)\nonumber\\=&\mathrm{e}^{\bar{V}}\hat{P}_o^\theta\!\times\!\hat{P}_o^\theta\left(N+1\leq s^+<\infty\right)\nonumber\\\leq&\mathrm{e}^{\bar{V}}\sum_{k=N+1}^{\infty}\hat{P}_o^\theta\!\times\!\hat{P}_o^\theta\left(X_k=Y_k\right)\label{faikcina}.
\end{align} Since $d\geq3$ and the covariance of $X_1-Y_1$ under $\hat{P}_o^\theta\!\times\!\hat{P}_o^\theta$ is a nonsingular matrix whose entries are continuous in $\theta$, the local CLT implies that the sum in (\ref{faikcina}) is the tail of a series which converges uniformly in $\theta$ in a neighborhood of $0$.

Now that we know $\theta\mapsto B(\theta)$ is continuous at $0$, recall (\ref{fisinial}) and see that there exists $\bar{\eta}>0$ such that for every $\theta\in\mathbb{R}^d$ with $|\theta|<\bar{\eta}$, $B(\theta)<1$. Letting $C(\theta):=\sup_{M}C_M(\theta)$, turn to (\ref{neis}) and conclude that \[\sup_{M\leq N}G_M(\theta)\leq \frac{C(\theta)}{1-B(\theta)}<\infty.\]Taking $N\rightarrow\infty$ gives the desired result.
\end{proof}

\subsection{Proof of Conjecture \ref{conjecture}}

From now on, consider $d\geq3$ and $\theta$ as in Lemma \ref{eltu}. For every $x\in\mathbb{Z}^d$ and $n,N\in\mathbb{Z}$ with $n<N$, recall (\ref{babalar}) and observe that $\mathbb{P}$-a.s.
\begin{align*}
u_N^\theta(\omega,n,x)&=E_{n,x}^\omega\left[\mathrm{e}^{\langle\theta,X_N-X_n\rangle-\Lambda_c(\theta)(N-n)}\right]\\
&=\sum_{y}\pi_{n,n+1}(x,y)\mathrm{e}^{\langle\theta,y-x\rangle}E_{n+1,y}^\omega\left[\mathrm{e}^{\langle\theta,X_N-X_{n+1}\rangle-\Lambda_c(\theta)(N-n)}\right]\\
&=\sum_{y}\pi_{n,n+1}(x,y)\mathrm{e}^{\langle\theta,y-x\rangle-\Lambda_c(\theta)}u_N^\theta(\omega,n+1,y).
\end{align*}
$\left(u_N^\theta(\cdot,n,x)\right)_{N>n}$ is a nonnegative martingale and $\mathbb{P}$-a.s.\ converges to some limit $u^\theta(\cdot,n,x)$ which satisfies
\begin{equation}\label{denk}
u^\theta(\omega,n,x)=\sum_{y}\pi_{n,n+1}(x,y)\mathrm{e}^{\langle\theta,y-x\rangle-\Lambda_c(\theta)}u^\theta(\omega,n+1,y).
\end{equation} By Lemma \ref{eltu}, $\left(u_N^\theta(\cdot,n,x)\right)_{N>n}$ is uniformly bounded in $L^2(\mathbb{P})$, and therefore the convergence takes place also in $L^2(\mathbb{P})$.

For every $x\in\mathbb{Z}^d$ and $n,N\in\mathbb{Z}$ with $n<N$, clearly $\left\|u_N^\theta(\cdot,n,x)\right\|_{L^1(\mathbb{P})}=1$. Since $\left(u_N^\theta(\cdot,n,x)\right)_{N>n}$ converges to $u^\theta(\cdot,n,x)$ in $L^2(\mathbb{P})$, $\left\|u^\theta(\cdot,n,x)\right\|_{L^1(\mathbb{P})}=1$ and $u^\theta(\cdot,n,x)\in L^2(\mathbb{P})$.
\[u_N^\theta(T_{n,x}\omega,0,0)=\frac{E_{o,o}^{T_{n,x}\omega}\left[\mathrm{e}^{\langle\theta,X_N-X_o\rangle}\right]}{\mathrm{e}^{\Lambda_c(\theta)N}}=\frac{E_{n,x}^\omega\left[\mathrm{e}^{\langle\theta,X_{N+n}-X_n\rangle}\right]}{\mathrm{e}^{\Lambda_c(\theta)N}}=u_{N+n}^\theta(\omega,n,x).\] holds $\mathbb{P}$-a.s. Taking $N\to\infty$,
\begin{align}
&u^\theta(T_{n,x}\omega,0,0)=u^\theta(\omega,n,x).\label{oldu}\\
&u^\theta(\omega):=u^\theta(\omega,0,0)\label{yeniu}
\end{align}
abbreviates the notation. Since $u_N^\theta(\cdot,0,0)$ is $\mathcal{B}_0^+$-measurable, it follows that $u^\theta$ is $\mathcal{B}_0^+$-measurable as well.

Using (\ref{oldu}) and (\ref{yeniu}), put (\ref{denk}) in the following form: $\mathbb{P}$-a.s.
\begin{equation}\label{bariz}
u^\theta(\omega)=\sum_{z\in U}\overline{\pi}(\omega,T_{1,z}\omega)\mathrm{e}^{\langle\theta,z\rangle-\Lambda_c(\theta)}u^\theta(T_{1,z}\omega).
\end{equation}

Finally, let us prove that $u^\theta>0$ holds $\mathbb{P}$-a.s. We already know that $u^\theta\geq0$ holds $\mathbb{P}$-a.s. Clearly, (\ref{bariz}) implies that $\left\{\omega:\ u^\theta(\omega)=0\right\}$ is invariant under $T_{1,z}$ for every $z\in U$. Since $\mathbb{P}$ is ergodic under shifts, $\mathbb{P}(u^\theta(\omega)=0)$ is either $0$ or $1$. But we know that $\left\|u^\theta(\cdot,n,x)\right\|_{L^1(\mathbb{P})}=1$, and therefore $\mathbb{P}(u^\theta(\omega)=0)=0$.

Define a new transition kernel $\overline{\pi}^\theta$ on $\Omega$ by a Doob $h$-transform: For every $z\in U$, $\mathbb{P}$-a.s.
\begin{equation}\label{doobcan}
\overline{\pi}^\theta(\omega,T_{1,z}\omega):= \overline{\pi}(\omega,T_{1,z}\omega)\frac{u^\theta(T_{1,z}\omega)}{u^\theta(\omega)}\mathrm{e}^{\langle\theta,z\rangle-\Lambda_c(\theta)}.
\end{equation} $\overline{\pi}^\theta$ induces a probability measure $P_{k,x}^{\theta,\omega}$ on particle paths starting at position $x$ at time $k$. Write $E_{k,x}^{\theta,\omega}$ to denote expectation under this measure.

\begin{proof}[Proof of Theorem \ref{AequalsQ}]

For $d\geq3$ and $\bar{\eta}$ as in Lemma \ref{eltu}, recall (\ref{babalar}) and observe that if $|\theta|<\bar{\eta}$, then
\begin{equation}
\lim_{n\rightarrow\infty}\frac{1}{n}\log E_{o,o}^\omega\left[\mathrm{e}^{\langle\theta,X_n\rangle}\right]=\Lambda_c(\theta)+\lim_{n\rightarrow\infty}\frac{1}{n}\log u_n^\theta(\omega)=\Lambda_c(\theta)\label{higherz}
\end{equation} because $\lim_{n\to\infty} u_n^\theta(\omega) = u^\theta(\omega)>0$ holds $\mathbb{P}$-a.s. Since  $\Lambda_c$ is strictly convex and $\xi_o=\nabla\Lambda_c(0)$, the set $\left\{\nabla\Lambda_c(\theta):\ |\theta|<\bar{\eta}\right\}$ is open and contains $\xi_o$. Hence, there exists $\eta>0$ such that for every $\xi\in\mathcal{D}$ with $|\xi-\xi_o|<\eta$, there is a unique $\theta$ satisfying $|\theta|<\bar{\eta}$ and $\xi=\nabla\Lambda_c(\theta)$. Because $\theta\mapsto\Lambda_c(\theta)$ is analytic, (\ref{higherz}) and the G\"{a}rtner-Ellis theorem (see \cite{DZ}, page 44) immediately imply the desired result.
\end{proof}

\section{Conditioning on the mean velocity}\label{Asection}

\subsection{Environment MC under the averaged measure}








Fix any $\xi\in\mathcal{D}^o$. Recall Definition \ref{definemu}.
\begin{proposition}\label{welldefined}
$\bar{\mu}_\xi^\infty$ is well defined.
\end{proposition}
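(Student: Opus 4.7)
Following the pattern of Proposition \ref{bursayok}, I will check that the right-hand side of (\ref{mucan}) is independent of the choice of admissible triple $(N,M,K)$, and that the resulting assignment defines a probability measure. Since increasing any of $N,M,K$ only weakens the two constraints on $f$, the admissible set is upward closed, and it suffices to verify invariance under the three unit increments $N\to N+1$, $M\to M+1$, $K\to K+1$.

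The key input is that under $P_{o,o}$ the steps $(Z_i)_{i\geq 1}$ are i.i.d.\ with law $q$, so $M_m:=\mathrm{e}^{\langle\theta,X_m\rangle-m\Lambda_c(\theta)}$ is a martingale with respect to the filtration $\mathcal{G}_m:=\sigma(\omega_{k,\cdot}:k<m;\,Z_1,\ldots,Z_m)$: the increment factor $\mathrm{e}^{\langle\theta,Z_{m+1}\rangle-\Lambda_c(\theta)}$ is independent of $\mathcal{G}_m$ (since $\omega_{m,\cdot}$ is independent of $\mathcal{G}_m$ by the product structure of $\mathbb{P}$) and has mean $\sum_z q(z)\mathrm{e}^{\langle\theta,z\rangle-\Lambda_c(\theta)}=1$. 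With $n=N+M+K+1$, the integrand $M_n f(T_{N,X_N}\omega,(Z_{N+i})_{i\geq 1})$ is $\mathcal{G}_n$-measurable, so both $M\to M+1$ and $K\to K+1$ amount to inserting the mean-one factor $\mathrm{e}^{\langle\theta,Z_{n+1}\rangle-\Lambda_c(\theta)}$ without altering $f$, yielding invariance.

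For $N\to N+1$ the integrand itself changes. I will write $X_k = X_1 + X_k'$ for $k\geq 1$, where $X'$ is the walk started at the origin in the shifted environment $T_{1,X_1}\omega$, so that $T_{N+1,X_{N+1}}\omega = T_{N,X_N'}T_{1,X_1}\omega$ and $(Z_{N+1+i})_{i\geq 1}$ equals the corresponding sequence $(Z_{N+i}')_{i\geq 1}$. Conditioning on $Z_1=z$ pulls out a factor $\pi_{0,1}(0,z)\mathrm{e}^{\langle\theta,z\rangle-\Lambda_c(\theta)}$; since $T_{1,z}\omega$ depends only on $(\omega_{k,\cdot})_{k\geq 1}$ and is therefore independent of $\omega_{0,0}$ under $\mathbb{P}$, integrating $\omega_{0,0}$ first replaces $\pi_{0,1}(0,z)$ by $q(z)$, and the shift-invariance of $\mathbb{P}$ then lets me rename $T_{1,z}\omega$ as $\omega$, yielding $\bigl(\sum_z q(z)\mathrm{e}^{\langle\theta,z\rangle-\Lambda_c(\theta)}\bigr)I(N,M,K)=I(N,M,K)$ as desired. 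Once invariance is verified, the formula defines a non-negative linear functional on the algebra of bounded cylinder functions that evaluates to $1$ on $f\equiv 1$ (take $N=M=K=0$, so $n=1$), and the Daniell--Kolmogorov extension produces the probability measure $\bar{\mu}_\xi^\infty$. The main obstacle is the $N$-step: the two versions of $f$ live on disjoint time-slices of $\omega$ in original coordinates, so the required identity holds only in distribution, not pointwise, and needs the above shift-and-conditioning argument rather than the pure martingale manipulation that handles $M$ and $K$.
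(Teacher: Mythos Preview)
Your proposal is correct and follows essentially the same route as the paper: show invariance under each of the three unit increments $N\to N+1$, $M\to M+1$, $K\to K+1$, using for the $N$-step the independence of $\omega_{0,\cdot}$ from $(\omega_{k,\cdot})_{k\geq1}$ together with shift-invariance of $\mathbb{P}$, and for the $M$- and $K$-steps the independence of $\omega_{n,\cdot}$ from the past. Your martingale formulation for the $M$- and $K$-steps (via $\mathcal{G}_m=\sigma(\omega_{k,\cdot}:k<m;\,Z_1,\ldots,Z_m)$) is a clean repackaging of the paper's explicit splitting into $\mathcal{B}_{L-1}^-$- and $\mathcal{B}_L^+$-measurable factors; the content is the same.
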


\begin{proof}
Let $L:=N+M+K+1$.
We need to show that (\ref{mucan}) does not change if we replace $N$ by $N+1$, $M$ by $M+1$, or $K$ by $K+1$.

Let us start with the argument for $N$.
\begin{align}
&E_{o,o}\left[\mathrm{e}^{\langle\theta,X_{L+1}\rangle-(L+1)\Lambda_c(\theta)}f(T_{N+1,X_{N+1}}\omega,(Z_{N+1+i})_{i\geq1})\right]\nonumber\\
&=\sum_{x}\mathbb{E}\left(E_{o,o}^\omega\left[\mathrm{e}^{\langle\theta,X_1\rangle-\Lambda_c(\theta)}, X_1=x\right]\right.\label{bagimsiz}\\&\ \ \ \ \ \ \ \times\left.E_{1,x}^\omega\left[\mathrm{e}^{\langle\theta,X_{L+1}-X_1\rangle-L\Lambda_c(\theta)}f(T_{N+1,X_{N+1}}\omega,(Z_{N+1+i})_{i\geq1})\right]\right)\nonumber\\
&=\sum_{x}E_{o,o}\left[\mathrm{e}^{\langle\theta,X_1\rangle-\Lambda_c(\theta)}, X_1=x\right]\label{gozum}\\&\ \ \ \ \times E_{1,x}\left[\mathrm{e}^{\langle\theta,X_{L+1}-X_1\rangle-L\Lambda_c(\theta)}f(T_{N+1,X_{N+1}}\omega,(Z_{N+1+i})_{i\geq1})\right]\nonumber\\
&=\sum_{x}E_{o,o}\left[\mathrm{e}^{\langle\theta,X_1\rangle-\Lambda_c(\theta)}, X_1=x\right]\label{kaydir}\\&\ \ \ \ \times E_{o,o}\left[\mathrm{e}^{\langle\theta,X_L\rangle-L\Lambda_c(\theta)}f(T_{N,X_N}\omega,(Z_{N+i})_{i\geq1})\right]\nonumber\\
&=\,E_{o,o}\left[\mathrm{e}^{\langle\theta,X_L\rangle-L\Lambda_c(\theta)}f(T_{N,X_N}\omega,(Z_{N+i})_{i\geq1})\right]\nonumber
\end{align}
holds. Note that each term of the sum in (\ref{bagimsiz}) is the $\mathbb{P}$-expectation of two random variables; the first one is $\mathcal{B}_0^-$-measurable and the second one is $\mathcal{B}_1^+$-measurable. Use this independence to obtain (\ref{gozum}). (\ref{kaydir}) follows from the stationarity of $\mathbb{P}$ under shifts. Hence, (\ref{mucan}) does not change if $N$ is replaced by $N+1$.

Similarly, if $M$ is replaced by $M+1$ in (\ref{mucan}),
\begin{align}
&E_{o,o}\left[\mathrm{e}^{\langle\theta,X_{L+1}\rangle-(L+1)\Lambda_c(\theta)}f(T_{N,X_N}\omega,(Z_{N+i})_{i\geq1})\right]\nonumber\\
&=\sum_{x}\mathbb{E}\left(E_{o,o}^\omega\left[\mathrm{e}^{\langle\theta,X_L\rangle-L\Lambda_c(\theta)}f(T_{N,X_N}\omega,(Z_{N+i})_{i\geq1}), X_L=x\right]\right.\label{baris}\\
&\ \ \ \ \ \ \ \times\left.E_{L,x}^\omega\left[\mathrm{e}^{\langle\theta,X_{L+1}-X_L\rangle-\Lambda_c(\theta)}\right]\right)\nonumber\\
&=\sum_{x}E_{o,o}\left[\mathrm{e}^{\langle\theta,X_L\rangle-L\Lambda_c(\theta)}f(T_{N,X_N}\omega,(Z_{N+i})_{i\geq1}), X_L=x\right]\nonumber\\
&\ \ \ \ \ \ \ \times E_{L,x}\left[\mathrm{e}^{\langle\theta,X_{L+1}-X_L\rangle-\Lambda_c(\theta)}\right]\nonumber\\
&=E_{o,o}\left[\mathrm{e}^{\langle\theta,X_L\rangle-L\Lambda_c(\theta)}f(T_{N,X_N}\omega,(Z_{N+i})_{i\geq1})\right]\nonumber
\end{align}
where each term of the sum in (\ref{baris}) is the $\mathbb{P}$-expectation of two random variables; the first one is $\mathcal{B}_{L-1}^-$-measurable and the second one is $\mathcal{B}_L^+$-measurable.

The argument for $K$ is the same as the one for $M$. 
\end{proof}

\begin{proposition}\label{stationary}
$\bar{\mu}_\xi^\infty$ induces a stationary process $\mu_\xi^\infty$ with values in $\Omega$.
\end{proposition}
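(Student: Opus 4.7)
The plan is to identify $\mu_\xi^\infty$ as the push-forward of $\bar{\mu}_\xi^\infty$ under the map
\[
\Phi:\Omega\times U^{\mathbb{N}}\longrightarrow\Omega^{\mathbb{N}_0},\qquad(\omega,(z_i)_{i\geq1})\longmapsto(T_{n,X_n}\omega)_{n\geq0},
\]
with $X_n:=z_1+\cdots+z_n$, and then to verify one-sided shift invariance on cylinder functions by pushing the invariance of the formula in Definition \ref{definemu} (established in Proposition \ref{welldefined}) through the change of variables $\Phi$. A Kolmogorov extension then upgrades the one-sided stationary process to a two-sided one.

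First I would fix a cylinder function $g(\eta_0,\ldots,\eta_K)$ that is $\mathcal{B}_{-N}^{+}\cap\mathcal{B}_{M}^{-}$-measurable in each coordinate, and introduce
\[
f_g(\omega,(z_i)):=g\!\left(\omega,T_{1,x_1}\omega,T_{2,x_2}\omega,\ldots,T_{K,x_K}\omega\right),\qquad x_i:=z_1+\cdots+z_i,
\]
together with its shifted version $f_{g\circ S}$ corresponding to $g(\eta_1,\ldots,\eta_{K+1})$. A routine check using the inclusions $T_{j,y}^{-1}\mathcal{B}_{-N}^{+}=\mathcal{B}_{-N-j}^{+}$ shows that $f_g$ is $\mathcal{B}_{-N}^{+}\cap\mathcal{B}_{M+K}^{-}$-measurable, while $f_{g\circ S}$ is $\mathcal{B}_{-(N-1)}^{+}\cap\mathcal{B}_{M+K+1}^{-}$-measurable and depends on $z_1,\ldots,z_{K+1}$. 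Both test functions therefore fit the template of Definition \ref{definemu}, provided I first enlarge $N$ so that $N\geq 1$; Proposition \ref{welldefined} guarantees that such enlargement does not alter either integral.

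Next I would unfold Definition \ref{definemu} on each side. The telescoping identity $T_{j,x_j'}T_{N-1,X_{N-1}}\omega=T_{N-1+j,X_{N-1+j}}\omega$ (with $x_j'=X_{N-1+j}-X_{N-1}$) applied to $f_{g\circ S}$ with parameters $(N-1,M+K+1,K+1)$ produces
\[
\int g\circ S\,d\mu_\xi^\infty=E_{o,o}\!\left[\mathrm{e}^{\langle\theta,X_L\rangle-L\Lambda_c(\theta)}\,g\!\left(T_{N,X_N}\omega,\ldots,T_{N+K,X_{N+K}}\omega\right)\right]
\]
with $L=(N-1)+(M+K+1)+(K+1)+1=N+M+2K+2$. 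Exactly the same integrand appears when Definition \ref{definemu} is applied to $f_g$ with parameters $(N,M+K+1,K)$, for which $L=N+(M+K+1)+K+1=N+M+2K+2$; and by Proposition \ref{welldefined} this value equals $\int g\,d\mu_\xi^\infty$. Thus $\int g\,d\mu_\xi^\infty=\int g\circ S\,d\mu_\xi^\infty$ on cylinder functions, and a monotone class argument extends this to all bounded measurable $g$.

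The main obstacle is not conceptual but bookkeeping: one must carefully track how the parameters $N,M,K$ transform under both $\Phi$ and the coordinate shift $S$, and then invoke Proposition \ref{welldefined} the right number of times to bring the two expressions into index-by-index agreement. Once that alignment is achieved, the two sides are literally the same $P_{o,o}$-expectation, and stationarity is automatic.
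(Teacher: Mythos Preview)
Your proposal is correct and follows essentially the same route as the paper. The paper proves invariance of $\bar{\mu}_\xi^\infty$ under the shift $\bar{S}(\omega,(z_i)_{i\geq1})=(T_{1,z_1}\omega,(z_i)_{i\geq2})$ on $\Omega\times U^{\mathbb{N}}$ and then defines $\mu_\xi^\infty$ as the law of $(\Psi\circ\bar{S}^k)_{k\geq0}$; you instead push forward first via $\Phi$ and verify shift invariance on $\Omega^{\mathbb{N}_0}$, but since $\Phi\circ\bar{S}=S\circ\Phi$ the two computations are the same parameter-matching exercise with Definition~\ref{definemu} and Proposition~\ref{welldefined}. One small slip: the identity should read $T_{j,y}^{-1}\mathcal{B}_{-N}^{+}=\mathcal{B}_{-N+j}^{+}$ (not $\mathcal{B}_{-N-j}^{+}$), though your stated measurability conclusions for $f_g$ and $f_{g\circ S}$ are nonetheless correct.
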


\begin{proof}
Define $\bar{S}: \Omega\times U^{\mathbb{N}}\rightarrow\Omega\times U^{\mathbb{N}}$ by $\bar{S}: \left(\omega,(z_i)_{i\geq1}\right)\mapsto \left(T_{1,z_1}\omega,(z_i)_{i\geq2}\right)$ and the projection map $\Psi:\Omega\times U^{\mathbb{N}}\rightarrow\Omega$ by $\Psi:\left(\omega,(z_i)_{i\geq1}\right)\mapsto\omega.$ Let us show that $\bar{\mu}_\xi^\infty$ is invariant under $\bar{S}$. For every $N,M$ and $K\in\mathbb{N}$, and any $f$ as in Definition \ref{definemu}, $f\circ\bar{S}\left(\omega,(z_i)_{i\geq1}\right)=f\left(T_{1,z_1}\omega,(z_i)_{i\geq2}\right)$ is $\mathcal{B}_{-(N-1)}^+\cap\mathcal{B}_{M+1}^-$-measurable and independent of $(z_i)_{i>K+1}$. By definition,
\begin{align*}
&\int\!\! f\!\circ\!\bar{S}\mathrm{d}\bar{\mu}_\xi^\infty=E_{o,o}\!\left[\mathrm{e}^{\langle\theta,X_{N+M+K+2}\rangle - (N+M+K+2)\Lambda_c(\theta)}f\!\circ\!\bar{S}(T_{N-1,X_{N-1}}\omega,(Z_{(N-1)+i})_{i\geq1})\right]\\
&=E_{o,o}\!\left[\mathrm{e}^{\langle\theta,X_{N+M+K+2}\rangle - (N+M+K+2)\Lambda_c(\theta)}f(T_{N,X_N}\omega,(Z_{N+i})_{i\geq1})\right]=\int\!\!f\mathrm{d}\bar{\mu}_\xi^\infty.
\end{align*}

Therefore, under $\bar{\mu}_\xi^\infty$, $\left(\Psi\circ\bar{S}^k(\cdot)\right)_{k\geq0}$ extends to a stationary process taking values in $\Omega$, whose distribution is denoted by $\mu_\xi^\infty$.
\end{proof}

\begin{proof}[Proof of Theorem \ref{averagedconditioning}]

As noted in Remark \ref{fehimbey}, $(n,X_n)_{n\geq0}$ can be viewed as RWRE on $\mathbb{Z}^{d+1}$. It is clearly non-nestling in the ``time" direction, and the associated regeneration times satisfy $\tau_m=m$.
Therefore, Theorem \ref{averagedconditioning} is almost a special case of Theorem \ref{averagedconditioningsh}. But, there is a slight difference:

Recall (\ref{sabrialtintas}). In Theorem \ref{averagedconditioningsh}, we consider $f:U^{\mathbb{N}}\rightarrow\mathbb{R}$ such that $f((z_i)_{i\geq1})$ is independent of $(z_i)_{i>K}$, and therefore $(G_j,G_{j+K},\ldots)$ is an i.i.d.\ sequence under $P_o$. In Theorem \ref{averagedconditioning}, we instead consider $f:\Omega\times U^{\mathbb{N}}\rightarrow\mathbb{R}$ such that $f(\cdot,(z_i)_{i\geq1})$ is independent of $(z_i)_{i>K}$ and $\mathcal{B}_{-N}^+\cap\mathcal{B}_M^-$-measurable for each $(z_i)_{i\geq1}$, and this time $(\tilde{G}_j,\tilde{G}_{j+L},\ldots)$ is an i.i.d.\ sequence under $P_{o,o}$, where $\tilde{G}_j:=g(T_{j,X_j}\omega,(Z_{j+i})_{i\geq1})$ and $L:=N+M+K+1$. The first part of the proof of Theorem \ref{averagedconditioningsh} carries over with this minor modification and
\begin{equation}\label{yaniyorumbee}
\limsup_{n\rightarrow\infty}\frac{1}{n}\log E_{o,o}\left[\mathrm{e}^{\langle\theta,X_n\rangle-n\Lambda_c(\theta)},\ \left|\int\!\! f\mathrm{d}\bar{\nu}_{n,X}^\infty-\int\!\! f\mathrm{d}\bar{\mu}_\xi^\infty\right|>\epsilon\right]=:\gamma<0.
\end{equation} The desired result is obtained by a standard change of measure argument given in the last part of the proof of Theorem \ref{averagedconditioningsh}. See \cite{YilmazSpaceTime} for the complete proof.
\end{proof}

\subsection{Environment MC under the quenched measure}

\begin{proof}[Proof of Theorem \ref{strong}]
Let $\theta$ be the unique solution of $\xi=\nabla\Lambda_c(\theta)$. Fix $\alpha>0$. Recall (\ref{yaniyorumbee}). For every $n\in\mathbb{N}$, the event $B_n'\subset\Omega$ is defined by
\begin{align*}
B_n':&=\left\{\omega:\ E_{o,o}^\omega\left[\mathrm{e}^{\langle\theta,X_n\rangle-n\Lambda_c(\theta)},\ |\langle f,\bar{\nu}_{n,X}^\infty - \bar{\mu}_\xi^\infty\rangle|>\epsilon\right]>\mathrm{e}^{n(\gamma+\alpha)}\right\}.\\
\mathbb{P}\left(B_n'\right)&\leq\int_{B_n'}E_{o,o}^\omega\left[\mathrm{e}^{\langle\theta,X_n\rangle-n\Lambda_c(\theta)},\ |\langle f,\bar{\nu}_{n,X}^\infty - \bar{\mu}_\xi^\infty\rangle|>\epsilon\right]\mathrm{e}^{-n(\gamma+\alpha)}\mathrm{d}\mathbb{P}\\
&\leq E_{o,o}\left[\mathrm{e}^{\langle\theta,X_n\rangle-n\Lambda_c(\theta)},\ |\langle f,\bar{\nu}_{n,X}^\infty - \bar{\mu}_\xi^\infty\rangle|>\epsilon\right]\mathrm{e}^{-n(\gamma+\alpha)}.
\end{align*}
Therefore, $\limsup_{n\rightarrow\infty}\frac{1}{n}\log\mathbb{P}\left(B_n'\right)\leq -\alpha$, and in particular $\sum_{n=1}^{\infty}\mathbb{P}\left(B_n'\right)<\infty$. By the Borel-Cantelli lemma, $\mathbb{P}\left(B_n'\ \text{i.o.}\right)=0$. In other words, $\mathbb{P}$-a.s. \[E_{o,o}^\omega\left[\mathrm{e}^{\langle\theta,X_n\rangle-n\Lambda_c(\theta)},\ |\langle f,\bar{\nu}_{n,X}^\infty - \bar{\mu}_\xi^\infty\rangle|>\epsilon\right]\leq\mathrm{e}^{n(\gamma+\alpha)}\] for sufficiently large $n$. Thus, \[\limsup_{n\rightarrow\infty}\frac{1}{n}\log E_{o,o}^\omega\left[\mathrm{e}^{\langle\theta,X_n\rangle-n\Lambda_c(\theta)},\ |\langle f,\bar{\nu}_{n,X}^\infty - \bar{\mu}_\xi^\infty\rangle|>\epsilon\right]\leq\gamma+\alpha.\] Since $\alpha>0$ is arbitrary, \[\limsup_{n\rightarrow\infty}\frac{1}{n}\log E_{o,o}^\omega\left[\mathrm{e}^{\langle\theta,X_n\rangle-n\Lambda_c(\theta)},\ |\langle f,\bar{\nu}_{n,X}^\infty - \bar{\mu}_\xi^\infty\rangle|>\epsilon\right]\leq\gamma.\]

Let us now finish the proof of the theorem:
\begin{align*}
&\limsup_{n\rightarrow\infty}\frac{1}{n}\log P_{o,o}^\omega\left(|\langle f,\bar{\nu}_{n,X}^\infty - \bar{\mu}_\xi^\infty\rangle|>\epsilon\,\left|\,|\frac{X_n}{n}-\xi|\leq\delta\right.\right)\\
\leq&\limsup_{n\rightarrow\infty}\frac{1}{n}\log P_{o,o}^\omega\left(|\langle f,\bar{\nu}_{n,X}^\infty - \bar{\mu}_\xi^\infty\rangle|>\epsilon\,,\,|\frac{X_n}{n}-\xi|\leq\delta\right)\\
&\quad - \liminf_{n\rightarrow\infty}\frac{1}{n}\log P_{o,o}^\omega\left(|\frac{X_n}{n}-\xi|\leq\delta\right)\\
\leq&\limsup_{n\rightarrow\infty}\frac{1}{n}\log E_{o,o}^\omega\left[\mathrm{e}^{\langle\theta,X_n\rangle},|\langle f,\bar{\nu}_{n,X}^\infty - \bar{\mu}_\xi^\infty\rangle|>\epsilon\,,\,|\frac{X_n}{n}-\xi|\leq\delta\right]\\
&\quad - \langle\theta,\xi\rangle + I_c(\xi) + |\theta|\delta\\
\leq&\limsup_{n\rightarrow\infty}\frac{1}{n}\log E_{o,o}^\omega\left[\mathrm{e}^{\langle\theta,X_n\rangle - n\Lambda_c(\theta)}\,,\,|\langle f,\bar{\nu}_{n,X}^\infty - \bar{\mu}_\xi^\infty\rangle|>\epsilon\right] + |\theta|\delta\leq\gamma + |\theta|\delta<0
\end{align*}
when $\delta>0$ is sufficiently small. The above estimate uses the fact that the quenched LDP holds in a neighborhood of $\xi$ with rate $I_c(\xi)=\langle\theta,\xi\rangle-\Lambda_c(\theta)$ at $\xi$, which is true by hypothesis.
\end{proof}

For $d\geq3$ and $|\xi-\xi_o|<\eta$ with $\eta$ as in Theorem \ref{AequalsQ}, we can put $\bar{\mu}_\xi^\infty$ in a nicer form. For every $N,M,K\in\mathbb{N}$ and any $f$ as in Definition \ref{definemu}, set $L:=N+M+K+1$.
\begin{align}
\int\!\!f\mathrm{d}\bar{\mu}_\xi^\infty&=E_{o,o}\left[\mathrm{e}^{\langle\theta,X_{L}\rangle - L\Lambda_c(\theta)}f(T_{N,X_N}\omega,(Z_{N+i})_{i\geq1})\right]\nonumber\\
&=\sum_{x}\mathbb{E}\!\left(E_{o,o}^\omega\left[\mathrm{e}^{\langle\theta,X_{L}\rangle - L\Lambda_c(\theta)}f(T_{N,X_N}\omega,(Z_{N+i})_{i\geq1}),X_L=x\right]\right)\!\mathbb{E}\!\left(u^\theta(T_{L,x}\omega)\right)\nonumber\\
&=\sum_{x}\mathbb{E}\left(E_{o,o}^\omega\left[\mathrm{e}^{\langle\theta,X_{L}\rangle - L\Lambda_c(\theta)}u^\theta(T_{L,x}\omega)f(T_{N,X_N}\omega,(Z_{N+i})_{i\geq1}),X_L=x\right]\right)\nonumber\\
&=\mathbb{E}\left(u^\theta(\omega)E_{o,o}^\omega\left[\mathrm{e}^{\langle\theta,X_{L}\rangle - L\Lambda_c(\theta)}\frac{u^\theta(T_{L,X_L}\omega)}{u^\theta(\omega)}f(T_{N,X_N}\omega,(Z_{N+i})_{i\geq1})\right]\right)\nonumber\\
&=\mathbb{E}\left(u^\theta(\omega)E_{o,o}^{\theta,\omega}\left[f(T_{N,X_N}\omega,(Z_{N+i})_{i\geq1})\right]\right)\label{genc}
\end{align} holds since $\mathbb{E}\left(u^\theta(T_{L,x}\cdot)\right)=1$ and $u^\theta(T_{L,x}\cdot)$ is $\mathcal{B}_L^+$-measurable. Note that (\ref{genc}) is independent of $M$ and $K$. This immediately implies that the marginal $\mu_\xi^1$ of $\mu_\xi^\infty$ is absolutely continuous relative to $\mathbb{P}$ on every $\mathcal{B}_{-N}^+$. Here is how: Fix $N\in\mathbb{N}$. For any $M\in\mathbb{N}$ and any bounded $\mathcal{B}_{-N}^+\cap\mathcal{B}_{M}^-$-measurable $h:\Omega\rightarrow\mathbb{R}$,
\begin{align*}
\int h\mathrm{d}\mu_\xi^1&=\mathbb{E}\left(u^\theta(\omega)E_{o,o}^{\theta,\omega}\left[h(T_{N,X_N}\omega)\right]\right)\\
&\leq\left\|u^\theta\right\|_{L^2(\mathbb{P})}\left\|E_{o,o}^{\theta,\omega}\left[h(T_{N,X_N}\omega)\right]\right\|_{L^2(\mathbb{P})}\\
&\leq\left\|u^\theta\right\|_{L^2(\mathbb{P})}\left\|\sum_{|x|\leq N}h(T_{N,x}\omega)\right\|_{L^2(\mathbb{P})}\\
&\leq (2N+1)^d\left\|u^\theta\right\|_{L^2(\mathbb{P})}\left\|h\right\|_{L^2(\mathbb{P})}.
\end{align*} Since such functions are dense in $L^2(\Omega,\mathcal{B}_{-N}^+,\mathbb{P})$, it follows by the Riesz representation theorem that
\begin{equation}\label{riesz}
\left.\frac{\mathrm{d}\mu_{\xi}^1}{\mathrm{d}\mathbb{P}}\right|_{\mathcal{B}_{-N}^+}\in L^2(\mathbb{P}).
\end{equation}

\begin{proof}[Proof of Theorem \ref{doob}]

The function $u^\theta$ is defined in (\ref{yeniu}). Recall (\ref{bariz}) and (\ref{doobcan}). For every $N,K\in\mathbb{N}$, take any bounded $f:\Omega^{K+1}\rightarrow\mathbb{R}$ and $g:\Omega^{\mathbb{N}}\rightarrow\mathbb{R}$ such that \[f\left(\omega,T_{1,z_1}\omega,\ldots,T_{K,z_1+\cdots+z_K}\omega\right)g\left(T_{K,z_1+\cdots+z_K}\omega,T_{K+1,z_1+\cdots+z_{K+1}}\omega,\ldots\right)\] is $\mathcal{B}_{-N}^+$-measurable for any $(z_i)_{i\geq 1}$. Then,
\begin{align*}
&\int\!\! f(\omega_1,\ldots,\omega_{K+1})g(\omega_{K+1},\omega_{K+2},\ldots)\,\mathrm{d}\mu_\xi^\infty(\omega_1,\omega_2,\ldots)\\
=&\int\!\! f\left(\omega,T_{1,z_1}\omega,\ldots,T_{K,z_1+\cdots+z_K}\omega\right)g\left(T_{K,z_1+\cdots+z_K}\omega,T_{K+1,z_1+\cdots+z_{K+1}}\omega,\ldots\right)\mathrm{d}\bar{\mu}_\xi^\infty\\
=&\mathbb{E}\left(u^\theta(\omega)E_{o,o}^{\theta,\omega}\left[f(T_{N,X_N}\omega,\ldots,T_{N+K,X_{N+K}}\omega)g(T_{N+K,X_{N+K}}\omega,\ldots)\right]\right)\\
=&\mathbb{E}\left(u^\theta(\omega)E_{o,o}^{\theta,\omega}\left[f(T_{N,X_N}\omega,\ldots,T_{N+K,X_{N+K}}\omega)E_{N+K,X_{N+K}}^{\theta,\omega}\left[g(T_{N+K,X_{N+K}}\omega,\ldots)\right]\right]\right)\\
=&\int\!\! f(\omega_1,\ldots,\omega_{K+1})E_{o,o}^{\theta,\omega_{K+1}}\left[g(\omega_{K+1},T_{1,X_1}\omega_{K+1},\ldots)\right]\mathrm{d}\mu_\xi^\infty
\end{align*} by (\ref{genc}) and the Markov property. This proves that $\mu_\xi^\infty$ is indeed a Markov process with state space $\Omega$ and transition kernel $\overline{\pi}^\theta$.

$\mu_\xi^\infty$ is a stationary process by Proposition \ref{stationary}. Hence, its marginal $\mu_\xi^1$ is an invariant measure for $\overline{\pi}^\theta$. Since $\mu_\xi^1$ is absolutely continuous relative to $\mathbb{P}$ on every $\mathcal{B}_{-N}^+$ by (\ref{riesz}), it follows that $\mu_\xi^1$ is the unique invariant measure for $\overline{\pi}^\theta$ with that absolute continuity property (see \cite{Firas}).

\end{proof}

\backmatter

\end{document}